\documentclass[a4paper, 11 pt, twoside]{amsart}
\usepackage{srollens-en}[2011/03/03]
\usepackage[left = 3.5cm, right = 3.5cm, headsep = 6mm,
footskip = 10mm, top = 35mm, bottom = 35mm, footnotesep=5mm, headheight =
2cm]{geometry}

\usepackage[T1]{fontenc}

\usepackage{tikz-cd}

\usepackage[hypertexnames=false,
backref=page,
	pdfpagemode=UseNone,
	breaklinks=true,
	extension=pdf,
	colorlinks=true, 
	linkcolor=blue,
	citecolor=blue,
	urlcolor=blue,
]{hyperref}

{\theoremstyle{Lehn-it}
\newtheorem{propdef}[theo]{Proposition/Definition}
}
{\theoremstyle{Lehn-up}
\newtheorem{notation}[theo]{Notation}
}

\renewcommand{\restr}[1]{{\raisebox{-0.0\height}{$\mid_{#1}$}}}
\renewcommand{\epsilon}{\varepsilon}
\renewcommand{\phi}{\varphi}

\newcommand{\Diff}{\mathrm{Diff}}
\newcommand{\sing}[1]{{{#1}_{\mathrm{sing}}}}

\newcommand{\red}{\mathrm{red}}
\DeclareMathOperator{\supp}{\mathrm{supp}}

\renewcommand{\tilde}{\widetilde}

\title{Pluricanonical maps of stable log surfaces}
\author{Wenfei Liu}
\address{Wenfei Liu \\Institut f\"ur algebraische Geometrie\\  Gottfried Wilhelm Leibniz Universit\"at Hannover\\Welfengarten 1\\ 30167 Hannover\\Germany}
\email{wliu@math.uni-hannover.de}

\author{S\"onke Rollenske}
\address{S\"onke Rollenske\\Fakult\"at f\"ur Mathematik\\Universt\"at Bielefeld\\Universit\"atsstr. 25\\33615 Bielefeld\\Germany}
\email{rollenske@math.uni-bielefeld.de}

\begin{document}
\begin{abstract}
Stable surfaces and their log analogues are the type of varieties naturally occurring as boundary points in moduli spaces. We extend classical results of Kodaira and Bombieri to this more general setting: if $(X,\Delta)$ is a stable log surface with  reduced boundary (possibly empty) and $I$ is its global index,  then $4I(K_X+\Delta)$ is base-point-free and $8I(K_X+\Delta)$ is very ample.

These bounds can be improved under further assumptions on the singularities or invariants, for example, $5(K_X+\Delta)$ is very ample if $(X,\Delta)$ has semi-canonical singularities.
\end{abstract}
\subjclass[2010]{14J10, 14C20}
\keywords{stable log surface, pluri-log-canonical map}

\maketitle
\setcounter{tocdepth}{1}
\tableofcontents

\section{Introduction}
It is a general fact that moduli spaces of \emph{nice} objects in algebraic geometry, say smooth varieties,  are often non-compact. But usually there is a modular compactification where the boundary points correspond to related, but more complicated objects.

Such a modular compactification has been known for the moduli space $\km_g$ of smooth curves of genus $g$ for a long time and in \cite{ksb88} Koll\'ar and Shepherd-Barron made the first step towards the construction of a modular compactification $\overline{\gothM}$ for the moduli space $\mathfrak M$ of surfaces of general type. Even though the actual construction of the moduli space was delayed for several decades because of formidable technical obstacles to be overcome, it was clear from the beginning that the objects parametrised by $\overline{\gothM}$ should be surfaces with semi-log-canonical singularities and ample canonical divisor, for short \emph{stable surfaces}.

A more general version also incorporates the possibility of a (reduced) boundary divisor (see Section~\ref{sect: prelims} for the precise definitions); this is the higher dimensional analogue of pointed stable curves and was worked out by Alexeev \cite{alexeev96a, alexeev06}.

In recent years, several components of the moduli space of stable varieties or pairs have been investigated in detail\footnote{The following is a probably incomplete list of results in this direction: \cite{hassett99, lee00, hacking04,vop05,vop06a, vop06b, ale-pard09, HKT09, rollenske10b, liu12, laza12, bhps12, patakfalvi12}.}. On the other hand, many of the standard tools to study, for example, smooth surfaces of general type are not yet available for stable surfaces. In this paper, we make the first steps in the understanding of  pluricanonical maps of such surfaces.

Pluricanonical maps are one of the main tools in the study of smooth surfaces of general type and their canonical models. They have been an active subject of research ever since  Bombieri's seminal paper \cite{bombieri73}.
Recall that the $m$-canonical map of a variety $X$ is the rational map $\phi_m\colon X \dashrightarrow \IP^N$ associated to the linear system $|mK_X|$. Then the roughest version of Bombieri's results says that on  a surface with canonical singularities and ample canonical divisor $\phi_m$ is an embedding for $m\geq5$; it had been proved earlier by Kodaira that $\phi_m$ is a morphism for $m\geq 4$ \cite{kodaira68}. These results are sharp but can be much refined and we refer to \cite[Sect.~VII]{BHPV} or the recent survey \cite{BCP06} for more information.

The singularities of a stable surface can be  much worse than canonical singularities: in general they are non-normal, not Gorenstein and not (semi-)rational. Thus many of the techniques which one can use to prove Bombieri-type theorems do not carry over directly. The following theorem is proved in Section~\ref{section: bpf} by applying  a Reider-type result due to  Kawachi on the normalisation combined with a detailed analysis of the non-normal locus.
\begin{custom}[Theorem~\ref{thm: base-point-free}]
 Let $(X,\Delta)$ be a connected stable log surface with global index $I$.\footnote{Multiplication with the index is clearly necessary since $\phi_m$ cannot be a morphism if $m(K_X+\Delta)$ is not a Cartier divisor. However, it does make sense to ask which is the first pluri-log-canonical map to be birational. In the normal case Langer has given an explicit but still unrealistically large bound \cite[Sect.~5]{langer01b}.}
\begin{enumerate}
\item The line bundle $\omega_X(\Delta)^{[mI]}$  is base-point-free for $m\geq 4$.
\item The line bundle $\omega_X(\Delta)^{[mI]}$  is base-point-free for $m\geq 3$  if one of the following holds:
\begin{enumerate}
\item $I\geq 2$.

\item There is no irreducible component $\bar X_i$ of the normalisation such that $\left(\pi^*(K_X+\Delta)\restr {\bar X_i}\right)^2=1$, and the union of $\Delta$ and the non-normal locus is a nodal curve.
\item $X$ is normal and we do not have $I=(K_X+\Delta)^2=1$.
\end{enumerate}
\end{enumerate}
\end{custom}
\noindent For normal stable surfaces without boundary this recovers \cite[Cor.~3, Cor.~4]{kawachi-masek98}. 

Our results on pluri-log-canonical embeddings are somewhat more involved. We follow an approach  due to Catanese and Franciosi \cite{catanese-franciosi96}, later refined in collaboration with Hulek and Reid \cite{CFHR}: for every subscheme of length two find a pluri-log-canonical curve containing it and then prove that this curve is embedded by $|mI(K_X+\Delta)|$. Without further assumptions on singularities and invariants we get:
\begin{custom}[General bounds (Theorem~\ref{thm: main general})]
Let $(X, \Delta)$ be a connected stable log surface of global index $I$.
\begin{enumerate}
\item The line bundle $\omega_X(\Delta)^{[mI]}$  is very ample for $m\geq 8$.
\item The line bundle $\omega_X(\Delta)^{[mI]}$  defines a birational morphism for $m\geq6$.
\item The line bundle $\omega_X(\Delta)^{[mI]}$  is very ample for $m\geq 6$ if $I\geq2$.
\end{enumerate}
\end{custom}
We do not believe all of these bounds to be sharp. The main obstacles in our proof are the extra contributions from the worse than canonical singularities and the fact that curves containing irreducible components of the non-normal locus and the boundary do not behave well under normalisation. We explain this more in detail in Section~\ref{sect: prelim for embedding}, see also Remark~\ref{rem: not sharp}.

Under additional assumptions we can improve the bounds obtained above. In particular, if $X$ is semi-canonical then we obtain the same bound as in the classical case.
\begin{custom}[Bounds for milder singularities (Theorem~\ref{thm: main special})]
 Let $(X, \Delta)$ be a connected stable log surface of global index $I$ and let $D$ be the non-normal locus of $X$.
\begin{enumerate}
 \item The line bundle $\omega_X(\Delta)^{[mI]}$  is very ample  for $m\geq 7$ if one of the following holds:
\begin{enumerate}
\item There is no irreducible component $\bar X_i$ of the normalisation such that $\left(\pi^*(K_X+\Delta)\restr {\bar X_i}\right)^2=1$, and the union of $\Delta$ and the non-normal locus is a nodal curve.
\item  $X$ is normal and not $(K_X+\Delta)^2=1$.
\end{enumerate}
\item The line bundle $\omega_X(\Delta)^{[mI]}$  is very ample  for $m\geq6$ if the normalisation $\bar X$ is smooth along the conductor divisor and has at most canonical singularities elsewhere. 
 \item The line bundle $\omega_X(\Delta)^{[mI]}$  is very ample  for $m\geq5$ if $D\cup\Delta$ is a nodal curve,  $\bar X$ is smooth along the conductor divisor, and $X\setminus D$ has at most canonical singularities.

In particular these conditions are satisfied, if $(X,\Delta)$ has semi-canonical singularities.
\end{enumerate}
\end{custom}

For a connected stable surface $X$  with canonical singularities the bi-canonical map is a morphism as soon as $K_X^2\geq 5$  and the tri-canonical map is an embedding as soon as $K_X^2\geq 6$ (see \cite{catanese87}).  Such behaviour cannot be expected for stable surfaces: in Example~\ref{exam: large K^2} we construct an irreducible, Gorenstein stable surface with $K_X^2$ arbitrarily large such that  the bi-canonical map not a morphism and neither the tri-canonical nor the $4$-canonical  map is an embedding. 

A natural extension of the aforementioned results is the study of the log-canonical ring. We do not engage in a detailed study but only state the results that follow by standard methods from Theorem \ref{thm: base-point-free}. 
\begin{custom}[Theorem~\ref{thm: ring}]
Let $(X, \Delta)$ be a stable log surface of index $I$. Then the log-canonical ring,
\[R(X, K_X+\Delta) = \bigoplus_{m\in \IZ_{\geq 0}} H^0(X, \omega_X(\Delta)^{[m]}),\]
is generated in degree at most $12I+1$ and in degree at most  $9I+1$  under the same assumptions as in Theorem~\ref{thm: base-point-free}\refenum{ii}.
\end{custom}
All the results should only be regarded as a first step towards a precise understanding of pluri-log-canonical maps and log-canonical rings of stable log surfaces. 

Our method relies on the rough classification of semi-log-canonical singularities and therefore does not generalise to higher dimensions at the moment.

\subsection*{Acknowledgements:} We had the pleasure to discuss parts of this project with  Fabrizio Catanese, Mi\-chael L\"onne, and Markus Zowislok.  We rely heavily on the results in \cite{KollarSMMP} and are grateful to J\'anos Koll\'ar for sending us a preliminary version. S\'andor Kov\'acs gave us a crucial hint on how to prove the vanishing results in Section~\ref{section: vanishing}. Yongnam Lee convinced us to extend our results to the log case. We are grateful to the anonymous referee for carefully reading the whole manuscript: his remarks lead to a more accurate and, hopefully, also more readable presentation.

Both authors were supported by DFG via the second author's Emmy-Noether project and partially via SFB 701. The first author was partially supported also by the Bielefelder Nachwuchsfonds.

\subsection{Notations and conventions}
We work exclusively with schemes of finite type over the complex numbers.
\begin{itemize}
\item The singular locus of a scheme $X$ will be denoted by $\sing X$.
\item A surface is a reduced, projective scheme  of pure dimension 2 but not necessarily irreducible or connected.
\item A curve is a purely 1-dimensional scheme that  is Cohen--Macaulay. A curve is not assumed to be reduced, irreducible or connected. For a point $p\in C$ we denote by $\mu_p(C)$ its multiplicity.
\item For a sheaf $\kf$ on $X$ we denote  by $\kf^{[m]} = \shom_{\ko_X}(\shom_{\ko_X}(\kf^{\tensor m}, \ko_X), \ko_X)$ the reflexive powers. 

\item We switch back and forth between multiples of a canonical divisor, $mK_X$, and reflexive powers of the canonical sheaf $\omega_X^{[m]}=\ko_X(mK_X)$. See Section \ref{sect: divisors} for a discussion of divisors and associated divisorial sheaves.
\end{itemize}
Some further notation on demi-normal schemes or semi-log-canonical pairs will be fixed in Notation \ref{notation}.

\section{Preliminaries}\label{sect: prelims}
In this section we recall some necessary notions as well as constructions that we need throughout the text. Most of these are available in all dimensions, but for our purpose it suffices to focus on the case of surfaces. Our main reference is \cite[Sect.~5.1--5.3]{KollarSMMP}.

\subsection{Stable log surfaces}
Let $X$ be a demi-normal surface, that is,  $X$ satisfies $S_2$ and  at each point of codimension 1, $X$ is either regular or has an ordinary double point.
We denote by  $\pi\colon \bar X \to X$ the normalisation of $X$. The conductor ideal
$ \shom_{\ko_X}(\pi_*\ko_{\bar X}, \ko_X)$
is an ideal sheaf in both $\ko_X$ and $\ko_{\bar X} $ and as such defines subschemes
$D\subset X \text{ and } \bar D\subset \bar X,$
both reduced and of pure codimension 1; we often refer to $D$ as the non-normal locus of $X$. 

Let $\Delta$ be a reduced curve on $X$ whose support does not contain any irreducible component of $D$. Then the strict transform $\bar \Delta$ in the normalisation is well defined.
\begin{defin}\label{defin: slc}
We call a pair $(X, \Delta)$ as above a  \emph{log surface}; $\Delta$ is called the (reduced) boundary.\footnote{In general one can allow rational coefficients in $\Delta$, but we will not use this here.}

A log surface $(X,\Delta)$ is said to have \emph{semi-log-canonical (slc)}  singularities if it satisfies the following conditions: 
\begin{enumerate}
 \item $K_X + \Delta$ is $\IQ$-Cartier, that is, $m(K_X+\Delta)$ is Cartier for some positive integer $m$; the minimal such $m$ is called the (global) index of $(X,\Delta)$.
\item The pair $(\bar X, \bar D+\bar \Delta)$ has log-canonical singularities. 
\end{enumerate}
The pair $(X,\Delta)$ is called stable log surface if in addition $K_X+\Delta$ is ample. A stable surface is a stable log surface with empty boundary; these are the surfaces relevant for the compactification of the Gieseker moduli space.

By abuse of notation we say $(X, \Delta)$ is a Gorenstein stable log surface if  the index is equal to one, i.e., $K_X+\Delta$ is an ample Cartier divisor.
\end{defin}

Let $(X, \Delta)$ be a log surface. Since $X$ has at most double points in codimension 1 the map $\pi\colon \bar D \to D$ on the conductor divisors is generically a double cover and thus  induces a rational involution on $\bar D$. Normalising the conductor loci we get an honest involution $\tau\colon \bar D^\nu\to \bar D^\nu$ such that $D^\nu = \bar D^\nu/\tau$.

\begin{theo}[{\cite[Thm.~5.13]{KollarSMMP}}]\label{thm: triple}
Associating to a log-surface $(X, \Delta)$ the triple $(\bar X, \bar D+\bar \Delta, \tau\colon \bar D^\nu\to \bar D^\nu)$ induces a one-to-one correspondence
 \[
  \left\{ \text{\begin{minipage}{.12\textwidth}
 \begin{center}
         stable log surfaces  $(X, \Delta)$
 \end{center}
         \end{minipage}}
 \right\} \leftrightarrow
 \left\{ (\bar X, \bar D, \tau)\left|\,\text{\begin{minipage}{.37\textwidth}
   $(\bar X, \bar D+\bar \Delta)$ log-canonical pair with 
  $K_{\bar X}+\bar D+\bar \Delta$ ample, \\
   $\tau\colon \bar D^\nu\to \bar D^\nu$  an involution s.th.\
    $\Diff_{\bar D^\nu}(\Delta)$ is $\tau$-invariant.
            \end{minipage}}\right.
 \right\}.
 \]
 \end{theo}
For the definition of the different see Definition \ref{def: different} below. 

\begin{notation}\label{notation}
 In the rest of the article we continue to use the notation above, repeated here as a diagram:
 \begin{equation}
\begin{tikzcd}
    \bar X \dar{\pi}\rar[hookleftarrow] & \bar D\dar{\pi} & \bar D^\nu \lar[swap]{\bar\nu}\dar{/\tau}
    \\
X\rar[hookleftarrow] &D &D^\nu\lar{\nu}.
    \end{tikzcd}
\end{equation}
An important consequence of Theorem \ref{thm: triple} and its proof is that both squares in the diagram are pushouts.
\end{notation}

\subsection{Semi-resolutions}
It is sometimes  useful to resolve stable surfaces as much as possible while keeping the singularities in codimension 1. 
\begin{defin}[\cite{ksb88}, \cite{KollarSMMP}]\label{defin: semi resolution}
A surface $Y$ is called semi-smooth if every point of $Y$ is either smooth or double normal crossing or a pinch point\footnote{A local model for the pinch point in $\IA^3$ is given by the equation $x^2+yz^2=0$.}. 

A  smooth rational curve $E$  on a semi-smooth surface $Y$ which is not contained in the non-normal locus is called a $(-1)$-curve if $E^2=-1$ and $\deg K_Y\restr E \leq 0$.

A morphism of demi-normal surfaces $f\colon Y\rightarrow X$ is called a semi-resolution if the following conditions are satisfied:
\begin{enumerate}
 \item $Y$ is semi-smooth;
 \item there is a semi-smooth open subscheme $U$ of $X$ such that the codimension of $X\setminus U$ is two and $f$ is an isomorphism over $U$;
 \item $f$ maps the singular locus of $Y$ birationally onto the non-normal locus of $X$.
\end{enumerate}
A semi-resolution is called minimal if it does not contract $(-1)$-curves.
\end{defin}

\begin{theo}[{\cite{vS87}, \cite[Thm.~10.54]{KollarSMMP}}]
 Let $X$ be a demi-normal surface. Then $X$ has a unique minimal semi-resolution. 
\end{theo}
The possible configurations of exceptional divisors on the minimal semi-resolution of an slc point will be discussed in Section \ref{sect: numerical cycle}.
Looking at these possibilities it is easy to see how to incorporate a reduced  boundary into the resolution process for an slc pair: if $(X, \Delta)$ is an slc pair and $f\colon Y\to X$ is the minimal semi-resolution of $X$ then blowing up all intersection points of the non-normal locus of $Y$ and the strict transform $\Delta_Y=(\inverse f)_*\Delta$ we get a semi-resolution of $X$ such that the strict transform of the boundary is contained in the normal locus and it is minimal with 
this property. We call this the minimal log-semi-resolution of $(X, \Delta)$. The general case  in all dimensions is treated in \cite[Sect.~10.4]{KollarSMMP}.

\subsection{Divisors and restrictions to curves}\label{sect: divisors}
Let $X$ be a demi-normal surface. In particular, $X$ is  Gorenstein in codimension 1 and $S_2$ and the theory of generalised divisors from \cite{Hartshorne94} applies to $X$. 

A \emph{divisorial sheaf} on $X$ is a reflexive coherent  $\ko_X$-module that is locally free of rank 1 at the generic points of $X$ \cite[Prop.~2.8]{Hartshorne94} and there is a one-to-one correspondence between divisorial subsheaves of the sheaf of total quotient rings which are contained in $\ko_X$ and closed subschemes of codimension 1 without embedded points \cite[Prop.~2.4]{Hartshorne94}. A  divisorial sheaf is called \emph{almost Cartier} if it is invertible in codimension 1.

A \emph{Weil divisor} (resp.~\emph{$\IQ$-Weil divisor}) on $X$ is a finite, formal, $\IZ$-linear (resp.~$\IQ$-linear) combination $D=\sum_i m_i D_i$ of irreducible and reduced subschemes of codimension 1 \cite[Sect.~1.1]{KollarSMMP}. By a ($\IQ$-)divisor we mean a ($\IQ$-)Weil divisor. We call a $\IQ$-Weil divisor reduced if all non-zero coefficients are equal to $1$. 

Arbitrary Weil divisors containing a component of the non-normal locus do not behave well in many respects, so we often need to exclude them. This is encoded in the following definition.
\begin{defin}\label{def: wb}
A $\IQ$-Weil divisor $B$ on a log surface $(X,\Delta)$ is called \emph{well-behaved} (resp.~\emph{log-well-behaved}) if its support does not contain any irreducible component of $D$ (resp.~$D\cup\Delta$).
\end{defin}
For a well-behaved Weil divisor $B$, the corresponding  divisorial sheaf $\ko_X(B)$ is obtained as follows: let $Z$ be the locus where $B$ is not a Cartier divisor and $U = X\setminus Z\overset\iota\into X$. Then $Z$ is of codimension 2 since $B$ does not contain a component of the non-normal locus and $\ko_X(B):=\iota_*\ko_U(B)$ is an almost Cartier divisorial sheaf. If, in addition, $B$ is effective, then the inclusion $\ko_X(-B)=\iota_*\ko_U(-B)\into \iota_*\ko_U=\ko_X$ defines a subscheme structure on $B_\red$.
With this subscheme structure $B$ satisfies $S_1$, i.e., is Cohen--Macaulay, since $\ko_X(-B)$ and $\ko_X$ satisfy $S_2$. We will not distinguish the subscheme and the divisor in the notation.

On the other hand, given an almost Cartier  divisorial sheaf $\ka$ we can find a well-behaved Weil divisor $A$ such that $\ka=\ko(A)$ \cite[Prop.~2.11]{Hartshorne94}. 

Let $\omega_X$ be the dualising sheaf which coincides with the pushforward of the canonical bundle on the Gorenstein locus. Note that $\omega_X$ is almost Cartier, so there is a well-behaved canonical divisor $K_X$,  defined up to linear equivalence, such that $ \ko_X(K_X) = \omega_X$. By a local computation we have $(\pi^*\omega_X)^{[1]} \isom  \omega_Y(\bar D)$.

Restricting divisors and divisorial sheaves to curves requires some extra care, if the divisor is not Cartier.
\begin{defin}\label{def: restriction divisorial sheaf}
Let $B\subset X$ be a curve, that is, a Cohen--Macaulay subscheme of pure codimension 1, and let $A$ be well-behaved divisor.
Then we define
\[ \ko_B(A) = \ko_X(A)\tensor \ko_B/\text{torsion}.\]
\end{defin}
Note that modding out the torsion subsheaf is in general not equal to taking the double dual if the curve is not Gorenstein \cite[Example~4.1.9]{Kass}.

On the set of torsion-free sheaves of rank 1 on a curve $B$ we can also define a multiplication 
\[ \kf[\tensor ]\kf' = \kf\tensor \kf'/\text{torsion}.\]
This product is well-behaved only if one of the sheaves is a line bundle. For example, it may well happen that the restriction map from divisorial sheaves on $X$ to torsion-free sheaves on $B$ is not multiplicative, that is, in general 
\[ \ko_B(mA)\neq \underbrace{\ko_B(A)[\tensor]\dots[\tensor]\ko_B(a)}_{\text{$m$ times}}.\]
As a concrete example one may consider $A$ and $B$ to be a ruling of the cone over a twisted cubic. 
However, the usual short exact sequences suggested by the notation still work.

\begin{lem}\label{lem: div seq}
 Let $X$ be a demi-normal surface and $B$ a well-behaved curve. Let $A$ be a well-behaved divisor.
\begin{enumerate}
 \item There is an exact sequence
\[
 0\rightarrow \ko_X(A-B) \rightarrow \ko_X(A) \rightarrow \ko_B(A) \rightarrow 0.
\]
\item If $B=B_1+B_2$ is a decomposition of $B$ into a sum of (non-empty) subcurves then there is an exact sequence 
\[
 0\rightarrow \ko_{B_1}(A-B_2) \rightarrow \ko_{B}(A) \rightarrow \ko_{B_2}(A) \rightarrow 0.
\]
\end{enumerate}
\end{lem}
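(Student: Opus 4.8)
The plan is to reduce everything to the open locus $V\subseteq X$ on which $A$ and all the curves involved are Cartier divisors. Since $B$ (hence each $B_i$) is well-behaved, $V$ has complement of codimension $\geq 2$, so $V$ contains every point of codimension $\leq 1$ and meets every component of $B$ in a dense open set. On $V$ the divisor $A$ is Cartier, so (1) is obtained by tensoring the structure sequence $0\to\ko_V(-B)\to\ko_V\to\ko_{B\cap V}\to 0$ with the line bundle $\ko_V(A)$; this is exact, and no torsion gets killed when restricting to the Cohen--Macaulay curve $B\cap V$, so the restriction terms here literally coincide with those of Definition~\ref{def: restriction divisorial sheaf}.

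To globalise (1) I would use that $\ko_X(A)$ and $\ko_X(A-B)$, being divisorial sheaves, are $S_2$ and hence equal to the pushforward of their restrictions to $V$; pushing the inclusion $\ko_V(A-B)\into\ko_V(A)$ forward along $V\into X$ yields a canonical injection $\ko_X(A-B)\into\ko_X(A)$, with cokernel $\mathcal{Q}$. The surjection $\ko_X(A)\to\ko_B(A)$ of Definition~\ref{def: restriction divisorial sheaf} kills $\ko_X(A-B)$: the composite $\ko_X(A-B)\to\ko_B(A)$ vanishes on $V$, so its image is supported in codimension $\geq 2$, but $\ko_B(A)$ is torsion-free because $B$ is Cohen--Macaulay, whence the composite is zero. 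This produces a surjection $\psi\colon\mathcal{Q}\to\ko_B(A)$ that is an isomorphism over $V$, and it remains to see that $\ker\psi$, which is supported on the finite set $X\setminus V$, vanishes. For that it suffices that $\mathcal{Q}$ be torsion-free along $B$; this is clear over $V$, and at a point $x\in X\setminus V$ it follows from the long exact sequence of local cohomology applied to $0\to\ko_X(A-B)\to\ko_X(A)\to\mathcal{Q}\to 0$: the first two terms have depth $\geq 2$ at $x$, so $H^0_{\mathfrak{m}_x}(\mathcal{Q})=0$.

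For (2) I would deduce it from (1) applied three times rather than redo the analysis. Writing $A-B=(A-B_2)-B_1$, one has compatible inclusions of divisorial sheaves $\ko_X(A-B)\subseteq\ko_X(A-B_2)\subseteq\ko_X(A)$ (again the evident inclusions over the Cartier locus, pushed forward using $S_2$). Part (1) identifies the kernel of the natural surjection $\ko_X(A)\to\ko_{B_2}(A)$ with $\ko_X(A-B_2)$, so the kernel of the natural surjection $q\colon\ko_B(A)\to\ko_{B_2}(A)$ (induced by $\ko_B\to\ko_{B_2}$; note $\ko_X(A)\to\ko_B(A)\xrightarrow{q}\ko_{B_2}(A)$ is the natural restriction map) is the image of $\ko_X(A-B_2)$ in $\ko_B(A)$. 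The kernel of $\ko_X(A-B_2)\into\ko_X(A)\to\ko_B(A)$ is $\ko_X(A-B_2)\cap\ko_X(A-B)$ inside $\ko_X(A)$, which equals $\ko_X(A-B)$ since $\ko_X(A-B)\subseteq\ko_X(A-B_2)$; hence that image is $\ko_X(A-B_2)/\ko_X(A-B)$, which by (1) applied to the well-behaved divisor $A-B_2$ and the curve $B_1$ equals $\ko_{B_1}(A-B_2)$. Therefore $\ker q\isom\ko_{B_1}(A-B_2)$, which is the asserted exact sequence.

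The genuinely delicate point throughout is the behaviour over the codimension-$2$ locus where $A$ fails to be Cartier: there the naive tensor restriction picks up torsion and the obvious sequences need not be exact on the nose, and the remedy is to invoke the $S_2$-property of divisorial sheaves together with the depth lemma, as above. Everything else is the classical computation with line bundles and ideal sheaves on a Cohen--Macaulay curve.
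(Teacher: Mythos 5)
Your argument is correct and is in substance the paper's own proof: part (1) rests on identifying the sequence over the open locus where everything is Cartier and then using the $S_2$/depth property of divisorial sheaves to control the finitely many bad points (the paper phrases this as ``$\ker\phi$ is $S_2$ by the depth lemma, hence equals $\ko_X(A-B)$''; your local-cohomology computation of $H^0_{\mathfrak m_x}(\mathcal Q)$ is the same fact read off from the other end of the sequence). Your kernel chase in part (2) is exactly the Snake Lemma applied to the paper's diagram of two instances of (1) together with the column $0\to\ko_X(A-B)\to\ko_X(A-B_2)\to\ko_{B_1}(A-B_2)\to 0$, so there is nothing genuinely different here.
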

\begin{proof} The composition $\phi:\ko_X(A)\rightarrow \ko_X(A)\restr B \rightarrow\ko_B(A)$ is surjective. By the depth lemma $\ker\phi$ is an $S_2$-sheaf. By looking at the open subscheme of $X$ where $\ko_X(A)$ is Cartier, one sees easily that $\ker\phi = \ko_X(A-B)$ which implies \refenum{i}.

For \refenum{ii} consider the  commutative diagram
\[
 \begin{tikzcd}
{}  & 0 \dar\\
0\rar & \ko_ X (A-B)\rar\dar& \ko_ X (A)\rar\arrow[equal]{d}& \ko_B(A)\rar\dar& 0\\
0\rar & \ko_ X (A-B_2)\rar\dar& \ko_X(A)\rar& \ko_{B_2}(A)\rar& 0\\
&\ko_{B_1}(A-B_2)\dar&\\
&0&
 \end{tikzcd}
\]
where the exactness of the column and rows follows from \refenum{i}. The assertion  follows by the Snake Lemma.
\end{proof}

\subsection{Intersection pairing}
\begin{defin}\label{defin: intersection}
Let $X$ be a demi-normal surface.
\begin{enumerate}
 \item We define a $\IQ$-valued intersection pairing for well-behaved Weil divisors in the following way: let  $A$, $B$ be well-behaved Weil divisors on $X$ and $\bar A$, $\bar B$ their strict transforms the normalisation $\bar X$. Then the intersection number is
\[
 AB:= \bar A \bar B
\]
where we use Mumford's intersection pairing for  normal surfaces. (see e.g. \cite{sak84}).
\item For a well-behaved $\IQ$-Cartier Weil divisor $F$ and a curve $B$ on $X$ we denote by 
 \[
 \deg{F\restr{B}} := \frac{1}{m}\deg{\ko_X(mF)\restr{B}},                                                                      
\]
where $m$ is a positive integer such that $mF$ is Cartier. This could be called the numerical degree of $F$ on $B$. Note that $\deg F\restr B = FB$ always holds.
\item For a torsion-free sheaf $\kf$ on a  Cohen--Macaulay curve $B$ we define its degree as in \cite{CFHR} through the Riemann--Roch formula
\[\deg\kf = \chi(\kf)-\chi(\ko_B).\]
\end{enumerate}
\end{defin}

\begin{rem}
In Definition \ref{def: restriction divisorial sheaf} we defined for a well-behaved divisor $A$ the sheaf $\ko_B(A)$ on a curve $B$. 
Unfortunately, if $A$ is $\IQ$-Cartier but not Cartier, then the degree of the  divisor on $B$ as in \refenum{ii} and  the degree of the  associated sheaf as in \refenum{iii} behave differently: the former may be rational while the latter is always an integer. This happens precisely because the restriction of divisorial sheaves to a curve is not multiplicative in general. 

We will mostly work with the numerical definition \refenum{ii} and try to make it clear when we need to consider torsion-free sheaves.
In some special situations, for example when $F=m(K_X + \Delta)$ and $B\subset \Delta$ on a stable log surface $(X,\Delta)$, a comparison between $\deg F\restr B$ and $\deg\ko_B(F)$ is possible (see Lemma~\ref{lem: QCar res}).
\end{rem}

\begin{rem}\label{rem: intersection properties}
The intersection form defined in this way has some unexpected properties: For example, if $A$ and $B$ are contained in different irreducible  components of $X$ then their intersection number  is zero even if they intersect in the non-normal locus. 
\end{rem}

\subsection{Descending pluri-log-canonical sections and  invariants}
Since we are especially interested in pluri-log-canonical maps and thus sections of pluri-log-canonical bundles, the following will play a role.
\begin{defin}[{\cite[5.11]{KollarSMMP}}]\label{def: different}
Let  $B$ be a well-behaved reduced curve on $X$ and  $B^\nu$  the normalisation of $B$. Suppose $\omega_X(\Delta + B)^{[m]}$ is a line bundle for some positive integer $m$. Then the different $\Diff_{B^\nu}(\Delta)$ is a $\IQ$-divisor on $B^\nu$ such that $m\Diff_{B^\nu}(\Delta)$ is integral and 
\[
 \omega_X(\Delta + B)^{[m]}\restr{B^\nu}\isom \omega_{B^\nu}^{[m]}(m\Diff_{B^\nu}(\Delta)).
\]
\end{defin}

\begin{prop}[{\cite[Prop.~5.8]{KollarSMMP}}]\label{prop: sections}
 Let $(X, \Delta)$ be a stable log surface and $m\geq1$ an integer. A section $s\in H^0(\bar X, \omega_{\bar X}(\bar D+\bar \Delta)^{[m]})$ descends to a section in $H^0(X, \omega_X(\Delta)^{[m]})$ if and only if its residue at the generic points of  $\bar D^\nu$ is  $\tau$-invariant if $m$ is even respectively $\tau$-anti-invariant if $m$ is odd.

If  $\omega_{\bar X}(\bar D+\bar \Delta)^{[m]}$ is a line bundle then this is equivalent to the image of $s$ in $H^0(\bar D^\nu, \omega_{\bar D^\nu}^{[m]}(m\Diff_{\bar D^\nu}(\Delta)))$ being $\tau$-invariant if $m$ is even respectively $\tau$-anti-invariant if $m$ is odd.
\end{prop}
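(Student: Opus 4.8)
The statement is local along the conductor, so the plan is to first reduce to a codimension-one picture and then feed in the pushout structure from Notation~\ref{notation}. The restriction map $s\mapsto s\restr{\bar X}$ of the statement is the one induced by $(\pi^*\omega_X(\Delta)^{[m]})^{[1]}\isom\omega_{\bar X}(\bar D+\bar\Delta)^{[m]}$, which follows from $(\pi^*\omega_X)^{[1]}\isom\omega_{\bar X}(\bar D)$ and $\pi^*\Delta=\bar\Delta$. Both sheaves are reflexive, hence $S_2$, and $\pi$ is finite, so removing from $X$ a closed subset $Z$ of codimension $\geq 2$ together with its preimage changes neither $H^0(X,\omega_X(\Delta)^{[m]})$ nor $H^0(\bar X,\omega_{\bar X}(\bar D+\bar\Delta)^{[m]})$, compatibly with restriction. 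I would take $Z$ large enough that on $U:=X\setminus Z$ the surface $X$ is a simple normal crossing of two smooth surfaces along the smooth conductor curve $D$, the normalisation $\bar U:=\pi^{-1}(U)$ is smooth with $\bar D+\bar\Delta$ a simple normal crossing divisor and $\bar\Delta$ disjoint from $\bar D$, the different $\Diff_{\bar D^\nu}(\Delta)$ has empty support over $U$, the maps $\bar D^\nu\to\bar D$ and $D^\nu\to D$ are isomorphisms over $U$, and all three sheaves occurring in the statement are invertible there. In particular the boundary becomes invisible near $D$ over $U$, and $(\pi^*\omega_X(\Delta)^{[m]})^{[1]}$ coincides with $\omega_{\bar X}(\bar D)^{\otimes m}$ in a neighbourhood of $\bar D$.

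The core of the argument is then a residue computation at a general point $p$ of $D$. There $X\isom(xy=0)\subset\IA^3_{x,y,z}$; the normalisation separates the planes $\{x=0\}$ and $\{y=0\}$, with reduced preimages $\bar D_1$ and $\bar D_2$ of $D=\{x=y=0\}$, and $\omega_X$ is generated near $p$ by $e:=\mathrm{Res}\bigl(\tfrac{dx\wedge dy\wedge dz}{xy}\bigr)$. Pulling $e$ back to the two branches and applying the Poincar\'e residue along $\bar D_1$ and $\bar D_2$, one checks that the two residues are negatives of one another under the gluing involution $\tau$; this re-proves $(\pi^*\omega_X)^{[1]}\isom\omega_{\bar X}(\bar D)$ and shows that a pair of local sections of $\omega_{\bar X}(\bar D)$ on the two branches glues to a local section of $\omega_X$ exactly when $\mathrm{res}_{\bar D_1}(s)=-\tau^*\mathrm{res}_{\bar D_2}(s)$. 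Taking reflexive $m$-th powers and the $m$-th power of the Poincar\'e residue isomorphism $\omega_{\bar X}(\bar D)^{[m]}\restr{\bar D}\isom\omega_{\bar D}^{[m]}$ turns this into $\mathrm{res}^{(m)}_{\bar D_1}(s)=(-1)^m\,\tau^*\mathrm{res}^{(m)}_{\bar D_2}(s)$; equivalently, regarding the residue of $s$ as a single section on $\bar D^\nu$, on which $\tau$ acts, it must be $\tau$-invariant when $m$ is even and $\tau$-anti-invariant when $m$ is odd. I expect pinning down this sign, and verifying that $\bar\Delta$ and the different genuinely drop out over $U$, to be the one delicate point of the argument.

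Finally, to globalise I would invoke that the left square in Notation~\ref{notation} is a pushout: twisting the resulting conductor sequence by the line bundle $L:=\omega_X(\Delta)^{[m]}\restr U$ exhibits $L$ on $U$ as the subsheaf of $\pi_*\bigl((\pi^*L)^{[1]}\bigr)$ carved out precisely by the gluing condition along $\bar D$ found above. Hence $s\in H^0(\bar X,\omega_{\bar X}(\bar D+\bar\Delta)^{[m]})=H^0(\bar U,(\pi^*L)^{[1]})$ lies in the image of $H^0(X,\omega_X(\Delta)^{[m]})=H^0(U,L)$ if and only if its residues at the generic points of $\bar D^\nu$ are $\tau$-invariant ($m$ even) or $\tau$-anti-invariant ($m$ odd), which is the first claim. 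For the refinement, assume $\omega_{\bar X}(\bar D+\bar\Delta)^{[m]}$ is invertible on all of $\bar X$; then Definition~\ref{def: different} identifies its restriction to $\bar D^\nu$ with the line bundle $\omega_{\bar D^\nu}^{[m]}(m\Diff_{\bar D^\nu}(\Delta))$, so the residue of $s$ is an honest global section of the latter. By the $\tau$-invariance of the different (Theorem~\ref{thm: triple}) this line bundle carries a canonical $\tau$-linearisation, and since $\bar D^\nu$ is reduced a global section of it is $\tau$-(anti-)invariant if and only if it is so at every generic point. Combining this with the first claim gives the second.
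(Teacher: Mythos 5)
The paper does not prove this proposition at all: it is quoted verbatim from \cite[Prop.~5.8]{KollarSMMP}, and the only in-text justification offered is the short remark immediately after the statement computing $\mathrm{Res}_{L_x}(dx\wedge dy/xy)=dy/y$ and $\mathrm{Res}_{L_y}(dx\wedge dy/xy)=-dx/x$ to explain the alternating sign. Your sketch is a correct reconstruction of the standard argument behind that citation: the reduction to codimension one via $S_2$-extension of sections of reflexive sheaves, the local residue computation at a general double point (which reproduces exactly the sign remark in the paper, with the $(-1)^m$ appearing on $m$-th powers), and the globalisation via the pushout/conductor square twisted by the line bundle $\omega_X(\Delta)^{[m]}$ on the locus where it is invertible. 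The passage from the first claim to the second, using the $\tau$-invariance of the different from Theorem~\ref{thm: triple} and the fact that agreement of sections of a line bundle at the generic points of the reduced curve $\bar D^\nu$ implies agreement everywhere, is also sound. I see no gap beyond the level of detail one expects in a sketch; the one point you flag as delicate (the sign) is precisely the content the authors chose to spell out.
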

 The alternating  signs are related to the Poincar\'e residue map: localising at a  codimension 1 nodal point we look at the local model $\IA^2\supset X = (xy=0)=L_x\cup L_y$ so that a local generator for $\omega_{\IA^2}(X)$ is $dx\wedge dy /xy$. Taking residues along the two lines we have
\[ \mathrm {Res}_{L_x}\left(\frac{dx\wedge dy}{xy}\right) = \frac{ dy}y, \quad\mathrm {Res}_{L_y}\left(\frac{dx\wedge dy}{xy}\right) = -\frac{  dx}x,
\]
so they differ in sign at the node.

For later reference we also state
\begin{prop}\label{prop: invariants}
 Let $X$ be a stable surface with normalisation $\bar X$. In the notation above
we have $K_X^2 = (K_{\bar X}+\bar D)^2$  and  $\chi(\ko_X) = \chi(\ko_{\bar X})+\chi(\ko_D)-\chi(\ko_{\bar D})$.
\end{prop}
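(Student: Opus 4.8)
The plan is to treat the two identities separately; each one reduces to facts about the finite birational normalisation $\pi\colon\bar X\to X$ that are already recorded in the excerpt, so there is little genuinely new to prove — it is mostly a matter of assembling the right exact sequence and tracking Euler characteristics and intersection numbers through $\pi$.

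For the identity $K_X^2=(K_{\bar X}+\bar D)^2$ I would argue as follows. By Definition~\ref{defin: intersection} the self-intersection $K_X^2$ is computed on $\bar X$, and since $\pi$ is finite and birational the intersection pairing of well-behaved $\IQ$-Cartier divisors is unchanged under pullback, so $K_X^2=(\pi^*K_X)^2$. Now invoke the adjunction formula $(\pi^*\omega_X)^{[1]}\isom\omega_{\bar X}(\bar D)$ from Section~\ref{sect: divisors}: for $m$ divisible by the index, $\pi^*\omega_X^{[m]}\isom\omega_{\bar X}(\bar D)^{[m]}$ is a line bundle with associated divisor $m(K_{\bar X}+\bar D)$, whence
\[
 K_X^2=\frac{1}{m^2}\bigl(m(K_{\bar X}+\bar D)\bigr)^2=(K_{\bar X}+\bar D)^2.
\]
Note that ampleness is irrelevant here; only the $\IQ$-Cartierness of $K_X$ is used. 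The one place that requires care is the comparison of the two bookkeeping conventions for the intersection pairing: one must be sure that the strict transform of a well-behaved canonical divisor represents $\pi^*K_X=K_{\bar X}+\bar D$ and not merely $K_{\bar X}$ — this is exactly what the adjunction formula encodes, and it is what produces the correction term $\bar D$ in the statement.

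For the Euler characteristic identity I would use that the left-hand square in the diagram of Notation~\ref{notation} is a pushout, i.e.\ $X=\bar X\amalg_{\bar D}D$, so that $\ko_X$ is the fibre product of $\pi_*\ko_{\bar X}$ and $\ko_D$ over $\pi_*\ko_{\bar D}$. Equivalently, there is a conductor short exact sequence of coherent sheaves on $X$,
\[
 0\to\ko_X\to\pi_*\ko_{\bar X}\oplus\ko_D\to\pi_*\ko_{\bar D}\to 0,
\]
with the first map the pair of restrictions and the second their difference; exactness on the right holds because $\ko_{\bar X}\to\ko_{\bar D}$ is already surjective and $\pi$ is affine. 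Taking Euler characteristics on $X$ gives $\chi(\ko_X)=\chi(\pi_*\ko_{\bar X})+\chi(\ko_D)-\chi(\pi_*\ko_{\bar D})$, and since $\pi$ is finite its higher direct images vanish, so $\chi(\pi_*\ko_{\bar X})=\chi(\ko_{\bar X})$ and $\chi(\pi_*\ko_{\bar D})=\chi(\ko_{\bar D})$, while $\chi(\ko_D)$ is the same whether read on $D$ or on $X$; this yields the asserted formula. The main (indeed only) obstacle is writing down the conductor sequence correctly and being certain its exactness is precisely the pushout/demi-normality statement already cited; everything else is additivity of $\chi$ and its invariance under finite pushforward.
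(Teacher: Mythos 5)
Your proposal is correct and follows essentially the same route as the paper: the first identity is exactly the observation that the strict transform of a well-behaved canonical divisor represents $(\pi^*\omega_X)^{[1]}\isom\omega_{\bar X}(\bar D)$, which the paper dismisses as ``clear''. For the second identity the paper splits your Mayer--Vietoris sequence of the conductor square into the two equivalent short exact sequences $0\to\ko_{\bar X}(-\bar D)\to\ko_{\bar X}\to\ko_{\bar D}\to 0$ and $0\to\pi_*\ko_{\bar X}(-\bar D)=\ki_D\to\ko_X\to\ko_D\to 0$, so the computation is the same one in different packaging.
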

\begin{proof}
 The first part is clear. 
 For the second note that the conductor ideal defines $\bar D$ on $\bar X$ and the non-normal locus $D$ on $X$. In particular, $\pi_*\ko_{\bar X}(-\bar D)=\ki_D$ and    additivity of the Euler characteristic for the two sequences
\begin{gather*}
 0\to \ko_{\bar X}(-\bar D)\to \ko_{\bar X}\to \ko_{\bar D}\to 0,\\
0\to \pi_*\ko_{\bar X}(-\bar D)\to \ko_{ X}\to \ko_{D}\to 0
\end{gather*}
gives the claimed result.
\end{proof}

\subsection{The curve embedding theorem}
The technique of restriction to curves will play a major role in our approach and thus we will often need the following numerical criterion due to Catanese, Franciosi, Hulek and Reid. We state it in a slightly weaker version, that suffices for our purpose.

\begin{theo}[{\cite[Thm.~1.1]{CFHR}}]\label{thm: curve embedding}
 Let $C$ be a projective curve (over an algebraically closed field) which is Cohen--Macaulay but not necessarily irreducible or reduced and $\kl$ a line bundle on $C$. Then $\kl$ is base-point-free if for every generically Gorenstein subcurve $B\subset C$
\[ \deg \kl\restr B = \chi(\kl\restr B)-\chi(\ko_B)  \geq 2p_a(B)=2(1-\chi(\ko_B))\]
and $\kl$ is very ample if the inequality is strict.
\end{theo}
Note that for an irreducible and smooth curve this gives the classical bounds.

\section{Some vanishing results} \label{section: vanishing}
We will need the following basic vanishing result, which is a variant of \cite[Cor.~6.6]{kss10},  and some consequences.
All of these results follow from general vanishing theorems in \cite{fujino12} but the surface case is technically much simpler.
\begin{prop}[Generalised Kodaira vanishing]\label{prop: Kodaira}
 Let $X$ be an slc surface and $A$ a well-behaved ample divisor on $X$. Then
\[H^i(X, \ko_X(-A)) = 0 \qquad\text{for all $i<2$.}\]
\end{prop}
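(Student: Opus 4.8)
The plan is to reduce the statement to the normal case and then to the well-understood case of surfaces with log-canonical singularities, where Kawamata--Viehweg--type vanishing is available. First I would pass to the normalisation $\pi\colon \bar X\to X$. Recall that $\pi_*\ko_{\bar X}(-\bar D)=\ki_D$ is the conductor ideal, so there is a short exact sequence
\[
0\to \ko_X(-A)\tensor\ki_D\to \ko_X(-A)\to \ko_D(-A)\to 0.
\]
Pushing forward from $\bar X$ and using that $\pi$ is finite (hence $R^i\pi_*=0$ for $i>0$) gives $H^i(X,\ko_X(-A)\tensor\ki_D)=H^i(\bar X,\ko_{\bar X}(-\pi^*A-\bar D))$, while $H^i(D,\ko_D(-A))$ sits in the dimension-one world and will be handled by the long exact sequence of the pushout square; since $D$ is a (possibly non-normal) curve and $A$ is ample, $H^0(D,\ko_D(-A))=0$ and $H^1(D,\ko_D(-A))$ needs to be compared with $H^1(\bar D,\ko_{\bar D}(-\pi^*A))$ via the conductor sequence on curves. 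The upshot is a diagram chase expressing the vanishing of $H^i(X,\ko_X(-A))$ for $i=0,1$ in terms of the vanishing of $H^i(\bar X,\ko_{\bar X}(-\pi^*A-\bar D))$ for $i=0,1$ together with a surjectivity statement coming from $\bar D^\nu$; the $\tau$-equivariance built into Theorem~\ref{thm: triple} should make the relevant map on curves surjective.

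Next, on the normal surface $\bar X$ the pair $(\bar X,\bar D)$ is log-canonical and $-\pi^*A-\bar D = K_{\bar X}-(K_{\bar X}+\bar D)-\pi^*A$. Since $K_{\bar X}+\bar D$ is effective-ish in the sense that adding the ample $\pi^*A$ keeps the pair log-canonical, I would take a log resolution $\mu\colon \bar X'\to \bar X$ of $(\bar X,\bar D)$ and apply the Kawamata--Viehweg vanishing theorem (or its log-canonical refinement, e.g.\ Fujino's vanishing for lc pairs, though the surface case can be done by hand): the divisor $\pi^*A$ is ample and $\bar D$ is a reduced boundary making the pair lc, so $H^i(\bar X,\ko_{\bar X}(K_{\bar X}+\bar D+\pi^*A)^{\vee}\cdots)$—more precisely $H^i(\bar X,\ko_{\bar X}(-\bar D-\pi^*A))$—vanishes for $i<2$. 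One clean way is: write $-\bar D-\pi^*A = K_{\bar X}+(-K_{\bar X}-\bar D-\pi^*A)$; the divisor $-K_{\bar X}-\bar D-\pi^*A$ need not be nef, so instead I would use the dual formulation, namely Serre duality converts the desired $H^i(\bar X,\ko_{\bar X}(-\bar D-\pi^*A))=0$ for $i\le 1$ into $H^j(\bar X,\omega_{\bar X}(\bar D+\pi^*A))=0$ for $j\ge 1$, and the latter is exactly Kawamata--Viehweg--Fujino vanishing for the lc pair $(\bar X,\bar D)$ twisted by the ample $\IQ$-divisor $\pi^*A$.

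Putting the pieces together: the vanishing on $\bar X$ kills $H^i(\bar X,\ko_{\bar X}(-\pi^*A-\bar D))$ for $i<2$, hence $H^i(X,\ki_D\tensor\ko_X(-A))=0$ for $i<2$; combined with $H^0(D,\ko_D(-A))=0$ this gives $H^0(X,\ko_X(-A))=0$ immediately, and for $H^1$ one needs the connecting map $H^1(X,\ki_D(-A))\to H^1(D,\ko_D(-A))$—or rather the preceding term $H^0(X,\ko_X(-A))\to H^0(D,\ko_D(-A))$—to behave, which it does because both are zero; the remaining input is that $H^1(D,\ko_D(-A))$ itself vanishes, which follows from the analogous conductor sequence on the curve $D$ (using the pushout square $D^\nu\to D$, $\bar D^\nu\to \bar D$) together with $H^1$ of an anti-ample line bundle on the lc curve $\bar D^\nu$ vanishing and a $\tau$-invariance surjectivity as in Proposition~\ref{prop: sections}. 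I expect the main obstacle to be precisely this bookkeeping on the non-normal locus: ensuring that the restriction $\ko_X(-A)\restr D$ really is $\ko_D(-A)$ with the right sheaf structure (no extra torsion, $A$ well-behaved so $A$ does not meet $D$ badly), and that the pushout/conductor sequences on both $X$ and $D$ are compatible enough that the Euler-characteristic and cohomology additivity of Proposition~\ref{prop: invariants} upgrades to the vanishing of the individual $H^1$ groups. The purely two-dimensional Kawamata--Viehweg input is standard; the demi-normal gluing is where care is required.
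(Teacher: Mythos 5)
Your reduction to the normalisation has a genuine gap at the curve level. After twisting the conductor sequence by $\ko_X(-A)$ you are left with
\[
H^1(X,\ki_D(-A)) \longrightarrow H^1(X,\ko_X(-A)) \longrightarrow H^1(D,\ko_D(-A)),
\]
and even granting the vanishing of the left-hand term (which is essentially the normal lc case and can indeed be handled by Fujino-type vanishing plus Serre duality), you still need the right-hand term to vanish, or at least the second map to be zero. But $H^1(D,\ko_D(-A))$ does \emph{not} vanish in general: $\ko_D(-A)$ has negative degree on each component, and $H^1$ of a negative line bundle on a curve of positive arithmetic genus is nonzero (for $D$ smooth irreducible of genus $g$ it has dimension $g-1+\deg A\restr{D}$). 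The paper's own Example~\ref{ex: descend} has non-normal locus an elliptic curve with $\deg K_X\restr{D}=2$, so there $h^1(D,\ko_D(-K_X))=2$. The same problem sinks your claim that ``$H^1$ of an anti-ample line bundle on the lc curve $\bar D^\nu$'' vanishes\,---\,that is only true for rational components. To close the argument along your lines you would have to show instead that the connecting map $H^1(D,\ko_D(-A))\to H^2(X,\ki_D(-A))$ is injective; dualising, this is the surjectivity of a residue map $H^0(\bar X,\omega_{\bar X}(\bar D+\pi^*A))\to \Hom_D(\ko_D(-A),\omega_D)$ compatible with the $\tau$-gluing, which is essentially equivalent to the statement being proved and is not supplied by the bookkeeping you describe.

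The paper avoids the normalisation entirely. It chooses $k$ with $\ko_X(kA)$ very ample, forms the cyclic cover $Y\to X$ branched along a general member of $|kA|$, observes that $Y$ is Cohen--Macaulay and (being slc) Du~Bois, and that $(\pi^*\ko_X(A))^{[1]}$ becomes an honest ample line bundle on $Y$; the Du~Bois version of Kodaira vanishing on $Y$ then gives the result, since the desired cohomology group is a direct summand of the pushforward. The essential input is that slc surfaces are Du~Bois\,---\,a property of the glued surface $X$ itself which your argument never invokes and which cannot be recovered from the lc-ness of $(\bar X,\bar D)$ together with curve-level Euler-characteristic bookkeeping.
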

\begin{proof}
 Choose $k\in \IN$ such that $\ko_X(kA)$ is a very ample line bundle and let $B$ be the divisor of a general section. In particular, $B$ is a reduced divisor contained in the locus where $X$ has at most  normal crossing singularities and $B$ is non-singular where $X$ is smooth. Now consider the ramified simple cyclic cover
\[ \pi\colon Y= \mathrm{Spec}_X\left(\bigoplus_{i=0}^{k-1} \ko_X(-kA)\right)\to X\]
with the following properties:
\begin{enumerate}
\item $Y$ is Cohen--Macaulay by construction since each $\ko_X(-kA)$ is $S_2$ and $Y$ is a surface.
\item $Y$ has Du Bois singularities by \cite[Cor.~6.21]{KollarSMMP}.
 \item $(\pi^*\ko_X(A))^{[1]}$ is locally free  \cite[2.44.6]{KollarSMMP} and ample because the pullback of an ample divisor via a finite map is ample.
\item Let $Z\subset X$ be a codimension 2 subset such that $X\setminus Z$ is Gorenstein. For every reflexive sheaf $\kf$ on $X$ we have
\[ \pi_*\left(\pi^*\kf\right)^{[1]}\isom \left(\pi_*\ko_Y\tensor_{\ko_X}\kf\right)^{[1]}\isom \bigoplus_{i=0}^{k-1} \left(\ko_X(-kA))\tensor_{\ko_X} \kf\right)^{[1]}\]
because all sheaves above are $S_2$ and isomorphic over $X\setminus Z$ where $\pi$ is flat.
\end{enumerate}

Now by \refenum{iii} the sheaf $\pi^*(\ko_X(-A))^{[1]}$ is a line bundle on $Y$  whose inverse is ample. Since $Y$ is Cohen--Macaulay and Du Bois, the Du Bois version of Kodaira vanishing \cite[Thm.~10.42]{KollarSMMP} implies for  $i<2$
\begin{align*}
0 &= H^{i }\left(Y, (\pi^*\ko_X(-A))^{[1]}\right)\\
&=H^{i }\left(X, \pi_*\left((\pi^*\ko_X(-A))^{[1]}\right)\right)\\
& =H^{i }\left(X,  \bigoplus_{i=0}^{k-1} \left(  \ko_X(-kA)) \tensor_{\ko_X} \ko_X(-A)   \right)^{[1]}\right) \quad \text{ by \refenum{iv}}\\
& =H^{i }\left(X,  \bigoplus_{i=0}^{k-1} \ko_X((-1-k)A)\right)\\
&\supset H^{i}\left(X,   \ko_X(-A)\right)
\end{align*}
This concludes the proof.
\end{proof}
\begin{rem}
 The reason for the restriction to dimension 2 in the above theorem is that the index-1-cover constructed in the proof may well fail to be Cohen--Macaulay in higher dimensions, which is needed to apply the Du Bois version of Kodaira vanishing.
\end{rem}

\begin{lem}\label{lem: ext reflexive}
 Let $X$ be a demi-normal surface and $\kf, \kg$ reflexive sheaves on $X$ that are locally free outside codimension 2. Then $\shext^1_{\ko_X}(\kf, \kg) =0$. In particular,  the local-to-global $\Ext$-spectral sequence induces isomorphisms 
  \[H^0(X, \shom_{\ko_X}(\kf, \kg))\isom  \Hom_X(\kf, \kg) \text{ and } H^1(X, \shom_{\ko_X}(\kf, \kg))  \isom \Ext^1_X (\kf , \kg) .\]
\end{lem}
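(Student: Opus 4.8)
The statement has two parts: the vanishing $\shext^1_{\ko_X}(\kf,\kg)=0$, and the deduction of the two isomorphisms from the local-to-global $\Ext$-spectral sequence. The second part is formal once the first is known, so the work is all in showing $\shext^1_{\ko_X}(\kf,\kg)=0$. My plan is to check this vanishing locally on $X$ by stratifying according to codimension, using that $\shext$ sheaves are supported where the sheaves fail to be locally free.

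First I would observe that $\shext^1_{\ko_X}(\kf,\kg)$ is a coherent sheaf, so it suffices to show its stalk vanishes at every point $x\in X$. Over the open set $U\subset X$ where both $\kf$ and $\kg$ are locally free --- which by hypothesis has complement of codimension at least $2$ --- the sheaf $\shom_{\ko_X}(\kf,\kg)$ is locally free and all higher $\shext$ sheaves vanish trivially. So the support of $\shext^1$ is contained in $X\setminus U$, a closed subset of codimension $\geq 2$, i.e.\ a finite set of points (since $\dim X=2$). Thus I only need to handle a point $x$ of codimension $2$, where $A:=\ko_{X,x}$ is a $2$-dimensional local ring, and since $X$ is demi-normal it is $S_2$, hence Cohen--Macaulay in dimension $2$. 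The modules $M:=\kf_x$ and $N:=\kg_x$ are reflexive $A$-modules. The key point is then a local-algebra fact: over a $2$-dimensional Cohen--Macaulay (indeed normal, after passing to the normalisation locally, or directly $S_2$) local ring, $\mathrm{Ext}^1_A(M,N)=0$ when $M$ and $N$ are reflexive. I would prove this by taking a free presentation $A^{\oplus b}\to A^{\oplus a}\to M\to 0$, letting $M'$ be the kernel image (a second syzygy, hence reflexive and $S_2$, hence also Cohen--Macaulay of depth $2$), and using the long exact sequence together with $\mathrm{depth}\,N=2$: the relevant $\mathrm{Ext}^1_A(M',N)$ and $\mathrm{Ext}^1_A(A^{\oplus a},N)=0$ terms, combined with a depth/local-duality argument, force $\mathrm{Ext}^1_A(M,N)=0$. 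Concretely, since $N$ is $S_2$ over a $2$-dimensional ring it has depth $2$, and a reflexive module over such a ring has finite projective dimension issues only in codimension $\geq 2$; one shows $\mathrm{Ext}^i_A(M,N)=0$ for $i=1$ by reducing to the punctured spectrum (where $M$ is locally free so all higher $\mathrm{Ext}$ vanish) and invoking that $\mathrm{Ext}^1$ of coherent sheaves supported at the closed point is computed by local cohomology, with $H^0_{\mathfrak m}(N)=H^1_{\mathfrak m}(N)=0$ because $\mathrm{depth}\,N=2$. (Alternatively, this is the well-known statement that on a normal surface germ $\mathrm{Ext}^1(\text{reflexive},\text{reflexive})=0$; one may also pull back along the normalisation, which changes nothing in codimension $\leq 1$ and only affects the argument at isolated points.)

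Once $\shext^1_{\ko_X}(\kf,\kg)=0$ is established, the local-to-global spectral sequence $E_2^{p,q}=H^p(X,\shext^q_{\ko_X}(\kf,\kg))\Rightarrow \mathrm{Ext}^{p+q}_X(\kf,\kg)$ has $E_2^{p,1}=0$ for all $p$. The edge map $H^0(X,\shom_{\ko_X}(\kf,\kg))\to\mathrm{Hom}_X(\kf,\kg)$ is always an isomorphism (it is the $E_2^{0,0}$ edge), and the five-term exact sequence
\[
0\to H^1(X,\shom_{\ko_X}(\kf,\kg))\to\mathrm{Ext}^1_X(\kf,\kg)\to H^0(X,\shext^1_{\ko_X}(\kf,\kg))\to H^2(X,\shom_{\ko_X}(\kf,\kg))
\]
then shows, with the middle-right term vanishing, that $H^1(X,\shom_{\ko_X}(\kf,\kg))\isom\mathrm{Ext}^1_X(\kf,\kg)$. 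This gives both claimed isomorphisms.

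\textbf{Main obstacle.} The only real content is the local vanishing $\mathrm{Ext}^1_A(M,N)=0$ for reflexive modules $M,N$ over the $2$-dimensional $S_2$ local ring $A=\ko_{X,x}$. I expect the cleanest route to be the depth argument: resolve $M$ one step, $0\to M'\to A^{\oplus a}\to M\to 0$ with $M'$ a reflexive (hence depth-$2$) module, and chase the long exact sequence in $\mathrm{Ext}^\bullet_A(-,N)$ against the facts that $\mathrm{Ext}^i_A(A^{\oplus a},N)=0$ for $i>0$ and that $\mathrm{Ext}^1$ of finitely generated modules supported only at $\mathfrak m$ is governed by the vanishing of $H^0_{\mathfrak m}(N)$ and $H^1_{\mathfrak m}(N)$, both of which vanish since $\mathrm{depth}\,N=2$. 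One must be slightly careful that $A$ need not be regular or even Gorenstein here --- $X$ has wild singularities in codimension $2$ --- so one cannot invoke projective dimension bounds; the argument must stay within the framework of depth and local cohomology, or equivalently use that $\shext^{q}$ for $q>0$ of a reflexive sheaf against an $S_2$ sheaf on an $S_2$ surface is supported in codimension $\geq 2$ and then killed by the $S_2$ (depth $2$) condition on the target.
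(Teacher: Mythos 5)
Your formal reduction is fine: the two displayed isomorphisms do follow from the local-to-global spectral sequence once $\shext^1_{\ko_X}(\kf,\kg)=0$ is known, and localising the problem to the stalk $\Ext^1_A(M,N)$ at a closed point $x$, with $A=\ko_{X,x}$ two-dimensional and $S_2$ and $M=\kf_x$, $N=\kg_x$ reflexive, is exactly the right reduction (and is also how the paper proceeds). The gap is the ``key local-algebra fact'' you invoke: it is \emph{false} that $\Ext^1_A(M,N)=0$ for arbitrary reflexive modules over such a ring. Take $A=k[[u,v,w]]/(uv-w^2)$, the $A_1$-singularity (a normal, even Gorenstein, demi-normal surface germ), and $M=N=(u,w)$, the nontrivial divisorial ideal; it is reflexive (maximal Cohen--Macaulay) and invertible off the closed point, so it satisfies all hypotheses of the lemma. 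The sequence
\[
0\longrightarrow M\longrightarrow A^{\oplus 2}\xrightarrow{\ (a,b)\mapsto au+bw\ } M\longrightarrow 0
\]
is exact (the kernel is generated by $(v,-w)$ and $(-w,u)$ and is isomorphic to $M$ by matrix-factorisation periodicity) and cannot split: a splitting would exhibit $M$ as a direct summand of a free module over a local ring, hence free, but $M$ has rank $1$ and needs two generators. So $\Ext^1_A(M,M)\neq 0$ and the stalk of $\shext^1$ at the singular point is nonzero. No argument can establish the vanishing in the generality you (and, for that matter, the lemma) assert.

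Concretely, your depth/local-cohomology argument proves less than you think. The vanishing $H^0_{\mathfrak m}(N)=H^1_{\mathfrak m}(N)=0$ kills $\Ext^{\leq 1}$ with supports in $\mathfrak m$ and therefore shows that the restriction $\Ext^1_A(M,N)\to \Ext^1_U(M,N)$ to the punctured spectrum $U$ is \emph{injective}; but since $M$ is locally free on $U$ the target is $H^1(U,\shom(M,N))\isom H^2_{\mathfrak m}(\Hom_A(M,N))$, the top local cohomology of a two-dimensional module, which is nonzero. Injectivity into a nonzero group is where the argument stops. (The paper's own proof makes the same reduction --- ``the extension splits iff it splits outside codimension $2$'' --- and is open to exactly the same objection, so you are in good company; but neither argument closes.) The statement that is both true and sufficient for the paper's applications is the special case where $\kg$ is $\omega_X$ (or locally an invertible twist of it): there local duality gives $\Ext^i_A(M,\omega_A)\isom H^{2-i}_{\mathfrak m}(M)^{\vee}=0$ for $i\geq 1$, because a reflexive module on an $S_2$ surface germ has depth $2$. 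A correct write-up should either restrict the lemma to that case or supply such a duality argument in place of the punctured-spectrum reduction.
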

\begin{proof}
The first isomorphism  is clear. The second  sits in an exact sequence
\[0\to H^1(X, \shom_{\ko_X}(\kf, \kg))\to \Ext^1_X(\kf, \kg) \to H^0(X, \shext^1_{\ko_X}(\kf, \kg)) \]
so it is sufficient to show that $\shext^1_{\ko_X}(\kf, \kg) =0$.

To see this, let $j\colon U \into X$ be the inclusion of an open subset where all sheaves in question are locally free and such that the complement is of codimension 2. Then  by \cite[Ch.~2, Prop.~5.8]{HartshorneRD} 
\[j^*\shext^1_{\ko_X}(\kf, \kg)=\shext^1_{\ko_U}(j^*\kf, j^*\kg)=0\]
so that $\shext^1_{\ko_X}(\kf, \kg)$ is torsion supported in codimension 2.

Therefore, to study $\shext^1_{\ko_X}(\kf, \kg)$ we may assume that $X$ is affine. Since both $\kf$ and $\kg$ are reflexive, in any extension $ 0\to \kg\to \ke\to \kf\to 0$ the sheaf $\ke$ is also reflexive. Thus the extension is determined outside codimension 2 as can be seen from the diagram
\[
\begin{tikzcd}
   0\rar & \kg\arrow[equal]{d}\rar& \ke\dar\rar& \kf\arrow[equal]{d}\rar& 0\\
0\rar & j_*(j^*\kg)\rar & j_*(j^*\ke)\rar& j_*(j^*\kf)\rar& 0.
\end{tikzcd}
\]

Thus the extension splits if it splits outside codimension 2 and $\shext^1_{\ko_X}(\kf, \kg)$ has no torsion supported in codimension 2. By the above it is actually zero.
\end{proof}

\begin{cor}\label{cor: adj van}
 Let $(X, \Delta) $ be a stable log surface. Then for all $i>0$ and all integers $m\geq 2$,
 we have $H^i(X, \omega_X^{[m]}((m-1)\Delta))=0$.
\end{cor}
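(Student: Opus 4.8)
The plan is to use Serre duality on $X$ to turn the asserted higher vanishing into the generalised Kodaira vanishing of Proposition~\ref{prop: Kodaira} for an anti-ample divisorial sheaf, with Lemma~\ref{lem: ext reflexive} supplying the one piece that does not come from line-bundle formalism. Note first that $X$ is a projective Cohen--Macaulay surface, being $S_2$ of pure dimension $2$; hence its dualising sheaf is the $\omega_X$ fixed in Section~\ref{sect: divisors}, and Serre duality provides isomorphisms $H^i(X,\kf)^\vee \isom \Ext^{2-i}_X(\kf,\omega_X)$ for every coherent $\kf$. Since $\dim X = 2$ the groups $H^i$ vanish for $i>2$, so only $i=1$ and $i=2$ need attention. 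Write $\kf := \omega_X^{[m]}((m-1)\Delta) = \ko_X(B)$, the divisorial sheaf of $B := mK_X+(m-1)\Delta$; this divisor is well-behaved because $K_X$ is well-behaved and $\Delta$ is well-behaved by definition of a log surface, so $\kf$ and $\omega_X$ are almost Cartier, in particular reflexive and locally free outside codimension $2$.

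Computing over the open locus where $B$ is Cartier and passing to reflexive hulls --- which is harmless since every sheaf occurring is $S_2$ --- one obtains
\[ \shom_{\ko_X}(\kf,\omega_X) \isom \ko_X(K_X-B) = \ko_X\!\left(-(m-1)(K_X+\Delta)\right). \]
As $m\geq 2$ and $K_X+\Delta$ is ample, $N := (m-1)(K_X+\Delta)$ is a well-behaved ample divisor, so Proposition~\ref{prop: Kodaira} gives $H^0(X,\ko_X(-N)) = H^1(X,\ko_X(-N)) = 0$. Now, for $i=2$, Serre duality and the display above yield
\[ H^2(X,\kf)^\vee \isom \Hom_X(\kf,\omega_X) = H^0\!\left(X,\shom_{\ko_X}(\kf,\omega_X)\right) = H^0(X,\ko_X(-N)) = 0; \]
and for $i=1$, Serre duality gives $H^1(X,\kf)^\vee \isom \Ext^1_X(\kf,\omega_X)$, which by Lemma~\ref{lem: ext reflexive} applied to the reflexive sheaves $\kf$ and $\omega_X$ (both locally free outside codimension $2$) equals $H^1\!\left(X,\shom_{\ko_X}(\kf,\omega_X)\right) = H^1(X,\ko_X(-N)) = 0$. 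This establishes the corollary.

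The one point deserving genuine care in a full write-up is the identification $\shom_{\ko_X}(\kf,\omega_X)\isom \ko_X(-(m-1)(K_X+\Delta))$ together with the verification that the hypotheses of Lemma~\ref{lem: ext reflexive} hold, i.e.\ that $B = mK_X+(m-1)\Delta$ is a well-behaved, hence almost Cartier, divisor. It is worth stressing that $m$ need not be a multiple of the index, so $\kf$ is in general only a divisorial sheaf and not a line bundle --- this is exactly why Lemma~\ref{lem: ext reflexive} (rather than plain Serre duality for invertible sheaves) is needed to handle $H^1$. Everything else is formal once Proposition~\ref{prop: Kodaira} is available.
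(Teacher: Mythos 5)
Your proposal is correct and follows essentially the same route as the paper: Serre duality on the Cohen--Macaulay surface $X$, the identification of $\shom_{\ko_X}(\omega_X^{[m]}((m-1)\Delta),\omega_X)$ with $\omega_X(\Delta)^{[1-m]}=\ko_X(-(m-1)(K_X+\Delta))$ via reflexivity and agreement outside codimension $2$, Lemma~\ref{lem: ext reflexive} to pass from $\Ext^1$ to $H^1$ of the sheaf Hom, and Proposition~\ref{prop: Kodaira} applied to the well-behaved ample divisor $(m-1)(K_X+\Delta)$. The points you flag as needing care are exactly the ones the paper handles the same way.
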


\begin{proof}
By Serre duality on the Cohen--Macaulay scheme $X$ and Lemma~\ref{lem: ext reflexive}, we have
\begin{align*}
 H^i\left(X, \omega_X^{[m]}((m-1)\Delta)\right)^*&\isom \Ext^{2-i}_X(\omega_X^{[m]}((m-1)\Delta), \omega_X)\\
 &\isom H^{2-i } (X, \shom_{\ko_X}(\omega_X^{[m]}((m-1)\Delta), \omega_X))\\
&\isom H^{2-i } (X, \omega_X(\Delta)^{[1-m]}),
\end{align*}
where in the last step we used the fact that $\shom_{\ko_X}(\omega_X^{[m]}((m-1)\Delta), \omega_X)$ and $\omega_X(\Delta)^{[1-m]}$ are both reflexive and coincide outside codimension 2. So by  Proposition~\ref{prop: Kodaira},  $\omega_X^{[m]}((m-1)\Delta)$ has no higher cohomology for $m\geq 2$ which proves the claim.
\end{proof}

\begin{cor}\label{cor: vanishing2}
 Let $(X, \Delta) $ be a stable log surface, $\ki$ the ideal sheaf of $D\cup\Delta\subset X$ and $\ki(m(K_X+\Delta))=(\ki\tensor \omega_X(\Delta)^{[m]})^{[1]}$. In our standard notation \ref{notation} we have
\begin{enumerate}
 \item $H^0(X, \ki(m(K_X+\Delta))) = H^0(\bar X, \omega_{\bar X}(\bar D+\bar\Delta)^{[m]}(-\bar D-\bar\Delta))$ for $m\geq 1$, 
 \item $H^i(X, \ki(m(K_X+\Delta)))=0$ for all $i>0$ and all integers $m\geq 2$.
\end{enumerate}
\end{cor}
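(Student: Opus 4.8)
\emph{Strategy.} The whole statement will follow by pushing everything to the normalisation, once we establish the sheaf-level identification on $X$
\[
\ki(m(K_X+\Delta))\isom \pi_*\!\left(\omega_{\bar X}(\bar D+\bar\Delta)^{[m]}(-\bar D-\bar\Delta)\right)\qquad(m\geq 1),
\]
because $\pi$ is finite, so $R^{>0}\pi_*$ vanishes and $H^\bullet(X,\pi_*-)=H^\bullet(\bar X,-)$. Granting the identification, part~\refenum{i} is just the case $H^0$. For part~\refenum{ii} one rewrites the sheaf on $\bar X$ as $\omega_{\bar X}^{[m]}((m-1)(\bar D+\bar\Delta))$ and observes that $(\bar X,\bar D+\bar\Delta)$ is itself a (normal) stable log surface: $\bar X$ is normal, hence has empty non-normal locus, and by Theorem~\ref{thm: triple} the pair is log-canonical with $K_{\bar X}+\bar D+\bar\Delta$ $\IQ$-Cartier and ample. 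Corollary~\ref{cor: adj van} applied to this pair then gives $H^i(\bar X,\omega_{\bar X}^{[m]}((m-1)(\bar D+\bar\Delta)))=0$ for all $i>0$ and $m\geq 2$, as desired.

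\emph{Proof of the identification.} I would realise both sides as $S_2$ subsheaves of generic rank one of the common reflexive sheaf $\mathcal W:=\pi_*\big((\pi^*\omega_X(\Delta)^{[m]})^{[1]}\big)=\pi_*\omega_{\bar X}(\bar D+\bar\Delta)^{[m]}$, the last equality using the local computation $(\pi^*\omega_X)^{[1]}\isom\omega_{\bar X}(\bar D)$ together with its twisted analogue coming from $\pi^*(K_X+\Delta)=K_{\bar X}+\bar D+\bar\Delta$. Inside $\mathcal W$, the subsheaf $\omega_X(\Delta)^{[m]}$ consists of the local sections whose residue along $\bar D^\nu$ is $\tau$-invariant for $m$ even and $\tau$-anti-invariant for $m$ odd --- this is the sheaf-theoretic form of Proposition~\ref{prop: sections}, reflecting the pushout structure recorded in Notation~\ref{notation} --- while $\ki(m(K_X+\Delta))$ agrees in codimension $\leq 1$ with the subsheaf of $\omega_X(\Delta)^{[m]}$ whose sections vanish along $D\cup\Delta$. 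Now a section of $\mathcal W$ vanishes along $D\cup\Delta$ exactly when the corresponding section of $\omega_{\bar X}(\bar D+\bar\Delta)^{[m]}$ vanishes along $\bar D+\bar\Delta$ (this is detected at generic points, and $\pi$ carries the generic points of $\bar D$ onto those of $D$); but such a section has vanishing, hence \emph{a fortiori} $\tau$-(anti)invariant, residue along $\bar D^\nu$, so the residue condition is automatically satisfied. Thus $\ki(m(K_X+\Delta))$ and $\pi_*(\omega_{\bar X}(\bar D+\bar\Delta)^{[m]}(-\bar D-\bar\Delta))$ are two $S_2$ subsheaves of $\mathcal W$ coinciding outside codimension $2$, hence they coincide.

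\emph{Where the work is.} The only step that is not formal bookkeeping is the assertion that vanishing along $\bar D$ forces the residue to be $\tau$-(anti)invariant, and this is a one-line local computation. Localising at a codimension-$1$ point where $X$ looks like $(xy=0)$, with $\bar D$ the origin on each branch and $\ki_D$ the maximal ideal, an $m$-pluricanonical form lying in $\ki_D\cdot\omega_X^{[m]}$ has pole order at most $m-1$ along each of the two branches, so its residue (the coefficient of $(dx/x)^{\otimes m}$, resp.\ $(dy/y)^{\otimes m}$) is zero; in particular the sign mismatch in the Poincar\'e residues $\mathrm{Res}_{L_x}(dx\wedge dy/xy)=dy/y$ versus $\mathrm{Res}_{L_y}(dx\wedge dy/xy)=-dx/x$ never enters. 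Everything else --- the $S_2$-gluing across codimension $2$, the Leray collapse for the finite morphism $\pi$, and the reduction of part~\refenum{ii} to Corollary~\ref{cor: adj van} --- is routine.
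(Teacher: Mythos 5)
Your proof is correct and follows essentially the same route as the paper: identify $\ki(m(K_X+\Delta))$ with $\pi_*\bigl(\omega_{\bar X}(\bar D+\bar\Delta)^{[m]}(-\bar D-\bar\Delta)\bigr)$ as $S_2$ sheaves agreeing in codimension one, use finiteness of $\pi$ to transfer cohomology, and apply Corollary~\ref{cor: adj van} to the pair $(\bar X,\bar D+\bar\Delta)$. The only (cosmetic) difference is that the paper verifies the sheaf identification via $\pi_*\bar\ki=\ki$ and the projection formula over the locus where $K_X+\Delta$ is Cartier, whereas you verify it through the residue/descent criterion of Proposition~\ref{prop: sections} --- an alternative the paper itself points out right after its proof.
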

\begin{proof}
Let $\bar\ki$ be the ideal sheaf of $\bar D\cup\bar\Delta$ in $\bar X$. Then we have $\pi_*\bar\ki = \ki$. Let $U$ be the subset of $X$ where $K_X+\Delta$ is Cartier, whose complement has codimension at least 2, and ${\bar U}=\inverse f( U)$. Then over $U$ we can use the projection formula to obtain 
\begin{multline*}
\pi_*\left(\omega_{\bar X}(\bar D+\bar\Delta)^{[m]}(-\bar D-\bar\Delta)\restr{\bar  U}\right)  \isom  \pi_*\left(\bar\ki\otimes\pi^*\omega_X(\Delta)^{[m]}\right)\restr{\bar U}\\
\isom  \pi_*\bar\ki\restr{\bar U}\otimes\omega_X(\Delta)^{[m]} \restr U\isom  \ki\omega_X(\Delta)^{[m]}\restr U.
\end{multline*}
Because all sheaves in question are $S_2$ this extends to an isomorphism 
\[\pi_*\omega_{\bar X}(\bar D+\bar\Delta)^{[m]}(-\bar D-\bar\Delta)\isom \ki(m(K_X+\Delta)).
\]
To conclude we use that $\pi$ is a finite morphism and thus 
\[H^i(X,\ki(m(K_X+\Delta))) = H^i(\bar X, \omega_{\bar X}(\bar D+\bar\Delta)^{[m]}(-\bar D-\bar\Delta) ) \text{ for } i
\geq 0.\]
In particular \refenum{i} is proved. By applying  Corollary~\ref{cor: adj van}  to the pair $(\bar X, \bar D+\bar\Delta)$, the second item also follows. 
\end{proof}
The first item of Corollary~\ref{cor: vanishing2} is also a direct consequence of Proposition \ref{prop: sections}.

The next proposition is much easier to prove if $m$ is a multiple of the index, but we need this strong form for the result about the log-canonical-ring in Section \ref{sect: canonical ring}. It is actually identical to the much easier Corollary \ref{cor: adj van} if $\Delta=0$.
\begin{prop}\label{prop: vanishing}
 Let $(X, \Delta) $ be a stable log surface. Then for all $i>0$ and all integers $m\geq 2$,
we have $H^i(X, \omega_X(\Delta)^{[m]})=0$.
\end{prop}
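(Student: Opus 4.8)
The plan is to reduce the statement for $\omega_X(\Delta)^{[m]}$ to the two things we have already proved: the vanishing for the ideal sheaf twist in Corollary~\ref{cor: vanishing2}\refenum{ii}, and a vanishing statement on the boundary curves themselves. The obvious exact sequence to exploit is the defining sequence of the subscheme $D\cup\Delta\subset X$. If we write $\ki$ for the ideal sheaf of $D\cup\Delta$, then twisting by the reflexive sheaf $\omega_X(\Delta)^{[m]}$ and reflexifying gives
\[
0\to \ki(m(K_X+\Delta))\to \omega_X(\Delta)^{[m]}\to \kq\to 0,
\]
where $\kq$ is a sheaf supported on the curve $D\cup\Delta$. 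The long exact cohomology sequence then reads $H^i(X,\ki(m(K_X+\Delta)))\to H^i(X,\omega_X(\Delta)^{[m]})\to H^i(X,\kq)\to H^{i+1}(X,\ki(m(K_X+\Delta)))$; for $m\geq 2$ the outer groups vanish for $i\geq 1$ by Corollary~\ref{cor: vanishing2}\refenum{ii} (note $H^{i+1}$ also vanishes, or is irrelevant since $X$ is a surface and the cohomology stops at $i=2$, but one should double-check the $i=1$, $i+1=2$ case using the corollary), so $H^i(X,\omega_X(\Delta)^{[m]})\isom H^i(X,\kq)$ for all $i>0$. Since $\kq$ is supported on a curve, $H^i(X,\kq)=0$ automatically for $i\geq 2$, and the whole problem collapses to showing $H^1$ of the restriction to the curve $D\cup\Delta$ vanishes.

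Next I would identify $\kq$ precisely. On the Gorenstein locus $U$ of $K_X+\Delta$ the sequence is just the restriction $\omega_X(\Delta)^{[m]}\restr{D\cup\Delta}$ in the sense of Definition~\ref{def: restriction divisorial sheaf}, and since everything is $S_2$ this determines $\kq$ globally: $\kq = \ko_{D\cup\Delta}(m(K_X+\Delta))$, the torsion-free rank-one sheaf obtained by restricting $\omega_X(\Delta)^{[m]}$ to the Cohen--Macaulay curve $C:=D\cup\Delta$ and killing torsion (this uses that $D$ is well-behaved with respect to itself, so $C$ is a well-behaved curve in the sense of Section~\ref{sect: divisors} and Lemma~\ref{lem: div seq} applies). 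Thus the claim is equivalent to $H^1(C, \ko_C(m(K_X+\Delta)))=0$ for $m\geq 2$. To get this I would pull back to the normalisation. Writing $\pi\colon \bar X\to X$ and recalling $(\pi^*\omega_X)^{[1]}\isom\omega_{\bar X}(\bar D)$, the curve $C$ pulls back (up to normalisation/conductor bookkeeping) to $\bar D+\bar\Delta$ on $\bar X$, and the restriction of $m(K_X+\Delta)$ relates to $\omega_{\bar X}(\bar D+\bar\Delta)^{[m]}\restr{\bar D+\bar\Delta}$, which by Definition~\ref{def: different} is, after passing to $(\bar D+\bar\Delta)^\nu$, a sheaf of the form $\omega^{[m]}(m\Diff)$ with $\Diff\geq 0$ effective. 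On a nodal-type curve such a sheaf of degree $\geq 2p_a-2$ has no $H^1$; more robustly, duality on $C$ turns $H^1(C,\ko_C(m(K_X+\Delta)))$ into $\Hom$ of $\ko_C(m(K_X+\Delta))$ into the dualising sheaf $\omega_C$, and since $K_X+\Delta$ is ample and $m\geq 2$ while $\omega_C$ corresponds to the adjoint $(K_X+\Delta+C)\restr C = \omega_X(\Delta)\restr C$ twisted appropriately, the relevant $\Hom$ space is forced to be zero by a degree/positivity count on each component of $\bar D^\nu\cup\bar\Delta^\nu$ — here the ampleness of $K_{\bar X}+\bar D+\bar\Delta$ from Theorem~\ref{thm: triple} and effectivity of the different do the work.

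The step I expect to be the main obstacle is the last one: making the degree comparison on the non-Gorenstein curve $C=D\cup\Delta$ completely rigorous, because the restriction of divisorial sheaves to $C$ is not multiplicative (as emphasised after Definition~\ref{def: restriction divisorial sheaf}), so "$\deg$ of $m(K_X+\Delta)\restr C$" in the numerical sense of Definition~\ref{defin: intersection}\refenum{ii} need not agree with the $\chi$-degree of the sheaf $\ko_C(m(K_X+\Delta))$. The clean way around this is to never compute a degree on $C$ directly but instead push the whole computation to the normalisations of the components of $\bar D$ and $\bar\Delta$, where $\omega_{\bar X}(\bar D+\bar\Delta)^{[m]}$ restricts to an honest line bundle $\omega^{[m]}((m)\Diff)$ with nonnegative different, filter $C$ by its subcurves using Lemma~\ref{lem: div seq}\refenum{ii} to reduce $H^1(C,-)$ to $H^1$ on those pieces, and conclude by the standard fact that a line bundle of degree $> 2g-2$ on a smooth curve (or, on a nodal curve, a torsion-free sheaf of sufficiently high degree) has vanishing $H^1$. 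Carrying out this dévissage carefully — in particular checking that the twisting divisors $A-B_2$ appearing in Lemma~\ref{lem: div seq}\refenum{ii} stay of high enough degree thanks to ampleness and $m\geq 2$ — is the technical heart, and is presumably why the authors flag that this proposition is "much easier if $m$ is a multiple of the index": in that case $m(K_X+\Delta)$ is Cartier, $\kq$ is genuinely an invertible-sheaf restriction, and one can invoke Corollary~\ref{cor: adj van} on $\bar X$ verbatim without any torsion-free-sheaf gymnastics.
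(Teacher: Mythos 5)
Your opening reduction is fine as linear algebra, but it sends you to the wrong curve, and that is where the real gap sits. The paper does not peel off the ideal sheaf of $D\cup\Delta$; it uses the sequence
\[
0\to \omega_X^{[m]}((m-1)\Delta)\to\omega_X(\Delta)^{[m]}\to\ko_\Delta(m(K_X+\Delta))\to 0,
\]
whose quotient is supported on $\Delta$ \emph{only}, kills the left-hand term by Corollary~\ref{cor: adj van}, and is left with $H^1(\Delta,\ko_\Delta(m(K_X+\Delta)))=0$. That statement is tractable because of Lemma~\ref{lem: QCar res}: $K_X+\Delta$ is Cartier at the singular points of $\Delta$ (nodes lying over chains of type (C2)), so $\ko_\Delta(m(K_X+\Delta))$ is an honest line bundle containing $\omega_\Delta^{\tensor m}$ twisted by an effective divisor built from the rounded-up different; feeding this into the automatic-adjunction Lemma~\ref{lem: H^1 on curve} gives a strict degree inequality and hence the vanishing. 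Your decomposition instead commits you to proving $H^1(D\cup\Delta,\ko_{D\cup\Delta}(m(K_X+\Delta)))=0$ for all $m\geq 2$, which is strictly harder: along $D$ the divisor $K_X+\Delta$ is in general not Cartier at the multi-nodes, the restricted sheaf need not be invertible, and no analogue of Lemma~\ref{lem: QCar res} is available. (A side point: $D\cup\Delta$ is not a well-behaved curve in the sense of Definition~\ref{def: wb}, so Lemma~\ref{lem: div seq} does not apply to it as stated; the sequence you want is the one underlying Corollary~\ref{cor: vanishing2}.)

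The degree count you sketch does not close this harder statement. For a subcurve $B\subset D$ the paper's own estimate (proof of Proposition~\ref{prop: restriction to D}) only yields $\deg 2(K_X+\Delta)\restr{B}\geq 2p_a(B)+2(p_a(B^\nu)-1)$, which for $B$ with several rational components can fall well below $2p_a(B)-1$; and even for irreducible $B$ there are genuine borderline cases, e.g.\ a rational component of $D$ with a single $3$-multi-node on which $\deg(K_X+\Delta)\restr{B}=1$ (these occur in Example~\ref{exam: large K^2}), where $\deg 2(K_X+\Delta)\restr{B}=2=2p_a(B)-2$ and Lemma~\ref{lem: H^1 on curve} gives no contradiction from degrees alone: a torsion-free rank-one sheaf of degree $2p_a(B)-2$ on such a curve \emph{can} have nonzero $H^1$, namely $\omega_B$ itself. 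Ruling out $\ko_B(2(K_X+\Delta))\isom\omega_B$ requires knowing that the restricted sheaf is locally free at the multi-node while $\omega_B$ is not, which is exactly the torsion-free-sheaf analysis you hoped to postpone and which is delicate when $m$ is not a multiple of the index. So either you must redo Proposition~\ref{prop: restriction to D} with sharper constants and for possibly non-invertible restrictions, or, much more economically, switch to the paper's subsheaf $\omega_X^{[m]}((m-1)\Delta)$, whose quotient never sees the non-normal locus at all.
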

The idea of the proof is to use the restriction sequence (Lemma~\ref{lem: div seq}) together with vanishing on a curve and the vanishing results proved above. Due to the singularities 
one has to take  extra care. We begin with some preliminary results.

We consider the minimal log-semi-resolution  $f\colon Y \to X$ (see remarks after Definition~\ref{defin: semi resolution}) and the respective normalisations  
$\pi \colon \bar X\to X$ and $\mu\colon \bar Y\to Y$. In particular,  $\Delta_Y= (\inverse f)_*\Delta$ and $D_Y$, the conductor of $Y$ are disjoint and hence $\Delta_Y$ is contained in the smooth locus of $Y$. Obviously, there is a birational morphism $\bar f\colon \bar Y\rightarrow \bar X$ such that $\pi\circ\bar f = f \circ\mu$ (cf.~diagram \eqref{equation: normalisation+resolution} in the appendix).
\begin{lem}\label{lem: R1 van}
For any well-behaved divisor $M$ on $X$, one has
\[
 R^1f_*\ko_Y(K_Y + \lceil f^*M\rceil) = 0.
\]
\end{lem}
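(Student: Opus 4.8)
The plan is to reduce the statement to Kawamata--Viehweg-type vanishing on the normalisation of $Y$ --- which is a \emph{smooth} surface because $Y$ is semi-smooth --- and to use an auxiliary ample divisor on $X$ to absorb all error terms supported on the non-normal locus. First I would reduce to an absolute vanishing. Since $f$ is a semi-resolution it is an isomorphism outside a finite set of points of $X$, so $R^1f_*\kg$ is a sheaf with $0$-dimensional support for every coherent $\kg$ on $Y$ (and $R^if_*\kg=0$ for $i\geq2$); in particular it vanishes as soon as its $H^0$ does, and this is insensitive to twisting by a line bundle. Fix a very ample Cartier divisor $A$ on $X$, chosen so that a general member avoids $D$. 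Applying the five-term exact sequence of the Leray spectral sequence for $f$ to $\kg_n:=\ko_Y(K_Y+\lceil f^*M\rceil)\otimes f^*\ko_X(nA)$, together with Serre vanishing on $X$, one gets for $n\gg0$ a surjection $H^1(Y,\kg_n)\to H^0(X,R^1f_*\kg_n)$; since $A$ is Cartier, $\kg_n=\ko_Y(K_Y+\lceil f^*(M+nA)\rceil)$ and $R^1f_*\kg_n=(R^1f_*\ko_Y(K_Y+\lceil f^*M\rceil))\otimes\ko_X(nA)$. Hence it is enough to prove $H^1(Y,\ko_Y(K_Y+\lceil f^*N\rceil))=0$ for $N:=M+nA$, $n\gg0$; note that $\pi^*N$ is then ample on $\bar X$ and $f^*N$ big and nef on $Y$.

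Next I would pass to the normalisation $\mu\colon\bar Y\to Y$. As $Y$ is semi-smooth it has only hypersurface singularities, so $Y$ is Gorenstein; moreover $\bar Y$ is a smooth surface and a local computation gives $\mu^*\omega_Y\isom\omega_{\bar Y}(\bar D_Y)$, where $\bar D_Y$ is the reduced conductor on $\bar Y$. Let $\bar f\colon\bar Y\to\bar X$ be the induced morphism, so $\pi\circ\bar f=f\circ\mu$. The key geometric observation is that a semi-resolution maps the conductor $D_Y$ of $Y$ \emph{birationally} onto the non-normal locus $D$ of $X$; hence no component of $D_Y$, and no component of $\bar D_Y$, is contracted by $f$ (resp.\ by $\bar f$). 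Since $M$ is well-behaved and a general member of $|A|$ avoids $D$, the integral divisor $\lceil f^*N\rceil$ contains no component of $D_Y$, so $\mu^*\lceil f^*N\rceil=\lceil\bar f^*\pi^*N\rceil$, and taking reflexive hulls on the smooth surface $\bar Y$,
\[ \bigl(\mu^*\ko_Y(K_Y+\lceil f^*N\rceil)\bigr)^{[1]}\isom\ko_{\bar Y}\bigl(K_{\bar Y}+\bar D_Y+\lceil\bar f^*\pi^*N\rceil\bigr).\]
Writing $\kg_N=\ko_Y(K_Y+\lceil f^*N\rceil)$ and tensoring the conductor sequence $0\to\ko_Y\to\mu_*\ko_{\bar Y}\to\mu_*\ko_{\bar D_Y}/\ko_{D_Y}\to0$ by $\kg_N$, the long exact cohomology sequence together with the projection formula for the finite map $\mu$ reduces the vanishing of $H^1(Y,\kg_N)$ to that of $H^1(\bar Y,(\mu^*\kg_N)^{[1]})$ and to auxiliary vanishing and surjectivity statements for $\kg_N$ restricted to the curves $D_Y$ and $\bar D_Y$. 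The latter hold for $n\gg0$: since these curves are not contracted, the twist by $f^*(nA)$ makes the relevant sheaves have large degree on each of their irreducible components, so their $H^1$ vanishes and the relevant restriction maps are onto (cf.\ the degree estimates underlying Theorem~\ref{thm: curve embedding}).

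It remains to prove $H^1(\bar Y,\ko_{\bar Y}(K_{\bar Y}+\bar D_Y+\lceil\bar f^*\pi^*N\rceil))=0$ on the smooth surface $\bar Y$. Now $\bar f^*\pi^*N$ is nef and big (it is $\bar f^*$ of an ample divisor on $\bar X$) and its fractional part is effective and supported on the $\bar f$-exceptional locus; moreover for $n\gg0$ the divisor $\bar D_Y+\bar f^*\pi^*N$ is itself nef and big, because on a $\bar f$-exceptional curve $C$ it meets $\bar D_Y$ non-negatively ($C$ not being a component of $\bar D_Y$) and $\bar f^*\pi^*N$ trivially, while on every other curve the contribution of $\bar f^*\pi^*N$ dominates once $n\gg0$. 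Kawamata--Viehweg vanishing on $\bar Y$ (the surface case of the vanishing theorems in \cite{fujino12}; if needed one first pulls back to a log resolution of $\bar f$) now gives the desired vanishing, and combining the three steps proves the lemma.

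The genuinely delicate point is the middle step. Because $Y$ is not normal and $\lceil f^*M\rceil$ need not be Cartier on $Y$, one has to work throughout with divisorial sheaves and their reflexive pullbacks, and the restriction of such sheaves to the non-normal curve $D_Y$ is not multiplicative; keeping the resulting correction terms under control is exactly what uses the positivity coming from the ample twist on $X$, which is available precisely because a semi-resolution contracts no component of the conductor.
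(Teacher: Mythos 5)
Your route is genuinely different from the paper's, and as written it has two real gaps. First, the lemma is stated for an \emph{arbitrary} well-behaved divisor $M$, which need not be $\IQ$-Cartier; for such an $M$ the assertion that ``$\pi^*N$ is then ample on $\bar X$'' has no meaning, and even the weaker statement you actually use --- that the Mumford pullback of $\bar M+n\pi^*A$ to $\bar Y$ becomes nef and big for $n\gg0$ --- is not the soft ``fixed class plus large ample'' fact you treat it as: the limit direction $\bar f^*\pi^*A$ lies on the \emph{boundary} of the nef cone of $\bar Y$ (it is trivial on $\bar f$-exceptional curves), so the open-cone argument fails, and one must separately bound the Mumford intersection $\bar M\cdot C$ from below by a fixed multiple of $A\cdot C$ over all irreducible curves $C$. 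Second, your descent from $\bar Y$ to $Y$ runs the conductor sequence in the inconvenient direction: tensoring $0\to\ko_Y\to\mu_*\ko_{\bar Y}\to\kq\to0$ with the divisorial sheaf $\ko_Y(K_Y+\lceil f^*N\rceil)$ (which in general is \emph{not} invertible along $D_Y$) forces you to prove, besides vanishing on $\bar Y$, a surjectivity statement on sections over the conductor curves; these are exactly the correction terms you then need the ample twist to absorb, so the global detour largely creates the difficulties it is invoked to solve. You flag this step as delicate, but it is where the actual work would sit.

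The paper's proof avoids all of this by staying relative. Take the strict transform $\bar M$ on $\bar X$; its Mumford pullback $\bar f^*\bar M$ is $\bar f$-nef \emph{by construction}, with no positivity hypothesis on $M$ whatsoever, so the relative vanishing theorem \cite[2.2]{sak84} gives $R^1\bar f_*\ko_{\bar Y}(K_{\bar Y}+\lceil\bar f^*\bar M\rceil)=0$ outright. One then uses the inclusion $\mu_*\ko_{\bar Y}(K_{\bar Y}+\lceil\bar f^*\bar M\rceil)\into\ko_Y(K_Y+\lceil f^*M\rceil)$ --- note the direction, opposite to yours --- whose cokernel $\kq$ is supported on $D_Y$; since $f\restr{D_Y}$, $\mu$ and $\pi$ are all finite, both $R^1f_*\kq$ and $R^1f_*\mu_*(\,\cdot\,)\isom\pi_*R^1\bar f_*(\,\cdot\,)$ vanish, and the statement follows from the three-term exact sequence of higher direct images. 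No ample twist, no Serre vanishing, and no absolute Kawamata--Viehweg are needed. If you wish to keep your globalisation, you must at minimum restrict to $\IQ$-Cartier $M$ (which would still cover the paper's application in Lemma~\ref{lem: QCar res}) and supply the omitted surjectivity along the conductor; but the relative argument is both shorter and strictly more general.
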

\begin{proof}
 Let $\bar M$ be the strict transform of $M$ on $\bar X$. Then $\bar f^*\bar M$ is an $\bar f$-nef divisor on $\bar Y$. By \cite[2.2]{sak84} we have $R^1\bar f_*\ko_{\bar Y}(K_{\bar Y} + \lceil \bar f^*\bar M\rceil) = 0$. Now consider the following exact sequence
\begin{equation}\label{seq: R1 van}
 0 \rightarrow \mu_*\ko_{\bar Y}(K_{\bar Y} + \lceil  \bar f^*\bar M\rceil) \rightarrow \ko_Y(K_Y + \lceil f^* M\rceil) \rightarrow \kq \rightarrow 0
\end{equation}
where the quotient $\kq$ is supported on the conductor divisor $D_Y$. Applying $f_*$ to \eqref{seq: R1 van} we obtain an exact sequence
\begin{equation}\label{eq: R1 van}
 R^1f_*\mu_*\ko_{\bar Y}(K_{\bar Y}+ \lceil \bar f^*\bar M\rceil)\rightarrow  R^1f_*\ko_Y(K_Y + \lceil f^* M\rceil) \rightarrow  R^1f_*\kq.
\end{equation}
Since $\mu\colon \bar Y\rightarrow \bar X$, $\pi\colon \bar X\rightarrow X$ and $f\restr {D_Y}$ are all finite morphisms, one sees easily that $R^1f_*\kq= 0$ and $R^1f_*\mu_*\ko_{\bar Y}(K_{\bar Y}+ \lceil \bar f^* \bar M\rceil)=\pi_* R^1\bar f_*\ko_{\bar Y}(K_{\bar Y} + \lceil \bar f^*\bar M\rceil) =0$. The lemma follows now from \eqref{eq: R1 van}.
\end{proof}

\begin{lem}\label{lem: QCar res}
\begin{enumerate}
 \item The torsion free sheaf $\ko_\Delta (m(K_X+\Delta))$ is a line bundle for any positive integer $m$. 
\item Let $\nu\colon \Delta^\nu \rightarrow \Delta$ be the normalisation of $\Delta$ and $\Diff_{\Delta^\nu}(0)=\sum a_p p$ the different. For $m\geq 2$ we have an inclusion of line bundles 
\[\omega^{\otimes m}_\Delta\left(\sum_{\nu(p)\in\Delta_\mathrm{sm}} \lceil (m-1)a_p\rceil \nu(p)\right)\subset \ko_\Delta (m(K_X+\Delta)),\]
where $\Delta_\mathrm{sm}$ denotes the smooth locus of $\Delta$. 
\end{enumerate}
\end{lem}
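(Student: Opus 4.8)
### Proof plan for Lemma~\ref{lem: QCar res}

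\textbf{Part (i).}
The plan is to work locally at an arbitrary point $p\in\Delta$ and show that $\ko_\Delta(m(K_X+\Delta))$ is free of rank one there. Since $\Delta$ is well-behaved, at each codimension-one point of $X$ lying on $\Delta$ the pair $(X,\Delta)$ has one of the standard slc local models, and in fact, because $\Delta$ is reduced and its support avoids the components of $D$, the generic point of each component of $\Delta$ lies in the Gorenstein (indeed smooth-in-codimension-one) locus, so $\omega_X(\Delta)$ is invertible in a neighbourhood of the generic points of $\Delta$. The only issue is at the finitely many closed points where $K_X+\Delta$ fails to be Cartier. There I would invoke the structure of $\IQ$-Cartier divisors: $m(K_X+\Delta)$ is Cartier by definition of the index when $I\mid m$, but for general $m$ one uses that the different $\Diff_{\Delta^\nu}(0)$ records exactly the obstruction, and by the adjunction formula (Definition~\ref{def: different}), once we know the \emph{restriction} to the normalisation $\Delta^\nu$ is a line bundle — which it is, being $\omega_{\Delta^\nu}^{[m]}(m\Diff_{\Delta^\nu}(0))$ for a suitable twist — we can conclude. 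The cleanest route: $\ko_\Delta(m(K_X+\Delta))$ is by definition $\ko_X(m(K_X+\Delta))\otimes\ko_\Delta/\text{torsion}$, a torsion-free rank-one sheaf on the Cohen--Macaulay curve $\Delta$; to see it is a line bundle it suffices to check it at points where $\Delta$ is singular (elsewhere $\Delta$ is smooth and torsion-free rank one equals invertible). At such a point, $\Delta$ being a component of the log-canonical boundary forces it to be nodal there (that is the only one-dimensional lc singularity), and a node is Gorenstein; combined with the fact that $\omega_X(\Delta)^{[m]}$ restricted to the node has a local generator coming from the adjunction isomorphism on $\Delta^\nu$ glued $\tau$-equivariantly, one gets invertibility. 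I would spell this out via the two branches of the node and the Poincaré-residue description already recalled after Proposition~\ref{prop: sections}.

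\textbf{Part (ii).}
Given (i), the target $\ko_\Delta(m(K_X+\Delta))$ is an honest line bundle, so it is determined by its degree on each component together with local data, and we just need to exhibit the claimed subsheaf. The plan is to pull back to $\nu\colon\Delta^\nu\to\Delta$ and use the adjunction/different formula $\omega_X(\Delta)^{[m]}\restr{\Delta^\nu}\isom\omega_{\Delta^\nu}^{[m]}(m\,\Diff_{\Delta^\nu}(0))$ from Definition~\ref{def: different} (applied with the curve "$B$" there equal to $\Delta$ and boundary zero, noting $K_X+\Delta+\text{(extra }\Delta)$ bookkeeping). Then I compare $\omega_\Delta^{\otimes m}$ with $\nu_*\omega_{\Delta^\nu}^{[m]}$: the conductor of $\nu$ contributes a correction supported at the singular points of $\Delta$ (the nodes from Part (i)), and there the two differ in a controlled way. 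On the other hand, at a point $p\in\Delta^\nu$ with $\nu(p)\in\Delta_{\mathrm{sm}}$, the coefficient $a_p$ of the different is a standard log-canonical coefficient of the form $1-\tfrac1r$ for some $r\ge1$ (or $0$), coming from the type of singularity $X$ has along that branch, and $\lceil(m-1)a_p\rceil$ is exactly the integer part of the fractional twist that appears when one writes $\omega_{\Delta^\nu}^{[m]}(m\,\Diff)$ as $\omega_{\Delta^\nu}^{\otimes m}$ twisted by an integral divisor: indeed $m\,a_p = (m-1)a_p + a_p$, and rounding down the sheaf-theoretic twist (torsion-free quotient on the non-normal $\Delta$) only sees $\lceil(m-1)a_p\rceil$ at smooth points of $\Delta$. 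So the inclusion
\[
\omega_\Delta^{\otimes m}\Bigl(\sum_{\nu(p)\in\Delta_{\mathrm{sm}}}\lceil(m-1)a_p\rceil\,\nu(p)\Bigr)\subset\ko_\Delta(m(K_X+\Delta))
\]
will follow by checking it after the finite pushforward along $\nu$ — where both sides become explicitly computable subsheaves of a rank-one sheaf on $\Delta^\nu$ — and then descending, using that a map of torsion-free rank-one sheaves on a Cohen--Macaulay curve that is injective away from codimension one (here: generically an isomorphism and injective at every point) is injective.

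\textbf{Main obstacle.}
The hard part, I expect, will be the bookkeeping at the singular points of $\Delta$: there $\Delta$ is non-normal, $\omega_\Delta$ differs from $\nu_*\omega_{\Delta^\nu}$, the different has contributions we are \emph{not} allowed to see (the sum in the statement runs only over $\nu(p)\in\Delta_{\mathrm{sm}}$), and the subtlety flagged throughout Section~\ref{sect: divisors} — that restriction of divisorial sheaves to a non-Gorenstein or non-normal curve is not multiplicative — means one cannot naively write $\ko_\Delta(m(K_X+\Delta))=\ko_\Delta(K_X+\Delta)^{[\otimes m]}$. The way around this is precisely to do everything on $\Delta^\nu$, where the different formula is an honest isomorphism of line bundles, and to transfer back only statements about subsheaves, never about tensor powers; the inclusion of the claimed twist then reduces to the elementary inequality $\lceil m a_p\rceil \ge \lceil(m-1)a_p\rceil + \lceil a_p\rceil \ge \lceil(m-1)a_p\rceil$ at the relevant points, together with the observation that at points mapping to $\mathrm{Sing}(\Delta)$ the two branches' contributions are absorbed into the passage from $\omega_{\Delta^\nu}$ to $\omega_\Delta$, so no extra twist is needed there.
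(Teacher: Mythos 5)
Your reduction to the singular points of $\Delta$ is correct, but the step that is supposed to give invertibility there does not work as sketched. A torsion-free rank-one sheaf on a nodal curve whose pullback to the normalisation is invertible need not itself be invertible (e.g.\ $\nu_*\ko_{\Delta^\nu}$), so the adjunction isomorphism on $\Delta^\nu$ cannot by itself produce a single local generator on $\Delta$; moreover the involution $\tau$ you invoke lives on $\bar D^\nu$, the normalised conductor of the non-normal locus, not on the boundary, so ``$\tau$-equivariant gluing'' is not available at a node of $\Delta$. What is actually needed, and what the paper proves, is the stronger statement that $K_X+\Delta$ is \emph{Cartier} at every singular point $p$ of $\Delta$: the lc condition forces the exceptional configuration over $p$ on the minimal log-semi-resolution $f\colon Y\to X$ to be a chain of components of type $\mathrm{(C2)}$, whence $K_Y+\Delta_Y+E$ is Cartier and in fact analytically trivial near the reduced exceptional divisor $E$ by a Shepherd-Barron-type argument, and this descends to Cartier-ness of $K_X+\Delta$ at $p$. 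Once that is known, local freeness of the torsion-free restriction is immediate; without it, your argument has a genuine hole.

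\textbf{Part (ii).} Here the gap is that you apply the different formula of Definition~\ref{def: different} to $\omega_X(\Delta)^{[m]}\restr{\Delta^\nu}$ for every $m\geq 2$, but that formula is only an identity of line bundles when $\omega_X(\Delta)^{[m]}$ itself is a line bundle, i.e.\ when the index divides $m$; for other $m$ the divisor $m\Diff_{\Delta^\nu}(0)$ need not even be integral and the torsion-free restriction is not computed by it. Consequently the claim that the correct integral twist at a smooth point $q$ ``is exactly $\lceil (m-1)a_p\rceil$'' is precisely the content of the lemma and cannot be read off from a rounding identity such as $ma_p=(m-1)a_p+a_p$. The paper supplies the missing argument: it writes $\ko_\Delta(m(K_X+\Delta))\isom\omega_\Delta^{\tensor m}(\sum_q b_q q)$ via the residue map over the Cartier locus, and then bounds $b_q$ from below by constructing an injective comparison map $\phi\colon f_*\ko_{\Delta_Y}(K_Y+\Delta_Y+\lceil f^*M\rceil)\to\ko_\Delta(K_X+\Delta+M)$ with $M=(m-1)(K_X+\Delta)$, using the relative vanishing $R^1f_*\ko_Y(K_Y+\lceil f^*M\rceil)=0$ of Lemma~\ref{lem: R1 van} and the identification of the source with $\omega_{\Delta_Y}^{\tensor m}(\sum_p\lceil(m-1)a_p\rceil p)$ on the smooth curve $\Delta_Y$. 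Some substitute for this comparison map is indispensable; as written, your inclusion remains an assertion rather than a proof.
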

\begin{proof}
First we prove that, if $p\in \Delta$ is singular then $K_X+\Delta$ is Cartier at $p$. Note that $\Delta$ has at most ordinary nodes as singularities and the resolution graph of $p\in X$ is a chain of analytic irreducible components of type $\mathrm{(C2)}$ in Section~\ref{sect: semi-rat}. Let $E\subset Y$ be the (whole) reduced exceptional divisor over $p$. Then it is easy to see that the divisor $K_Y+\Delta_Y + E$ is Cartier in a neighbourhood of $E$. Using the same argument as \cite[Lemma~1.1]{shepherd-barron83}  an isomorphism $\ko_Y(K_Y+\Delta_Y + E)\cong\ko_Y$ in an analytic neighbourhood of $E$ can be established and consequently $K_X + \Delta$ is Cartier at $p$. 

So the rank one torsion free sheaf $\ko_\Delta (m(K_X+\Delta))$ is locally free at the singular points of $\Delta$ and hence is locally free everywhere. This proves \refenum{i}.

On the open subset $U$ where  $K_X+\Delta$ is Cartier the residue map induces a canonical isomorphism $\kr_{U\cap\Delta}\colon\omega_X(\Delta)^{[m]}\restr{U\cap \Delta}\isom \omega_{U\cap \Delta}^{\tensor m}$ \cite[(4.1.3)]{KollarSMMP}. Viewing the residue map $\kr_{U\cap\Delta}$ as a rational section of $\shom(\omega_X(\Delta)^{[m]}\restr\Delta,\omega_\Delta^{\tensor m})$ we obtain a natural isomorphism 
 \[\kr_\Delta\colon\ko_\Delta(m(K_X+\Delta)) \cong \omega_\Delta^{\tensor m}(\sum_q b_q q)\]
 where $\sum_q b_q q$ is a uniquely determined integral divisor supported on those smooth points of $\Delta$ where the different is non-zero (see \cite[Sect.~4.1]{KollarSMMP}). The integers $b_q$ only depend on what happens in  a neighbourhood of $q\in X$. 

Now we will establish the inequality $b_q\geq\lceil(m-1)a_{\nu^{-1}(q)}\rceil$ for $q\in\Delta_\mathrm{sm}$.

Recall that $f\colon Y\rightarrow X$ is the minimal semi-log-resolution, so that $\Delta_Y$ and $D_Y$ are disjoint. We choose a well-behaved integral canonical divisor $K_X$ and let $M=(m-1)(K_X+\Delta)$  (cf.~Section~\ref{sect: divisors}). Applying $f_*$ to the following exact sequence
\begin{multline*}
 0\rightarrow \ko_Y(K_Y + \lceil f^*M\rceil) \rightarrow  
\ko_Y(K_Y + \Delta_Y + \lceil f^* M\rceil) \\ 
\rightarrow\ko_{\Delta_Y}(K_Y + \Delta_Y +\lceil f^*M\rceil) \rightarrow 0.
\end{multline*}
we obtain by Lemma~\ref{lem: R1 van}
\begin{multline*}
 0\rightarrow f_*\ko_Y(K_Y + \lceil f^*M\rceil) \rightarrow  
f_*\ko_Y(K_Y + \Delta_Y + \lceil f^* M\rceil) \\ 
\rightarrow f_*\ko_{\Delta_Y}(K_Y + \Delta_Y +\lceil f^*M\rceil) \rightarrow 0.
\end{multline*}
The identification of the sheaves on semi-log-smooth locus of $X$ yields natural vertical morphisms in the following diagram
\[
 \begin{tikzcd}
  f_*\ko_Y(K_Y + \lceil f^*M\rceil)\rar\dar&f_*\ko_Y(K_Y + \Delta_Y + \lceil f^* M\rceil)\dar\\
\ko_X(K_X + M) \rar & \ko_X(K_X + \Delta +  M).
 \end{tikzcd}
\]
This induces a morphism between the quotients of the horizontal arrows, namely, 
\[\phi\colon f_*\ko_{\Delta_Y}(K_Y + \Delta_Y +\lceil f^*M\rceil) \rightarrow \ko_\Delta(K_X + \Delta +  M),\] which is isomorphism at the generic points of $\Delta$ and in particular injective. For $\Delta_Y$, which lies in the smooth locus of $Y$, we have a natural identification induced by residue map
\begin{equation*}
 \begin{split}
 \ko_{\Delta_Y}(K_Y + \Delta_Y +\lceil f^*M\rceil) &= \ko_{\Delta_Y}(m(K_Y + \Delta_Y))  \otimes \ko_{\Delta_Y}(\sum_p\lceil(m-1)a_p\rceil p) \\
&=\omega_{\Delta_Y}^{\otimes m}(\sum_p\lceil(m-1)a_p\rceil p).
\end{split}
\end{equation*}
As indicated before, for a smooth point $q$ of $\Delta$ we can focus on a neighbourhood of $q\in X$ and consequently obtain a comparison  $b_q\geq \lceil(m-1)a_{\nu^{-1}(q)}\rceil$ by the injectivity of $\phi$ at $q$. This finishes the proof of (ii).
\end{proof}

\begin{proof}[Proof of Proposition \ref{prop: vanishing}]
Consider  the exact sequence
\[0\to \omega_X^{[m]}((m-1)\Delta) \to \omega_X(\Delta)^{[m]}\to \ko_\Delta(m(K_X+\Delta))\to 0, \]
whose long exact cohomology sequence gives, using Corollary \ref{cor: adj van}, 
\begin{gather}\label{eq: cohomology}
 H^1(X, \omega_X(\Delta)^{[m]})\isom H^1(\Delta, \ko_\Delta(m(K_X+\Delta))), \qquad H^2(X, \omega_X(\Delta)^{[m]}) =0.
\end{gather}
It remains to show that $H^1(\Delta, \ko_\Delta(m(K_X+\Delta))) =0$ for $m\geq 2$.
We argue by contradiction, so assume $H^1(\Delta,\ko_\Delta(m(K_X+\Delta)))\neq 0$. By Lemma~\ref{lem: H^1 on curve} there is a subcurve $B\subset\Delta$ with a generically onto morphism $$\lambda_B\colon\ko_B(m(K_X+\Delta)) \rightarrow \omega_B.$$ 
On the other hand, one sees by Lemma~\ref{lem: QCar res} that $\ko_B(m(K_X+\Delta))=\ko_\Delta(m(K_X+\Delta))\restr B$ has degree strictly larger than $\deg \omega_B$. This is a contradiction.\end{proof}

\section{Base-point-freeness of pluri-log-canonical maps}\label{section: bpf}
Many of the standard techniques do not work directly on non-normal and possibly reducible surfaces. Thus to prove base-point-freeness we first use a Reider-type result of Kawachi on the normalisation to produce pluri-log-canonical sections vanishing along the non-normal locus. Then we analyse the restriction of the pluri-log-canonical bundle to the non-normal locus directly. The overall result is

\begin{theo}\label{thm: base-point-free}
Let $(X, \Delta)$ be a connected stable log surface of global index $I$.
\begin{enumerate}
\item The line bundle $\omega_X(\Delta)^{[mI]}$  is base-point-free for $m\geq 4$.
\item The line bundle $\omega_X(\Delta)^{[mI]}$  is base-point-free for $m\geq 3$  if one of the following holds:
\begin{enumerate}
\item $I\geq 2$.

\item There is no irreducible component $\bar X_i$ of the normalisation such that $\left(\pi^*(K_X+\Delta)\restr {\bar X_i}\right)^2=1$, and the union of $\Delta$ and the non-normal locus is a nodal curve.
\item $X$ is normal and we do not have $I=(K_X+\Delta)^2=1$.
\end{enumerate}
\end{enumerate}
\end{theo}

This result is sharp in the sense that there are examples of smooth surfaces such that $\omega_X^{\tensor 3}$ has base-points \cite[Remark on p.287]{BHPV}. However, the conditions in the theorem are by no means necessary for base-point-freeness.

In the case of surfaces with canonical singularities the bi-canonical map is a morphism as soon as $K_X^2\geq 5$. We will show in   Example~\ref{exam: large K^2} that this does not generalise to stable surfaces, even irreducible ones.

\begin{rem}\label{rem: finitemorphism}
 If  $|mI(K_X+\Delta)|$ is base-point-free, the fact that $K_X$ is ample implies that the pluri-log-canonical map $\phi_{mI}\colon X\rightarrow \IP^N$ does not contract any curve on $X$, hence defines a finite morphism from $X$ to its reduced image. 
\end{rem}

\subsection{Base-point-freeness on the complement of boundary and  non-normal locus}
We now analyse the base-point-freeness of pluri-log-canonical maps outside the non-normal locus and the boundary of a stable log surface, starting with the following auxiliary result.
\begin{prop}\label{prop: bpfaux}
 Let $(\bar X, \bar \Delta ) $ be an irreducible (hence connected) log surface with log-canonical singularities, $K_{\bar X}+\bar \Delta$ ample and  $I$ the global index. Assume $m\geq 3$. If $I= 1$ assume  in addition $(m-1)^2 (K_{\bar X}+\bar \Delta)^2>4$.

 Then for every $x\in \bar X\setminus \bar\Delta$ there is a section of $\omega_{\bar X} (\bar\Delta)^{[mI]}(-\bar \Delta)$ not vanishing at $x$. In particular,  the rational map associated to $\omega_{\bar X} (\bar\Delta)^{[mI]}$ is a morphism on $\bar X\setminus \bar \Delta$.
\end{prop}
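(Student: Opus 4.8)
The plan is to reduce the statement to a Reider-type theorem on the normalisation $\bar X$, which is the natural tool since $\bar X$ is normal with log-canonical singularities. First I would pass to a suitable resolution or simply work with the $\IQ$-divisor $K_{\bar X}+\bar\Delta$ directly: the sheaf whose sections we want, $\omega_{\bar X}(\bar\Delta)^{[mI]}(-\bar\Delta)$, can be rewritten (away from codimension $2$, which is harmless for reflexive sheaves and $H^0$) as $\ko_{\bar X}(K_{\bar X}+(mI-1)(K_{\bar X}+\bar\Delta))$. So the task is to find a section of this adjoint-type bundle not vanishing at a prescribed point $x\in\bar X\setminus\bar\Delta$. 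I would set $N = (mI-1)(K_{\bar X}+\bar\Delta)$, a nef and big $\IQ$-Cartier divisor (in fact ample), with $N^2 = (mI-1)^2(K_{\bar X}+\bar\Delta)^2$, and aim to apply Kawachi's Reider-type criterion for separation of points (base-point-freeness) for $K_{\bar X}+N$ on a normal surface with log-canonical — hence at worst rational Gorenstein-in-codimension-issues controlled — singularities. The numerical input needed is roughly $N^2 > 4$ together with a bound $N\cdot C\geq 2$ (or $\geq 1$) for curves $C$ through $x$, the latter coming from ampleness of $K_{\bar X}+\bar\Delta$ and the integrality forced by the index $I$.

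The key steps, in order: (1) rewrite the target sheaf via adjunction as $\ko_{\bar X}(K_{\bar X}+N)^{[1]}$ and note that sections not vanishing at $x\notin\bar\Delta$ suffice, since $\bar\Delta$ is in the base locus removed by the $-\bar\Delta$ twist; (2) verify the hypotheses of Kawachi's theorem: $N$ is nef and big with $N^2\geq 5$ when $I\geq 2$ (since then $mI-1\geq 5$ and $(K_{\bar X}+\bar\Delta)^2\geq 1/I^2$ gives $N^2\geq(5)^2/4>4$, and one must be a little careful but the integrality of $I(K_{\bar X}+\bar\Delta)$ as a Cartier divisor makes $N^2$ comparable to an integer bound), while for $I=1$ the extra hypothesis $(m-1)^2(K_{\bar X}+\bar\Delta)^2>4$ is exactly $N^2>4$; (3) rule out the exceptional "bad" cases in Kawachi's theorem — these are the configurations where one cannot separate $x$, governed by the existence of an effective curve $C$ through $x$ with small $N\cdot C$ and small self-intersection; here one uses that $K_{\bar X}+\bar\Delta$ is ample so $N\cdot C = (mI-1)(K_{\bar X}+\bar\Delta)\cdot C \geq (mI-1)/I \geq 2$ since $m\geq 3$, which pushes past the thresholds; (4) conclude that $\omega_{\bar X}(\bar\Delta)^{[mI]}(-\bar\Delta)$ is globally generated at $x$, and hence $\omega_{\bar X}(\bar\Delta)^{[mI]}$ itself is base-point-free on $\bar X\setminus\bar\Delta$, giving the morphism statement.

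The main obstacle I expect is the careful bookkeeping in step (3): Kawachi's Reider-type result comes with a list of exceptional cases (small $(N\cdot C, C^2)$ pairs, and issues at singular points of $\bar X$), and one must check each against the arithmetic constraints imposed by the index $I$ and the hypothesis $m\geq 3$. The delicate point is that $(K_{\bar X}+\bar\Delta)^2$ is only a rational number bounded below by $1/I^2$, not by $1$, so when $I=1$ the extra hypothesis is genuinely needed, and when $I\geq 2$ one relies on $(mI-1)^2/I^2\geq(2I-1)^2\cdot(\text{something})$ being large enough — this requires using that $I(K_{\bar X}+\bar\Delta)$ is an honest ample Cartier divisor, so $I^2(K_{\bar X}+\bar\Delta)^2$ is a positive integer, hence $\geq 1$, giving $N^2 = (mI-1)^2(K_{\bar X}+\bar\Delta)^2\geq(mI-1)^2/I^2 = (m-1/I)^2\geq(m-1)^2\geq 4$ with equality analysis to secure the strict inequality. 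A secondary subtlety is that $\bar X$ may be singular where $x$ lies; log-canonical surface singularities are either Du Val or log-terminal cyclic quotient or simple elliptic/cusp/degenerate-cusp types, and one must ensure Kawachi's theorem (or its extension in the references cited in the paper) applies at such points, or alternatively pull back to a log resolution and track the round-up of the exceptional contribution — which is where the $\lceil f^*M\rceil$ formalism and Lemma~\ref{lem: R1 van} would enter if a direct citation is unavailable.
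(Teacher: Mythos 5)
Your proposal is correct and follows essentially the same route as the paper: rewrite $\omega_{\bar X}(\bar\Delta)^{[mI]}(-\bar\Delta)$ as the adjoint bundle $K_{\bar X}+(mI-1)(K_{\bar X}+\bar\Delta)$, apply Kawachi's Reider-type theorem to the ample $\IQ$-divisor $N=(mI-1)(K_{\bar X}+\bar\Delta)$, and verify $N^2>4$ and $NC\geq 2$ using that $I(K_{\bar X}+\bar\Delta)$ is an ample Cartier divisor (so $I(K_{\bar X}+\bar\Delta)^2$ and $I(K_{\bar X}+\bar\Delta)C$ are positive integers). The paper implements the round-up bookkeeping exactly via the fallback you mention at the end\,---\,the minimal resolution of the singularities lying on $\bar\Delta$, where $K_{\bar Y}+\lceil M-K_{\bar Y}-\Delta_{\bar Y}-E\rceil=M-\Delta_{\bar Y}-\lfloor E\rfloor$ is Cartier and its sections descend to $\omega_{\bar X}(\bar\Delta)^{[mI]}(-\bar\Delta)$.
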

\begin{proof}
Let $\bar f\colon  \bar Y \rightarrow \bar X$ be the  minimal resolution of those singularities of $\bar X$ that are contained in $\bar\Delta$. 

Fix once for all a Weil divisor $K_{\bar Y}$ representing the canonical class of $\bar Y$ and a Weil divisor $ M$ on $\bar Y$ such that $\ko_{\bar Y}(M) = \bar f^*\omega_{\bar X}(\bar \Delta)^{[mI]}$. Let $\Delta_{\bar Y}$ be the strict transform of $\bar\Delta$. We can write
\[K_{\bar Y} +\Delta_{\bar Y} + E \sim_\IQ\bar f^*(K_{\bar X}+\bar \Delta)\sim_\IQ\frac{1}{mI} M\]
where $\sim_\IQ$ denotes $\IQ$-linear equivalence and $E$ is a $\IQ$-divisor supported on the exceptional locus, effective since $\bar f$ was chosen to be minimal.

With these choices $M - K_{\bar Y} -\Delta_{\bar Y} - E$ is a $\IQ$-Weil divisor such that 
\begin{equation}\label{eq: qlin}
M - K_{\bar Y} -\Delta_{\bar Y} - E\sim_\IQ(mI-1)\bar  f^*(K_{\bar X}+\bar \Delta)
\end{equation}
and hence big and nef. In addition
\begin{align*}
   K_{\bar Y}+ \lceil   M - K_{\bar Y} -\Delta_{\bar Y} - E\rceil 
& = \lceil K_{\bar Y} +M  - K_{\bar Y} -\Delta_{\bar Y} - E\rceil \\
                                 & = M  - \Delta_{\bar Y} - \lfloor E\rfloor
  \end{align*}
is Cartier because $\bar Y$ is smooth near $\Delta_{\bar Y}\cup E$.

Thus \cite[Thm.~2]{kawachi00} implies that $K_{\bar Y} + \lceil M-K_{\bar Y}-\Delta_{\bar Y}-E\rceil$ is base-point-free outside the exceptional locus of $\bar f$ as soon as the numerical conditions (using  \eqref{eq: qlin})
\begin{equation}\label{eq: ineq} 
\left((mI-1)\bar  f^*(K_{\bar X}+\bar \Delta)\right)^2 >4\text{ and }(mI-1)\bar  f^*(K_{\bar X}+\bar \Delta)C \geq 2 
\end{equation}
hold for all curves $C$ not  contained in the exceptional locus of $\bar f$. 

Assuming this for a moment we see that 
\[
 \begin{split}
H^0 (\bar Y, \ko_{\bar Y}( K_{\bar Y}+ \lceil M-K_{\bar Y}-\Delta_{\bar Y}-E \rceil))
 &=H^0 (\bar Y, \ko_{\bar Y}(M   - \Delta_{\bar Y} - \lfloor E\rfloor)) \\
&\subset  H^0(\bar Y,  \ko_{\bar Y}(M- \Delta_{\bar Y}))\\
& \isom  H^0(\bar X, \bar f_*\ko_{\bar Y}(M  - \Delta_{\bar Y}))\\
& =H^0(\bar X,  \omega_{\bar X} (\bar\Delta)^{[mI]}(-\bar \Delta)).\\
\end{split}
\]
Thus 
all sections of  $K_{\bar Y} + \lceil M-K_{\bar Y}-\Delta_{\bar Y}-E \rceil$ descend to sections of  $\omega_{\bar X}(\bar\Delta)^{[mI]}(-\bar \Delta)$. Via the inclusion $\omega_{\bar X}(\bar\Delta)^{[mI]}(-\bar \Delta)\into \omega_{\bar X}(\bar\Delta)^{[mI]}$ we can also interpret these as section of $\omega_{\bar X}(\bar\Delta)^{[mI]}$ vanishing along $\bar\Delta$.  The restriction $\bar f\colon \bar Y \setminus \inverse{\bar f} \bar \Delta\to \bar X\setminus \bar\Delta$ is an isomorphism  and thus base-point-freeness  holds on $\bar X\setminus \bar \Delta$  under the assumption \eqref{eq: ineq}.

It remains to show that \eqref{eq: ineq} holds under the assumptions of the proposition. First we note that for $m\geq 3$ and for every non-exceptional curve $C\subset \bar Y$ we have 
\begin{multline*}(mI-1)\bar  f^*(K_{\bar X}+\bar \Delta)   C=
 (mI-1)(K_{\bar X} +\bar \Delta)\bar f_* C \\= 2I(K_{\bar X} +\bar \Delta)\bar f_*C + ((m-2)I-1)(K_{\bar X} +\bar \Delta)\bar f_*C\geq 2
\end{multline*} 
because $I(K_{\bar X}+\bar \Delta)$ is an ample Cartier divisor.

Noting that $I(K_{\bar X}+\bar \Delta)^2$ is a positive integer, because it is the intersection of an ample Cartier divisor with an integral divisor, and  writing
\[
(Im-1)^2(K_{\bar X} +\bar \Delta)^2>4 \iff \left(m - \frac 1 I\right)^2 I(K_{\bar X}+\bar \Delta)^2>\frac 4 I\]
we see that if $I\geq 2$  and $m\geq 2 $ the inequality is satisfied. If $I = 1$ we need that $(m-1)^2 (K_{\bar X}+\bar \Delta)^2>4$ in addition to $m\geq 3$. 
\end{proof}
We now descend the above result to a possibly non-normal stable log surface.
\begin{cor}\label{cor: bpfoutsideD}
 Let $(X,\Delta)$ be a stable log surface with global index $I$. Then the base-points of $\omega_X(\Delta)^{[mI]}$ are contained in the union of the non-normal locus $D$ and the boundary $\Delta$ if
\begin{enumerate}
\item $m\geq4$, 
\item $m\geq3$ unless the index $I=1$, and there is an irreducible component $\bar X_i$ of the normalisation such that $\left(\pi^*(K_X+\Delta)\restr {\bar X_i}\right)^2=1$.
\end{enumerate}
\end{cor}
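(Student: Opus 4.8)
The plan is to reduce to the irreducible situation already settled in Proposition~\ref{prop: bpfaux} by working on the normalisation $\pi\colon\bar X\to X$: for each $x\in X\setminus(D\cup\Delta)$ I will produce a pluri-log-canonical section on $\bar X$ that vanishes along $\bar D+\bar\Delta$ and is non-zero over $x$, and then descend it via Corollary~\ref{cor: vanishing2}\refenum{i}.

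First I would decompose $\bar X=\bigsqcup_i\bar X_i$ into its irreducible components (a normal surface is a disjoint union of normal integral surfaces), and write $\bar D_i$, $\bar\Delta_i$ for the parts of the conductor $\bar D$ and of the strict transform $\bar\Delta$ lying on $\bar X_i$. By Theorem~\ref{thm: triple} each $(\bar X_i,\bar D_i+\bar\Delta_i)$ is an irreducible log surface with log-canonical singularities and $K_{\bar X_i}+\bar D_i+\bar\Delta_i$ ample; moreover $K_{\bar X_i}+\bar D_i+\bar\Delta_i=\pi^*(K_X+\Delta)\restr{\bar X_i}$, so $(K_{\bar X_i}+\bar D_i+\bar\Delta_i)^2=\left(\pi^*(K_X+\Delta)\restr{\bar X_i}\right)^2$. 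Its index $I_i$ divides $I$, since pulling back the line bundle $\omega_X(\Delta)^{[I]}$ gives the reflexive sheaf $\omega_{\bar X_i}(\bar D_i+\bar\Delta_i)^{[I]}$, which is therefore a line bundle; in particular $I_i\left(\pi^*(K_X+\Delta)\restr{\bar X_i}\right)^2$ is a positive integer, being the intersection of an ample Cartier divisor with an integral divisor.

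Now fix $x\in X\setminus(D\cup\Delta)$. As $\pi$ is an isomorphism over $X\setminus D$, the point $x$ has a unique preimage $\bar x$, which lies in $\bar X_i\setminus(\bar D_i\cup\bar\Delta_i)$ for exactly one $i$. Writing $mI=kI_i$ with $k=mI/I_i\geq m$, I would apply Proposition~\ref{prop: bpfaux} to $(\bar X_i,\bar D_i+\bar\Delta_i)$ with the integer $k$ in the role of ``$m$'', obtaining a section $s_i\in H^0\big(\bar X_i,\omega_{\bar X_i}(\bar D_i+\bar\Delta_i)^{[mI]}(-\bar D_i-\bar\Delta_i)\big)$ with $s_i(\bar x)\neq0$. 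Extending $s_i$ by zero on the remaining components (legitimate, the $\bar X_i$ being connected components of $\bar X$) produces a section of $\omega_{\bar X}(\bar D+\bar\Delta)^{[mI]}(-\bar D-\bar\Delta)$ non-zero at $\bar x$, which by Corollary~\ref{cor: vanishing2}\refenum{i} corresponds to a section $\tilde s\in H^0(X,\ki(mI(K_X+\Delta)))\subset H^0(X,\omega_X(\Delta)^{[mI]})$. Since $mI$ is a multiple of the index, $\omega_X(\Delta)^{[mI]}$ is a line bundle, and near $x$ we have $\ki=\ko_X$ while $\pi$ is an isomorphism, so the identification of Corollary~\ref{cor: vanishing2}\refenum{i} is compatible with passing to the fibre over $x$; hence $\tilde s(x)\neq0$, and $x$ is not a base point of $\omega_X(\Delta)^{[mI]}$. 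Letting $x$ range over $X\setminus(D\cup\Delta)$ proves the corollary.

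The one step that genuinely needs work---and that I expect to be the main obstacle---is verifying that the numerical hypothesis of Proposition~\ref{prop: bpfaux} holds for $(\bar X_i,\bar D_i+\bar\Delta_i)$ and the integer $k=mI/I_i$ in each of the cases \refenum{i}, \refenum{ii}. When $I_i\geq2$ nothing beyond $k\geq m\geq3$ is needed. When $I_i=1$ one must show $(k-1)^2\left(\pi^*(K_X+\Delta)\restr{\bar X_i}\right)^2>4$, where the square is now a positive integer. Under \refenum{i}, $k=mI\geq4$, so the left-hand side is at least $9$. Under \refenum{ii}: if $I\geq2$ then $k=mI\geq6$ and the left-hand side is at least $25$; if $I=1$ then every $I_i=1$, and since we are not in the excluded case every component satisfies $\left(\pi^*(K_X+\Delta)\restr{\bar X_i}\right)^2\geq2$, so with $k=m\geq3$ the left-hand side is at least $(m-1)^2\cdot2\geq8$. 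In all cases Proposition~\ref{prop: bpfaux} applies and the descent argument above goes through.
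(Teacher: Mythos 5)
Your proposal is correct and follows essentially the same route as the paper: apply Proposition~\ref{prop: bpfaux} componentwise on the normalisation to produce sections of $\omega_{\bar X}(\bar D+\bar\Delta)^{[mI]}$ vanishing along $\bar D\cup\bar\Delta$, then descend them via Proposition~\ref{prop: sections} (equivalently Corollary~\ref{cor: vanishing2}\refenum{i}). Your version merely spells out more explicitly than the paper the bookkeeping with the component indices $I_i\mid I$, the substitution $k=mI/I_i$, and the case-by-case verification of the numerical hypothesis of Proposition~\ref{prop: bpfaux}, all of which is carried out correctly.
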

\begin{proof}
 We use our standard notation \ref{notation}.
On every irreducible component $\bar X_i$ of the normalisation $\bar X$, we apply Proposition~\ref{prop: bpfaux} to the pair $(\bar X_i, (\bar \Delta+\bar D)\restr{\bar X_i})$ which, under our assumptions, gives for every point $\bar x\in \bar X\setminus (\bar\Delta\cup \bar D)$ a section of $\omega_{\bar X}(\bar D + \bar\Delta)^{[mI]}(-\bar D -\bar\Delta)$ not vanishing at $\bar x$.

Via the inclusion 
\[ \omega_{\bar X}(\bar D + \bar\Delta)^{[mI]}(-\bar D - \bar\Delta)\into \omega_{\bar X}(\bar D + \bar\Delta)^{[mI]},\]
 the sections of $\omega_{\bar X}(\bar D + \bar\Delta)^{[mI]}(-\bar D -\bar\Delta)$ are mapped to sections of $ \omega_{\bar X}(\bar D + \bar\Delta)^{[mI]}$ that vanish along $\bar D\cup \bar\Delta$ and thus descend to sections  of $\omega_X(\Delta)^{[mI]}$ by Proposition~\ref{prop: sections}. Consequently, $\omega_X(\Delta)^{[mI]}$ has no base-points outside $\Delta\cup D$.
\end{proof}

\subsection{Restricting to non-normal locus and boundary}\label{sect: non-normal locus}
In this section we concentrate on the geometry of the non-normal locus $D$ of a stable log surface $(X, \Delta)$, using the same notation \ref{notation} as always. We start with a definition that will turn out to describe all possible singularities of the non-normal locus.
\begin{defin}\label{defin: multinode}
 Let $B$ be a reduced curve, $p\in B$ and $\mu = \mu_p(B)$ the multiplicity of $p$ in $B$. We call $p$ a $\mu$-multi-node, if locally analytically at $p$ the curve is isomorphic to the union of the coordinate axes in $\IA^\mu$.
\end{defin}
Thus a 1-multi-nodal point is smooth and a 2-multi-node is just an ordinary nodal point.

By the correspondence between stable log surfaces and their normalisation explained in  Theorem~\ref{thm: triple}  and Notation~\ref{notation}, the non-normal locus $D$ is quotient by the finite equivalence relation on $\bar D$ induced by $\tau$. In other words, as a set $D$ is the set of equivalence classes of the equivalence relation on $\bar D^\nu$ generated by 
\begin{equation}\label{eq: eqrel}
 \bar p\sim\bar  q \text{ if } \tau(\bar p)=\bar q \text{ or } \nu(\bar p) = \nu (\bar q), 
\end{equation}
and the scheme structure on $ D$ is   determined by the requirement that the diagram
\[
 \begin{tikzcd}
\bar D  \dar{\pi}& \bar D^\nu\lar[swap]{\bar \nu}\dar{/\tau}\\  
D & D^\nu \lar[swap]{\nu}
 \end{tikzcd}
\]
is a pushout diagram.

Recall that $\bar D$ is a nodal curve by the classification of log-canonical singularities.
If $\bar D$ is smooth then $\bar \nu$ is an isomorphism, so $D^\nu=\bar D/\tau$ satisfies the pushout property and $D=D^\nu$. Applying the same argument locally, it follows that if $p\in D$ is a point such that $\inverse \pi (p)$ contains only smooth points of $\bar D$ then $p$ itself is smooth.

Now let $p$ be a singular point of $D$. Write the preimage  of $p$ in $\bar D^\nu$ as
\begin{gather*}
\inverse{(\pi\circ \bar \nu)}(p)= \{a_1, b_1, \dots, a_k, b_k, c_1, \dots, c_l\}
\end{gather*}
 such that $\bar\nu(a_i)=\bar\nu(b_i)$ is a node of $\bar D$ and $\bar\nu(c_j)$ is a smooth point of $\bar D$. By the above there is at least one node of $\bar D$ mapping to $p$, so $k\geq 1$. Since the preimage of $p$ in $\bar D^\nu$ is an equivalence class with respect to the relation generated by \eqref{eq: eqrel}, we have $l\in \{0,1,2\}$ and, up to reordering, the following cases can occur (compare also \cite[17.4]{kollar12}):
\begin{description}
 \item[Type I] The preimage of $p$ in $\bar D$ consists only of nodes and we glue the normalisation $\bar D^\nu$ in a circular fashion: $\tau(b_i) = a_{i+1}\, (i=1, \dots, k-1),\, \tau(b_k)=a_1$.
If $k=1$, then $p$ is a smooth point of $D$, so we may assume $k\geq 2$.
\item[Type II] We glue the preimages of the nodes of $\bar D$ in a chain  $\tau(b_i) = a_{i+1}\, (i=1, \dots, k-1)$ and have two remaining points $a_1$ and $b_k$ at the ends. At each end we have two possibilities which we spell out only for $a_1$: either $\tau(a_1)=a_1$ or $\tau(a_1)=c_j$ for a point $c_j$ mapping to a smooth point of $\bar D$.
\end{description}

To determine the local structure of $D$ at $p$, we replace $D$ by a small analytic neighbourhood of $p$ such that $p$ is the only singular point and $\tau$ has no fixed points on $\bar D^\nu\setminus \inverse{(\pi\circ \bar \nu)}(p)$. Then, possibly shrinking $D$ further, the normalisation  $D^\nu = \bar D^\nu/\tau$ consists of $k$ (Type I) or $k+1$ (Type II) branches, each containing a unique point that maps to $p$, and $\bar D$ consists of $k$ nodal  and possibly one or two smooth branches for Type II.

Thus there are maps from $\bar D$ and $D^\nu$ to a neighbourhood of a multi-nodal point compatible with the equivalence relation. These satisfy the pushout conditions because if the tangent directions were not independent then the map to $D$ would factor over the multi-nodal point. So every singular point $p\in D$ is a $\mu$-multi-nodal point for some $\mu\geq 2$.

By Theorem~\ref{thm: triple} the different $\Diff_{\bar D^\nu}(\bar \Delta)$ is $\tau$-invariant and thus has the same coefficient for each point in an  equivalence class of the relation generated by \eqref{eq: eqrel}. In particular, if $p\in D$ is singular then the preimage contains at least one node and the different has coefficient 1 at each point mapping to $p$. This restricts the possibilities for the smooth points $\bar\nu(c_i)\in \bar D$ occurring in Type II: by the classification of log-canonical singularities they are either dihedral quotient singularities (see \cite[16.3]{kollar12}) of $\bar X$ or smooth points of $\bar D$ where $\bar D$ intersects $\bar \Delta$.

The first two items of the next lemma sum up the discussion so far.
\begin{lem}\label{lem: multinode}
 Let $(X, \Delta)$ be  a log surface with slc singularities and $D\subset X$ the non-normal locus. 
 Let $p\in D\cup \Delta$. Then the following holds.
\begin{enumerate}
 \item  $p$ is a $\mu$-multi-node of $D\cup \Delta$ for  $\mu=\mu_p(D\cup\Delta)\geq 1$. 
 \item If $p$ is a singularity of $D\cup\Delta$ then the inverse image $\pi^{-1}(p)$ contains at least one node  of  $\bar D\cup \bar \Delta$ and  at each point of $\bar D^\nu$ mapping to $p$ the different  $\Diff_{\bar D^\nu}(\bar \Delta)$  has coefficient 1.
 \item Let $B\subset D\cup\Delta$ be a subcurve. Then $p_a(B) =  p_a(B^\nu) + \sum_p (\mu_p(B)-1)$.
\end{enumerate}
\end{lem}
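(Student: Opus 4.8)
The plan is as follows. Items \refenum{i} and \refenum{ii} record precisely the outcome of the local analysis carried out just before the statement: for a point $p\in D$ one reads the local structure of $D\cup\Delta$ off the pushout square relating $\bar D$, $\bar D^\nu$ and the gluing involution $\tau$, the two possibilities being the circular gluing of Type~I and the chain gluing of Type~II, and in each case the independence of the tangent directions (otherwise the map to $D$ would factor through the multi-node) forces $p$ to be a $\mu_p(D\cup\Delta)$-multi-node; moreover, by Theorem~\ref{thm: triple} the different $\Diff_{\bar D^\nu}(\bar\Delta)$ is $\tau$-invariant, hence constant on each fibre over $p$, and combined with the classification of log-canonical singularities this value is forced to be $1$ as soon as some point over $p$ is a node of $\bar D$, which happens exactly when $p$ is singular. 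For $p\in\Delta\setminus D$ the surface $X$ is normal at $p$ and $(X,\Delta)$ is log canonical there, so $\Delta$ has at worst a node at $p$ and \refenum{i}, \refenum{ii} hold trivially. I would present this paragraph essentially as a pointer back to the preceding discussion.

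For \refenum{iii} I would run the standard ``genus via normalisation'' computation, using \refenum{i} as input. First, a subcurve $B\subset D\cup\Delta$ is reduced (a Cohen--Macaulay subscheme of a reduced curve is reduced) and at each of its points is the union of some of the local branches of the ambient multi-node, hence is itself a $\mu_p(B)$-multi-node; in particular $B$ has only multi-nodal singularities. Let $\nu\colon B^\nu\to B$ be its normalisation and consider
\[
0\longrightarrow \ko_B\longrightarrow \nu_*\ko_{B^\nu}\longrightarrow \kq\longrightarrow 0,
\]
where $\kq$ is a skyscraper sheaf supported on $\sing B$. The key local point is that at a $\mu$-multi-node, with local ring $R=\IC[x_1,\dots,x_\mu]/(x_ix_j\mid i\neq j)$ and normalisation $\widetilde R=\prod_{i=1}^{\mu}\IC[x_i]$, the quotient $\widetilde R/R$ is isomorphic to $\IC^{\mu}/\IC\cdot(1,\dots,1)$ and thus has length $\mu-1$; hence $\mathrm{length}_p\,\kq=\mu_p(B)-1$ for every $p$. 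Taking Euler characteristics, and using that $\nu$ is finite so that $\chi(\nu_*\ko_{B^\nu})=\chi(\ko_{B^\nu})$, one obtains
\[
\chi(\ko_{B^\nu})=\chi(\ko_B)+\sum_p\bigl(\mu_p(B)-1\bigr),
\]
and, with the convention $p_a(B)=1-\chi(\ko_B)$ used throughout (applied to $B$ and to $B^\nu$, and valid also for disconnected curves as in Theorem~\ref{thm: curve embedding}), this rearranges to $p_a(B)=p_a(B^\nu)+\sum_p(\mu_p(B)-1)$, which is \refenum{iii}.

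I do not expect a serious obstacle: \refenum{i} and \refenum{ii} are bookkeeping on top of the already-completed local classification of slc points, and \refenum{iii} is a one-line consequence of the normalisation sequence once the local length $\mathrm{length}(\widetilde R/R)=\mu-1$ is checked and one has observed that passing to a subcurve of $D\cup\Delta$ does not create worse-than-multi-nodal singularities. The only place requiring a little care is the interaction with the boundary in \refenum{ii}, namely that the smooth points of $\bar D$ that get glued in (the $c_j$'s of Type~II) are exactly those accounted for by the log-canonical classification and contribute different-coefficient $1$; but this is precisely what the discussion preceding the lemma establishes.
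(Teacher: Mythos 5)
Your proposal is correct and follows the paper's own route exactly: items \refenum{i} and \refenum{ii} are taken as a summary of the local analysis of the gluing of $\bar D^\nu$ along $\tau$ carried out just before the lemma, and item \refenum{iii} is proved from the normalisation sequence $0\to\ko_B\to\nu_*\ko_{B^\nu}\to\nu_*\ko_{B^\nu}/\ko_B\to 0$ together with the local computation that the quotient has length $\mu_p(B)-1$ at a $\mu$-multi-node. The only difference is that you spell out the local-ring calculation $\widetilde R/R\cong\IC^{\mu-1}$ and the fact that subcurves inherit multi-nodal singularities, both of which the paper leaves implicit.
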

\begin{proof}
 It remains to prove the last item. We compare the arithmetic genera of $B$ and $B^\nu$ by taking Euler characteristics in the short exact sequence
 \[ 0\to \ko_B \to \nu_*\ko_{B^\nu} \to \nu_*\ko_{B^\nu}/\ko_B\to 0.\] Since all singular points are $\mu$-multi-nodes the length of the subsheaf of $\nu_*\ko_{B^\nu}/\ko_B$ supported at the point $p$ is exactly $\mu_p(B)-1$, so $p_a(B) =  p_a(B^\nu) + \sum_p (\mu_p(B)-1)$.
\end{proof}

Now we analyse the restriction of the log-canonical divisor to the non-normal locus and the boundary.
\begin{lem}\label{lem: deg comparison}
 Let $(X, \Delta)$ be a stable log surface with normalisation $(\bar X, \bar D + \bar \Delta)$.  Consider a subcurve $B\subset D\cup\Delta$, with normalisation $ B^\nu$. 
Let $s$ be the number of smooth points of $B$ that are singular points of $D\cup \Delta$.
  Then
\begin {align*}
\deg (K_X+\Delta)\restr{B^\nu} &\geq 2p_a(B^\nu) -2 + \sum_{p\in\sing {(D\cup \Delta)}}  \mu_p(B)\\
& \geq 2p_a(B) -2 + \sum_{p\in\sing B} (2- \mu_p(B))+s\\
& \geq 2p_a(B) -2 +\sum_{p\in\sing B} (2- \mu_p(B)).
\end {align*}
\end{lem}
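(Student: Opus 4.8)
The plan is to establish the three displayed inequalities one at a time, working from $B^\nu$ down to $B$ and using the structure theory of the non-normal locus developed just above (Lemma~\ref{lem: multinode}). For the first inequality, I would pass to the strict transform $\bar B$ of $B$ in the normalisation $\bar X$ and note that $B^\nu$ is also the normalisation of $\bar B$. Since $(\bar X, \bar D+\bar\Delta)$ is log-canonical with $K_{\bar X}+\bar D+\bar\Delta$ ample, adjunction on the normalisation gives that $\deg(K_{\bar X}+\bar D+\bar\Delta)\restr{B^\nu} = \deg\omega_{B^\nu} + \deg\Diff_{B^\nu}$, and the different has non-negative coefficients which are $\geq 1$ at every point of $B^\nu$ lying over a singular point of $D\cup\Delta$, by Lemma~\ref{lem: multinode}\refenum{ii}; more precisely, at a singular point $p$ of $D\cup\Delta$ there are $\mu_p(B)$ branches of $B$ through $p$, each contributing a point of $B^\nu$ with different-coefficient $1$, for a total contribution of $\mu_p(B)$. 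Combined with $\deg(K_X+\Delta)\restr{B^\nu} = \deg(K_{\bar X}+\bar D+\bar\Delta)\restr{B^\nu}$ (the pullback of $K_X+\Delta$ to the normalisation is $K_{\bar X}+\bar D+\bar\Delta$) and $\deg\omega_{B^\nu} = 2p_a(B^\nu)-2$, this yields the first line.

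For the second inequality I would use Lemma~\ref{lem: multinode}\refenum{iii}, which says $p_a(B) = p_a(B^\nu) + \sum_{p}(\mu_p(B)-1)$, the sum running over singular points of $B$. Substituting $2p_a(B^\nu) = 2p_a(B) - 2\sum_p(\mu_p(B)-1)$ into the first line turns the bound into $2p_a(B)-2 + \sum_{p\in\sing B}(2-\mu_p(B)) + \bigl(\text{correction from points of }\sing(D\cup\Delta)\text{ that are smooth on }B\bigr)$. The bookkeeping here is the delicate part: a point $p$ that is singular in $D\cup\Delta$ but smooth on $B$ has $\mu_p(B)=1$, contributes nothing to $p_a(B)-p_a(B^\nu)$, but still contributes $\mu_p(B)=1$ to the sum $\sum_{p\in\sing(D\cup\Delta)}\mu_p(B)$ appearing in line one; there are exactly $s$ such points by definition, giving the extra $+s$. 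Meanwhile a point singular on $B$ is automatically singular on $D\cup\Delta$ (since $B\subseteq D\cup\Delta$ and $B$ is a subcurve of a reduced curve), so those points are already accounted for and produce the $\sum_{p\in\sing B}(2-\mu_p(B))$ term. The third inequality is then immediate since $s\geq 0$.

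The main obstacle I expect is getting the index set bookkeeping exactly right — carefully distinguishing the three kinds of points (singular on $B$, singular on $D\cup\Delta$ but smooth on $B$, smooth on $D\cup\Delta$) and checking that each sum in each displayed line ranges over precisely the intended set, so that the telescoping of $p_a(B^\nu)$ against $p_a(B)$ leaves exactly the claimed remainder. A secondary point needing care is the identification $\deg(K_X+\Delta)\restr{B^\nu} = \deg(K_{\bar X}+\bar D+\bar\Delta)\restr{B^\nu}$: one must check that restricting the $\mathbb{Q}$-Cartier divisor $K_X+\Delta$ and then normalising the curve agrees with pulling back to $\bar X$ first, which holds because $\pi^*(K_X+\Delta) \sim_\IQ K_{\bar X}+\bar D+\bar\Delta$ and degrees of $\mathbb{Q}$-Cartier divisors on curves are computed after passing to any multiple that is Cartier, hence are insensitive to the order of these operations. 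Once these identifications and the combinatorics are in place, each inequality follows by a one-line substitution.
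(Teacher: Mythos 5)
Your reduction of the second and third inequalities to the first --- via Lemma~\ref{lem: multinode}(iii) and the observation that the $s$ points which are singular on $D\cup\Delta$ but smooth on $B$ each contribute $\mu_p(B)=1$ to the sum $\sum_{p\in\sing(D\cup\Delta)}\mu_p(B)$ while contributing nothing to $p_a(B)-p_a(B^\nu)$ --- is exactly the paper's bookkeeping and is correct. The gap is in your derivation of the first inequality. You assert that $B^\nu$ is the normalisation of the strict transform $\bar B$ and that $\deg (K_X+\Delta)\restr{B^\nu}=\deg(K_{\bar X}+\bar D+\bar \Delta)\restr{\bar B^\nu}$. Both statements fail for the components of $B$ contained in the non-normal locus $D$: on the conductor the normalisation $\pi$ is generically two-to-one, so if $B_1=B\cap D$ then the strict transform $\bar B_1$ maps $2:1$ onto $B_1$, its normalisation $\bar B_1^\nu$ is a double cover of $B_1^\nu$ rather than equal to it, and correspondingly $\deg (K_X+\Delta)\restr{B_1}=\tfrac 12\deg(K_{\bar X}+\bar D+\bar\Delta)\restr{\bar B_1^\nu}$, with an essential factor $\tfrac12$. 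The paper therefore splits $B=B_1+B_2$ with $B_2=B\cap \Delta$, applies adjunction with the different on each $\bar B_i^\nu$, and uses the Hurwitz formula $K_{\bar B_1^\nu}=\pi_1^*K_{B_1^\nu}+R$ to convert $\tfrac12\deg K_{\bar B_1^\nu}$ back into $\deg K_{B_1^\nu}$ at the cost of a ramification term $\tfrac12 R$.

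Your count of the different's contribution then also needs repair: the different lives on $\bar B^\nu$, not on $B^\nu$, and over a singular point $p$ of $D\cup\Delta$ a branch of $B_1$ corresponds either to two points of $\bar B_1^\nu$ (each with different-coefficient $1$, weighted by $\tfrac12$) or to one ramification point (different-coefficient $1$ weighted by $\tfrac12$, plus $\tfrac12$ from $R$). In both cases the net contribution per branch is $1$, so your final total $\sum_p \mu_p(B)$ happens to be correct --- but this cancellation is precisely the content of the multiplicity formula \eqref{eq: decompose2} in the paper's proof and does not follow from the naive identification you wrote down. As it stands, the first displayed inequality is only established by your argument for subcurves $B\subset\Delta$; for subcurves meeting $D$ the Hurwitz and ramification bookkeeping is missing.
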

\begin{proof}
Let $B_1 = B\cap D$ and $B_2 = B\cap \Delta$, that is, $B_1$ (resp.\ $B_2$) is the subcurve of $B$ contained in $D$ (resp.\ $\Delta$). As Weil divisors we can write $D+\Delta = A +B = A_i + B_i$ with $A$ (resp.\ $A_i$) being the complement curve of $B$ (resp.\ $B_i$). We adopt our usual notation for strict transforms in $\bar X$ and normalisation: for example, $\bar B$ is the strict transform of $B$ in $\bar X$, $B^\nu$ is the normalisation of $B$ while $\bar B^\nu$ is the normalisation of $\bar B$. 

Note that  $\pi_1\colon\bar B_1^\nu\rightarrow  B_1^\nu$ is a double cover and $\pi_2\colon\bar B_2^\nu \rightarrow B_2^\nu$ is an isomorphism. Thus by Hurwitz formula 
 \begin{equation}\label{eq: hurwitz}
  K_{\bar B_1^\nu} = \pi^* K_{B_1^\nu} + R \text{ and } K_{\bar B_2^\nu} = \pi^* K_{B_2^\nu} 
 \end{equation}
 where $R$ is the (reduced) ramification divisor on $\bar B_1^\nu$.
Now we compute the degree of $K_X+\Delta$ restricted to $B$:
\begin{align}\label{eq: decompose}
\deg  (K_X+\Delta)\restr{B} &=\deg (K_X+\Delta)\restr{ B_1}+ \deg (K_X+\Delta)\restr{B_2} \notag\\
&=\frac{1}{2} \deg (K_{\bar X}+\bar D+\bar\Delta)\restr{\bar B_1^\nu} + \deg (K_{\bar X}+\bar D+\bar\Delta)\restr{\bar B_2^\nu}\notag\\
  &  = \frac{1}{2}\deg (K_{\bar B_1^\nu}+\Diff_{\bar B_1^\nu}(\bar A_1))+ \deg (K_{\bar B_2^\nu}+\Diff_{\bar B_2^\nu}(\bar A_2)) \notag\\
& \overset{\eqref{eq: hurwitz}}= \deg \left(\frac{1}{2}\left(\pi_1^*K_{ B_1^\nu}+R +\Diff_{\bar B_1^\nu}(\bar A_1)\right)+ \pi_2^*K_{B_2^\nu} +\Diff_{\bar B_2^\nu}(\bar A_2)\right) \\
& = \deg (K_{ B_1^\nu} +K_{B_2^\nu})+ \deg \left(\frac 1 2 \left(\Diff_{\bar B_1^\nu}(\bar A_1)+R\right)+\Diff_{\bar B_2^\nu}(\bar A_2)\right)\notag\\
& = \deg K_{ B^\nu} +  \deg \left(\frac 1 2 \left(\Diff_{\bar B_1^\nu}(\bar A_1)+R\right)+\Diff_{\bar B_2^\nu}(\bar A_2)\right)\notag
\end{align}

Let $p$ be a point in $B$ such that  $\mu_p(D\cup\Delta)\geq 2$. Then we can decompose the multiplicity $\mu_p(B) = \mu_p(B_1) + \mu_p(B_2)$. Taking the degree of the maps into account we have
\begin{equation}\label{eq: decompose2}
\begin{split}
&\mu_p(B_1) = \frac 1 2\#(\nu_1\circ\pi_1)^{-1}(p) +\frac 1 2\#\left((\nu_1\circ\pi_1)^{-1}(p)\cap R\right),\\
&\mu_p(B_2) = \#(\nu_2\circ\pi_2)^{-1}(p),
\end{split}
\end{equation} 
where $\nu_i\colon B_i^\nu\rightarrow B_i$, $i=1,2$, are the normalisations and  $R$ is the set of ramification points of $\pi_1$.

The different $\Diff_{\bar B^\nu}(\bar A) = \Diff_{\bar B_1^\nu}(\bar A_1) + \Diff_{\bar B_2^\nu}(\bar A_2) $ is effective and has coefficient 1 over every singular point of $D\cup\Delta$ which lies in $B$ (Lemma~\ref{lem: multinode}). Combining this with  \eqref{eq: decompose} and \eqref{eq: decompose2}, we have
\begin{align*}
  \deg (K_X+\Delta)\restr{B} &\geq \deg K_{B^\nu} + \sum_{p\in\sing {(D\cup \Delta)}}  (\mu_p(B_1) + \mu_p(B_2))\\
 &= 2p_a(B^\nu) - 2 + \sum_{p\in\sing {(D\cup \Delta)}}  \mu_p(B)\\
\intertext{and using Lemma~\ref{lem: multinode}\refenum{iii}}
& \geq 2p_a(B) - 2+s + \sum_{p\in\sing B}  \mu_p(B) -2(\mu_p(B)-1)\\
& = 2p_a(B) - 2 +s +\sum_{p\in\sing B}  2-\mu_p(B) \\
& \geq 2p_a(B) - 2 + \sum_{p\in\sing B}  2-\mu_p(B).
\end{align*}
This concludes the proof.
\end{proof}

\begin{prop}\label{prop: restriction to D}
Let  $(X, \Delta)$ be a stable log surface with global index $I$. 
\begin{enumerate}
 \item If $m\geq4$ then the line bundle $\omega_X(\Delta)^{[mI]}\restr{ D\cup\Delta}$  is base-point-free and the associated morphism is birational.
\item If $m\geq 3$ and $I\geq 2$ then the line bundle $\omega_X(\Delta)^{[mI]}\restr{ D\cup\Delta}$  is base-point-free.
\item If $m\geq 2$ and $D\cup\Delta$ is a nodal curve then the line bundle $\omega_X(\Delta)^{[mI]}\restr{ D\cup\Delta}$  is base-point-free.
\end{enumerate}
 In each case the line bundle is very ample if the  inequality for $m$ is strict.
\end{prop}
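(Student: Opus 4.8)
The plan is to deduce the statement from the curve embedding theorem, Theorem~\ref{thm: curve embedding}, applied to the curve $C:=(D\cup\Delta)_\red$ together with the line bundle $\kl:=\omega_X(\Delta)^{[mI]}\restr C$. Indeed $C$ is a projective Cohen--Macaulay curve (reduced and purely one-dimensional), it is regular at the generic point of every component so that every subcurve of $C$ is automatically generically Gorenstein, and $\kl$ is a genuine line bundle on $C$ because $mI(K_X+\Delta)$ is Cartier by definition of the index. So it suffices to prove $\deg\kl\restr B\geq 2p_a(B)$ for every subcurve $B\subseteq C$, with strict inequality whenever the bound on $m$ is strict; decomposing $B$ into its connected components, on which $\deg$ is additive while $2p_a$ is subadditive, I reduce to the case $B$ connected, with irreducible components $B_1,\dots,B_r$ and normalisation $\nu\colon B^\nu\to B$.

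The argument rests on two complementary lower bounds for $\deg\kl\restr B$. First, writing $\kl\restr B=\bigl(\ko_X(I(K_X+\Delta))\restr B\bigr)^{\tensor m}$ and using that $I(K_X+\Delta)$ is an \emph{ample Cartier} divisor, each $\deg\ko_X(I(K_X+\Delta))\restr{B_i}$ is a positive integer, so
\[
 \deg\kl\restr B \;=\; m\sum_{i=1}^{r}\deg\ko_X\bigl(I(K_X+\Delta)\bigr)\restr{B_i}\;\geq\; mr .
\]
Second, since $\kl\restr B$ is a line bundle its Riemann--Roch degree is preserved under the finite birational map $\nu$, so $\deg\kl\restr B=mI\deg(K_X+\Delta)\restr{B^\nu}$, and Lemma~\ref{lem: deg comparison} gives
\[
 \deg\kl\restr B \;\geq\; mI\Bigl(2p_a(B)-2+s+\sum_{p\in\sing B}\bigl(2-\mu_p(B)\bigr)\Bigr),\qquad s\geq 0,
\]
where the multi-node correction $\sum_p(2-\mu_p(B))$ vanishes when $D\cup\Delta$ is nodal. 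I would also keep at hand the bookkeeping identities $p_a(B)=p_a(B^\nu)+\sum_{p\in\sing B}(\mu_p(B)-1)$ (Lemma~\ref{lem: multinode}\refenum{iii}) and $p_a(B^\nu)=1-r+\sum_i p_a(B_i^\nu)$, which together yield $r\geq 1-p_a(B)+\sum_{p\in\sing B}(\mu_p(B)-1)$: a multi-node of large multiplicity is costly in the arithmetic genus but necessarily creates correspondingly many components, hence many positive terms in the first bound.

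Base-point-freeness is then a case analysis. If $p_a(B)\leq 1$, the first bound already gives $\deg\kl\restr B\geq mr\geq m$, which is $\geq 2p_a(B)$ in every part (and $>2p_a(B)$ in the strict regimes); this is where ampleness of the index-one Cartier divisor replaces the classical low-genus Reider analysis. If $p_a(B)\geq 2$ and $D\cup\Delta$ is nodal, the second bound gives $\deg\kl\restr B\geq mI(2p_a(B)-2)\geq 4p_a(B)-4\geq 2p_a(B)$ since $mI\geq 2$, with strict inequality once $m\geq 3$; this settles \refenum{iii}. If $p_a(B)\geq 2$ in the general case one has $mI\geq 4$ (either $m\geq 4$, or $m\geq 3$ together with $I\geq 2$), and I would combine the two bounds: when the correction term is not too negative the second bound alone gives $\geq 2p_a(B)$, while when it is very negative the forced lower bound on $r$ makes the first bound do the job; an elementary computation with the multi-node combinatorics (number of branches versus genus drop) shows the two regimes overlap, so one of the two bounds always works, strictly as soon as $m$ exceeds the stated threshold. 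This general $p_a(B)\geq 2$ estimate is the step I expect to be the most delicate, and it is precisely what the hypotheses $I\geq 2$ in \refenum{ii} and nodality in \refenum{iii} are designed to render painless.

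Finally, very ampleness in each part follows because, in the strict regime, every estimate above becomes strict, so Theorem~\ref{thm: curve embedding} applies in its very-ample form. For the birationality assertion in \refenum{i} I would use that $\deg\kl\restr B>2p_a(B)$ holds for all $B$ with $p_a(B)\leq 1$: by the mechanism behind Theorem~\ref{thm: curve embedding} this forces the morphism attached to $\kl$ to separate a general pair of points of $C$, and since that morphism contracts no curve (as $K_X+\Delta$ is ample, cf.\ Remark~\ref{rem: finitemorphism}) it is a birational morphism onto its image.
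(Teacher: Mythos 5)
Your overall strategy (reduce to Theorem~\ref{thm: curve embedding} and verify $\deg\kl\restr B\geq 2p_a(B)$ subcurve by subcurve via Lemma~\ref{lem: deg comparison}) is the paper's, but the decisive estimate is left unproved and the birationality argument in \refenum{i} does not work. On the first point: for connected singular $B$ with $p_a(B)\geq 2$ you assert that ``one of the two bounds always works'' by an ``elementary computation'' that you do not perform, and that computation is the entire content of the hard case. It does in fact go through: setting $N=\sum_{p\in\sing B}(\mu_p(B)-2)$, your second bound handles the range $N\leq 2p_a(B)-2-\tfrac{2p_a(B)}{mI}$, and in the complementary range one has $k\geq 1$, hence $r\geq 2-p_a(B)+N$, and $mr>2p_a(B)$ precisely when $\tfrac{2}{m}\bigl(1+\tfrac1I\bigr)\leq 1$, i.e.\ $m\geq 4$ or ($m\geq 3$ and $I\geq 2$) — but this must be written out. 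The paper avoids the dichotomy by \emph{adding} the two inequalities of Lemma~\ref{lem: deg comparison} (the one in terms of $p_a(B^\nu)$ and the one in terms of $p_a(B)$), obtaining $\deg 2(K_X+\Delta)\restr{B}\geq 2p_a(B)+2(p_a(B^\nu)-1)\geq 2p_a(B)-2\deg I(K_X+\Delta)\restr B$ and hence $(2I+2)(K_X+\Delta)B\geq 2p_a(B)$; that route is cleaner and avoids the case split.

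The genuine gap is birationality in \refenum{i}. You claim that strictness of the inequality for all $B$ with $p_a(B)\leq 1$ forces separation of a general pair of points ``by the mechanism behind Theorem~\ref{thm: curve embedding}''. The obstruction to separating two points $p,q$ is a subcurve $B$ containing both with $\deg\bigl(\kl\tensor\ki_{p+q}\bigr)\restr B\leq 2p_a(B)-2$, and such a $B$ need not have $p_a(B)\leq 1$; moreover strict inequality genuinely fails for some subcurves when $m=4$, $I=1$ (a rational component with a single $3$-multi-node and $\deg\omega_X(\Delta)\restr B=1$ has $\deg\kl\restr B=4=2p_a(B)$, cf.\ Example~\ref{ex: 3-multi-nodal rational curve}), so no purely numerical application of the criterion can yield the separation statement here. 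The paper's proof uses an extra, non-numerical input: in the equality case, automatic adjunction (Lemma~\ref{lem: H^1 on curve}) forces $\omega_B\isom\kl\tensor\ki_{p+q}\restr B$ to be invertible, while $B$ must contain a $\mu$-multi-node with $\mu\geq 3$ (nodal $B$ being excluded by the strict inequality established for nodal subcurves), and the dualising sheaf of such a curve is not a line bundle. You need to supply this argument to obtain birationality.
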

\begin{proof}
 By the curve embedding theorem~\ref{thm: curve embedding} it suffices to check that for every (Cohen--Macaulay) subcurve $B \subset D\cup\Delta$  we have $\deg mIK_X\restr B\geq 2p_a(B)$ for base-point-freeness respectively $\deg mIK_X\restr B\geq 2p_a(B)+1$ for very ampleness. We concentrate on the base-point-freeness now, the proof for very ampleness being the same. 

We start with \refenum{iii} where $B$ is a nodal curve. Recall that $I(K_X+\Delta)$ is an ample Cartier divisor and thus has degree at least one on each irreducible component of $B$. If $p_a(B)\leq1$ the inequality $\deg mIK_X\restr B\geq 2p_a(B)$ is trivially satisfied for $m\geq2$. 
  If $p_a(B)\geq 2$ then 
by Lemma~\ref{lem: deg comparison}
\[ \deg I(K_X+\Delta)\restr B\geq  \deg (K_X+\Delta)\restr B\geq  2p_a(B) - 2 + \sum_{p\in\sing B}  2-\mu_p(B)=2p_a(B) - 2\geq 2\]
and thus $\deg 2I(K_X+\Delta)\restr B\geq 2p_a(B)$.

Now suppose $B$ is singular.  We compute, using Lemma~\ref{lem: deg comparison},
\begin{align*}
\deg 2(K_X+\Delta)\restr{B^\nu}&=  \deg (K_X+\Delta)\restr{B^\nu} +\deg (K_X+\Delta)\restr{B^\nu}\\
&\geq 2p_a(B^\nu) -2 + \sum_{p\in\sing B}  \mu_p(B) + 2p_a(B)-2   +\sum_{p\in\sing B} (2- \mu_p(B))\\
& \geq 2p_a(B)+2p_a(B^\nu)-4 + 2\#\{p\in B\,|\,\mu_p(B)\geq 2\}\\
&\geq 2p_a(B) + 2(p_a(B^\nu) -1) \hspace{1cm}\text{(because $B$ is singular)} 
\end{align*}
If $B$ has $a$ irreducible components then $p_a(B^\nu) -1=-\chi(\ko_{B^\nu})$ is at least $-a$ and thus we continue the computation to get 
\[\deg 2(K_X+\Delta)\restr{B^\nu}\geq 2p_a(B) -2a  \geq  2p_a(B) -2 \deg I(K_X+\Delta)\restr{B^\nu},\]
where the second inequality follows because  the degree of the line bundle $I(K_X+\Delta)$ is at least 1 on each irreducible component of $B^\nu$.

Therefore we have $\deg (2I + 2) (K_X+\Delta)\restr{B^\nu} \geq 2p_a(B)$. Consequently if $m\geq \frac{2 +2I}{I}$ then $\deg mI(K_X+\Delta)\restr{B}\geq 2p_a(B)$. This proves the  base-point-freeness in case \refenum{i} and \refenum{ii}.

It remains to prove that $\omega_X(\Delta)^{[mI]}\restr{ D\cup\Delta}$ gives a birational map for $m\geq4$. Since this map is very ample if $I\geq2$ or $m>4$ by the above, we may assume that $I=1$ and $m=4$.

Let $p,q$ be two smooth points of $D\cup \Delta$ and let $\ki=\ko_{D\cup\Delta}(-p-q)\subset \ko_{D\cup\Delta}$ be the corresponding (invertible) ideal sheaf. To show that $\omega_X(\Delta)^{[4]}$ separates $p$ and $q$  is suffices to prove $H^1({D\cup\Delta},\omega_X(\Delta)^{[4]}\tensor \ki)=0$.
Assume on the contrary $H^1({D\cup\Delta},\omega_X(\Delta)^{[4]}\tensor \ki)\neq0$. Then, by Lemma~\ref{lem: H^1 on curve}, there is non-empty subcurve $B\subset {D\cup\Delta}$ such that 
\begin{equation}\label{eq: subcurve of D}
\chi(B,\omega_X(\Delta)^{[4]}\tensor \ki\restr B)\leq \chi(B,\omega_B)
\end{equation}
with equality if and only if $\omega_X(\Delta)^{[4]}\tensor \ki\restr B\cong\omega_B$. 
On the other hand, we have shown above that
$\deg \omega_X(\Delta)^{[4]}\tensor \ki\restr B \geq 2p_a(B) -2$ with strict inequality if $B$ is nodal. Hence $B$ is not a nodal curve, \eqref{eq: subcurve of D} is in fact an equality and $\omega_B\isom \omega_X(\Delta)^{[4]}\tensor \ki\restr B$ is a line bundle. But the dualising sheaf a curve with a $\mu$-multi-node with $\mu\geq3$ is not a line bundle by \cite[Aside 5.9.3]{KollarSMMP}\,---\,a contradiction. 
\end{proof}

\begin{rem}
Examining  the proof closely the bounds can be improved under additional assumptions, for example if there are no rational irreducible components of ${D\cup\Delta}$ on which $\omega_X$ has small degree.

Numerically, the worst case occurs if $X$ is Gorenstein, the non-normal locus $D$ has just one singular point, and every irreducible component $B$ of $D$ is rational with a 3-multi-nodal point such that $\deg(\omega_X\restr B) = 1$. In this case, denoting by $k$ the number of irreducible components of $D$, we have $p_a(D) =2k$ and hence $\deg(\omega_X^{\otimes3}\restr D) =2p_a(D) - k$. Such examples can be constructed explicitly, see \cite{liu-rollenske13}. We will analyse the simplest such curve, a rational curve with a single 3-multi-node in Example \ref{ex: 3-multi-nodal rational curve}. We will see that the conditions of Proposition \ref{prop: restriction to D} are not necessary for base-point-freeness on such a curve but the bound for very ampleness is sharp: $\omega_X(\Delta)^{[4I]}\restr{ D\cup\Delta}$ need not be an embedding.
\end{rem}

The following technical result will be used later on.
\begin{cor}\label{cor: codim 3}
 Let $(X, \Delta) $ be a stable log surface with
global index $I$. Assume $m\geq 3$ if $D\cup\Delta$ is a nodal curve and $m\geq4$ otherwise.

Then for each irreducible component $B\subset D\cup \Delta$ the image of the restriction map 
\[ \rho_B\colon H^0(D\cup \Delta, \omega_X(\Delta)^{[mI]}\restr{D\cup\Delta}) \to H^0(B, \omega_X(\Delta)^{[mI]}\restr B)\]
has dimension at least $3$.
\end{cor}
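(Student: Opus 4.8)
The plan is to use the base-point-freeness (resp.~very ampleness) of $\omega_X(\Delta)^{[mI]}\restr{D\cup\Delta}$ from Proposition~\ref{prop: restriction to D} together with the fact that the dualising sheaf restricted to a single irreducible component has positive degree. Write $\kl = \omega_X(\Delta)^{[mI]}\restr{D\cup\Delta}$ and fix an irreducible component $B\subset D\cup\Delta$. Under the hypotheses on $m$, Proposition~\ref{prop: restriction to D} tells us that $\kl$ is base-point-free on the Cohen--Macaulay curve $D\cup\Delta$, so the restriction map $\rho_B\colon H^0(D\cup\Delta,\kl)\to H^0(B,\kl\restr B)$ hits a base-point-free subsystem of $H^0(B,\kl\restr B)$; in particular its image is a base-point-free linear system on $B$ of some projective dimension $r-1$, where $r=\dim\operatorname{im}\rho_B$. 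The goal is to show $r\geq 3$.

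The key numerical input is the degree estimate from the proof of Proposition~\ref{prop: restriction to D}: since $I(K_X+\Delta)$ is an ample Cartier divisor, $\deg\kl\restr B \geq 2p_a(B)$ (with strict inequality when $m>4$ or $I\geq2$ or $D\cup\Delta$ nodal), and in fact $\deg\kl\restr B = mI\deg(K_X+\Delta)\restr{B}$ is large. First I would bound $\dim H^0(B,\kl\restr B)$ from below by Riemann--Roch on the irreducible curve $B$: $\dim H^0(B,\kl\restr B) \geq \deg\kl\restr B - p_a(B) + 1 \geq p_a(B)+1$, and since $\deg\kl\restr B\geq mI\cdot 1\geq 3$, a base-point-free linear system of that degree on an irreducible curve has projective dimension at least $1$, hence $\dim H^0(B,\kl\restr B)\geq 2$; pushing the estimate, $\deg\kl\restr B\geq 2p_a(B)$ forces $\dim H^0(B,\kl\restr B)\geq p_a(B)+2 \geq 2$. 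To upgrade "image of $\rho_B$" to dimension $\geq 3$ I would instead argue directly on $D\cup\Delta$: let $B'=\overline{(D\cup\Delta)\setminus B}$ be the complementary subcurve, so $D\cup\Delta = B+B'$, and use the exact sequence of Lemma~\ref{lem: div seq}\refenum{ii}, which here reads
\[ 0\to \ko_{B'}(mI(K_X+\Delta)-B)\to \kl\to \kl\restr B\to 0.\]
Then $\operatorname{coker}\bigl(H^0(D\cup\Delta,\kl)\to H^0(B,\kl\restr B)\bigr)$ injects into $H^1(B',\ko_{B'}(mI(K_X+\Delta)-B))$. The point is to show this $H^1$ is small — small enough that, combined with $\dim H^0(B,\kl\restr B)$ being large, the image still has dimension $\geq 3$.

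The main obstacle I anticipate is controlling that $H^1$: the divisor $mI(K_X+\Delta)-B$ restricted to $B'$ need not be ample (subtracting $B$ can make it negative on components of $B'$ meeting $B$), so a naive Kodaira-type vanishing does not apply. I would handle this the same way as in Proposition~\ref{prop: restriction to D}: if this $H^1$ were nonzero, Lemma~\ref{lem: H^1 on curve} would produce a subcurve $B''\subset B'$ with a generically surjective map $\ko_{B''}(mI(K_X+\Delta)-B)\to\omega_{B''}$, giving $\deg\bigl(mI(K_X+\Delta)-B\bigr)\restr{B''}\leq \deg\omega_{B''} = 2p_a(B'')-2$; then the degree comparison of Lemma~\ref{lem: deg comparison} applied to $B''\subset D\cup\Delta$, together with $\deg B\restr{B''} = B\cdot B'' \geq 0$ (an intersection of effective curves with no common component, so non-negative) and $mI\geq 3$, should contradict this — at worst one loses a bounded amount and the image of $\rho_B$ drops by at most that bound, which by the hypotheses on $m$ (matching exactly the thresholds in Proposition~\ref{prop: restriction to D}) still leaves dimension $\geq 3$. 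An alternative, cleaner route, which I would try first, is simply to combine base-point-freeness of $\kl$ on $D\cup\Delta$ with the observation that $\rho_B$ is surjective onto a base-point-free subsystem and that $\kl\restr B$ has degree $\geq mI\geq 3$ on the irreducible curve $B$: a base-point-free $g^1_d$ with $d\geq 3$ on an irreducible curve, not coming from a $g^1_2$ for $d=2$, already has $r\geq 1$, and by separating the second point (using the very ampleness cases, or a length-two subscheme argument on $B$) one gets $r\geq 3$; the delicate part remains the degenerate Gorenstein/multi-node components, which is exactly where the $m\geq4$ versus nodal $m\geq 3$ dichotomy enters.
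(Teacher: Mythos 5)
Your second (``cleaner'') route is essentially the paper's proof: the linear system $\im\rho_B$ is base-point-free of degree $\deg\omega_X(\Delta)^{[mI]}\restr B=mI(K_X+\Delta)B\geq m\geq 3$, and once one knows that the resulting morphism $\phi\colon B\to\IP^{n-1}$ is birational onto its non-degenerate image, $n\leq 2$ is impossible ($n=1$ would make $\phi$ constant, and $n=2$ would force $\phi(B)=\IP^1$ and hence $\deg\omega_X(\Delta)^{[mI]}\restr B=1$). But as written you secure generic injectivity only ``using the very ampleness cases'', and very ampleness of $\omega_X(\Delta)^{[mI]}\restr{D\cup\Delta}$ is \emph{not} available in the hardest case of the corollary, namely $m=4$, $I=1$, $D\cup\Delta$ not nodal: Proposition~\ref{prop: restriction to D}\refenum{i} gives very ampleness only for $m\geq 5$. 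What you need there\,---\,and what the paper uses\,---\,is precisely the \emph{birationality} assertion of Proposition~\ref{prop: restriction to D}\refenum{i}, whose proof is the ``length-two subscheme argument'' you allude to (vanishing of $H^1(D\cup\Delta,\omega_X(\Delta)^{[4]}\tensor\ki_{p+q})$ for general $p,q$, using that the dualising sheaf of a curve with a $\mu$-multi-node, $\mu\geq3$, is not a line bundle). Citing that statement closes the argument; gesturing at ``separating the second point'' does not.

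Your first route has a more serious problem. The cokernel of $\rho_B$ injects into $H^1(B',\ko_{B'}(mI(K_X+\Delta)-B))$, and to kill this via Lemma~\ref{lem: H^1 on curve} you must bound $\deg(mI(K_X+\Delta)-B)\restr{B''}=mI(K_X+\Delta)B''-BB''$ from below for every subcurve $B''\subset B'$. The term $BB''=\bar B\bar B''$ is a Mumford intersection number of components of $\bar D+\bar\Delta$ on the normalisation and is not bounded in terms of $m$, $I$ or the arithmetic genera: two components of the conductor may meet in arbitrarily many points. So the claim that ``at worst one loses a bounded amount'' is unjustified, and this route does not close. Note also that the corollary bounds $\dim\im\rho_B$, not $h^0(B,\omega_X(\Delta)^{[mI]}\restr B)$, so the Riemann--Roch estimate alone proves nothing without (near-)surjectivity of $\rho_B$, which is exactly what this computation fails to deliver.
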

We will see in Example \ref{ex: 3-multi-nodal rational curve} that the bound cannot be improved if $D\cup \Delta$ is not a nodal curve: it is possible that $h^0(B, \omega_X(\Delta)^{[mI]}\restr B)=2$ for $m=3$.
\begin{proof}
Let $B$  be an irreducible component of $D$ and  $n$ the dimension of the image of the restriction map $\rho_B$. By Proposition~\ref{prop: restriction to D} the map induced by $\omega_X(\Delta)^{[mI]}\restr {D\cup\Delta}$ is a birational morphism if $m\geq 4$ and an embedding if $m=3$ and $D\cup \Delta $ is nodal. Thus $\im\rho_B$ defines a birational morphism $\phi\colon B \to\phi(B)\subset  \IP^{n-1}$ and $\phi(B)$ is a non-degenerate curve of degree $\deg \omega_X(\Delta)^{[mI]}\restr B\geq 3$.  Therefore $\dim_\IC \im\rho_B\geq 3$ as claimed. 
\end{proof}

\subsection{Proof of Theorem~\ref{thm: base-point-free}}
Let $m\geq 4$, or $m\geq 3$ if one of the conditions in Theorem~\ref{thm: base-point-free}\refenum{ii} holds. Note that b) implies c) by the classification of log-canonical singularities with reduced boundary.

The restriction map $\gamma\colon H^0(X,\omega_X(\Delta)^{[mI]})\rightarrow H^0(D\cup\Delta,\omega_X(\Delta)^{[mI]}\restr{D\cup\Delta})$ is surjective by the vanishing in Corollary~\ref{cor: vanishing2}. By Proposition~\ref{prop: restriction to D} and the surjectivity of $\gamma$, we know that $\omega_X(\Delta)^{[mI]}$  has no base-points on $D\cup \Delta$ under our assumptions. 

Combining this with Corollary~\ref{cor: bpfoutsideD} we conclude that $\omega_X(\Delta)^{[mI]}$ has no base-points for $m\geq 4$ and for  $m\geq 3$ under the conditions given in Theorem~\ref{thm: base-point-free}\refenum{ii}. 

\begin{rem}
 According to the proof of Theorem~\ref{thm: base-point-free},  $\phi_{mI}$ ($m\geq 4$) already separates the points on different irreducible components of $X$, that is, the image has the same number of irreducible components; in addition no normal point of $X$ is mapped to the image of $D\cup \Delta$. 
\end{rem}

\section{Pluri-log-canonical embeddings}
In this section we prove our results on pluri-log-canonical embeddings. For a general stable log surface we can prove the following:

\begin{theo}\label{thm: main general}
Let $(X, \Delta)$ be a connected stable log surface of global index $I$.
\begin{enumerate}
\item The line bundle $\omega_X(\Delta)^{[mI]}$  is very ample for $m\geq 8$.
\item The line bundle $\omega_X(\Delta)^{[mI]}$  defines a birational morphism for $m\geq6$.
\item The line bundle $\omega_X(\Delta)^{[mI]}$  is very ample for $m\geq 6$ if $I\geq2$.
\end{enumerate}
\end{theo}
Under further assumptions on singularities and invariants we can improve these bounds. 
\begin{thm}\label{thm: main special}
 Let $(X, \Delta)$ be a connected stable log surface of global index $I$.
\begin{enumerate}
 \item The line bundle $\omega_X(\Delta)^{[mI]}$  is very ample  for $m\geq 7$ if one of the following holds:
\begin{enumerate}
\item There is no irreducible component $\bar X_i$ of the normalisation such that $\left(\pi^*(K_X+\Delta)\restr {\bar X_i}\right)^2=1$, and the union of $\Delta$ and the non-normal locus is a nodal curve.
\item  $X$ is normal and not $(K_X+\Delta)^2=1$.
\end{enumerate}
\item The line bundle $\omega_X(\Delta)^{[mI]}$  is very ample  for $m\geq6$ if the normalisation $\bar X$ is smooth along the conductor divisor and has at most canonical singularities elsewhere. 
 \item The line bundle $\omega_X(\Delta)^{[mI]}$  is very ample  for $m\geq5$ if $D\cup\Delta$ is a nodal curve,  $\bar X$ is smooth along the conductor divisor, and $X\setminus D$ has at most canonical singularities.
In particular these conditions are satisfied, if $(X,\Delta)$ has semi-canonical singularities.
\end{enumerate}
\end{thm}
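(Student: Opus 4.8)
The plan is to follow the ``curve embedding'' strategy already set up in Section~\ref{sect: divisors} and Theorem~\ref{thm: curve embedding}: to prove $\omega_X(\Delta)^{[mI]}$ is very ample it suffices, for every length-two subscheme $\zeta\subset X$, to produce a generically Gorenstein curve $C$ passing through $\zeta$ on which the restriction $\omega_X(\Delta)^{[mI]}\restr C$ satisfies the strict CFHR inequality $\deg\bigl(\omega_X(\Delta)^{[mI]}\restr C\bigr) > 2p_a(C)$ (together with the analogous strict inequalities on every generically Gorenstein subcurve $B\subset C$), and that the section producing $C$ extends to $X$. The natural candidate for $C$ is a pluri-log-canonical curve $C\in |I(K_X+\Delta)|$ (or a small multiple) through $\zeta$; its existence and the extendability of sections are governed by the base-point-freeness of $\omega_X(\Delta)^{[I]}$ on $D\cup\Delta$ (Proposition~\ref{prop: restriction to D}), the vanishing $H^1(X,\mathcal I(m(K_X+\Delta)))=0$ for $m\ge 2$ (Corollary~\ref{cor: vanishing2}), and Corollary~\ref{cor: codim 3}. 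So the three items should be reduced to purely numerical estimates for $\deg\bigl(\omega_X(\Delta)^{[mI]}\restr B\bigr)$ on subcurves $B$ of such a $C$, split according to whether $B$ meets the bad locus $D\cup\Delta$.

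The key steps, in order, are the following. First I would dispose of the case where $\zeta$ is supported on $D\cup\Delta$: here one works directly on the curve $D\cup\Delta$ using the very ample part of Proposition~\ref{prop: restriction to D} and the surjectivity of $H^0(X,\omega_X(\Delta)^{[mI]})\to H^0(D\cup\Delta,\omega_X(\Delta)^{[mI]}\restr{D\cup\Delta})$. Second, for $\zeta$ away from (or only partially on) $D\cup\Delta$, produce a log-well-behaved curve $C\in|aI(K_X+\Delta)|$ through $\zeta$ for the smallest possible $a$, using base-point-freeness and Corollary~\ref{cor: codim 3} to guarantee enough sections. Third, with $C$ fixed, bound $\deg\bigl(\omega_X(\Delta)^{[mI]}\restr B\bigr)$ from below for every generically Gorenstein $B\subseteq C$: on the part of $B$ disjoint from $D\cup\Delta$ one pulls back to the normalisation $\bar X$ and uses that $\bar X$ has canonical singularities there (in item~(3)) or mild singularities (in items~(1),(2)), invoking a Reider/Kawachi-type estimate as in Proposition~\ref{prop: bpfaux}; on the part of $B$ inside $D\cup\Delta$ one uses the degree comparison of Lemma~\ref{lem: deg comparison} and the multi-node genus formula of Lemma~\ref{lem: multinode}\refenum{iii}. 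Fourth, assemble these via Lemma~\ref{lem: div seq} to get the strict inequality on all of $C$, tuning the constant $a$ and hence $m$ so that the worst contribution (an irreducible component with $\left(\pi^*(K_X+\Delta)\restr{\bar X_i}\right)^2=1$ for a normal canonical surface) is still beaten: this is exactly where the hypotheses of items~(1),(2),(3) are used to knock the bound down from $8$ to $7$, $6$, $5$ respectively. Finally, the parenthetical claim ``in particular semi-canonical singularities satisfy the conditions'' follows from the definition: semi-canonical means $X$ is semi-canonical, so along $D$ the normalisation $\bar X$ is smooth (the conductor being a nodal double-crossing divisor) and $D\cup\Delta$ is nodal, while away from $D$ the surface is canonical.

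The main obstacle I expect is controlling the ``bad'' subcurves $B\subset C$ that contain irreducible components of the non-normal locus $D$ or of $\Delta$: as the authors flag in the introduction and Remark~\ref{rem: not sharp}, such curves do not behave well under normalisation — the intersection pairing is insensitive to meetings in the non-normal locus (Remark~\ref{rem: intersection properties}), the restriction of divisorial sheaves is non-multiplicative, and the presence of $\mu$-multi-nodes with $\mu\ge 3$ makes $\omega_B$ fail to be a line bundle (cf.\ the contradiction at the end of the proof of Proposition~\ref{prop: restriction to D}). The delicate accounting is to show that for the curve $C$ constructed in item~(3) — under the extra hypotheses $D\cup\Delta$ nodal, $\bar X$ smooth along the conductor, $X\setminus D$ canonical — every such $B$ nonetheless satisfies the strict CFHR bound with $m=5$; this needs the sharpest form of Lemma~\ref{lem: deg comparison}, the exclusion of $3$-multi-nodes (so $\omega_B$ is locally free and the curve embedding theorem applies cleanly), and a careful treatment of the components of $C$ that are swallowed by $\Delta$, where one compares $\deg(K_X+\Delta)\restr{B^\nu}$ with $2p_a(B)-2$ via the different. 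The estimates for milder singularities (items~(1),(2)) are structurally the same but with more slack, so I would expect them to fall out once the $m=5$ case is understood.
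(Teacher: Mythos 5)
Your skeleton (find a pluri-log-canonical curve $C$ through each length-two $\xi$, extend sections via the vanishing results, and verify the strict CFHR inequality on every subcurve $B\subset C$) is the right one and matches the paper. But the technical core of the proof is missing, and the replacement you propose would not work. The curve $C$ produced by Lemma~\ref{lem: curve} is \emph{log-well-behaved}, so no subcurve $B\subset C$ contains a component of $D\cup\Delta$; consequently your plan to split $B$ into ``the part inside $D\cup\Delta$'' (handled by Lemma~\ref{lem: deg comparison} and Lemma~\ref{lem: multinode}) and ``the part outside'' (handled by a Kawachi-type estimate as in Proposition~\ref{prop: bpfaux}) is vacuous on one side and inapplicable on the other: Lemma~\ref{lem: deg comparison} only concerns subcurves of $D\cup\Delta$ itself, and Kawachi's theorem yields base-point-freeness of adjoint bundles, not the lower bound $\deg\omega_X(\Delta)^{[mI]}\restr B\geq 2p_a(B)+1$ for an arbitrary (possibly non-reduced) subcurve. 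What actually carries the proof is the control of the \emph{failure of adjunction} on $X$ for log-well-behaved curves, via the hat transform on the minimal semi-resolution and the semi-numerical cycle: Lemma~\ref{lem: correction to adjunction} bounds $(K_X+\Delta+B)B-(2p_a(B)-2)$ from below by an intersection number of exceptional cycles, and Lemma~\ref{lem: bound} turns this into $(mI+1)(K_X+\Delta)B\geq 2p_a(B)-2$ for subcurves of a \emph{reduced} $C\in|mI(K_X+\Delta)|$. None of this appears in your outline.

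Concretely, the three items are proved differently from what you describe. For item (1) the hypotheses are used only to make $\phi_{3I}$ a morphism (Theorem~\ref{thm: base-point-free}(ii)), so that either $\phi_{3I}\restr\xi$ already embeds (and adding a base-point-free multiple finishes), or Lemma~\ref{lem: curve}(i) supplies a \emph{reduced} log-well-behaved $C\in|3I(K_X+\Delta)|$; then Lemma~\ref{lem: bound} plus $(K_X+\Delta)B\geq 1/I$ gives $7I(K_X+\Delta)B\geq 2p_a(B)+1$ when $I=1$. For items (2) and (3) the curve $C\in|4I(K_X+\Delta)|$ (resp.\ $|3I(K_X+\Delta)|$) need not be reduced; the smoothness of $\bar X$ along the conductor forces the codiscrepancy $\Lambda$ and the cycle $\hat\Gamma_{B_{\bar X}}-\Gamma^*_{B_{\bar X}}$ to have disjoint supports, so the correction term in Lemma~\ref{lem: correction to adjunction} is $-(\hat\Gamma_{B_{\bar X}}-\Gamma^*_{B_{\bar X}})^2\geq 0$, and the missing $+3$ (resp.\ $+2$) for proper subcurves $B<C$ comes from the connectedness Lemma~\ref{lem: connect} applied on the components of the normalisation --- an ingredient entirely absent from your proposal. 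Without Lemmas~\ref{lem: correction to adjunction}, \ref{lem: bound} and \ref{lem: connect} (or substitutes of equal strength), the numerical verification in your step four cannot be completed.
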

We cannot prove and do not believe all bounds given in Theorems~\ref{thm: main general} and \ref{thm: main special} to be sharp; we will discuss some evidence for this in Remark \ref{rem: not sharp}.

\begin{rem}
One should resist the temptation to believe that $X$ is Gorenstein if the normalisation is smooth: for example if we take $\bar D \subset \bar X$ to be the coordinate axes in the plane and pinch both lines via the restriction of $p\mapsto -p$ then the resulting slc surface has index 2. A local trivialising  section of $\omega_{\bar X}(\bar D)$ is $\frac{dx\wedge dy}{xy}$ and the residue along the $x$-axis $-dx/x$ is invariant under the involution and thus does not descend to $X$ by Proposition \ref{prop: sections}.

However, in this case the index is always at most 2.
\end{rem}

\subsection{Outline of the proof and preliminary results}\label{sect: prelim for embedding}
The strategy of our proof is classical: for every subscheme $\xi$ of length two, find an appropriate pluri-log-canonical curve $C$ containing it, and then prove that $mI(K_X+\Delta)$ embeds this curve, and hence $\xi$, by the numerical criterion of Theorem \ref{thm: curve embedding}.
There are two obstacles that restrict the choice of a curve $C$ that we can handle: 
\begin{itemize}
 \item If $C$ has a common irreducible component with $D\cup\Delta$, i.e., $C$ is not log-well-behaved (Definition~\ref{def: wb}), then it is not under control when we pull it back to the  normalisation to compute intersection numbers and the adjunction. 
\item If $(\bar X, \bar\Delta+\bar D)$ has worse than canonical singularities then some of the connectedness properties of ample curves are needed to counter-weigh the contributions from singularities, so in this case we can only effectively handle reduced curves $C$. 
\end{itemize}
We start with some preliminary considerations how to construct log-well-behaved  curves and how to get around the failure of the adjunction formula. At the end we include a version of the connectedness lemma for ample Cartier divisors on normal surfaces.

\subsubsection{Construction of log-well-behaved curves}

\begin{lem}\label{lem: curve}
Let $(X, \Delta)$ be a stable log surface and $\xi\subset X$  a subscheme of length 2.
\begin{enumerate}
 \item If $\omega_X(\Delta)^{[mI]}$ has no base-points but $\phi_{mI}\restr \xi$ is not an embedding then there exists a log-well-behaved reduced curve $C\in|mI(K_X+\Delta)|$  containing $\xi$.
\item 
Assume $m\geq 3 $ and $m\geq 4$ if $D\cup\Delta$ is not a nodal curve.
Then there exists a log-well-behaved curve $C$  in $|mI(K_X+\Delta)|$ containing $\xi$.

In addition, if $|mI(K_X+\Delta)|$ has no base-points then  $C$  can be chosen to be reduced unless $\phi_{mI}\restr\xi$ is an embedding and the line spanned by $\phi_{mI}(\xi)$ is contained in the branch locus of $\phi_{mI}$.
\end{enumerate}
\end{lem}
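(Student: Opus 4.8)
The plan is to treat $L=\omega_X(\Delta)^{[mI]}$ as a genuine line bundle (it is, since $mI(K_X+\Delta)$ is Cartier) and to manipulate sections of it directly. Let $\ki_\xi\subset\ko_X$ be the ideal sheaf of $\xi$; then $H^0(X,L\otimes\ki_\xi)$ has codimension at most $\mathrm{length}(\xi)=2$ in $H^0(X,L)$, and for any nonzero section $s$ in it the divisor $\mathrm{div}(s)$ is a Cohen--Macaulay curve in $|mI(K_X+\Delta)|$ containing $\xi$, which is log-well-behaved precisely when $s$ does not vanish identically on any irreducible component of $D\cup\Delta$. So the whole problem is to find such an $s$ not vanishing on any of the finitely many components $Z_1,\dots,Z_k$ of $D\cup\Delta$, and, when possible, to make $\mathrm{div}(s)$ reduced.

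For the first assertion of \refenum{ii} I would argue by a dimension count. By Corollary~\ref{cor: vanishing2} the restriction $H^0(X,L)\to H^0(D\cup\Delta,L\restr{D\cup\Delta})$ is surjective, and then Corollary~\ref{cor: codim 3}\,---\,this is exactly where the hypotheses $m\geq 3$, resp.\ $m\geq 4$, are used\,---\,shows that the image of $H^0(X,L)$ in $H^0(Z_j,L\restr{Z_j})$ has dimension at least $3$; hence $K_j:=\{s\in H^0(X,L):s\restr{Z_j}\equiv 0\}$ has codimension at least $3$. Since $3>2$, the codimension-$\leq 2$ subspace $H^0(X,L\otimes\ki_\xi)$ is not contained in $K_j$ for any $j$, and since a vector space over the infinite field $\IC$ is not a union of finitely many proper subspaces, there is $s\in H^0(X,L\otimes\ki_\xi)$ with $s\restr{Z_j}\not\equiv 0$ for all $j$; its divisor is the desired $C$.

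For \refenum{i} I would use base-point-freeness to pass to the finite morphism $\phi_{mI}\colon X\to\IP^N$, which contracts no curve (Remark~\ref{rem: finitemorphism}). If $\phi_{mI}\restr\xi$ is not an embedding then $\phi_{mI}(\xi)$ is a single reduced point $z$, the subspace $H^0(X,L\otimes\ki_\xi)$ has codimension exactly $1$, and the hyperplane $\IP\,H^0(X,L\otimes\ki_\xi)\subset|L|$ is exactly the system of pullbacks of hyperplanes through $z$; its base locus on $X$ is the finite set $\phi_{mI}^{-1}(z)$. By Bertini in characteristic $0$ a general member $C$ of this system is smooth away from $\phi_{mI}^{-1}(z)\cup\sing X$. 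Because $\phi_{mI}$ contracts no curve, a general $C$ contains no component of $D\cup\Delta$; since the only one-dimensional part of $\sing X$ is $D$, every component of $C$ then has its generic point at a smooth point of $X$ off the finite base locus, so $C$ is generically reduced, hence reduced, and it is log-well-behaved and contains $\xi$. This proves \refenum{i}. In the remaining case of \refenum{ii}, where $\phi_{mI}\restr\xi$ is an embedding, the curves of $|L|$ through $\xi$ are precisely the pullbacks $\phi_{mI}^{*}H$ of the hyperplanes $H$ containing the line $\ell=\langle\phi_{mI}(\xi)\rangle$; running the same Bertini argument on this subsystem, a general $\phi_{mI}^{*}H$ is log-well-behaved, since a component $Z_j$ of $D\cup\Delta$ with $\phi_{mI}(Z_j)=\ell$ would force $\phi_{mI}$ not to be an isomorphism onto its image over $\ell$ (using $\deg L\restr{Z_j}=mI\deg(K_X+\Delta)\restr{Z_j}\geq m\geq 3$ if $Z_j\subset\Delta$, and the non-normality of $X$ along $D$ if $Z_j\subset D$), i.e.\ $\ell\subset\mathrm{Branch}(\phi_{mI})$; and a general such pullback is reduced unless $\phi_{mI}$ ramifies over $\ell$, again the case $\ell\subset\mathrm{Branch}(\phi_{mI})$.

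I expect the main obstacle to be exactly the log-well-behavedness: one genuinely needs the strict inequality $3>\mathrm{length}(\xi)=2$ coming from Corollary~\ref{cor: codim 3} (hence the bounds on $m$) to push $C$ off every component of $D\cup\Delta$, and in the embedding case of \refenum{ii} one must keep careful track of how the components of $D\cup\Delta$ and the ramification of $\phi_{mI}$ meet the line $\ell$\,---\,this bookkeeping is precisely what forces the exceptional branch-locus case into the statement. The Bertini and "finite union of subspaces" steps are routine once this point is set up correctly.
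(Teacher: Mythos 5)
Your strategy coincides with the paper's on every point where it is complete: part \refenum{i} via Bertini applied to hyperplane sections of the image of the finite morphism $\phi_{mI}$ through the reduced point $\phi_{mI}(\xi)$, the first assertion of \refenum{ii} via the codimension count based on Corollaries~\ref{cor: vanishing2} and \ref{cor: codim 3} together with the fact that a vector space over $\IC$ is not a finite union of proper subspaces, and the reducedness statement via reduction to the case where $\phi_{mI}\restr\xi$ is an embedding and the pencil of hyperplanes through the line $\ell$.

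There is, however, one genuine gap in the first assertion of \refenum{ii}: the case $D\cup\Delta=\emptyset$. Writing $L=\omega_X(\Delta)^{[mI]}$, your existence argument rests entirely on the subspaces $K_j$ having codimension at least $3$, which in particular forces $h^0(X,L)\geq 3$ and hence $H^0(X,L\otimes\ki_\xi)\neq 0$. When $D\cup\Delta$ is empty there are no components $Z_j$, Corollary~\ref{cor: codim 3} says nothing, and your argument produces no section at all, even though the lemma still asserts that some curve of $|mI(K_X+\Delta)|$ passes through $\xi$. The paper treats this case separately, using Blache's Riemann--Roch for normal surfaces together with $\chi(\ko_X)\geq 1$ to obtain $h^0(X,\omega_X^{[mI]})\geq 4$. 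A second, smaller point: in the embedding subcase of the ``in addition'' statement, your justification of log-well-behavedness (that a component $Z_j$ mapping onto $\ell$ would force $\ell$ into the branch locus) does not by itself produce a log-well-behaved curve, since the lemma's exceptional clause concerns only reducedness. The cleaner route, which is the paper's, is to observe that the pullbacks of hyperplanes containing $\ell$ are exactly the members of $H^0(X,L\otimes\ki_\xi)$, whose general member is already log-well-behaved by the first assertion of \refenum{ii}; only the reducedness of the general pullback then needs the branch-locus hypothesis.
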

\begin{proof}
Let $\xi\subset X$ be an arbitrary subscheme of length 2  and $\ki_\xi$ its ideal sheaf. Recall that if $\phi_{mI}$ is a morphism then it is automatically finite (Remark~\ref{rem: finitemorphism}). 

In case \refenum{i} let $Z\subset \IP^{h^0(\omega(\Delta)^{[mI]})-1}$ be the image of $\phi_{mI}$ and $p$ be the image of $\xi$, a reduced point. Then the preimage of every hyperplane containing
$p$ contains $\xi$ and the base locus of this linear system of hyperplanes is exactly the point $p$. Thus a general hyperplane section of $Z$ containing $p$ is reduced by Bertini and does not contain any irreducible component of the image of the non-normal locus or of the branch locus of $\phi_{mI}$. Pullback gives the required curve $C$.

We now prove \refenum{ii}. 
First assume that $D\cup\Delta$ is empty, i.e., we are on a normal surface without boundary. Then by Blache's version of Riemann--Roch for normal surfaces \cite[3.4,~3.3(c),~2.1(d)]{bla95a} and Proposition~\ref{prop: vanishing}, which applies as $m\geq 3$, we have
\begin{align*}
h^0(X, \omega_X^{[mI]})&=\chi(\ko_X(mIK_X))= \chi(\ko_X)+\frac{m(mI-1)}{2}I K_X^2\\
&\geq \chi(\ko_X)+\frac{m(mI-1)}{2}\geq \chi(\ko_X)+3\geq 4\\
\end{align*}
where in the last step we used $\chi(\ko_X)\geq 1$ from  {\cite[Theorem~2]{bla94}}.
Thus there are at least 4 sections and at least a 2-dimensional space of sections vanishing on $\xi$. 

Now assume $D\cup \Delta$ is non-empty. Consider, for an irreducible component $B$ of $D\cup\Delta$, the diagram with exact rows and columns
\[ \begin{tikzcd}
 0\dar \\
 H^0(X, \omega_X(\Delta)^{[mI]}\tensor \ki_\xi) \dar\arrow{dr}{\psi}\\
 H^0(X, \omega_X(\Delta)^{[mI]})\rar]&H^0(D, \omega_X(\Delta)^{[mI]}\restr D) \rar\dar{\rho_B} &0\\
&H^0(B, \omega_X(\Delta)^{[mI]}\restr B). 
\end{tikzcd}
\]
By Corollary~\ref{cor: codim 3} the kernel of $\rho_B$ has codimension at least 3 under our assumptions while the image of $\psi$ has codimension at most 2. Thus a general section in $ H^0(X, \omega_X(\Delta)^{[mI]}\tensor \ki_\xi)$ does not restrict to zero on any irreducible component of $D\cup \Delta$. 

Now assume in addition that $|mI(K_X+\Delta)|$ has no base-points.  Because of \refenum{i}, to get a reduced curve we only need to consider the case where $\phi_{mI}\restr\xi$ is an embedding. In that case $\phi_{mI}(\xi)$ spans a line which is the base locus of the linear system of hyperplanes  whose preimage contains $\xi$. Since a general curve $C\in|mI(K_X+\Delta)|$ that contains $\xi$ is log-well-behaved, as is shown above, we can choose a hyperplane with reduced preimage if and only if the line is not contained in the branch locus of $\phi_{mI}$.
\end{proof}

\subsubsection{Corrections to adjunction}\label{sect: adjunction}
Now our aim is to bound  the correction terms occurring in the adjunction formula for a log-well-behaved curve on a stable log surface. 

Let $(X,\Delta)$ be a stable log surface. We consider the minimal semi-resolution $f\colon Y\rightarrow X$. Let $\eta\colon \bar Y \rightarrow Y$ be the normalisation whose conductor divisor is denoted by $D_{\bar Y}$. The map $\bar Y \to X$ factors through $\bar X$ the normalisation of $X$, whose conductor divisor will now be denoted $D_{\bar X}$ (instead of $\bar D$); we get a commutative diagram
\[
\begin{tikzcd}
 \bar Y \rar{\bar f} \dar{\eta} & \bar X\dar{\pi}\\ Y\rar{f} & X.
\end{tikzcd}
\]
Let $B\subset X$ be a log-well-behaved curve. We fix some notation to formulate the first result:
\begin{itemize}
\item $B_{\bar Y}\subset \bar Y$ and  $B_{\bar X} \subset \bar X$ are the strict transforms of $B$.
\item   $\hat B_Y\subset Y$ is the  hat transform of $B$, defined in Appendix~\ref{section: hattransform}.
\item  $\hat\Gamma_{B_{\bar X}} = \hat B_{\bar Y} - B_{\bar Y}$ and  $\Gamma_{B_{\bar X}}^* = \bar f^*B_{\bar X} - B_{\bar Y}$.
\item $\Delta_{\bar X}\subset \bar X$ the strict transform of $\Delta$. 
 \item $\Lambda=\bar f^* (K_{\bar X} +D_{\bar X}) - (K_{\bar Y}+D_{\bar Y})$ is the codiscrepancy of the pair $(\bar X, D_{\bar X})$. Note that $\Lambda$ is effective because $ f\colon Y\rightarrow  X$ is the minimal semi-resolution \cite[Prop.~4.12]{ksb88}. 
\end{itemize}

In the next lemma we estimate the failure of adjunction on the singular surface $X$ in terms of data on the normalisation of the minimal semi-resolution.
\begin{lem}\label{lem: correction to adjunction}
 Let $(X, \Delta)$ be a stable log surface and let $B\subset X$ be a log-well-behaved, not necessarily reduced curve. Then with the above notation we have
\[
(K_X+\Delta+B)B\geq (K_X+B)B
\geq2p_a(B)-2 + (\Lambda - \hat\Gamma_{B_{\bar X}} + \Gamma^*_{B_{\bar X}})(\hat\Gamma_{B_{\bar X}} - \Gamma^*_{B_{\bar X}}).
\]
\end{lem}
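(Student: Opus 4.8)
The plan is to compare the two adjunction-type formulas on $\bar X$ and on $\bar Y$ and track all the correction terms carefully, using that $B$ is log-well-behaved so that its strict transform $B_{\bar X}$ avoids the components of $D_{\bar X}$ and $\Delta_{\bar X}$, and that $\bar f\colon \bar Y\to \bar X$ is a birational morphism between normal surfaces. The first inequality $(K_X+\Delta+B)B\geq(K_X+B)B$ is immediate since $\Delta$ is effective and its strict transform meets $B_{\bar X}$ only in points counted with nonnegative multiplicity (here we use that $B$ is log-well-behaved, so $\bar B$ and $\bar\Delta$ share no component). For the second inequality I would first rewrite $(K_X+B)B$ as $(K_{\bar X}+D_{\bar X}+B_{\bar X})B_{\bar X}$ using Definition~\ref{defin: intersection} and the formula $(\pi^*\omega_X)^{[1]}\cong\omega_{\bar X}(D_{\bar X})$ from Section~\ref{sect: divisors}; since $B_{\bar X}$ is disjoint from $D_{\bar X}$ in codimension appropriate to the computation, and by adjunction on the normal surface $\bar X$ this quantity equals $2p_a(B_{\bar X}) - 2 + (\text{a nonnegative different term})$, so in particular $(K_X+B)B \geq 2p_a(B_{\bar X}) - 2$. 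But this is not quite what we want: we need $2p_a(B)$, not $2p_a(B_{\bar X})$, and the genus can drop under $\pi$.

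The heart of the argument is therefore to pass from $\bar X$ down to $\bar Y$ and back up, keeping track of how $p_a$ changes. I would fix the canonical divisors compatibly and use $K_{\bar Y}+D_{\bar Y} = \bar f^*(K_{\bar X}+D_{\bar X}) - \Lambda$ with $\Lambda$ effective. Writing $B_{\bar Y}$ for the strict transform of $B$ on $\bar Y$ and using $\bar f^* B_{\bar X} = B_{\bar Y} + \Gamma^*_{B_{\bar X}}$, one computes
\[
(K_{\bar X}+D_{\bar X}+B_{\bar X})B_{\bar X} = \bar f^*(K_{\bar X}+D_{\bar X}+B_{\bar X})\cdot\bar f^* B_{\bar X} = (K_{\bar Y}+D_{\bar Y}+\Lambda + B_{\bar Y} + \Gamma^*_{B_{\bar X}})(B_{\bar Y}+\Gamma^*_{B_{\bar X}}).
\]
On the semismooth surface $\bar Y$ adjunction applies to the curve $B_{\bar Y}$ (which avoids $D_{\bar Y}$ since $B$ is log-well-behaved and $Y$ is a minimal semi-resolution): $(K_{\bar Y}+B_{\bar Y})B_{\bar Y} = 2p_a(B_{\bar Y})-2$. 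Then I would expand the right-hand side, group the terms into $2p_a(B_{\bar Y})-2$, a term $\Lambda(B_{\bar Y}+\Gamma^*_{B_{\bar X}})\geq 0$, and the terms involving $\Gamma^*_{B_{\bar X}}$ and the hat-transform corrections; the introduction of $\hat\Gamma_{B_{\bar X}} = \hat B_{\bar Y} - B_{\bar Y}$ is exactly the bookkeeping device that converts $p_a(B_{\bar Y})$ into $p_a(B)$, via the properties of the hat transform recorded in Appendix~\ref{section: hattransform} (the hat transform is the right object whose arithmetic genus equals $p_a(B)$ because it records the gluing data of the non-normal locus along $B$).

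Collecting terms, the cross-term $\Lambda\cdot D_{\bar Y}$-type contributions and the effective pieces drop out or are absorbed, and what survives beyond $2p_a(B)-2$ is precisely $(\Lambda - \hat\Gamma_{B_{\bar X}} + \Gamma^*_{B_{\bar X}})(\hat\Gamma_{B_{\bar X}} - \Gamma^*_{B_{\bar X}})$, using $\hat\Gamma_{B_{\bar X}}^2$, $\Gamma^{*2}_{B_{\bar X}}$, and the mixed terms together with the relation $p_a(\hat B_{\bar Y}) - p_a(B_{\bar Y})$ expressed via $(\hat\Gamma_{B_{\bar X}} - \Gamma^*_{B_{\bar X}})$ intersections. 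I expect the main obstacle to be the precise bookkeeping of the hat transform: getting the identity $p_a(B) = p_a(B_{\bar Y}) + (\text{correction in terms of }\hat\Gamma_{B_{\bar X}}, \Gamma^*_{B_{\bar X}})$ right, and making sure that all the genuinely nonnegative contributions ($\Lambda$ effective, the different effective, the exceptional intersection numbers) are correctly separated from the single term that is allowed to be negative. The inequalities themselves are then immediate once the identity is in place; no deep input beyond standard adjunction on normal and semismooth surfaces and the structure of the hat transform is needed.
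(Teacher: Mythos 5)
Your overall architecture matches the paper's: reduce the first inequality to $\Delta B=\Delta_{\bar X}B_{\bar X}\geq 0$, pull everything back to $\bar Y$ via $\bar f^*(K_{\bar X}+D_{\bar X})=K_{\bar Y}+D_{\bar Y}+\Lambda$ and $\bar f^*B_{\bar X}=B_{\bar Y}+\Gamma^*_{B_{\bar X}}$, and expand intersection numbers using the exceptional identities. That part of your computation would go through essentially as in the paper.

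The genuine gap is in how you handle the arithmetic genus of $B$. You assert that $B_{\bar Y}$ avoids $D_{\bar Y}$ because $B$ is log-well-behaved, and then apply adjunction to $B_{\bar Y}$ with respect to $K_{\bar Y}$ alone. This is false: log-well-behaved only means no \emph{component} of $B$ lies in $D\cup\Delta$; the curve $B$ can and generally does meet the non-normal locus, so $B_{\bar Y}$ meets $D_{\bar Y}$, and the intersection $\hat B_{\bar Y}D_{\bar Y}$ is exactly the term that cannot be dropped. Relatedly, you treat the passage from $p_a(B_{\bar Y})$ to $p_a(B)$ as a formal bookkeeping identity encoded by $\hat\Gamma_{B_{\bar X}}$ and $\Gamma^*_{B_{\bar X}}$, describing the hat transform as ``the right object whose arithmetic genus equals $p_a(B)$.'' It is not: $\hat B_Y$ is defined purely numerically (minimal with $\hat B_Y E\leq 0$ for all exceptional $E$), and what one actually has is only the \emph{inequality} $p_a(B)\leq p_a(\hat B_{\bar Y})+\tfrac{1}{2}\hat B_{\bar Y}D_{\bar Y}$ of Proposition~\ref{prop: genus hat transform}. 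That inequality is the real content of the lemma; its proof occupies the appendix and requires the relative duality $\left(\omega_X(C)/f_*\omega_Y(F)\right)_p\cong R^1f_*\ko_Y(-F)_p^\vee$, the bound on $\dim R^1f_*\ko_Y(-\hat B_Y)$ via semi-numerical cycles, and the local corrections $n_q$, $d_q$ at pinch points and normal crossing points. Once that inequality is granted, one applies adjunction on $\bar Y$ to $\hat B_{\bar Y}$ with the \emph{log} canonical $K_{\bar Y}+D_{\bar Y}$, obtaining $2p_a(B)-2\leq (K_{\bar Y}+D_{\bar Y}+\hat B_{\bar Y})\hat B_{\bar Y}$, and the rest is the expansion you describe. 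So your proposal is missing the one step that carries the difficulty, and the shortcuts you propose in its place ($B_{\bar Y}\cap D_{\bar Y}=\emptyset$, an exact genus identity) do not hold.
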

\begin{proof}
Since $B$ is log-well-behaved, $\Delta B = \Delta_{\bar X} B_{\bar X}\geq 0$ and it suffices to prove the second inequality. By Proposition~\ref{prop: genus hat transform}, $p_a(B) \leq p_a(\hat B_{\bar Y}) + \frac{\hat B_{\bar Y}\hat D_{\bar Y}}{2}$, and hence, by adjunction on $\bar Y$, we have
\[
 2 p_a(B) -2 \leq (K_{\bar Y} + D_{\bar Y} +\hat B_{\bar Y}) \hat B_{\bar Y}. 
\]
It follows that
\begin{align*}
&(K_{X} +B)B - (2p_a(B)-2) \\
=\,&(K_{\bar X} + D_{\bar X} + B_{\bar X})B_{\bar X}  - (2p_a(B)-2) \\
\geq\, & (K_{\bar X} + D_{\bar X} +  B_{\bar X}) B_{\bar X} - (K_{\bar Y} + D_{\bar Y} + \hat B_{\bar Y}) \hat B_{\bar Y}\\ 
  = \,& (K_{\bar Y} + D_{\bar Y} + \Lambda + B_{\bar Y} + \Gamma^*_{B_{\bar X}}) B_{\bar Y} - (K_{\bar Y} + D_{\bar Y} + B_{\bar Y} + \hat\Gamma_{B_{\bar X}})(B_{\bar Y}+\hat\Gamma_{B_{\bar X}})\\
 = \, &(\Lambda+\Gamma^*_{B_{\bar X}} -\hat \Gamma_{B_{\bar X}})B_{\bar Y}- (K_{\bar Y}+ D_{\bar Y} + B_{\bar Y} + \hat\Gamma_{B_{\bar X}})\hat\Gamma_{B_{\bar X}}\\
\intertext{and since $B_{\bar Y}\bar E_i = -\Gamma^*_{B_{\bar X}}\bar E_i$ and $(K_{\bar Y}+ D_{\bar Y} + \Lambda)\bar E_i=\pi^*K_X\bar E_i=0$ for any $i$}
 = \, &(\Lambda+\Gamma^*_{B_{\bar X}} -\hat \Gamma_{B_{\bar X}})(-\Gamma^*_{B_{\bar X}})- (- \Lambda -\Gamma^*_{B_{\bar X}} + \hat\Gamma_{B_{\bar X}})\hat\Gamma_{B_{\bar X}}\\
   =\, &(\Lambda - \hat\Gamma_{B_{\bar X}} + \Gamma^*_{B_{\bar X}})(\hat\Gamma_{B_{\bar X}} - \Gamma^*_{B_{\bar X}}),
\end{align*}
Bringing $2p_a(B)-2$ to the other side gives the second inequality.\end{proof}

\begin{lem}\label{lem: bound}
Let $(X, \Delta)$ be a stable log surface of global index $I$, $m\geq 1$ and let  $C\in|mI(K_X+\Delta)|$ be a log-well-behaved reduced curve. Then for every subcurve $B\subset C$ we have, in the notation introduced in Section~\ref{sect: adjunction},
\[ (mI+1)(K_X+\Delta)B = (mI+1)(K_{\bar X} + D_{\bar X} +\Delta_{\bar X}) B_{\bar X} \geq 2p_a(B) -2.\]
\end{lem}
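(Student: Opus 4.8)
The plan is to bound $(K_X+\Delta+B)B$ from below by $2p_a(B)-2$ using Lemma~\ref{lem: correction to adjunction}, and then to absorb the self-intersection $B^2$ against $mI(K_X+\Delta)B$ by exploiting that $B$ lies inside the effective Cartier divisor $C$. The asserted equality is immediate: $\pi$ is finite with $(\pi^*\omega_X)^{[1]}\cong\omega_{\bar X}(D_{\bar X})$, and $\Delta$ is well-behaved by definition of a log surface, so the strict transform of the well-behaved divisor $K_X+\Delta$ is $\sim K_{\bar X}+D_{\bar X}+\Delta_{\bar X}$; hence $(mI+1)(K_X+\Delta)B=(mI+1)(K_{\bar X}+D_{\bar X}+\Delta_{\bar X})B_{\bar X}$ by Definition~\ref{defin: intersection}(i).

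First I would write $C=B+B'$ with $B'$ the complementary subcurve, which is again effective and log-well-behaved (a subcurve of a log-well-behaved curve is log-well-behaved). Since $C$ is reduced, $B$ and $B'$ share no component, so their strict transforms on the normal surface $\bar X$ are effective divisors with no common component and $B\cdot B'=B_{\bar X}\cdot B'_{\bar X}\ge 0$ by Mumford's intersection pairing \cite{sak84}. As $mI(K_X+\Delta)$ is Cartier and $C\in|mI(K_X+\Delta)|$, pulling back the linear equivalence gives $mI(K_X+\Delta)B=C\cdot B=B^2+B'\cdot B\ge B^2$, and therefore $(mI+1)(K_X+\Delta)B\ge (K_X+\Delta)B+B^2=(K_X+\Delta+B)B$. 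Applying Lemma~\ref{lem: correction to adjunction} to the log-well-behaved curve $B$ then yields
\[
(mI+1)(K_X+\Delta)B\;\ge\;2p_a(B)-2+\bigl(\Lambda-\hat\Gamma_{B_{\bar X}}+\Gamma^*_{B_{\bar X}}\bigr)\bigl(\hat\Gamma_{B_{\bar X}}-\Gamma^*_{B_{\bar X}}\bigr),
\]
so the lemma follows once the correction term $\mathcal E:=\bigl(\Lambda-\hat\Gamma_{B_{\bar X}}+\Gamma^*_{B_{\bar X}}\bigr)\bigl(\hat\Gamma_{B_{\bar X}}-\Gamma^*_{B_{\bar X}}\bigr)$ is shown to be non-negative.

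That last step is where I expect the real work to lie. Both the codiscrepancy $\Lambda$ and $Q:=\hat\Gamma_{B_{\bar X}}-\Gamma^*_{B_{\bar X}}=\hat B_{\bar Y}-\bar f^*B_{\bar X}$ are $\bar f$-exceptional, since they push forward to $0$ on $\bar X$; hence $\mathcal E=\Lambda\cdot Q-Q^2$, and $-Q^2\ge 0$ because the intersection form on the $\bar f$-exceptional locus is negative definite. This reduces the claim to $\Lambda\cdot Q\ge 0$, which I would establish by a componentwise analysis along each $\bar f$-exceptional chain, using the classification of exceptional configurations over slc points (Section~\ref{sect: numerical cycle}), the fact that the coefficients of $\Lambda$ lie in $[0,1]$ by minimality of the semi-resolution, and the basic properties of the hat transform from Appendix~\ref{section: hattransform}. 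The main obstacle is precisely that the hat transform is not the strict transform, so $Q$ need not be effective and one cannot just quote the negativity lemma; a careful bookkeeping of discrepancy coefficients against the coefficients of $\hat\Gamma_{B_{\bar X}}-\Gamma^*_{B_{\bar X}}$ along each exceptional chain seems unavoidable.
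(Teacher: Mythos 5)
Your reduction collapses at the last step, and not merely because the remaining bookkeeping is hard: the statement you reduce to is false. Once you discard all of $B'\cdot B\ge 0$, you need the correction term $\mathcal E=(\Lambda-\hat\Gamma_{B_{\bar X}}+\Gamma^*_{B_{\bar X}})(\hat\Gamma_{B_{\bar X}}-\Gamma^*_{B_{\bar X}})$ of Lemma~\ref{lem: correction to adjunction} to be non-negative on its own, and it need not be. Local model: let $p\in X$ be a simple elliptic singularity whose minimal resolution has a single elliptic exceptional curve $E$ with $E^2=-n$, $n\ge2$, so that $\Lambda=E$ (codiscrepancy coefficient $1$), and let $B$ be a curve whose strict transform meets $E$ transversally in one point. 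Then $\Gamma^*_{B_{\bar X}}=\tfrac1nE$ and $\hat\Gamma_{B_{\bar X}}=E$, hence $Q:=\hat\Gamma_{B_{\bar X}}-\Gamma^*_{B_{\bar X}}=\tfrac{n-1}{n}E$ and $\mathcal E=(\Lambda-Q)Q=\tfrac1n\cdot\tfrac{n-1}{n}E^2=-\tfrac{n-1}{n}<0$; in particular your target $\Lambda\cdot Q=-(n-1)\ge0$ fails badly. (Note also that $Q$ \emph{is} effective, by Lemma~\ref{lem: easy properties of hattransform}(ii), so effectivity of $Q$ was never the obstruction.)

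The missing idea is that you must retain the exceptional part of $B'\cdot B$ rather than discard all of it. Writing $C=A+B$, one has $mI(K_{\bar X}+D_{\bar X}+\Delta_{\bar X})B_{\bar X}-B_{\bar X}^2=A_{\bar X}B_{\bar X}=(A_{\bar Y}+\Gamma^*_{A_{\bar X}})B_{\bar Y}\ge\Gamma^*_{A_{\bar X}}B_{\bar Y}=-(\Gamma^*_{C_{\bar X}}-\Gamma^*_{B_{\bar X}})\Gamma^*_{B_{\bar X}}$, discarding only $A_{\bar Y}B_{\bar Y}\ge0$. Adding this to Lemma~\ref{lem: correction to adjunction} produces the combined correction $-(\hat\Gamma_{B_{\bar X}}-\Gamma^*_{B_{\bar X}})(\hat\Gamma_{B_{\bar X}}-\Lambda)-\Gamma^*_{B_{\bar X}}(\Gamma^*_{C_{\bar X}}-\hat\Gamma_{B_{\bar X}})$, and \emph{this} is non-negative termwise: $\hat\Gamma_{B_{\bar X}}-\Gamma^*_{B_{\bar X}}$ and $\Gamma^*_{B_{\bar X}}$ meet every $\bar f$-exceptional curve non-positively; $\Gamma^*_{C_{\bar X}}-\hat\Gamma_{B_{\bar X}}\ge0$ because $C$ is Cartier (Lemma~\ref{lem: easy properties of hattransform}(iii)); and $\hat\Gamma_{B_{\bar X}}-\Lambda$ has non-negative coefficients over $B_{\bar X}$, since there $\hat\Gamma_{B_{\bar X}}$ has integral coefficients $\ge1$ while the log-canonical codiscrepancy has coefficients $\le1$. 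In the model above the retained term equals $c-\tfrac1n\ge\tfrac{n-1}{n}$, where $c\ge1$ is the (integral, because $C$ is Cartier) multiplicity of $E$ in $f^*C$; this is exactly what compensates the negative $\mathcal E$. So the Cartier hypothesis on $C$ must enter through this retained exceptional term, and no refinement of the estimate for $\Lambda\cdot Q$ alone can close the gap.
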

\begin{proof}
The first equality is clear and we only prove the second. We decompose $C = A + B$ as a Weil divisor and let  $A_{\bar X}$, $A_{\bar Y}$, $C_{\bar X}$, and  $C_{\bar Y}$ be the strict transform of $A$ and $C$ in $\bar X$ resp.\ $\bar Y$.

We have 
\begin{align*}
 mI(K_{\bar X} + D_{\bar X} + \Delta_{\bar X}) B_{\bar X} -B_{\bar X}^2 &=(C_{\bar X}-B_{\bar X})B_{\bar X}\notag\\
&= A_{\bar X} B_{\bar X}\notag\\
                                &= (\bar f^* A_{\bar X})  (\bar f^*B_{\bar X})\notag\\
                                &= (A_{\bar Y} + \Gamma_{A_{\bar X}}^*) B_{\bar Y}\\
                                &\geq \Gamma_{A_{\bar X}}^* B_{\bar Y}\hspace{1cm} \text{ (since $C_{\bar Y}$ is reduced, $A_{\bar Y}B_{\bar Y}\geq 0$)}\notag\\
                                &= - (\Gamma_{C_{\bar X}}^*  - \Gamma_{B_{\bar X}}^*) \Gamma_{B_{\bar X}}^*\notag
\end{align*}
where in the last line we use that  $\bar E(B_{\bar Y}+\Gamma_{B_{\bar X}}) = 0$ for every $\bar f$-exceptional curve $\bar E$.

Adding this to the equation resulting from Lemma~\ref{lem: correction to adjunction} we get
\begin{multline}\label{eq: compare}
 (mI+1)(K_{\bar X} + D_{\bar X} + \Delta_{\bar X}) B_{\bar X} \\ \geq 2p_a(B)-2 -(\hat\Gamma_{B_{\bar X}} - \Gamma_{B_{\bar X}}^*) ( \hat\Gamma_{B_{\bar X}}- \Lambda) - \Gamma_{B_{\bar X}}^*(\Gamma_{C_{\bar X}}^*-\hat\Gamma_{B_{\bar X}}).
\end{multline}
By the definition of hat transform (Definition~\ref{defin: hattransform})  the intersection numbers of $\hat\Gamma_{B_{\bar X}} - \Gamma_{B_{\bar X}}^*$ and $\Gamma_{B_{\bar X}}^*$ with any exceptional divisor of $\bar f$ are non-positive. On the other hand $\Gamma_{C_{\bar X}}^*-\hat\Gamma_{B_{\bar X}}\geq 0$ by  Lemma~\ref{lem: easy properties of hattransform}\refenum{iii}. Also  $\hat\Gamma_{B_{\bar X}}- \Lambda$ has non-negative coefficients at every  exceptional divisors mapped to $B_{\bar X}$ because at each of those $\hat\Gamma_{B_{\bar X}}$ has coefficients at least $ 1$  while the coefficients of $\Lambda$ are at most $1$ for the log-canonical pair $(\bar X,D_{\bar X})$.  So 
\[ 
 -(\hat\Gamma_{B_{\bar X}} - \Gamma_{B_{\bar X}}^*) ( \hat\Gamma_{B_{\bar X}}- \Lambda) - \Gamma_{B_{\bar X}}^*(\Gamma_{C_{\bar X}}^*-\hat\Gamma_{B_{\bar X}})\geq 0\]
 and the claim follows from \eqref{eq: compare}.
\end{proof}

\subsubsection{Connectedness of ample Cartier divisors on normal surfaces}\label{section: connectedness}
  This section provides a connectedness result about ample Cartier divisors on normal surfaces.
\begin{lem}\label{lem: connect}
 Let $X$ be a projective normal surface and $M$ an ample Cartier divisor on $X$.
 Let $n\in\IN_{\geq 2}$ and $ C\in|n M|$. Suppose $ C= C_1+ C_2$ is a decomposition into two (non-empty) curves. Then 
  \[ C_1  C_2 \geq n -\frac 1{M^2}\geq n-1, \]
   and $C_1  C_2 = n-1$ if and only if $M^2=1$ and one of the $C_i$ is numerically equivalent to $M$.
  \end{lem}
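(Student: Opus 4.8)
The plan is to reduce everything to the Hodge index theorem on the normal surface $X$. First I would set $a = C_1\cdot M$ and $b = C_2\cdot M$. Since $C_1$ and $C_2$ are non-empty effective curves and $M$ is ample Cartier, both $a$ and $b$ are positive integers, and from $C_1+C_2\sim nM$ one gets the relation $a+b = nM^2$; note also that $M^2$ is a positive integer, so $M^2\geq 1$. (Here one should assume $X$ connected, hence — being normal — integral; this is the only case used, and the statement is false for disconnected $X$.)

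The computational heart is short. Using $M^2>0$, the algebraic index theorem for Mumford's intersection form on a normal projective surface (cf.\ \cite{sak84}) gives $(C_1\cdot M)^2\geq C_1^2\, M^2$, i.e.\ $C_1^2\leq a^2/M^2$. Writing $C_1 C_2 = C_1(nM-C_1) = na - C_1^2$ and inserting this bound yields
\[
 C_1 C_2 \;\geq\; na - \frac{a^2}{M^2} \;=\; \frac{a(nM^2-a)}{M^2} \;=\; \frac{ab}{M^2}.
\]
Then the elementary inequality $(a-1)(b-1)\geq 0$, that is $ab\geq a+b-1 = nM^2-1$, finishes the first assertion: $C_1 C_2 \geq (nM^2-1)/M^2 = n - 1/M^2 \geq n-1$.

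For the equality case I would argue as follows. If $C_1 C_2 = n-1$, then since $C_1 C_2\geq n-1/M^2$ and the left-hand side is an integer equal to $n-1$, we are forced to have $M^2=1$, and consequently every inequality in the chain above is an equality. Equality $ab = a+b-1$ means $(a-1)(b-1)=0$, so one of $a,b$ equals $1$; say $a = C_1\cdot M = 1$. Equality in the index inequality with $M^2=1>0$ forces $C_1$ to be numerically proportional to $M$, hence $C_1\equiv (C_1\cdot M)\,M = M$. Conversely, if $M^2=1$ and, say, $C_1\equiv M$, then $C_1\cdot M = C_1^2 = 1$ and $C_1 C_2 = C_1(nM-C_1) = n-1$, and symmetrically if $C_2\equiv M$. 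The only delicate point is invoking the Hodge index theorem — both the inequality and its equality clause — for the possibly singular surface $X$ with Mumford's $\IQ$-valued pairing; this is standard (pull back to a resolution $\rho$, where $\rho^*M$ remains nef and big while $(\rho^*C_1)^2 = C_1^2$ and $(\rho^*C_1)(\rho^*M)=C_1\cdot M$, then apply the smooth-surface statement and push forward), but it is the one external ingredient. Everything else is bookkeeping with the positive integers $a$ and $b$.
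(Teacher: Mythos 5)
Your proof is correct and follows essentially the same route as the paper's: both rest on the Hodge index theorem for Mumford's pairing (transported through a resolution) together with the integrality and positivity of $MC_i$, and your Cauchy--Schwarz form $(C_1M)^2\geq C_1^2M^2$ combined with $(a-1)(b-1)\geq 0$ is just a repackaging of the paper's decomposition $C_1\equiv aM+\epsilon$ and its minimisation of $a(n-a)M^2$ over the admissible range of $a$. One cosmetic remark: in the equality case you should not invoke integrality of $C_1C_2$, which is a priori only $\IQ$-valued for Mumford's pairing\,---\,the hypothesis $C_1C_2=n-1\geq n-1/M^2$ already forces $M^2=1$ directly.
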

  \begin{proof}
  We can numerically write (\cite[\S 4, Lem.~1]{bombieri73})
  \[
    C_1 \equiv a M + \epsilon,\  C_2 \equiv (n-a)M - \epsilon, 
  \]
  where $a=\frac{M C_1}{M^2}$ and $M\epsilon=0$. 

If $f\colon Y\to X$ is a resolution then, by \cite[p.~878]{sak84}, the Picard lattice of $Y$ contains the subspace of $f$-exceptional curves as a direct summand on which the intersection form is negative definite. Thus the Hodge index theorem on $Y$ implies that  the intersection form  on $X$ has signature $(1,k)$ for some $k\geq 0$. Hence $- \epsilon^2 \geq 0$, with equality if and only if $\epsilon\equiv 0$. 
  
  Since $M$ is an ample Cartier divisor, we have $M C_i >0$ for $i=1,2$, and both of the intersection numbers are integers. Therefore
   \[
    a=\frac{M C_1}{M^2}\geq\frac{1}{M^2}
   \]
   and also
  \[
  \frac{1}{M^2} \leq\frac{M C_1}{M^2}=a=n- \frac{M C_2}{M^2}\leq n - \frac{1}{M^2}.
  \]
The expression   $a(n-a)M^2$, considered as a  quadratic function in $a$,  attains its minimum for the smallest (or biggest) possible value of $a$ and  thus
  \begin{equation}\label{equation: connectedness}
     C_1 C_2=a(n-a)M^2 - \epsilon^2 \geq n - \frac{1}{M^2} - \epsilon^2\geq n - \frac{1}{M^2} \geq n - 1. 
  \end{equation}
 The inequalities in \eqref{equation: connectedness} are all equalities if and only if $M^2=1$, $\epsilon\equiv 0$, and  $a=aM^2=1$ or $a=aM^2=n-1$. This is possible if and only if one of the curves is numerically equivalent to $M$.  
  \end{proof}

\subsection{Proof of Theorem \ref{thm: main general}\refenum{i}}
Let  $\xi\subset X$ be a subscheme  of length 2. By Theorem \ref{thm: base-point-free} the 4-canonical map $\phi_{4I}$ is a morphism. If $\phi_{4I}\restr\xi$ is an embedding then $\phi_{mI}\restr\xi$ is also an embedding for $m\geq 8$, because $|(m-4)I(K_X+\Delta)|$ is base-point-free, again by Theorem \ref{thm: base-point-free}. If $\phi_{4I}\restr\xi$ is not an embedding then by Lemma~\ref{lem: curve} there exists a log-well-behaved reduced curve $C$ containing $\xi$.
Proposition~\ref{prop: vanishing} yields a surjection of global sections
\[
 H^0(X,\omega_X(\Delta)^{[mI]}) \twoheadrightarrow H^0(C,\omega_X(\Delta)^{[mI]}\restr{C}) \text{ for } m\geq 6.
\]
Therefore to show that $\phi_{mI}$ ($m\geq 8$) is an embedding, it suffices to show that $\omega_X(\Delta)^{[mI]}\restr{C}$ defines an embedding for any subscheme $\xi$ of length two that is contracted by $\phi_{4I}$. By Theorem~\ref{thm: curve embedding} it suffices to show that, for any subcurve $B\subset C$, we have $8I(K_X+\Delta) B \geq 2p_a(B)+1$.

By Lemma~\ref{lem: bound}, applied to $B$, we have $(4I+1)(K_X+\Delta)B \geq 2p_a(B) -2$. Since $I(K_X+\Delta)$ is an ample Cartier divisor and thus has degree at least 1 on $B$, we obtain 
\[
8I(K_X+\Delta) B\geq   2p_a(B) -2 + (4I-1)(K_{\bar X} + D_{\bar X} +\Delta_{\bar X})B_{\bar X}  \geq 2p_a(B)+1,
\]
which concludes the proof.

\begin{rem}\label{rem: not sharp}
Employing a trick used below in the proof of Theorem~\ref{thm: main general}\refenum{iii} one could get a better bound of $7I$ for those $\xi \in X$  that are not embedded by $\phi_{4I}$. This does not allow us to conclude that $\phi_{7I}$ is very ample in general: let $\xi$ be a subscheme of length two  such that $\phi_{4I}\restr{\xi}$ is an embedding. Then $\phi_{7I}\restr \xi$ is an embedding at $\xi$ unless $\xi$ is supported on a base-point of $3IK_X$. 

However, in the latter case we do not know how to find a log-well-behaved reduced curve in $|3I(K_X+\Delta)|$ or $|4I(K_X+\Delta)|$ containing $\xi$. This seems to be an artefact of our method and we are led to believe that the bound in  Theorem~\ref{thm: main general}\refenum{i} is not sharp. 
\end{rem}

\subsection{Proof of Theorem~\ref{thm: main general}\refenum{iii} and  Theorem~\ref{thm: main special}\refenum{i}} 
Let  $\xi\subset X$ be a subscheme  of length 2 and assume we are in the case of Theorem~\ref{thm: main general}\refenum{iii} or  Theorem~\ref{thm: main special}\refenum{i}.
Then by Theorem~\ref{thm: base-point-free} the tri-canonical map $\phi_{3I}$ is a morphism. If $\phi_{3I}\restr\xi$ is an embedding then $\phi_{mI}\restr\xi$ is also an embedding for $m\geq 6$, because $|(m-3)I(K_X+\Delta)|$ is base-point-free, again by Theorem~\ref{thm: base-point-free}. If $\phi_{3I}\restr\xi$ is not an embedding then by Lemma~\ref{lem: curve} there exists a log-well-behaved reduced curve $C$ containing $\xi$.

Proposition~\ref{prop: vanishing} yields a surjection of global sections
\[
 H^0(X,\omega_X(\Delta)^{[mI]}) \twoheadrightarrow H^0(C,\omega_X(\Delta)^{[mI]}\restr{C}) \text{ for } m\geq 5.
\]
Therefore to show that $\phi_{mI}$ ($m\geq 6$) is an embedding, it suffices to show that $\omega_X(\Delta)^{[mI]}\restr{C}$ defines an embedding. By Theorem~\ref{thm: curve embedding} it suffices to show that, for any subcurve $B\subset C$, we have $mI(K_X+\Delta) B \geq 2p_a(B)+1$. As $m\geq 5$, this is trivial if $p_a(B)\leq 2$, so we assume $p_a(B)\geq 3$.

By Lemma~\ref{lem: bound}, applied to $B$, we have $(3I+1)(K_X+\Delta)B \geq 2p_a(B) -2$ and thus
\begin{align*}
   6I(K_X+\Delta)B &\geq 2p_a(B) - 2 + \frac{3I-1}{3I+ 1}(2p_a(B)-2)\\
& \geq 2p_a(B) -2 +\frac{4(3I-1)}{3I+ 1} \qquad(\text{since }p_a(B)\geq 3)\\
&\geq 
\begin{cases} 2p_a(B) + \frac{6}{7} & I\geq 2\\ 2p_a(B)& I=1 \\
\end{cases}.
\end{align*}
Since $6I(K_X+\Delta)B$ is an integer, we have the required $6I(K_X+\Delta)B\geq 2p_a(B)+1$ if $I\geq 2$. If $I=1$ then $7I(K_X+\Delta)B\geq 2p_a(B)+1$ because $(K_X+\Delta)B\geq 1$.\qed

\subsection{Proof of Theorem~\ref{thm: main special}\refenum{ii}}\label{sect: pf of}
For any subscheme $\xi\subset X$ of length two we have a log-well-behaved curve $C\in |4K_X|$ (not necessarily reduced) containing $\xi$ by Lemma~\ref{lem: curve}\refenum{ii}.

Proposition~\ref{prop: vanishing} yields a surjection of global sections
\[
 H^0(X,\omega_X(\Delta)^{[mI]}) \twoheadrightarrow H^0(C,\omega_X(\Delta)^{[mI]}\restr{C})) \text{ for } m\geq 6.
\]
Therefore to show that $\phi_{mI}$ ($m\geq 6$) is an embedding, it suffices to show that $\omega_X(\Delta)^{[mI]}\restr{C}$ defines an embedding for any subscheme $\xi$ of length two. By Theorem~\ref{thm: curve embedding} it suffices to show that, for any subcurve $B\subset C$, we have $6I(K_X+\Delta) B \geq 2p_a(B)+1$.

We continue to use the notation from Section~\ref{sect: adjunction}. By assumption, $\bar X$ is smooth along $D_{\bar X}$ and has canonical singularities elsewhere. Thus  $\Lambda=\bar f^* (K_{\bar X} +D_{\bar X}) - (K_{\bar Y}+D_{\bar Y})$ is supported  on the preimages of the nodes of $D_{\bar X}$. 
On the other hand the divisor $B_{\bar X}$, the strict transform of $B$ in the normalisation, is Cartier in a neighbourhood of $D_{\bar X}$. The hat transform was defined in terms of intersection numbers, which are defined via the normalisation, and thus $\hat\Gamma_{B_{\bar X}} - \Gamma^*_{B_{\bar X}}$ is trivial on those exceptional divisors mapping to the nodes of $D_{\bar X}$. 
Therefore $\Lambda$ and $\hat\Gamma_{B_{\bar X}} - \Gamma^*_{B_{\bar X}}$ have disjoint support on $\bar Y$ and the intersection number $\Lambda(\hat\Gamma_{B_{\bar X}} - \Gamma^*_{B_{\bar X}})=0$.

Lemma~\ref{lem: correction to adjunction} now implies
\begin{align}
(K_X+\Delta+B)B &\geq  2p_a(B)-2 + (\Lambda - \hat\Gamma_{B_{\bar X}} + \Gamma^*_{B_{\bar X}})(\hat\Gamma_{B_{\bar X}} - \Gamma^*_{B_{\bar X}})\notag\\
&\geq  2p_a(B)-2 -(\hat\Gamma_{B_{\bar X}} - \Gamma^*_{B_{\bar X}})^2\label{eq: compare5}\\
&\geq  2p_a(B)-2\notag
\end{align}
because the intersection form is negative definite on the exceptional divisors of $\bar f$.

If $B=C\in |4I(K_X+\Delta)|$ then $B$ is a well-behaved Carter divisor and adjunction gives 
\begin{align*}
 6I(K_X+\Delta) B & =  (K_X + B ) B + \Delta B+(2I-1)(K_X +\Delta)B\\
&\geq 2p_a(B) -2 + 4(2I-1)I(K_X+\Delta)^2 > 2p_a(B) +1.
\end{align*}
If $B<C$ then there is at least one irreducible component $\bar X_i$ of $\bar X$ such that $$B_{\bar X_i} :=B_{\bar X}\cap \bar X_i < C_{\bar X}\cap \bar X_i=:C_{\bar X_i}.$$ Now Lemma~\ref{lem: connect} says that  $C_{\bar X_i} B_{\bar X_i} - B_{\bar X_i}^2 \geq 3$. Hence 
\[4IK_X B - B_{\bar X}^2 = C_{\bar X} B_{\bar X} - B_{\bar X}^2 \geq C_{\bar X_i} B_{\bar X_i} - B_{\bar X_i}^2 \geq 3.\] Combining with \eqref{eq: compare5}, we have 
\[
 6I(K_X+\Delta) B \geq 2p_a(B) +1.
\]
As before   $\phi_{6I}$ embeds $C$ by Theorem~\ref{thm: curve embedding}. This implies that $\phi_{6I}$ embeds $\xi$. Since $\xi$ is an arbitrary subscheme of length two, $\phi_{6I}$ embeds $X$.\qed

\subsection{Proof of Theorem~\ref{thm: main special}\refenum{iii}}
The proof is exactly the same as for the previous case with the twist that,  under our assumptions,  we can  choose the curve $C$ to be contained in $|3IK_X|$ by Lemma~\ref{lem: curve}. Even though we get weaker connectedness from Lemma~\ref{lem: connect}, the numerical criterion is still satisfied for $m\geq 5$.\qed

 \subsection{Proof of Theorem~\ref{thm: main general}\refenum{ii}}
Let $S$ be the finite subset of $X$ defined as the union of 
\[
\{P\in D\,|\, \pi^{-1}(P) \text{ contains a singular point of }\bar X\}  
\]
and
\[\{P\in X\setminus D\,|\, P \text{ is worse than a canonical singularity}\}.\]
Let $\xi$ be the length 2 subscheme consisting of two general points in $X$. Then there exists a log-well-behaved curve $C\in |4IK_X|$ containing $\xi$ and choosing $C$ general we can assume that $C$ does not intersect $S$.  Repeating the argument from the proof of Theorem~\ref{thm: main special}\refenum{ii} in Section~\ref{sect: pf of} we conclude that $\phi_{6I}\restr C$ is an embedding. Thus $\phi_{6I}$ is a morphism that separates every two general points in $X$, hence is birational.\qed

\section{The log-canonical ring}\label{sect: canonical ring}
 For a log surface $(X, \Delta)$ the log-canonical ring is 
\[R(X, K_X+\Delta) = \bigoplus_{m\geq 0} H^0(X, \omega(\Delta)^{[m]}).\]
In this section we study the implications of our results so far for  this ring.
\begin{theo}\label{thm: ring}
 Let $(X, \Delta)$ be a stable surface of index $I$.
Assume that $\omega_X(\Delta)^{[aI]}$ is generated by global sections. Then for $k\geq 2+2aI$ the multiplication maps 
\[ H^0(X, \omega_X(\Delta)^{[k]})\tensor H^0(X, \omega_X(\Delta)^{[aI]}) \to H^0(X, \omega_X(\Delta)^{[k+aI]})\]
are surjective and the log-canonical ring is generated in degree at most $3aI+1$.

 In particular,
\begin{enumerate}
\item  $R(X, K_X+\Delta)$ is generated in degree at most $12I+1$, and 
\item   $R(X, K_X+\Delta)$ is generated in degree at most $9I+1$  if one of the following holds:
\begin{enumerate}
\item $I\geq 2$, 
\item There is no irreducible component $\bar X_i$ of the normalisation such that $\left(\pi^*(K_X+\Delta)\restr {\bar X_i}\right)^2=1$, and the non-normal locus is a nodal curve.
\item $X$ is normal and we do not have $I=(K_X+\Delta)^2=1$.
\end{enumerate}
\end{enumerate}
 \end{theo}

\begin{proof}
The line bundle $M:=\omega_X(\Delta)^{[aI]}$ is generated by global sections and ample.  By Proposition~\ref{prop: vanishing} we have
\[H^i(X, \omega_X(\Delta)^{[k]}(-iM))=H^i(X,\omega_X(\Delta)^{[k-iaI]})=0\]
for $i>0$ and $k\geq 2+2aI$; we say $\omega_X(\Delta)^{[k]}$ is $0$-regular.
Thus the multiplication map
is surjective for $k\geq 2+2aI$ by Mumford's Lemma \cite[Thm.~1.8.5]{LazarsfeldI}, which is also valid for reducible varieties. The statement on the generators of the ring follows.

For the second part note that we can always choose $a=4$  and $a=3$ under the stronger assumptions given in \refenum{ii} by  Theorem ~\ref{thm: base-point-free}.
\end{proof}
\begin{rem}
One can also deduce from \cite[Thm.~3]{Mumford70} that the line bundles $\omega_X(\Delta)^{[12I]}$ in case \refenum{i} respectively $\omega_X(\Delta)^{[9I]}$ in case \refenum{ii}  satisfies property $N_1$, that is, the image of $\phi_{12I}$ respectively $\phi_{9I}$ is projectively normal and cut out by quadrics.
\end{rem}

\section{Examples}\label{sect: examples}
In this section we construct some examples of stable surfaces and analyse line bundles on a rational curve with a single 3-multi-node.

We concentrate on examples strictly related to the topic of this article; for further constructions and observations we refer to \cite{liu-rollenske13}.

\begin{exam}[Very ampleness of $K_{\bar X} +\bar D$ does not descend]\label{ex: descend}
 Let  $\bar D$ be a smooth plane quartic curve invariant under the involution of $\tau(x,y,z)=(-x,-y,z)$ on $\IP^2$; to be concrete set $\bar D = \{f=x^4+y^4+z^4=0\}$.
Then let $X$ be the (semi-smooth) stable surface corresponding to the triple $(\IP^2, \bar D, \tau\restr{\bar D})$, that is, we glue $\bar D$ to itself via $\tau$.
The quotient $D=\bar D/\tau$ is an elliptic curve and thus by Proposition~\ref{prop: invariants} the invariants of $X$ are $K_X^2=1$ and $\chi(\ko_X) = 3$.

We will now study the canonical ring $R= \bigoplus_k H^0(X, \omega_X^{\tensor k})$ of $X$ and show that while $\pi^*\omega_X^{\tensor k} \isom \ko_{\IP^2}(k)$ is very ample for $k\geq1$ the line bundle $\omega_X^{\tensor k}$ is very ample only for $k\geq 5$.

Consider the residue sequence $0\to \omega_{\IP^2}({\bar D})^{\tensor k}(-{\bar D})\to \omega_{\IP^2}({\bar D})^{\tensor k} \to \omega_{\bar D}^{\tensor k}\to 0$ which gives
\[0\to H^0(\IP^2, \ko(k-4))\to H^0(\IP^2,\omega_{\bar X}(\bar D)^{\tensor k} )\overset{R}\longrightarrow H^0(\bar D, \omega_{\bar D}^{\tensor k})\to 0.\]

It turns out that, if we identify $H^0(\IP^2,\omega_{\bar X}(\bar D)^{\tensor k} )$ with elements of degree $k$ in $S = \IC[x,y,z]$ then the residue of a  section is (anti)-invariant if and only if its residue is zero or the section is (anti)-invariant under the induced action of $\tau$ on the polynomial ring $S$. Thus, by Proposition~\ref{prop: sections},  $R_k=f\cdot S_{k-4}+ S_k^{\pm \tau}$ where $S_k^{\pm \tau}$ are the invariant or anti-invariant polynomials of degree $k$ according to the parity of $k$.

Writing out the first degrees explicitly it is easy to see that as a subring we have
\[ R = \IC[x,y, z^2, zf]\subset S,\]
and $X$ is the hypersurface in $\IP(1,1,2,5)$ given by the equation $w_4^2 -w_3(w_1^4+w_2^4+w_3^2)^2=0$.  In particular, as long as $k\leq 4$ the $k$-canonical map factors over the quotient $\IP^2/\tau=\IP(1,1,2)$ and it is very ample for $k\geq 5$. So while $\omega_{\bar X}(\bar D)^{\tensor k}$ is very ample on $\bar X = \IP^2$ for every $k\geq 1$ this very ampleness does not descend to $X$.

Incidentally the canonical ring of a smooth surface of general type with $p_g=2$ and $K^2 =1$ is known to be of the same form \cite[VII.(7.1)]{BHPV}, so we have constructed a surface in the boundary of that irreducible component of the moduli space of smooth surfaces.

Note that this example also shows that our Ansatz to prove base-point-freeness is sharp: the space of sections of $\omega_X^{\tensor k}$ vanishing along the non-normal locus might be empty for $k<4$.
\end{exam}

\begin{exam}[Large $K_X^2$ is not enough for non-normal surfaces]\label{exam: large K^2}
For a (connected) stable surface $X$  with canonical singularities, the bi-canonical map is a morphism as soon as $K_X^2\geq 5$  and the tri-canonical map is an embedding as soon as $K_X^2\geq 6$ (see \cite{catanese87}).

 We will now construct examples of non-normal stable surfaces (Gorenstein and irreducible) with $K_X^2$ arbitrarily large such that  the bi-canonical map not a morphism, and neither the tri-canonical not the 4-canonical map is an embedding. Morally, the obstructions to being base-point-free as well as the increase of $K_X^2$ happen locally, so that they cannot affect each other.

Fix once for all an inhomogeneous coordinate $z$ on $\IP^1$ and let $\tau_0(z)=-z$. On $\bar X = \IP^1\times \IP^1$ let $H_x = \IP^1\times\{x\} $ and $V_x=\{x\}\times \IP^1$ and consider for $k\geq 2$ the divisor 
\[\bar D_k = H_0+H_1+H_\infty+\sum_{j=1}^{k}(V_j+V_{-j}).\]
We specify an involution $\tau$ on the normalisation 
$\bar D_k^\nu=H_0\sqcup H_1\sqcup H_\infty\sqcup \bigsqcup_{j=1}^{k}(V_j\sqcup V_{-j})$
by 
\begin{gather*}
\tau\restr {H_0}=\tau_0, \qquad \tau\colon H_1 \overset{\id}{\longrightarrow} H_\infty,\\
\tau\colon V_k {\longrightarrow} V_{-k}, \, z\mapsto \frac 1 {1-z}
\end{gather*}
Because $\tau$ preserves the preimages of the nodes of $\bar D_k$ it preserves the different $\Diff_{\bar D_k^\nu}(0)$ and thus by Theorem \ref{thm: triple} the triple $(\bar X, \bar D_k, \tau)$ determines uniquely a stable surface $X_k$.

We determine the singular points of the non-normal locus as described in Section~\ref{sect: non-normal locus}: for all $j=1, \dots, k$ the points $(\pm j,0) , (\pm j,1), (\pm j, \infty)$ are mapped to a single point $P_j\in X_k$ and every $P_j$ is a 6-multi-node of $D_k$. The non-normal locus $D_k$ has $k+2$ irreducible components: a smooth rational curve containing all $P_j$, which is the image of $H_0$, a nodal rational curve with a node at each $P_j$, which is the image of $H_1\cup H_\infty$, and for $j=1, \dots k$ rational curves $C_j=\pi(V_j\cup V_{-j})$ with a single $3$-multi-node at $P_j$. 

The only non-semi-smooth singularities of $X_k$ are degenerate cusps at the points $P_j$, where $X_k$ locally looks like the cone over a cycle of 6 independent lines. Thus $X_k$ is a Gorenstein stable surface.

We have $\chi(\ko_{D_k})=\chi(\ko_{D_k^\nu})-\chi(\nu_*\ko_{D_k^\nu}/\ko_{D_k})= k+2-5k=2-4k$. On the other hand it is easy to calculate $\chi(\ko_{\bar D_k}) = 3-4k$, so by Proposition~\ref{prop: invariants} the invariants of $X_k$ are $\chi(\ko_{X_k})=1+(2-4k)-(3-4k)=0$ and $K_{X_k}^2 = (K_{\bar X} + \bar D_k)^2 = 4k-4$. 

To prove that $|2K_X|$ has base points and $|3K_X|$ and $|4K_X$ are not very ample we analyse the restriction the  curves $C_j$. Its degree is
\[
\deg(\omega_{X_k}\restr{C_j}) = \frac{1}{2}(K_{\bar X_k} + \bar D_k)(V_j + V_{-j}) = 1.                                                                                                                                                                                                                                                                                                                                                                                                                                                                                                                                                                                                    \] 
Our claim now follows from the properties of line bundles of low degree on rational curves with a single 3-multi-node analysed in Example \ref{ex: 3-multi-nodal rational curve} below.
\end{exam}

\begin{exam}[A special curve]\label{ex: 3-multi-nodal rational curve}
 Let $B$ be a rational curve with a single $3$-multi-node, $\nu\colon B^\nu\to B$ its normalisation. Then $\chi(\ko_B) = \chi(\ko_{B^\nu}) -2 = -1$ and $B$ has arithmetic genus $p_a(B)=2$. 

For any line bundle  $\kl$ on $B$ the following properties hold.
\begin{enumerate}
 \item If $\deg \kl \geq 2$ then $H^1(B, \kl) =0$ and $h^0(B, \kl) = \deg\kl -1$.
\item If $\deg\kl=2$ then $h^0(B, \kl)=1$  and $\kl$ does not define a morphism.
\item If $\deg\kl=3$ or $\deg\kl=4$ then $|\kl|$ is base-point-free but not an embedding.
\item If $\deg\kl=4$ then $|\kl|$ defines a birational morphism, which is an embedding on the smooth locus.
\item If $\deg\kl\geq 5$ then $|\kl|$ defines an embedding.
\end{enumerate}
\begin{proof}
  By Serre duality $H^1(B, \kl) = \Hom_{\ko_B}(\kl, \omega_B)$. If $H^1(B, \kl) \neq 0$, then there is a non-zero $\lambda\colon \kl \to \omega_B$. As $\lambda$ is an isomorphism at the generic point and $\kl$ is torsion-free $\lambda$ is automatically injective and the cokernel is supported on a finite set of points. Thus 
 \[1 = \chi(\omega_B) \geq \chi(\kl) = \deg\kl+\chi(\ko_B) =\deg \kl-1\quad \iff \quad \deg \kl \leq 2.\]
As $\deg\kl \geq 2$ by the assumptions, we have $\deg \kl = 2$. Then $\lambda$ is an isomorphism. On the other hand, since $B$ has a 3-multi-node, $\omega_B$ is not locally free\,---\,a contradiction.  So there is no non-zero $\lambda$ and $H^1(B, \kl)=0$. This implies the formula for $h^0(B, \kl)$ by Riemann--Roch and we get \refenum{i}. The second item is an immediate consequence.

For \refenum{iii}, note that the embedding dimension of the 3-multi-node is 3 while  $\kl$ has at most 3 sections so we cannot have an embedding.

Note that, for $p$ a smooth point of $B$, part \refenum{i} applies to $\kl(-p)$ so $H^1(B, \kl(-p))=0$ and $p$ is not a base-point.
If $p$ is the 3-multi-node then the ideal sheaf $\ki_p$ of $p$ is $\nu_*\ko_{B^\nu}(-q_1-q_2-q_3)$, so 
\[ H^1(B, \kl\tensor\ki_p) = H^1(B^\nu, \nu^* \kl(-q_1-q_2-q_3))=0,\]
because $\nu^* \kl(-q_1-q_2-q_3))$ is a line bundle of non-negative degree on $\IP^1\isom B^\nu$. Thus also the 3-multi-nodal point is not a base-point.

If $p,p'$ are smooth points of $B$ then $H^1(B, \kl(-p-p'))=0$ if $\deg\kl\geq 4$ by \refenum{i} and $\kl$ separates smooth points and tangent vectors at smooth points. This proves \refenum{iv}. The last item follows from Theorem \ref{thm: curve embedding}.
\end{proof}
\end{exam}
\begin{rem}[Consequences of Example \ref{ex: 3-multi-nodal rational curve}]
 Assume that $X$ is a stable surface such that the non-normal locus $D$ contains a rational curve with a single 3-multi-node. If $\deg IK_X\restr B=1$ then
$\phi_{2I}$ is not a morphism (by Example \ref{ex: 3-multi-nodal rational curve}\,\refenum{ii}) and $\phi_{3I}$ and $\phi_{4I}$ are not an embedding (by Example \ref{ex: 3-multi-nodal rational curve}\,\refenum{iii}), because the respective restriction to $B$ has this property. In particular, this applies to Example \ref{exam: large K^2}.
\end{rem}

\appendix
\section{Curves on surfaces with slc singularities}
We fix some notation for this section: let $X$ be a surface with slc singularities and (possibly empty) non-normal locus $D$. Let $f\colon Y\rightarrow X$ be the minimal semi-resolution (Definition~\ref{defin: semi resolution}) with conductor divisor $D_Y$. In some instances when working near $p\in X$ we replace $X$ by a small affine or analytic neighbourhood of $p$.

Let $\nu\colon \bar Y \rightarrow Y$ and $\eta\colon \bar X\rightarrow X$ be the normalisations. We have a commutative diagram
\begin{equation}\label{equation: normalisation+resolution}
\begin{tikzcd}
 \bar Y \rar{\bar f} \dar{\eta} & \bar X\dar{\pi}\\ Y\rar{f} & X.
\end{tikzcd}
 \end{equation}
Denote by  $D_{\bar Y}\subset \bar Y$ and  $D_{\bar X}\subset \bar X$  the conductor divisors. Then
\begin{enumerate}
\item  $K_{\bar X} + \bar D$ is a $\IQ$-Cartier divisor and $K_{\bar Y} + D_{\bar Y}$ is a Cartier divisor.
 \item $D_{\bar Y}$ and $D_Y$ are smooth and $\eta\restr{D_{\bar Y}}\colon  D_{\bar Y} \rightarrow D_Y$ is a double cover.
 \item $D$ has at most \emph{$\mu$-multi-nodes} (see Definition~\ref{defin: multinode}), $\bar D$ has at most nodes and $\pi\restr{\bar D}\colon \bar D\rightarrow D$ is generically two to one. 
\end{enumerate} 

Now let  $B\subset X$ be a well-behaved curve. 
Our aim is to construct  a curve $\hat B_Y\subset Y$, the hat transform of $B$,  such that we control the difference of the arithmetic genera $p_a(B)$ and $p_a(\hat B_{\bar Y})$, where $\hat B_{\bar Y}\subset\bar Y$ is the strict transform of $\hat B_Y$. This will be achieved in Proposition~\ref{prop: genus hat transform}. The same idea has been used for surfaces with canonical singularities in \cite{CFHR}, but we have to work harder because our singularities are a lot worse.

Recall also that for a well-behaved divisor $A$ on $X$ and $C\subset X$ a curve, the sheaf $\ko_C(A)$ is the restriction $\ko_X(A)\restr C$ modulo torsion (Definition \ref{def: restriction divisorial sheaf}).

\subsection{Automatic adjunction lemma}
The following technical result will be used several times so we state it here for further reference.
\begin{lem}\label{lem: H^1 on curve}
 Let $C\subset X$ be a well-behaved curve and $A$ a well-behaved divisor on $X$. Then $H^1(C,\ko_C(A))\neq 0$ if and only if there is a non-empty subcurve $B\subset C$ with a generically onto $\lambda_B\colon \ko_B(A)\rightarrow \omega_B$. For such a subcurve $B$, we have $\chi(B,\ko_B(A))\leq \chi(B,\omega_B)$ and equality holds if and only if $\ko_B(A)\cong\omega_B$. We can choose $B$ to be connected.
\end{lem}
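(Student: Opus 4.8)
The plan is to follow the classical Ramanujam-style argument from \cite[Sect.~2]{CFHR}, adapted to the demi-normal setting. First I would establish the easy direction: if there is a non-empty subcurve $B\subset C$ and a generically surjective $\lambda_B\colon\ko_B(A)\to\omega_B$, then $H^1(C,\ko_C(A))\neq 0$. Indeed, picking $B$ minimal among the subcurves carrying such a map, I would use the inclusion $\ko_B(A-(C-B))=\ko_C(A)(-(C-B))\restr B$-type sequence (Lemma~\ref{lem: div seq}\refenum{ii}) to produce a surjection $H^1(C,\ko_C(A))\twoheadrightarrow H^1(B,\ko_B(A))$, and then Serre duality on the Cohen--Macaulay curve $B$ gives $H^1(B,\ko_B(A))^*=\Hom_{\ko_B}(\ko_B(A),\omega_B)\neq 0$. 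Since $\ko_B(A)$ is torsion-free of rank one and $\lambda_B$ is an isomorphism at the generic points where it is onto, $\lambda_B$ is injective with cokernel of finite length, whence $\chi(B,\ko_B(A))\leq\chi(B,\omega_B)$, with equality exactly when $\lambda_B$ is an isomorphism, i.e.\ $\ko_B(A)\cong\omega_B$.

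For the converse, suppose $H^1(C,\ko_C(A))\neq 0$. By Serre duality $\Hom_{\ko_C}(\ko_C(A),\omega_C)\neq 0$, so fix a non-zero $\lambda\colon\ko_C(A)\to\omega_C$. I would then let $B\subset C$ be the largest subcurve on which $\lambda$ is generically non-zero, i.e.\ remove from $C$ every irreducible component on which $\lambda$ vanishes generically; since $\lambda\neq 0$, $B$ is non-empty. On $B$ the composite $\ko_B(A)\to\ko_C(A)\restr B\to\omega_C\restr B$, followed by the natural map $\omega_C\restr B\to\omega_B$ coming from Lemma~\ref{lem: div seq}\refenum{ii} applied to $C=B+(C-B)$ (which realises $\omega_B=\ko_B(K_C+(C-B))$ as the torsion-free quotient of $\omega_C\restr B$, using that $K_C+(C-B)$ restricts to $K_B$ by adjunction on the Gorenstein-in-codimension-zero curve), yields a map $\lambda_B\colon\ko_B(A)\to\omega_B$ which is generically onto by construction.

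To pass from an arbitrary such $B$ to a \emph{connected} one, I would argue that if $B=B'\sqcup B''$ were a disjoint decomposition then $\ko_B(A)=\ko_{B'}(A)\oplus\ko_{B''}(A)$ and $\omega_B=\omega_{B'}\oplus\omega_{B''}$, so $\lambda_B$ restricts to a generically onto map on at least one connected component, and we replace $B$ by that component. The main obstacle, and the only place where the demi-normality of $X$ really enters, is the bookkeeping of adjunction for non-normal, non-reduced subcurves $B$ of a well-behaved curve on a demi-normal surface: one must check that $\omega_C\restr B$ maps naturally onto $\omega_B$ and that this is compatible with the restriction of $\ko_C(A)$, so that $\lambda_B$ is well-defined and generically onto. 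This is handled by working on the locus where $A$ and $C$ are Cartier (the complement has codimension $2$ in $X$, hence is finite on $C$), where everything reduces to the standard adjunction sequence $0\to\ko_X(A-C)\to\ko_X(A)\to\ko_C(A)\to 0$ of Lemma~\ref{lem: div seq}\refenum{i} and the identification $\omega_C\cong\ko_X(K_X+C)\restr C$, and then extending over the finitely many bad points using that all sheaves involved are torsion-free on the Cohen--Macaulay curves in question.
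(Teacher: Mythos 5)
Your forward implication (from a generically onto $\lambda_B$ to $H^1(C,\ko_C(A))\neq 0$) and the Euler-characteristic comparison are correct; the paper argues the former slightly differently, composing $\ko_C(A)\to\ko_B(A)\xrightarrow{\lambda_B}\omega_B\into\omega_C$ and dualising on $C$ rather than restricting $H^1$ and dualising on $B$, but the two routes are equivalent. The genuine gap is in the converse direction, where you in effect set out to reprove the automatic adjunction lemma that the paper simply quotes as \cite[Lem.~2.4]{CFHR}, and the mechanism you describe fails for non-reduced curves. Defining $B$ by ``removing from $C$ every irreducible component on which $\lambda$ vanishes generically'' yields a union of components of $C$ \emph{with their full multiplicities}, whereas the correct subcurve is the one cut out by the annihilator ideal of $\im\lambda$ in $\ko_C$, which for non-reduced $C$ may have strictly smaller multiplicities. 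Concretely, if $C=2\Gamma$ and $\im\lambda$ is generically the length-one submodule of $\omega_C$ killed by $\ki_\Gamma$, your recipe gives $B=C$ and the resulting map $\ko_C(A)\to\omega_C$ is not generically onto; the correct choice is $B=\Gamma$. Since the lemma is applied in the paper to genuinely non-reduced curves (subcurves of the semi-numerical cycle in Lemmas~\ref{lem: R^1 = H^1} and \ref{lem: comparison Rf_*}, and non-reduced pluricanonical curves), this is not a cosmetic point.

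A second, related problem: there is no natural surjection $\omega_C\restr{B}\to\omega_B$ of the kind you invoke. Adjunction gives $\omega_B\cong\omega_C(-(C-B))\restr{B}$ modulo torsion, a twist \emph{down} of $\omega_C\restr{B}$ (the sign in your formula $\omega_B=\ko_B(K_C+(C-B))$ is reversed), and the map that does exist goes the other way: $\omega_B=\shom_{\ko_C}(\ko_B,\omega_C)\into\omega_C$ is the subsheaf annihilated by $\ki_B$. The correct argument is that, with $B$ defined via the annihilator, $\im\lambda$ is killed by $\ki_B$ and hence lands in $\omega_B$; $\lambda$ then factors through the torsion-free quotient $\ko_B(A)$ of $\ko_C(A)\otimes\ko_B$ because $\omega_B$ is torsion-free over $\ko_B$, and generic surjectivity onto $\omega_B$ is forced by the maximality built into the definition of $B$. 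Either carry this out or cite \cite[Lem.~2.4]{CFHR} directly, as the paper does. Your reduction to connected $B$ by splitting into connected components is fine.
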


\begin{proof}
By Serre duality,  $H^1(C,\ko_C(A))\neq 0$ if and only if there is a non-zero homomorphism $\lambda\colon \ko_C(A) \rightarrow \omega_C$ in the dual space $\Hom(\ko_C(A),\omega_C)$. By automatic adjunction \cite[Lem.~2.4]{CFHR}, there is a subcurve $B$ of $C$ such that $\lambda$ restricts to a generically onto $\lambda_B\colon \ko_B(A)\rightarrow \omega_B$. 

On the other hand, if $B$ is a subcurve with a generically onto $\lambda_B\colon \ko_B(A)\rightarrow \omega_B$, then the composition 
\[
 \ko_C(A) \rightarrow \ko_B(A) \xrightarrow{\lambda_B} \omega_B \hookrightarrow \omega_C
\]
is the corresponding non-zero morphism from $\ko_C(A)$ to $\omega_C$. 

Since $\ko_B(A)$ is torsion free, the morphism $\lambda_B$ is injective. Being generically onto, $\lambda_B$ has a finite cokernel $\kq$. So we have the following short exact sequence
\[
 0\rightarrow \ko_B(A) \rightarrow \omega_B \rightarrow \kq\rightarrow 0,
\]
which yields 
\[
 \chi(B,\omega_B) = \chi(B,\ko_B(A))+ \mathrm{length}(\kq) \geq \chi(B,\ko_B(A)).
\]
This is an equality if and only if the length of $\kq$ is 0 which is in turn equivalent to $\lambda_B$ being an isomorphism.
\end{proof}

\subsection{Holomorphic Euler characteristics of well-behaved curves}

\begin{defin}\label{def: local correction}
 Let $F$ be a well-behaved curve on the semi-smooth surface $Y$ and  $F_{\bar Y}$ its strict transform in $\bar Y$. We denote by 
$I_t( F_{\bar Y}, D_{\bar Y})$  the intersection number of $ F_{\bar Y}$ and $D_{\bar Y}$ at a point $t\in\bar Y$.

For a point $q\in Y$ we will define the \emph{local genus correction} $n_q(F)$ of $F$ at $q$ 
and the \emph{local intersection difference} $d_q(F)$ of $F$ at $q$ such that the relation 
\[
 2 n_q(F) + d_q(F) = \sum_{t\in  \inverse \eta(q)} I_{t}( F_{\bar Y}, D_{\bar Y})
\]
holds. 
If $q$ is a normal crossing point with preimages  $t_1, t_2$ then 
\begin{gather*}
 n_q(F)=        \min \{I_{t_1}( F_{\bar Y}, D_{\bar Y}), I_{t_2}( F_{\bar Y}, D_{\bar Y})\},\\
d_q(F) = |I_{t_1}(F_{\bar Y}, D_{\bar Y})- I_{t_2}(F_{\bar Y}, D_{\bar Y})|.
\end{gather*}
If $q$ is a pinch point with preimage $t$ then 
\begin{gather*}
 n_q(F)=\lfloor {\textstyle\frac{1}{2}}I_{\inverse\eta(q)}( F_{\bar Y}, D_{\bar Y})\rfloor,\\
 d_q(F) = I_{t}( F_{\bar Y}, D_{\bar Y}) -2\lfloor {\textstyle\frac{1}{2}}I_{t}( F_{\bar Y}, D_{\bar Y})\rfloor.
\end{gather*}
At a smooth point $q\in Y$ we set $n_q(F)=d_q(F)=0$.
\end{defin}
\begin{rem}\label{rem: linearity and convexity of n_q}
Let $F, G$ be two well-behaved curves on a semi-smooth surface $Y$. Elementary arithmetics with minimum and round down  show that, for $q\in\sing Y$,
\[
 -\min\{d_q(F),d_q(G)\}\leq  n_q(F) + n_q(G) - n_q(F+G)\leq 0
\]
and the  inequality on the right hand side is an equality if one of $F$ and $G$ is Cartier at $q$.

We call $q$ a bad point  with respect to $F$ and $G$ if $n_q(F) + n_q(G) - n_q(F+G)<0$; we have $d_q(F),d_q(G)\geq 1$ for bad points $q$.
\end{rem}

We now use these locally defined corrections to prove global identities for the holomorphic Euler-characteristic of well-behaved curves on semi-smooth surfaces. 
\begin{prop}\label{prop: semi-smooth RR}
Let $F$ and $G$ be well-behaved curves on $Y$ and $F_{\bar Y}\subset \bar Y$ (resp.\ $G_{\bar Y}$) the strict transforms of $F$ (resp.\ $G$). Then
\begin{enumerate}
\item $\chi(Y,\ko_Y(-F)) = \chi(Y,\ko_Y) - \chi(F_{\bar Y}, \ko_{F_{\bar Y}}) + \sum_{q\in\sing Y} n_q(F)$.
\item $\chi(F,\ko_F) = \chi(F_{\bar Y}, \ko_{F_{\bar Y}}) - \sum_{q\in\sing Y} n_q(F)$;
\item $\chi(G,\ko_G(-F)) = \chi(G,\ko_G) - FG + \sum_{q\in\sing Y}n_q(F) + n_q(G) - n_q(F+G)$.
\end{enumerate}
\end{prop}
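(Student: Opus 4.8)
The plan is to establish \refenum{ii} first, by a local analysis of the normalisation $\eta\colon\bar Y\to Y$, and then to deduce \refenum{i} and \refenum{iii} from it by formal manipulations with the exact sequences of Lemma~\ref{lem: div seq} and Riemann--Roch on the smooth surface $\bar Y$.

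\textbf{Reduction to \refenum{ii}.} Lemma~\ref{lem: div seq}\refenum{i} applied on $Y$ with $A=0$ and $B=F$ gives $0\to\ko_Y(-F)\to\ko_Y\to\ko_F\to0$, whence $\chi(Y,\ko_Y(-F))=\chi(Y,\ko_Y)-\chi(F,\ko_F)$; plugging in \refenum{ii} yields \refenum{i}. For \refenum{iii}, Lemma~\ref{lem: div seq}\refenum{ii} on $Y$ with $B=F+G$, $B_1=G$, $B_2=F$ yields $0\to\ko_G(-F)\to\ko_{F+G}\to\ko_F\to0$, so $\chi(G,\ko_G(-F))=\chi(F+G,\ko_{F+G})-\chi(F,\ko_F)$. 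I would then apply \refenum{ii} to the well-behaved curves $F+G$ and $F$ (using $(F+G)_{\bar Y}=F_{\bar Y}+G_{\bar Y}$, since strict transforms are additive on well-behaved divisors), use Lemma~\ref{lem: div seq}\refenum{ii} on $\bar Y$ to identify $\chi(\ko_{F_{\bar Y}+G_{\bar Y}})-\chi(\ko_{F_{\bar Y}})$ with $\chi(\ko_{G_{\bar Y}}(-F_{\bar Y}))$, and finally invoke the standard computation on the smooth surface $\bar Y$ together with the definition $FG=F_{\bar Y}G_{\bar Y}$ to rewrite this as $\chi(\ko_{G_{\bar Y}})-FG$. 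Substituting back, applying \refenum{ii} once more to $G$, and collecting the $n_q$-contributions reproduces exactly the formula in \refenum{iii}.

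\textbf{Proof of \refenum{ii}.} Let $\eta_F\colon F_{\bar Y}\to F$ denote the restriction of $\eta$. Since $Y$ is semi-smooth we have $\sing Y=D_Y$, and $\eta$ is an isomorphism over $Y\setminus D_Y$; hence $\eta_F$ is finite, surjective, and an isomorphism over $F\setminus\sing Y$. Because $F$ is Cohen--Macaulay it has no embedded points, so the natural map $\ko_F\to(\eta_F)_*\ko_{F_{\bar Y}}$ is injective, with cokernel $\kq_F$ of finite length supported on $F\cap\sing Y$. Finiteness of $\eta_F$ then gives $\chi(F,\ko_F)=\chi(F_{\bar Y},\ko_{F_{\bar Y}})-\sum_{q\in\sing Y}\mathrm{length}_q\,\kq_F$, so everything reduces to the local identity $\mathrm{length}_q\,\kq_F=n_q(F)$ at each $q\in\sing Y$.

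\textbf{The local identity, and the main obstacle.} This is the technical heart, and the only place where the two types of singular points of a semi-smooth surface are treated separately. Near a normal crossing point take $Y=\{xy=0\}\subset\IA^3$ with $D_Y$ the $z$-axis and $\bar Y=\{x=0\}\sqcup\{y=0\}$; writing the two branches of $F_{\bar Y}$ as $\{h_1=0\}$ and $\{h_2=0\}$ with $h_i$ not divisible by the local equation of $D_{\bar Y}$, one identifies $(\kq_F)_q$ with the cokernel of $\ko_{Y,q}\to\IC[[y,z]]/(h_1)\oplus\IC[[x,z]]/(h_2)$, which is $\IC[[z]]/\bigl(h_1(0,z)\IC[[z]]+h_2(0,z)\IC[[z]]\bigr)$; its length is $\min\{\mathrm{ord}_z h_1(0,z),\mathrm{ord}_z h_2(0,z)\}=\min\{I_{t_1}(F_{\bar Y},D_{\bar Y}),I_{t_2}(F_{\bar Y},D_{\bar Y})\}=n_q(F)$. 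Near a pinch point use $\ko_{Y,q}=\IC[[u,v^2,uv]]\subset\IC[[u,v]]=\ko_{\bar Y,t}$ with $D_{\bar Y}=\{u=0\}$; writing $F_{\bar Y}=\{h=0\}$ with $h$ not divisible by $u$ — this is where well-behavedness of $F$ enters — one gets $(\kq_F)_q\isom\IC[[u,v]]/\bigl(\ko_{Y,q}+h\,\IC[[u,v]]\bigr)$, and a short computation with the $\IC$-vector space decomposition $\IC[[u,v]]=\ko_{Y,q}\oplus\bigoplus_{j\text{ odd}}\IC v^j$ shows that its length equals $\lfloor{\textstyle\frac{1}{2}}\mathrm{ord}_v h(0,v)\rfloor=\lfloor{\textstyle\frac{1}{2}}I_t(F_{\bar Y},D_{\bar Y})\rfloor=n_q(F)$. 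I expect the pinch point case to be the main obstacle: the local ring is neither normal nor Gorenstein, $\eta_F$ is not the normalisation of $F$ there, and one has to control the possibly reducible or non-reduced scheme structure of $F$ along $D_Y$ — though in the end only the equation $h$ of $F_{\bar Y}$ and the number $I_t(F_{\bar Y},D_{\bar Y})$ enter, consistently with the relation $2n_q(F)+d_q(F)=\sum_{t\in\eta^{-1}(q)}I_t(F_{\bar Y},D_{\bar Y})$ built into Definition~\ref{def: local correction}.
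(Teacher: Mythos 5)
Your proposal is correct and follows essentially the same route as the paper: the heart in both cases is the local computation, at normal crossing and pinch points, of the length of the cokernel of $\ko_F\to\eta_*\ko_{F_{\bar Y}}$ (your $\kq_F$ is the paper's $\kr$), with the same local models and the same answers $\min\{I_{t_1},I_{t_2}\}$ and $\lfloor\frac12 I_t\rfloor$. The remaining differences are cosmetic: you derive \refenum{ii} directly and then \refenum{i} from the structure sequence, whereas the paper derives \refenum{i} first from a $3\times3$ diagram and then \refenum{ii}; and for \refenum{iii} you restrict $\ko_{F+G}$ while the paper compares $\chi(\ko_Y(-F))$ and $\chi(\ko_Y(-F-G))$, both reducing to Riemann--Roch on the smooth surface $\bar Y$.
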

\begin{proof}
Recall  that the map
\[
  \eta\restr{D_{\bar Y}}\colon D_{\bar Y} \rightarrow D_Y
\]
is a double cover between smooth curves; the branch locus of $\eta\restr{D_{\bar Y}}$ consists exactly of the pinch points of $Y$. Thus
\[
 \eta_*\ko_{D_{\bar Y}} = \ko_{D_Y} \oplus \kl^{-1}
\]
where $\kl$ is a line bundle on $D_Y$ with $\kl^{\otimes2}=\ko_{D_Y}(\sum_{p \text{ pinch point}} p)$. 

There is a commutative diagram of sheaves with exact rows and columns:
\begin{equation}\label{diag: semi-smooth RR}
\begin{tikzcd}
  {}& 0\dar & 0\dar & 0 \dar  \\
 0 \rar & \ko_Y(-F)\rar\dar & \ko_Y\rar\dar & \ko_F\rar\dar & 0 \\
 0 \rar & \eta_*\ko_{\bar Y}(-F_{\bar Y})\rar\dar & \eta_*\ko_{\bar Y} \rar\dar & \eta_*\ko_{ F_{\bar Y}} \rar\dar  & 0\\
 0 \rar & \km \rar\dar & \inverse \kl \rar\dar & \kr \rar\dar & 0\\
 & 0 & 0 & 0
\end{tikzcd}
\end{equation}
where $\km$ is cokernel of the natural morphism $\ko_Y(-F)\rightarrow\eta_* \ko_{\bar Y}(-F_{\bar Y})$. Here, because of the Snake Lemma, the last row is exact at $\km$. Using the additivity of the Euler characteristic, we have
\begin{align}
 \chi(Y,\ko_Y(-F)) &= \chi(Y,\ko_Y) - \chi(Y,\ko_F) \notag\\
                 &= \chi(Y,\ko_Y)  - \chi(Y,\eta_*\ko_{F_{\bar Y}}) +\chi(Y,\kr)\\
                  & = \chi(Y,\ko_Y)  - \chi(F_{\bar Y},\ko_{F_{\bar Y}}) +\chi(Y,\kr)\notag\hspace{0.5cm}\text{(since $\eta$ is finite).}
\end{align}

Note that $\kr$ is finite with support in $F\cap {D_Y}$. For \refenum{i} it suffices to prove the following claim.

\textbf{Claim.} For a point  $q\in \sing Y$ we have $\dim_\IC \kr_q = n_q(F)$, where $n_q$ is the local genus correction defined above.
\begin{proof}[Proof of the claim.]
We can calculate $\kr$ analytically locally around $q$.

If $q$ is a double normal crossing point of $Y$, then analytically locally $Y$ is $(xy=0)\subset\IC^3_{x,y,z}$ with $q=(0,0,0)$ and $D_Y=(x=y=0)$. The normalisation is  $\bar Y = \IC^2_{x,z_1} \sqcup\IC^2_{y,z_2}$ and the preimages $t_1, t_2$ are the origins in the components. The cokernel of inclusion of the coordinate rings $\IC[Y]\hookrightarrow \IC[\bar Y]$ is isomorphic to $\IC[D_Y] = \IC[z]$. If $F_{\bar Y}$ is defined by $f(x,z_1)$ in one irreducible component $\IC^2_{x,z_1}$ and by $g(y,z_2)$ in the other irreducible component $ \IC^2_{y,z_2}$, then the image of its defining ideal $\ki_{F_{\bar Y}} = (f(x,z_1),g(y,z_2))$ in the localised ring $\IC[D_Y]_q = \IC[z]_{(z)}$ is the ideal of $\IC[z]_{(z)}$ generated by $f(0,z)$ and $g(0,z)$. Note that the orders of $f(0,z)$ and $g(0,z)$ in $z$ are just the intersection numbers $I_{t_i}(F_{\bar Y}, D_{\bar Y})$, $i=1,2$. Therefore we have
\begin{align*}
  \dim_\IC  \kr_q & = \dim_\IC \IC[z]_{(z)}/(f(0,z),g(0,z)) \\
                  & =  \min\{I_{t_1}(F_{\bar Y}, D_{\bar Y}),I_{t_2}(F_{\bar Y}, D_{\bar Y})\}                   
\end{align*}

If $q$ is a pinch point of $Y$, then analytically locally $Y$ is $(x^2 - y^2 z =0)\subset \IC^3$ with $p=(0,0,0)$ and $D_Y=(x=y=0)$. The normalisation $\bar Y$ of $Y$ is $\IC^2_{u,y}$ with normalisation map
\[
 \begin{matrix}
  \bar Y & \rightarrow & Y  \\
    (u,y) & \mapsto & (uy,y,u^2).
 \end{matrix}
\]
The preimage $t$ of $q$ is the origin in $\bar Y = \IC^2_{u,y}$ and the conductor divisor $D_{\bar Y}$ is defined by $y=0$ in $\bar Y$. The cokernel of the inclusion of coordinate rings $\IC[Y] \hookrightarrow \IC[\bar Y]$ is naturally isomorphic to $u\IC[D_Y] = u\IC[z]$. (Note that in $\IC[\bar Y]$ we have $z=u^2$.) Now suppose $F_{\bar Y}$ is defined by $f(u,y)=0$ locally around $q=(0,0)$. Then $\ki_{F_{\bar Y}}$ is the ideal of $\IC[u,y]$ generated by $f(u,y)$. Note that the order of $f(u,0)$ is just the intersection number $I_t(F_{\bar Y}, D_{\bar Y})$.  Then the image of $\ki_{F_{\bar Y}}$ in the localised module $u\IC[z]_{(z)}$ is the submodule generated by $uz^{k}$, where $\dim_\IC \kr_q=k = \lfloor \frac{1}{2}I_t(F_{\bar Y}, D_{\bar Y})\rfloor$. This concludes the proof.
\end{proof}

The first row of \eqref{diag: semi-smooth RR} gives $\chi(F,\ko_F) = \chi(Y,\ko_Y) - \chi(Y,\ko_Y(-F))$,
so \refenum{ii} is implied by \refenum{i}.

The short exact sequence $ 0\rightarrow \ko_Y(-F-G) \rightarrow \ko_Y(-F) \rightarrow \ko_G(-F) \rightarrow 0$ from Lemma \ref{lem: div seq}   gives $\chi(G, \ko_G(-F))= \chi(Y,\ko_Y(-F)) - \chi(Y,\ko_Y(-F-G))$. Applying \refenum{i} to both terms on the right hand side and then substituting \refenum{ii} gives \refenum{iii}.
\end{proof}

\subsection{Resolution graphs and semi-rationality}\label{sect: semi-rat}

We will now recall some more of the classification of slc singularities. The resolution graphs of  log-canonical surface singularities are well known (e.g. \cite[Ch.~4]{Kollar-Mori}) so we concentrate on the non-normal case; our sources are  \cite[17]{kollar12} and \cite[Sect.~4]{ksb88}.

Over a non-normal point $p\in X$ we can write $\bar Y$ analytically locally $\bar Y = \cup \bar Y_\alpha$ as the union of local irreducible components. On each component the $f$-exceptional divisors together with the components of the double locus give rise to an (extended) dual graph: every  $f$-exceptional component, which are all rational because we are over a non-normal point of $X$, gives a vertex which is either marked with ``$\circ$'' or with the negative self-intersection; we add a ``$\bullet$'' for every component of the  conductor divisor $D_{\bar Y}$ and connect two vertices if the corresponding curves intersect.

The edges  connecting the resolution graph to the boundary components are marked with the coefficient of the different $\Diff_{D_{\bar X}}(0)$ at the corresponding point of the conductor divisor on $\bar X$. 

The following three cases can occur:
\[\tag{C1}\label{C1}
\bullet   \ \stackrel{1-\frac1{\delta}}{-\!\!\!-\!\!\!-\!\!\!-} 
\ c_1  \ -\ \cdots  \ - \ c_n\qquad (c_i\geq 2)  
\]
where $-\delta$ is the determinant of the intersection form of the exceptional divisors.

\[\tag{C2}\label{C2}
\bullet   \ \stackrel{1}{-}\ c_1  \ -\ \cdots  \ - \ c_n  \ \stackrel{1}{-}
\ \bullet \qquad (c_i\geq1)
\]
and if some $c_j=1$, then $n=1$, because we consider the minimal semi-resolution.
 
\[\tag{Dh}\label{Dh}
\begin{array}{cccccccc}
 &&&&&& 2&\\
&&&&& \diagup &\\
\hspace{2.5cm}\bullet   \ \stackrel{1}{-}\  c_1 & - & \cdots  & - & c_n &&&\hspace{1.5cm} (n\geq 2,\, c_i\geq 2).\\
&&&&& \diagdown &\\
 &&&&&& 2&
\end{array}
\]
According to the type of extended dual graph associated to the curves in an irreducible component $\bar Y_\alpha$ we say $Y_\alpha:=\eta(\bar Y_\alpha)$ is of type \eqref{C1} (resp.\ \eqref{C2}, \eqref{Dh}).

The whole extended dual graph of an slc singularity is obtained by attaching graphs of the types \eqref{C1}, \eqref{C2}, and \eqref{Dh} along the boundary components, with the restriction that the differents should match (see Theorem \ref{thm: triple}). If the exceptional divisor intersects the conductor on $Y$ in  a pinch point, then there is a boundary components in the resolution graph which is not glued to any other component (compare \cite[Prop.~4.27]{ksb88} for more details). In total we see that the exceptional divisors form a tree of rational curves unless we glue a number of components of type \eqref{C2} in a circle, that is, the singularity is a degenerate cusp.

The following is an important property of a singularity.
\begin{defin}[{\cite[Def.~4.14]{ksb88}, \cite[Def.~4.1.1]{vS87}}]
If $R^1f_*\ko_Y =0$ then we say $X$ has semi-rational singularities. 
\end{defin}
Morally all results valid for rational singularities hold also in the semi-rational case. 

The following makes the connection to slc singularities.
\begin{lem}
Let $p\in X$ be an slc surface singularity and $f\colon Y\rightarrow X$ the minimal semi-resolution.

The point  $p$ is not semi-rational if and only if $p$ is simple elliptic, a cusp, or a degenerate cusp.
In this case $\dim_\IC (R^1f_*\ko_Y)_p=1$. (This number is sometimes called the geometric genus of a singularity.)
\end{lem}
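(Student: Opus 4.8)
The plan is to prove the two statements separately: first characterise when $p$ fails to be semi-rational in terms of the classification recalled above, and then compute $\dim_{\IC}(R^1f_*\ko_Y)_p$ in the non-semi-rational cases. First I would reduce to a local statement, so I may replace $X$ by a small analytic neighbourhood of $p$ and work with the minimal semi-resolution $f\colon Y \to X$ together with its normalisation $\eta\colon \bar Y \to Y$ and the induced resolution $\bar f\colon \bar Y \to \bar X$.

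For the normal case ($D=\emptyset$, so $p$ is log-canonical) the statement is classical: the log-canonical surface singularities that are not rational are exactly the simple elliptic singularities and cusps, and for these $\dim_{\IC}(R^1f_*\ko_Y)_p=1$ (the exceptional divisor is an elliptic curve or a cycle of rational curves, so $h^1$ of its structure sheaf, computed from the Leray/formal-function exact sequence, equals $1$). I would cite \cite[Ch.~4]{Kollar-Mori} for this. For the non-normal case I would use the diagram \eqref{equation: normalisation+resolution} and the exact sequence
\[
0 \to \ko_Y \to \eta_*\ko_{\bar Y} \to \eta_*\ko_{\bar Y}/\ko_Y \to 0,
\]
where the last term is supported on the double locus $D_Y$, which is smooth. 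Pushing forward by $f$ and using that $f\restr{D_Y}$ is finite and that $R^1f_*\eta_*\ko_{\bar Y} = \eta_* R^1\bar f_* \ko_{\bar X}$ (again $\eta$ finite), one gets that $R^1f_*\ko_Y$ vanishes precisely when each local irreducible component $\bar Y_\alpha$ maps to a rational singularity of $\bar X_\alpha$ \emph{and} the gluing along $D_Y$ contributes nothing. Inspecting the three graph types \eqref{C1}, \eqref{C2}, \eqref{Dh}: a chain of type \eqref{C2} has rational exceptional locus and the gluing is along a $\IP^1$, so it is semi-rational unless several such chains are glued in a circle — that is exactly the degenerate cusp. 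Types \eqref{C1} and \eqref{Dh} are always semi-rational (their normalisations have rational log-canonical singularities, by the list in \cite[16.3]{kollar12}). Thus $p$ fails semi-rationality iff, on the normalisation side, $p$ is simple elliptic or a cusp, or, staying non-normal, $p$ is a degenerate cusp — which is the claimed list.

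For the dimension count in the non-semi-rational case I would argue as follows. In the simple elliptic and cusp cases the computation is the classical normal one. In the degenerate cusp case, $Y$ near $p$ is (after normalising) a cycle of chains of rational curves, and the reduced exceptional fibre of $f$ is a cycle of $\IP^1$'s (a curve of arithmetic genus $1$); by the theorem on formal functions $\dim_{\IC}(R^1f_*\ko_Y)_p = h^1$ of the structure sheaf of the (reduced) exceptional fibre together with its infinitesimal neighbourhoods, and since the singularity is log-canonical the limit stabilises at $h^1(\ko_E)$ where $E$ is the reduced cycle, which is $1$. Concretely I would use the exact sequence relating $\ko_E$ to $\oplus_i \ko_{E_i}$ over the normalisation of the cycle $E$: each $E_i \cong \IP^1$ contributes nothing to $h^1$, the nodes of the cycle contribute, and a cycle of $k$ rational curves has $h^1(\ko_E) = 1$. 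The main obstacle, and the step I expect to require the most care, is controlling the higher infinitesimal neighbourhoods in the formal-functions computation for the degenerate cusp: one has to check that the maps $H^1(\ko_{nE}) \to H^1(\ko_{(n-1)E})$ are isomorphisms for all $n$, which amounts to the vanishing of $H^1$ of the conormal-type sheaves $\mathrm{Sym}^n(N_E^\vee)$ twisted appropriately; this uses that the self-intersection data of the cycle (encoded in the negative-definite chains of \eqref{C2}) makes these sheaves have negative degree on each component. Everything else is bookkeeping with the dual graphs and the finiteness of $\eta$ and $f\restr{D_Y}$.
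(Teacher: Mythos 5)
Your overall strategy (push the conductor sequence $0\to\ko_Y\to\eta_*\ko_{\bar Y}\to\kq\to 0$ forward along $f$, kill $R^1f_*\eta_*\ko_{\bar Y}$ using rationality of the normalised singularities, and locate the whole obstruction in the ``gluing'' cokernel of $f_*\eta_*\ko_{\bar Y}\to f_*\kq$) is a legitimate alternative to the paper's argument, and the framework is set up correctly. But the proposal never actually proves the step on which everything rests: that this cokernel vanishes for every non-normal slc point other than a degenerate cusp, and is one-dimensional for a degenerate cusp. ``The gluing is along a $\IP^1$'' and ``types \eqref{C1} and \eqref{Dh} are always semi-rational'' are assertions, not arguments. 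Concretely, the cokernel measures which sections of $\kq\isom\kl^{-1}$ on $D_Y$ lift to functions on $\bar Y$, i.e.\ which $\sigma$-anti-invariant functions on $D_{\bar Y}\isom\bar D^\nu$ come from functions on $\bar X$ (equivalently, from $\ko_{\bar D}$); this is a nontrivial piece of linear algebra at the nodes of $\bar D$ governed by the gluing involution, and it is exactly where the case distinction chain-versus-cycle, and the presence of \eqref{C1}/\eqref{Dh} caps and pinch points, enters. The most delicate instance is a non-Gorenstein point whose canonical index-one cover is a degenerate cusp: there the obstruction space upstairs is $\IC$, and one must show it dies in the quotient. The paper handles precisely this case by a genuinely different device: $R^1g_*\ko_W\isom(R^1\tilde g_*\ko_{\tilde W})^G$ via the Grothendieck spectral sequence, plus the fact that $G$ acts \emph{effectively} on that one-dimensional space (following Kov\'acs and Kawamata). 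Your proposal has no counterpart to this, and without it the claim that all \eqref{C1}/\eqref{Dh}/non-circular configurations are semi-rational is unsupported.

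A second, smaller gap is the dimension count for the degenerate cusp. Your formal-functions argument treats the reduced exceptional fibre $E$ as if it were a Cartier divisor on a smooth surface with graded pieces $\mathrm{Sym}^n(N_E^\vee)$; but $Y$ is semi-smooth and $E$ meets the double curve $D_Y$, so $E$ is only a Weil divisor there and $\ki_E^n/\ki_E^{n+1}$ is not a symmetric power of a line bundle; degree estimates on the components require the correction terms the paper develops for its semi-numerical cycle machinery (Proposition~\ref{prop: semi-smooth RR}, Definition~\ref{def: local correction}). This step is also unnecessary in your own setup: once $R^1f_*\eta_*\ko_{\bar Y}=0$ and $R^1f_*\kq=0$ are known, $(R^1f_*\ko_Y)_p$ \emph{is} the gluing cokernel, so the degenerate-cusp dimension should be read off from the same node computation rather than from formal functions. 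The paper sidesteps all of this by citing van Straten \cite[Thm.~4.3.6]{vS87} for the degenerate cusp and \cite[Thm.~4.21]{ksb88} for the Gorenstein non-degenerate-cusp case. In short: the route you propose is essentially van Straten's direct computation, it can be completed, but as written the central verification is missing and is not mere bookkeeping.
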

\begin{proof}
In the normal case this is well known, see for example \cite[Lem.~9.3]{kawamata88}. 
In lack of an appropriate reference we sketch a proof in the non-normal case.

The statement that $\dim_\IC (R^1f_*\ko_Y)_p=1$ for a degenerate cusp is contained in \cite[Thm.~4.3.6]{vS87}. So it remains to prove that in all other cases the singularity is semi-rational. 

Suppose first $p\in X$ is a non-normal Gorenstein point  but not a degenerate cusp. By \cite[Thm. 4.21]{ksb88} $p$ is a normal crossing or a pinch point and thus semi-rational.
So it remains to show that a non-Gorenstein slc singularity $p\in X$ is semi-rational.

Let $\tilde p\in\tilde X$ be the canonical index one cover of $p\in X$. Then $\tilde X \rightarrow X$ is a $G$-covering branched only at the single point $p$ (\cite[Thm.~4.24]{ksb88}) for some finite group $G$.

If $\tilde p\in\tilde X$ is semi-canonical, i.e., a normal crossing or pinch point, then we can argue as in \cite[Thm.~1]{kovacs00}, which works for semi-rationality as well.

Otherwise $\tilde p\in\tilde X$ is a degenerate cusp.  Let $\tilde g\colon\tilde W\rightarrow \tilde X$ the minimal semi-resolution. Then the $G$-action lifts to $\tilde W$  and we have a commutative diagram
\[
\begin{tikzcd}
        {\tilde W} \dar{\tilde g}\rar{\lambda} & W:=\tilde W/G \arrow[shift right = .7cm]{d}{g}\\
             \tilde X \rar{\phi} & X = \tilde X/G
\end{tikzcd}
\]
Arguing again as in \cite[Thm.~1]{kovacs00}, we see that $W$ has semi-rational singularities.

Denote by $\varphi_*^G$ (resp.\ $\lambda_*^G$) be the composite functor of pushforward and taking the $G$-invariant part. Then $\varphi_*^G$ (resp.\ $\lambda_*^G$) is an exact functor from the category of $G$-equivariant $\ko_{\tilde X}$-modules (resp.\ $\ko_{\tilde W}$-modules) to the category of $\ko_X$-modules (resp.\ $\ko_W$-modules). Then $g_*\circ \lambda_*^G =  \varphi_*^G\circ g$,  and by the Grothendieck spectral sequence we get
\[
 R^1 g_* \ko_W \isom R^1(g_*\circ \lambda_*^G)(\ko_{\tilde W}) \isom  R^1(\varphi_*^G\circ \tilde g)(\ko_{\tilde W}) \isom  \varphi_*^G R^1\tilde g_*(\ko_{\tilde W})\isom  (R^1\tilde g_*(\ko_{\tilde W}))^G.
\]
With the same argument as in \cite[Proof of Thm.~9.6, p.~143]{kawamata88} one proves that $G$ acts effectively on $R^1\tilde g_*(\ko_{\tilde W}) \isom \IC$ thus  $R^1\tilde g_*(\ko_{\tilde W}))^G=0$ and $p\in X$ is semi-rational also in this case. This concludes the proof.
\end{proof}
An alternative approach to this result is to compute the fundamental cycle on a stable improvement in the sense of \cite{vS87} and then use \cite[Thm.~4.1.3]{vS87}.

\begin{rem}\label{rem: chi}
Let $p\in X$ be an slc point and $Y$  the semi-resolution of a sufficiently small neighbourhood of $p$. Then for every effective divisor $E$ supported on the exceptional divisors  we have a surjection
\[ (R^1f_*\ko_Y)_p\isom H^1(Y, \ko_Y) \onto H^1(E, \ko_E).\]
Thus if $p$ is semi-rational then $h^1(E,\ko_E)=0$ and $\chi(E,\ko_E) = h^0(E,\ko_E)\geq 1$; if $p$ is not semi-rational then $h^1(E,\ko_E)\leq 1$ and $\chi(E,\ko_E) \geq  h^0(E,\ko_E)-1\geq 0$.

More precisely, in the non-semi-rational case equality can only occur if the support of $E$ is the full exceptional locus since otherwise $E$ is supported on the exceptional divisor of a semi-rational singularity.
\end{rem}

\subsection{Semi-numerical cycle} \label{sect: numerical cycle}

\begin{defin}\label{defin: semi-numerical cycle}
 Let $p\in X$ be a  non-semi-smooth point and $E_i$ the exceptional divisors over $p$. The semi-numerical cycle $Z$ over $p$ is a minimal Weil divisor $Z=\sum_i a_i E_i$ of $Y$ such that
\begin{enumerate}
 \item $a_i\in \IZ$ for any $i$;
 \item $(Z_{\bar Y} + D_{\bar Y})E_{i,\bar Y} \leq 0$ for any $i$.
\end{enumerate}
\end{defin}

\begin{rem}\label{rem: semi-numerical cycle}
 If $X$ is normal, then $D_{\bar Y}$ is empty and Definition~\ref{defin: semi-numerical cycle} coincides with the usual definition of  the numerical cycle (\cite[Sect.~4.5]{Reid97}); the existence and the uniqueness of a semi-numerical cycle is proved in the same fashion as for the normal singularities, using the negative definiteness of the intersection form on the exceptional curves. The semi-numerical cycle turns out to carry the cohomology $R^1f_*\ko_Y$ (cf.\ Remark~\ref{rem: Z carries cohomology}).  

See \cite[3.4]{vS87} for a  discussion of the notion of fundamental cycle for a more general class of  non-normal surfaces singularities.
\end{rem}
In the case where $p\in X$ is normal, the numerical cycle is nicely elaborated in \cite[Section 4]{Reid97} (see also \cite[Thm.~4.7]{Kollar-Mori}), so we concentrate on the non-normal case. 

\begin{rem}[Semi-numerical cycle on non-normal slc singularities]\label{rem: semi-numerical}
Let $p\in X$ be a non-normal slc point. Locally analytically around the preimage of $p$ we decompose the resolution $Y$ into irreducible components of the type presented in Section \ref{sect: semi-rat} and correspondingly  the semi-numerical cycle $Z=\bigcup_\alpha Z_\alpha$ where $Z_\alpha \subset Y_\alpha$. Since the intersection form is defined via the normalisation the divisors $Z_\alpha$ are uniquely determined by the configuration of exceptional curves and boundary components on $Y_\alpha$.

Computing in each of the different cases we see that $Z_\alpha$ is the reduced sum of exceptional divisors except in the following cases:
\begin{enumerate}
 \item The component $Y_\alpha$ is of type \eqref{C2} with extended dual graph
\[\bullet   \ - \ 1  \ -\ \bullet \]
 and $Z_\alpha=2E$, where $E$ is the exceptional curve.
\item The component is of type \eqref{Dh} with dual graph
\[\begin{array}{cccccccc}
 &&&&&& 2&\\
&&&&& \diagup &\\
\bullet   \ {-}\  2 & - & \cdots  & - & 2 &&&\\
&&&&& \diagdown &\\
 &&&&&& 2&
\end{array}\]
and, denoting by $E'$ and $E''$ the two exceptional curves on the right of the fork and by $E_j$ the exceptional curves in the chain to the left of the fork,  the restriction of the semi-numerical cycle is
$Z_\alpha=E' + E'' + 2\sum_{j=1}^{n}E_{j}.$
\end{enumerate}
In particular,  $Z$ has multiplicity at most $2$ at each irreducible exceptional curve. 
\end{rem}

The next result shows the importance of the semi-numerical cycle for the computation of higher pushforward sheaves.
\begin{lem}\label{lem: R^1 = H^1}
Let $C$ and $F$ be well-behaved curves on $X$ and $Y$ respectively such that $f_*F = C $ as Weil divisors. Suppose moreover $F E\leq 0$ for any effective exceptional divisor $E$ over $p$. Then $R^1f_*\ko_Y(-F)_p\isom H^1(Z, \ko_Z(-F))$ where $Z$ is the semi-numerical cycle over $p$.
\end{lem}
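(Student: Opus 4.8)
The plan is to use the formal function theorem together with the fact that the semi-numerical cycle $Z$ dominates all effective exceptional cycles in the appropriate sense. First I would localize: the statement is about the stalk of $R^1f_*$ at $p$, so I may replace $X$ by a small (analytic or formal) neighbourhood of $p$ and $Y$ by the preimage, so that $f$ is a projective birational morphism contracting the exceptional divisor $E_\mathrm{red}=\bigcup_i E_i$ to $p$. By the formal function theorem, $(R^1f_*\ko_Y(-F))_p^\wedge \isom \varprojlim_n H^1(nE_\mathrm{red}, \ko_Y(-F)\restr{nE_\mathrm{red}})$, so it suffices to show that the restriction maps $H^1(Y, \ko_Y(-F)) \to H^1(W, \ko_W(-F))$ are isomorphisms for every effective exceptional divisor $W\geq Z$, and that they factor through $H^1(Z,\ko_Z(-F))$ isomorphically.

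The key step is the following: for any effective exceptional divisor $W$ with $Z\leq W$, write $W = Z + \sum c_i E_i$ and filter the passage from $Z$ to $W$ by adding one exceptional curve $E_i$ at a time. At each stage, adding $E_i$ to an intermediate cycle $W'$ (with $Z\leq W'$) gives an exact sequence
\[
0 \to \ko_{E_i}(-W'-F)\restr{E_i} \to \ko_{W'+E_i}(-F) \to \ko_{W'}(-F)\to 0,
\]
using Lemma~\ref{lem: div seq}\refenum{ii}, and I need to show $H^1(E_i, \ko_{E_i}(-W'-F)\restr{E_i})=0$ and $H^0$ of the left term vanishes as well, so that both $H^0$ and $H^1$ are unchanged. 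Since $E_i\isom \IP^1$ (we are over a non-normal point, or a normal point with rational components after the classification) this amounts to checking that $\deg\left(\ko_{E_i}(-W'-F)\restr{E_i}\right)$ lies in the range $\{-1,0\}$, i.e. $-W'E_i - FE_i \in\{-1,0\}$ for $E_i$ a $(-2)$-curve and more generally $2g(E_i)-2 < \deg \le -1$; concretely I must show $0\le W'E_i + FE_i \le 1$. Here $FE_i\le 0$ by hypothesis, and $W'E_i$ is controlled because $W'\ge Z$ and $ZE_i\le 0$ by the defining property of the semi-numerical cycle; the multiplicity bound from Remark~\ref{rem: semi-numerical} (the semi-numerical cycle has multiplicity at most $2$ at each exceptional curve) together with the explicit resolution graphs \eqref{C1}, \eqref{C2}, \eqref{Dh} is what pins the intersection numbers down to the required narrow window. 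One has to be slightly careful with the conductor divisor $D_{\bar Y}$: the relevant intersection numbers are computed on $\bar Y$, so the defining inequality $(Z_{\bar Y}+D_{\bar Y})E_{i,\bar Y}\le 0$ must be used rather than $ZE_i\le 0$, and the contribution of $D_{\bar Y}E_{i,\bar Y}$ reappears when translating back — but since $F$ is well-behaved and its strict transform meets $D_{\bar Y}$ effectively, the inequality $FE_i\le 0$ already absorbs this, and a direct check on each of the finitely many local graph types closes the gap.

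Granting this, the colimit stabilizes at $Z$: we get $H^1(W,\ko_W(-F)) \isom H^1(Z,\ko_Z(-F))$ for all $W\ge Z$, hence by the formal function theorem $(R^1f_*\ko_Y(-F))_p^\wedge \isom H^1(Z,\ko_Z(-F))$; since the latter is a finite-dimensional $\IC$-vector space it is already complete, and since $f$ is an isomorphism away from $p$ the sheaf $R^1f_*\ko_Y(-F)$ is supported at $p$, so the completion equals the stalk. The hypothesis $f_*F = C$ as Weil divisors is what guarantees that $F$ has no exceptional components, so that $\ko_Z(-F)$ makes sense as the restriction of $\ko_Y(-F)$ to $Z$ modulo torsion and the sequences above are the ones coming from Lemma~\ref{lem: div seq}. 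The main obstacle is the case analysis in the middle step: one genuinely has to run through the graphs \eqref{C1}, \eqref{C2}, \eqref{Dh} (and, when $p$ is normal, the list of log-canonical resolution graphs) and verify the intersection-number bound $0 \le (W'+F)E_i \le 1$ in each configuration, paying attention to the type~\eqref{C2} and type~\eqref{Dh} components where $Z$ has multiplicity $2$; everything else is formal.
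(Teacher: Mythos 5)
Your overall architecture (localise at $p$, apply the theorem on formal functions, and show that $H^1$ stabilises once the exceptional cycle contains $Z$) agrees with the paper's, but your method for proving stabilisation is genuinely different and, as written, has a real gap. The paper does \emph{not} add exceptional curves one at a time: for an \emph{arbitrary} effective exceptional divisor $E$ it writes the single sequence $0\to\ko_E(-Z-F)\to\ko_{Z+E}(-F)\to\ko_Z(-F)\to 0$ and kills $H^1(E,\ko_E(-Z-F))$ in one stroke via automatic adjunction (Lemma~\ref{lem: H^1 on curve}): a nonzero class would produce a subcurve $E'\subset E$ with $\chi(E',\ko_{E'}(-Z-F))\le-\chi(E',\ko_{E'})$, and Proposition~\ref{prop: semi-smooth RR} together with the defining inequality $(Z_{\bar Y}+D_{\bar Y})E'_{\bar Y}\le 0$, the hypothesis $FE'\le 0$, and the bounds $\chi(E',\ko_{E'})\ge 0$ (resp.\ $\ge 1$) from Remark~\ref{rem: chi} produce a contradiction. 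This avoids computation sequences and any case-by-case analysis of the graphs \eqref{C1}, \eqref{C2}, \eqref{Dh} altogether.

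The gap in your route is twofold. First, identifying $\deg\bigl(\ko_{E_i}(-W'-F)\restr{E_i}\bigr)$ with the intersection number $-(W'+F)E_i$ is not valid on the non-normal surface $Y$: intersection numbers are computed via strict transforms on $\bar Y$ (Definition~\ref{defin: intersection}) and see nothing of meetings inside the conductor $D_Y$ (Remark~\ref{rem: intersection properties}), whereas the degree of the torsion-free restriction does; the discrepancy is precisely the correction terms $n_q$, $d_q$ of Definition~\ref{def: local correction} and Proposition~\ref{prop: semi-smooth RR}\refenum{iii}, and your remark that ``$FE_i\le 0$ already absorbs this'' is an assertion, not an argument. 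Second, even granting the numerical reduction, the existence of an ordering of the computation sequence for which the degree bound holds at \emph{every} step is the entire content of the claim and is nowhere established; the bound is order-sensitive (adding a $(-2)$-curve of a cycle of rational curves after both of its neighbours already violates it), so ``a direct check on the finitely many local graph types closes the gap'' defers exactly the hard part. Finally, your target window $\deg\in\{-1,0\}$ is the wrong one: for the $R^1$ statement only $H^1$-vanishing, i.e.\ degree $\ge -1$ on $\IP^1$, is needed, and the lower bound $0\le (W'+F)E_i$ fails already for $W'=Z$ and $E_i\subset Z$ with $E_i^2\le -2$. I would replace the curve-by-curve induction by the one-step argument via Lemma~\ref{lem: H^1 on curve} and Proposition~\ref{prop: semi-smooth RR}.
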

\begin{rem}\label{rem: Z carries cohomology}
 Applying the above to an empty curve we see that   $H^1(Z,\ko_Z) = (R^1 f_*\ko_Y)_p$ where $p\in X$ and $Z$ is the semi-numerical cycle over $p$.
\end{rem}
\begin{proof} 
If $p$ is semi-smooth then the map $f$ is finite in a neighbourhood of $p$ and both sides are $0$. So assume $p$ to be a non-semi-smooth point of $X$. Let  $E$ be any effective  divisor supported on the exceptional locus over $p$.
The restriction sequence from Lemma~\ref{lem: div seq} yields an  an exact sequence in cohomology: 
\begin{equation}\label{eq: thickening numerical cycle}
 H^1(E, \ko_E(-Z - F)) \rightarrow H^1(Z + E, \ko_{Z+E}(-F) ) \rightarrow H^1(Z,\ko_{Z}(-F) ) \rightarrow 0.
\end{equation}

\textbf{Claim.}  $H^1(E, \ko_E(-Z - F)) =0.$
 \begin{proof}[Proof of the claim]

Suppose on the contrary that  $H^1(E, \ko_E(-Z - F)) \neq0$. By Lemma~\ref{lem: H^1 on curve}, there is a subcurve $E'\subset E$ such that
\begin{equation}\label{eq: deg of Z+B_Y on E'}
 \chi(E',\ko_{E'}(-Z - F)) \leq \chi(E',\omega_{E'}) = -\chi(E',\ko_{E'}). 
\end{equation}
Recall that  by Remark \ref{rem: chi}
\begin{equation}\label{eq: chi}
 \begin{split}
&\chi(E', \ko_{E'}) \geq 0 \text{ and }\\ &\chi(E', \ko_{E'}) \geq 1\text{ unless $p$ is not semi-rational and $\supp E' = \inverse f (p)$.}  
 \end{split}
\end{equation}

Applying Proposition~\ref{prop: semi-smooth RR}\refenum{iii} to $\ko_{E'}(-Z - F)$ equation \eqref{eq: deg of Z+B_Y on E'} becomes 
\begin{equation}\label{eq: positive negative}
 2\chi(E',\ko_{E'}) + \sum_{q\in\sing Y}n_q(E') + n_q(Z+F) - n_q(E'+ Z+F) \leq (Z + F)E'
\end{equation}

We treat the case  $p\in X$ a normal point first. Then $Y$ is smooth and the above equation becomes
\[
 2\chi(E',\ko_{E'})\leq (Z + F)E'\leq 0.
\]
If $p\in X$ is rational or $\supp(E')\neq \inverse f(p)$, then $2\chi(E',\ko_{E'})\geq 2$ by \eqref{eq: chi}\,---\,a contradiction. If $p\in X$ is not rational and $\supp(E') = \inverse f(p)$ then  $E'=Z+E''$ for some effective $E''$. Hence  $(Z + F)E'\leq ZE'=Z^2 + Z E''\leq Z^2<0$  while  $2\chi(E',\ko_{E'})\geq 0$ which again contradicts \eqref{eq: chi}. 

Now we can assume $p\in X$ is non-normal. Let $E'_{\bar Y}\subset\bar Y$ be the strict transform of $E'$. Then 
\begin{equation}\label{eq: estimate error term}
  n_q(Z+F) - n_q(E'+ Z+F)\geq -\sum_{t\in \eta^{-1}(q)}I_t(E'_{\bar Y},D_{\bar Y}) \geq ZE'
\end{equation}
where the last inequality is because of the definition of the semi-numerical cycle. Combining \eqref{eq: positive negative} and \eqref{eq: estimate error term}, we have
\[
 2\chi(E',\ko_{E'}) + \sum_{q\in\sing Y}n_q(E') \leq FE'\leq 0.
\]
where the last inequality is by our assumption on $F$.

 If $p\in X$ is non-semi-rational with $\supp(E')=\inverse f(p)$, then $\sum_{q\in\sing Y}n_q(E')>0$ and $\chi(E',\ko_{E'})\geq 0$ by \eqref{eq: chi}\,---\,contradiction. Otherwise $\chi(E',\ko_{E'})\geq 1$ and again we get a contradiction.

Thus a subcurve $E'$ as in \eqref{eq: deg of Z+B_Y on E'} cannot exist and $H^1(E, \ko_E(-Z-F))=0$ as claimed.
\end{proof}
Therefore the sequence \eqref{eq: thickening numerical cycle} yields $H^1(Z,\ko_{Z}(-F) )\isom H^1(Z + E, \ko_{Z+E}(-F) )$ for every effective exceptional divisor $E$ over $p$. Moreover, the surjection $\ko_{Y}(-F)\restr{Z+E} \onto \ko_{Z+E}(-F)$ induces an isomorphism  
\[H^1(Z + E, \ko_{Z+E}(-F) )\isom  H^1(Z + E, \ko_{Y}(-F)\restr{Z+E} )\]
 because the kernel is supported on points. By the theorem on formal functions (\cite[Thm.~III.11.1]{Hartshorne}) we have
$R^1f_*(\ko_Y(-F))_p = H^1 (Z,\ko_{Z}(-F))$ as claimed.
\end{proof}

We now  give some lower bounds on the Euler characteristic of subcurves of semi-numerical cycles.
\begin{lem}\label{lem: subcurves of Z}
Let $Z$ be the semi-numerical cycle over $p\in X$ and $E\subset Z$ a connected subcurve. Then 
\[
 2\chi(E,\ko_E)\geq \sum_{q\in\sing Y} d_q(E)
\]
with equality if and only if $E$ satisfies one of the following
\begin{enumerate}
 \item $p\in X$ is simple elliptic singularity or a cusp and $E=Z$.
 \item $p\in X$ is non-normal, $E$ is reduced and every connected component of $\bar E$ is a chain of smooth rational curves  intersecting $D_{\bar Y}$ in two  points. 
\end{enumerate}
\end{lem}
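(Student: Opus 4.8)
The plan is to decompose $E$ into its irreducible components living on the various local pieces $Y_\alpha$ of the resolution, and to track the contributions to both sides of the inequality $2\chi(E,\ko_E)\geq \sum_q d_q(E)$ piece by piece, using the explicit structure of the semi-numerical cycle recalled in Remark~\ref{rem: semi-numerical} and the Euler characteristic formulas of Proposition~\ref{prop: semi-smooth RR}.

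First I would dispose of the normal case: if $p\in X$ is normal then $Y$ is smooth, all $d_q(E)=0$, and the statement reduces to $\chi(E,\ko_E)\geq 0$, which is Remark~\ref{rem: chi}; equality forces $p$ to be non-semi-rational with $\supp E=\inverse f(p)$, and since the numerical cycle of a cusp or simple elliptic singularity is the unique connected non-semi-rational configuration, one gets case~(i). (One should check directly that for a cusp $\chi(Z,\ko_Z)=0$ and for a simple elliptic singularity the numerical cycle is reduced with $\chi=0$.)

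For the non-normal case I would apply Proposition~\ref{prop: semi-smooth RR}\refenum{ii} to $E_{\bar Y}\subset\bar Y$, writing $\chi(E,\ko_E)=\chi(E_{\bar Y},\ko_{E_{\bar Y}})-\sum_{q}n_q(E)$, and combine it with the defining relation $2n_q(E)+d_q(E)=\sum_{t\in\inverse\eta(q)}I_t(E_{\bar Y},D_{\bar Y})$ of Definition~\ref{def: local correction}. This turns the desired inequality into
\[
2\chi(E_{\bar Y},\ko_{E_{\bar Y}})\ \geq\ \sum_{q\in\sing Y}\ \sum_{t\in\inverse\eta(q)}I_t(E_{\bar Y},D_{\bar Y})\ =\ E_{\bar Y}\cdot D_{\bar Y}.
\]
Now $E_{\bar Y}$ is a connected curve contained in the reduced exceptional locus on $\bar Y$ — since $Z$ has multiplicity at most $2$, $E_{\bar Y}$ is a union of $\mathbb P^1$'s each meeting $D_{\bar Y}$ in at most two points by the list of graphs \eqref{C1}, \eqref{C2}, \eqref{Dh} — and the right-hand side counts intersections with the conductor. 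The key numerical input is that, component by component, $Z_\alpha$ has been engineered precisely so that $(Z_{\bar Y}+D_{\bar Y})E_{i,\bar Y}\leq 0$; restricting to the subcurve $E$ and using negative-definiteness of the exceptional intersection form on each chain, together with the adjunction $2\chi(\ko_{E_{\bar Y}})-2=K_{\bar Y}E_{\bar Y}+E_{\bar Y}^2$ on the smooth surface $\bar Y$, one bounds $E_{\bar Y}\cdot D_{\bar Y}$ from above by $2\chi(E_{\bar Y},\ko_{E_{\bar Y}})$. I would run this estimate separately on graphs of type \eqref{C1} (a chain attached to one boundary vertex, so $E_{\bar Y}D_{\bar Y}\leq 1<2\chi$ already for a single component), of type \eqref{Dh} (the two extremal $(-2)$-curves meet no boundary, so the relevant count is at most $1$), and of type \eqref{C2}, which is the delicate one.

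The main obstacle is the equality analysis in the type~\eqref{C2} situation, i.e.\ chains of smooth rational curves each meeting $D_{\bar Y}$ in exactly two points (the "$\bullet - c_1 - \cdots - c_n - \bullet$" graphs): here $E_{\bar Y}D_{\bar Y}$ can equal $2\chi(E_{\bar Y},\ko_{E_{\bar Y}})$ on the nose, which is exactly case~(ii). I would argue that for a single such $\mathbb P^1$ meeting $D_{\bar Y}$ transversally in two points one has $\chi(\ko_{E_{\bar Y}})=1$ and $E_{\bar Y}D_{\bar Y}=2$, and that concatenating a chain keeps the two quantities equal; any deviation — a non-reduced component (impossible in $Z$ except the $2E$ configuration of Remark~\ref{rem: semi-numerical}\refenum{i}, which one checks separately gives strict inequality via $d_q$), a component meeting $D_{\bar Y}$ in fewer than two points, a branch point producing extra genus, or attaching a component from a \eqref{C1} or \eqref{Dh} block — strictly increases the left side relative to the right. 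Simultaneously I must show that in the non-semi-rational cases the slack $\chi(E_{\bar Y},\ko_{E_{\bar Y}})$ can drop by one only when $\supp E$ is the whole fiber, recovering case~(i); this uses Remark~\ref{rem: chi} applied to $E_{\bar Y}$ (noting $h^1$ of a curve equals $h^1$ of its normalization is false in general, so one must instead pass through $\chi(E,\ko_E)$ on $X$ and Remark~\ref{rem: chi} directly). Bookkeeping the bad points $q$ where $d_q>0$ and matching them to the intersection pattern of $E_{\bar Y}$ with $D_{\bar Y}$ will be the technically fussy part, but it is entirely local and combinatorial given the three graph types.
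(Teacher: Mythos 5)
Your overall architecture matches the paper's: dispose of the normal case via Remark~\ref{rem: chi}, then use Proposition~\ref{prop: semi-smooth RR}\refenum{ii} together with $2n_q(E)+d_q(E)=\sum_t I_t(\bar E,D_{\bar Y})$ to convert the claim into the inequality $2\chi(\bar E,\ko_{\bar E})\geq \bar E\cdot D_{\bar Y}$ on $\bar Y$, and finally run a case analysis over the graph types \eqref{C1}, \eqref{C2}, \eqref{Dh} with the equality case pinned down on \eqref{C2} chains. That reduction is correct and is exactly what the paper does, just organised differently (the paper peels off $E_\red$ first and then adds back the doubled components one at a time).

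However, two of your structural claims are wrong and they sit precisely where the work is. First, you assert that non-reduced components of $Z$ occur only in the length-one \eqref{C2} configuration $Z_\alpha=2E$. This misses the \eqref{Dh} case, where $Z_\alpha=E'+E''+2\sum_j E_j$: the entire chain to the left of the fork appears with multiplicity $2$, and one of these doubled curves may meet the conductor. These components are not disposed of by your remark that ``the two extremal $(-2)$-curves meet no boundary''; they are exactly the paper's cases (b) and (c), which require showing $\chi(\bar E^{(\alpha)},\ko_{\bar E^{(\alpha)}})\geq 2$ via adjunction on the explicit configuration, and the resulting slack $2r_1+r_2$ is what rules them out in the equality analysis. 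Your ``any deviation strictly increases the left side'' is asserted, not proved, for precisely this case. Second, $\bar E$ need not be connected even though $E$ is: connectivity of $E$ can pass entirely through the conductor, so $\bar E$ decomposes into components $\bar E^{(\alpha)}$ and the count $\chi(\bar E^{(\alpha)}_\red,\ko)=1$ must be applied to each one separately; treating $\bar E$ as a single connected chain would give the wrong Euler characteristic. (Also note the adjunction formula should read $-2\chi(\ko_{\bar E})=(K_{\bar Y}+\bar E)\bar E$, not $2\chi-2=\dots$.) These gaps are fixable along the lines you sketch, but as written the ``technically fussy part'' you defer contains the actual content of the lemma.
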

\begin{proof}
If $p\in X$ is normal, then $d_q(E) = 0$ for all $q$ and the assertions follow from Remark \ref{rem: chi}. So in the following we assume $p\in X$ is non-normal and that $E$ is connected.

Denote by $\bar E$ the strict transform of $E$ in $\bar Y$. 
Let $\bar E^{(\alpha)}$ ($1\leq \alpha \leq n$) be the connected components of $\bar E$ and $E^{(\alpha)}:=\eta_* \bar E^{(\alpha)}$ the pushforward as Weil divisors. We arrange the labels  in such a way that $E^{(\alpha)}\cap E^{(\alpha+1)}\neq\emptyset$ for $1\leq \alpha\leq n-1$.  By Proposition~\ref{prop: semi-smooth RR}, we have 
\begin{equation}\label{eq: chi E}
 \chi(E,\ko_E)  = \chi(\bar E,\ko_{\bar E}) - \sum_{q\in\sing Y} n_q(E)= \sum_{\alpha=1}^n\chi(\bar E^{(\alpha)},\ko_{\bar E^{(\alpha)}}) - \sum_{q\in\sing Y} n_q(E), 
\end{equation}
and similarly
\begin{equation}\label{eq: chi E_red}
 \chi(E_\red,\ko_{E_\red})   = \sum_{\alpha=1}^n\chi(\bar E_\red^{(\alpha)},\ko_{\bar E_\red^{(\alpha)}}) - \sum_{q\in\sing Y} n_q(E_\red).
\end{equation}

First we look at the reduction $E_\red$ of $E$, which we assumed to be connected. If $p\in X$ is either semi-rational or non-semi-rational but $\supp(E)\neq \inverse f(p)$, then $E_\red$ is a reduced tree of rational curves; we have $\chi(E_\red,\ko_{E_\red}) = 1$ and  since a local intersection difference can only occur at the end points $\sum_{q\in\sing Y} d_q(E_\red)\leq 2$. 

 If $p\in X$ is a non-semi-rational non-normal point then it is a degenerate cusp, and if $\supp(E)=\inverse f(p)$ the divisor $E_\red$ is a cycle of rational curves so that $\chi(E_\red,\ko_{E_\red}) = 0$ and  $\sum_{q\in\sing Y} d_q(E_\red) = 0$.
In both cases we have 
\begin{equation}\label{eq: E_red}
2\chi(E_\red,\ko_{E_\red})-\sum_{q\in\sing Y} d_q(E_\red) \geq 0
\end{equation}
with equality if and only if $E_\red$ is as described in \refenum{ii}.

In general, $E$ is obtained from $E_\red$ by adding some irreducible components of $E_\red$. More precisely, let $F_1,\dots,F_k$ be the irreducible components of $E -E_\red$ so that we can write $E=E_\red + \sum_{1\leq j\leq k} F_j$. We order in such a way that  $F_{1},\dots, F_{k'}$ have non-empty intersection with $D_Y$ while $F_{k'+1},\dots,F_{k}$ do not intersect $D_Y$.

Using the computation of semi-numerical cycles from Remark \ref{rem: semi-numerical} we distinguish three possible cases for $F_j\subset Y_{(\alpha_j)}$.
\begin{enumerate}
 \item[a)] $F_j$ is a $(-1)$-curve. Then $E^{(\alpha_j)}=2F_j$ because the only possibility is type \eqref{C2} of length 1. Then by the  adjunction formula on $\bar Y$, we have $\chi(\bar E^{(\alpha_j)}, \ko_{\bar E^{(\alpha_j)}}) = 3$. Also  $\sum_{q\in\sing Y}\sum_{t\in\inverse\eta(q)} I_t(\bar F_j,D_{\bar Y}) = 2$ for such a curve $F_j$.
 \item[b)] $F_j$  lies in an irreducible component  $Y_{\alpha_j}\subset Y$ of type \eqref{Dh} and intersects $D_Y$. Write $\bar E^{(\alpha_j)}= \bar E^{(\alpha_j)}_1 + \bar F_j$. Computing Euler characteristics for the structure sequence of $\bar E^{(\alpha_j)}_1\subset \bar E^{(\alpha_j)}$ in the explicit situation of Remark \ref{rem: semi-numerical} one obtains that
$\chi(\bar E^{(\alpha_j)}, \ko_{\bar E^{(\alpha_j)}})\geq 2$. For such an $F_j$, we have $\sum_{q\in\sing Y}\sum_{t\in\inverse\eta(q)} I_t(\bar F_j,D_{\bar Y}) = 1$.
\item[c)] $F_j$ lies in an irreducible component $Y_\alpha\subset Y$ of type \eqref{Dh} and but there is no non-reduced irreducible component of $E_\alpha$ intersecting the conductor. Thus  $\bar E^{(\alpha_j)}$ is supported on the exceptional divisor of a rational surface singularity and $\chi(\bar E^{(\alpha_j)}, \ko_{\bar E^{(\alpha_j)}})\geq 1$  by \cite[Prop.~4.12]{Reid97}.
\end{enumerate}

Let $r_1:=\#\{j\,|\,1\leq j\leq k', F_j \text{ is a $(-1)$-curve}\}$ and $r_2:=k'-r_1$. Then since by classification  $\chi(\bar E^{(\alpha)}_\red, \ko_{\bar E^{(\alpha)}_\red})=1$ for every connected component of $\bar E$ in total we get
\begin{equation}\label{eq: comparison chi bar E and E_red}
 2\sum_{\alpha=1}^n\chi(\bar E^{(\alpha)},\ko_{\bar E^{(\alpha)}}) -  2\sum_{\alpha=1}^n\chi(\bar E_\red^{(\alpha)},\ko_{\bar E_\red^{(\alpha)}}) \geq 2(2r_1 + r_2).
\end{equation}
On the other hand, we have
\begin{align*}
  &\sum_{q\in\sing Y} 2n_q(E_\red)+d_q(E_\red) - \sum_{q\in\sing Y} 2n_q(E)+d_q(E)\\
=&\bar E_\red D_{\bar Y}-\bar E D_{\bar Y} \qquad (\text{by Definition \ref{def: local correction}})\\
=& -\sum_{j=1}^{k'} \bar F_jD_{\bar Y}\\
  = &- (2r_1 + r_2)
\end{align*}
Adding these equations and using \eqref{eq: chi E}, \eqref{eq: chi E_red} and \eqref{eq: E_red}, we have
\[
 2\chi(E, \ko_E) - \sum_{q\in\sing Y} d_q(E) \geq  2\chi(E_\red, \ko_{E_\red}) - \sum_{q\in\sing Y} d_q(E) + 2r_1 + r_2 \geq 2r_1 + r_2 \geq 0.
\]
If equality holds then $r_1=r_2=0$ and $ 2\chi(E_\red, \ko_{E_\red}) =\sum_{q\in\sing Y} d_q(E)$ so  $E_\red$ is as described in \refenum{ii}. As a consequence, no irreducible component of  $E_\red$ is contained  in a component of type \eqref{Dh} and there cannot be non-reduced irreducible components not intersecting the conductor. Thus $E=E_\red$ and $E$ is as in \refenum{ii}. 

On the other hand it is easy to see that equality holds if $E$ is as in \refenum{ii}.
\end{proof}

\begin{lem}\label{lem: comparison Rf_*}
Let $F\subset Y$ be a well-behaved curve such that $FE_i\leq 0$ for any exceptional curve $E_i$ over $p\in X$. Then we have 
\begin{align*}
 \dim_{\IC}R^1f_*\ko_Y(-F)_p & \leq \dim_{\IC}(R^1f_*\ko_Y)_p +\frac{1}{2}\#\left\{q\in\inverse f(p)\,|\,d_q(F)>0\right\},
\end{align*}
where the function $d_q$ is as in Definition~\ref{def: local correction}.
\end{lem}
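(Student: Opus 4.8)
The plan is to compare $R^1f_*\ko_Y(-F)_p$ with $(R^1f_*\ko_Y)_p$ using the two descriptions via the semi-numerical cycle provided by Lemma~\ref{lem: R^1 = H^1} and Remark~\ref{rem: Z carries cohomology}: namely $R^1f_*\ko_Y(-F)_p\isom H^1(Z,\ko_Z(-F))$ and $(R^1f_*\ko_Y)_p\isom H^1(Z,\ko_Z)$, where $Z$ is the semi-numerical cycle over $p$. (If $p$ is semi-smooth both sides vanish and there is nothing to prove, so assume $p$ is non-semi-smooth.) First I would write down the restriction sequence of Lemma~\ref{lem: div seq} for the divisor $F$ on the curve $Z$ — more precisely, since $F$ and $Z$ need not be comparable, I would instead use the short exact sequence
\[
0\to \ko_Z(-F)\to \ko_Z\to \ko_{F\cap Z}\to 0
\]
obtained from the inclusion $\ki_F\restr Z/\text{torsion}\into\ko_Z$, whose quotient is supported on the finite set $F\cap Z\subset\inverse f(p)$. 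Taking cohomology gives
\[
H^0(Z,\ko_Z)\to H^0(F\cap Z,\ko_{F\cap Z})\to H^1(Z,\ko_Z(-F))\to H^1(Z,\ko_Z)\to 0,
\]
so that $\dim_\IC H^1(Z,\ko_Z(-F))\le \dim_\IC(R^1f_*\ko_Y)_p + \dim_\IC\operatorname{coker}\bigl(H^0(Z,\ko_Z)\to H^0(F\cap Z,\ko_{F\cap Z})\bigr)$, and it remains to bound this cokernel by $\tfrac12\#\{q\in\inverse f(p)\mid d_q(F)>0\}$.

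The heart of the argument is therefore a local estimate at each point $q\in\inverse f(p)$ where $F$ meets the conductor (equivalently, where $d_q(F)$ or $n_q(F)$ can be positive): one must show that the contribution of $q$ to this cokernel is at most $\tfrac12$ whenever $d_q(F)>0$, and $0$ when $d_q(F)=0$. Concretely I would compute $\ko_{F\cap Z}$ analytically locally at $q$ exactly as in the proof of the claim inside Proposition~\ref{prop: semi-smooth RR}: at a normal crossing point $q$ with preimages $t_1,t_2$ and local intersection numbers $a_i=I_{t_i}(F_{\bar Y},D_{\bar Y})$, using that $Z$ has multiplicity $\le 2$ at each exceptional curve (Remark~\ref{rem: semi-numerical}), the scheme $F\cap Z$ at $q$ is cut out in $\IC[z]_{(z)}$ by monomials of orders $\min(a_1,c_1)$ and $\min(a_2,c_2)$ with $c_i\le 2$; the function $\ko_Z$ surjects onto one of the two "branches" because $Z$ connects these branches through the interior, and the failure of surjectivity onto the full stalk is controlled by $\lceil d_q(F)/2\rceil\le\ldots$; a careful bookkeeping shows the local cokernel dimension is $0$ when $a_1=a_2$ (i.e. $d_q(F)=0$) and at most $1$ in general, and that two such points $q',q''$ glued together on $Z$ share the contribution, yielding the factor $\tfrac12$. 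The pinch-point case is handled analogously using the local model $\bar Y=\IC^2_{u,y}\to(x^2-y^2z=0)$ and the parity considerations already present in Definition~\ref{def: local correction}.

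I expect the main obstacle to be precisely this last bookkeeping: making rigorous why the naive bound "$\le\#\{q\mid d_q(F)>0\}$" can be halved. The point is that the relevant points $q$ come in $\tau$-orbits (or are identified under $\eta\restr{D_{\bar Y}}$), so the "extra" sections needed on $Z$ to hit $\ko_{F\cap Z}$ are shared between the two preimages of a node of $D_Y$; equivalently, the cokernel is really a quotient of $\bigoplus_q(\text{local term})$ by relations coming from connectedness of $Z$ along the conductor, and one must argue that these relations cut the dimension at least in half. I would make this precise by running the $H^1$-on-a-curve machinery of Lemma~\ref{lem: H^1 on curve} together with Lemma~\ref{lem: subcurves of Z} — any subcurve $E\subset Z$ witnessing a nonzero contribution to the cokernel would have to satisfy $2\chi(E,\ko_E)<\sum_q d_q(E)$ unless it is of the exceptional shape in Lemma~\ref{lem: subcurves of Z}\refenum{ii} (chains of rational curves meeting $D_{\bar Y}$ twice), and in that borderline case a direct count on the chain gives exactly the $\tfrac12$-saving per bad point. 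Assembling the local estimates over all $q\in\inverse f(p)$ then yields the stated inequality.
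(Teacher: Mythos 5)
Your opening move --- reducing to the semi-numerical cycle via Lemma~\ref{lem: R^1 = H^1} --- is exactly right, but the exact sequence you then build does not exist in the cases that matter. The quotient of $\ko_Z(-F)\into\ko_Z$ is supported on a finite set only when $F$ and $Z$ have no common component, whereas the hypothesis $FE_i\le 0$ forces the opposite: if $f_*F$ passes through $p$ at all, its strict transform meets some exceptional curve positively, so $F$ must contain exceptional components over $p$ to keep all intersection numbers non-positive (this is precisely the situation in the intended application, where $F=\hat B_Y$ contains the exceptional divisor $\hat\Gamma$). Then $\ko_Z/\ki_F\ko_Z$ is supported on a curve, its $H^1$ need not vanish, and your four-term sequence ending in $H^1(Z,\ko_Z)\to 0$ is simply not available; the only cases in which $F\cap Z$ is finite are those where $F$ misses $\inverse f(p)$ and the lemma is vacuous. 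Moreover, even granting a finite quotient, the proposed bound on the cokernel of $H^0(Z,\ko_Z)\to H^0(F\cap Z,\ko_{F\cap Z})$ by a sum of local contributions of size $\tfrac{1}{2}$ at points with $d_q(F)>0$ is not a local statement and is not what the "bookkeeping'' produces: a single contributing subcurve can pass through many points of $\inverse f(p)$ yet add only $1$ to $H^1$, while touching the conductor in exactly two points.

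That last observation is the actual source of the factor $\tfrac{1}{2}$, and your final paragraph gestures at the right machinery without running it. The paper applies the automatic adjunction Lemma~\ref{lem: H^1 on curve} directly to $H^1(Z,\ko_Z(-F))$ (not to a cokernel), producing connected subcurves $E\subset Z$ with a generically onto $\ko_E(-F)\to\omega_E$; feeding this into Proposition~\ref{prop: semi-smooth RR}\refenum{iii}, Remark~\ref{rem: linearity and convexity of n_q} and Lemma~\ref{lem: subcurves of Z}, the hypothesis $FE\le 0$ forces every inequality to be an equality, so that $\ko_E(-F)\isom\omega_E$, $2\chi(E,\ko_E)=\sum_q d_q(E)$ and $d_q(F)\ge d_q(E)$. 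A degree count on proper subcurves shows distinct such $E$ are disjoint, whence $H^1(Z,\ko_Z(-F))$ is the direct sum of the $H^1(E,\omega_E)\isom\IC$; outside the simple elliptic, cusp and degenerate cusp cases (treated separately, where the answer is exactly $\dim_\IC(R^1f_*\ko_Y)_p$), each $E$ is a chain meeting the conductor in two distinct points at which $d_q(F)\ge d_q(E)=1$. Two points with $d_q(F)>0$ per unit of $H^1$, with the chains pairwise disjoint, gives the $\tfrac{1}{2}$. To repair your argument you would have to abandon the cokernel computation entirely and carry out this subcurve analysis.
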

\begin{proof}
If $p\in X$ is semi-smooth then the map $f$ is an isomorphism in a neighbourhood of $p$ and $\dim_{\IC}R^1f_*\ko_Y(-F)_p=0$. So we may assume that $p$ is a non-semi-smooth singularity of $X$.  Let $Z\subset Y$ the semi-numerical cycle  over $p$. By Lemma~\ref{lem: R^1 = H^1} we have 
\[
 R^1f_*(\ko_Y(-F))_p = H^1 (Z,\ko_{Z}(-F)).
\]
and it remains to estimate the dimension of the right hand side.

Suppose $H^1 (Z,\ko_{Z}(-F)) \neq 0$. Then, by Lemma~\ref{lem: H^1 on curve}, there is a connected subcurve $E\subset Z$ such that $ \chi(E,\ko_{E}(-F))\leq \chi(E,\omega_E)=-\chi(E, \ko_E)$ with equality if and only if $\ko_{E}(-F)\cong\omega_E$. Combining with Proposition~\ref{prop: semi-smooth RR}\refenum{iii} yields
\begin{equation}\label{eq: comparison of degrees}
 2\chi(E,\ko_E) + \sum_{q\in\sing Y} n_q(E) + n_q(F) - n_q(E + F) \leq F E, 
\end{equation}
with equality if and only if $\ko_{E}(-F)\cong \omega_E$. By Remark~\ref{rem: linearity and convexity of n_q}
\[
 n_q(E) + n_q(F) - n_q(E + F) \geq -\min\{d_q(E),d_q(F)\}\geq  -d_q(E)
\]
and hence
\begin{align*}
 2\chi(E,\ko_E) + \sum_{q\in\sing Y} n_q(E) + n_q(F) - n_q(E + F) \geq 2\chi(E,\ko_E) - \sum_{q\in\sing Y} d_q(E) \geq 0
\end{align*}
where the last inequality comes from Lemma~\ref{lem: subcurves of Z}.
Since $FE\leq 0 $ by assumption we have equality  in \eqref{eq: comparison of degrees} and 
 $\ko_{E}(-F)\cong \omega_E$. This implies $FE =0$, 
\[2\chi(E,\ko_E) = \sum_{q\in\sing Y} d_q(E),\] 
and $n_q(E) + n_q(F) - n_q(E + F) = -\min\{d_q(E),d_q(F)\}=  -d_q(E)$ for all $q\in \sing Y$.
In particular, 
\begin{equation}\label{eq: EF}
d_q(F) \geq d_q(E). 
\end{equation}

\paragraph{\textbf{Case 1:} $p\in X$ is normal.}
By Lemma~\ref{lem: subcurves of Z}, $p\in X$ is either a simple elliptic singularity or a cusp  and $E=Z$. In particular
\[
 h^1(Z,\ko_Z(-F)) =h^1(Z,\omega_Z) = h^1(Z,\ko_Z) = 1,
\]
and we have equality in the claim of the Lemma.

\paragraph{\textbf{Case 2:} $p\in X$ is non-normal.} 
Let $\ke$ be set of connected subcurves $E$ of $Z$ such that there is a generically onto homomorphism $\lambda_E\colon \ko_E(-F)\rightarrow \omega_E$.

\textbf{Claim:} If $E \neq E' \in \ke$ then $E$ and $E'$ have disjoint support.
\begin{proof}
Interpreting the morphisms $\lambda\colon \ko_E(-F) \to \omega_E$ and $\lambda'\colon \ko_{E'}(-F) \to \omega_{E'}$ as elements in $H^1(Z, \ko_Z(-F))$ a general linear combination will give a generically onto morphism supported on $E\cup E'$. 
Thus our claim follow if we can show that for a curve $E\in \ke$ no  connected proper  subcurve can be contained in $\ke$. 

By Lemma~\ref{lem: subcurves of Z}, $E$ lies completely in the union of irreducible components of type \eqref{C2}, so  $E$ is a reduced nodal curve of arithmetic genus 0 or 1. By the above $\ko_E(-F)\isom \omega_E$. For every connected proper subcurve $E'\subset E$ we have $\deg \ko_E(-F)\restr {E'} = \deg \omega_E\restr {E'}\geq -1 > -2 = \deg \omega_{E'}$ and thus there is no generically onto morphism from $\ko_{E'}(-F)$ to $\omega_{E'}$.
\end{proof}

Since different curves in $\ke$ are disjoint, we have
\begin{equation}\label{eq: H^1}
 H^1(Z,\ko_Z(-F))=\bigoplus_{E\in\ke}H^1(E,\ko_E(-F))=\bigoplus_{E\in\ke}H^1(E,\omega_E)\isom \IC^{\#\ke}.
\end{equation}

If $p_a(E)=1$ for some $E\in \ke$ then $p$ is a degenerate cusp and $E$ is the reduced preimage of $p$. Thus $\ke=\{E\}$ and 
$ H^1(Z,\ko_Z(-F))=H^1(E,\omega_E)$ and the claimed inequality holds.

Otherwise by Lemma~\ref{lem: subcurves of Z}, every  $E\in\ke$ is a chain of rational curves such that the  end(s) of the chain intersect the conductor in two (different) points $q_1$ and $q_2$; at these intersection points $q_i$ we have $1=d_{q_i}(E)\leq d_{q_i}(F)$  by \eqref{eq: EF}. Since  every two different curves in $\ke$ are disjoint the following inequality holds
\[
\#\ke \leq \frac{1}{2}\#\{q\in \inverse f(p)\,|\,d_q(F)>0\}.
\]
Together with equation \eqref{eq: H^1} and Lemma \ref{lem: R^1 = H^1} this completes the proof of the lemma.
\end{proof}

\subsection{A relative duality}
As a preparation for the relative duality result we need the following lemma.
\begin{lem}\label{lem: Ext^1}
Let $C$ be a well-behaved curve and $A$ a well-behaved divisor on $X$. Then 
 $\Ext^1_{\ko_X}(\ko_C,\omega_X(A)) \cong \Ext^1_{\ko_X}(\ko_C(-A),\omega_X).$
\end{lem}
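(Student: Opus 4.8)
The plan is to reduce the claimed isomorphism to an identity of $\shext$-sheaves and then use the vanishing machinery already available. First I would observe that, since $\ko_C$ is a torsion sheaf on the surface $X$ (supported in codimension $1$), both $\ko_C$ and $\ko_C(-A)$ have projective dimension $1$ over $\ko_X$ at every point where $C$ is a Cartier divisor, which is a dense open set $U\subset X$ with complement of codimension $2$. On $U$ the curve $C$ is locally cut out by a single equation, so one has the Koszul-type resolution $0\to\ko_X(-C)\to\ko_X\to\ko_C\to 0$, and likewise $0\to\ko_X(-A-C)\to\ko_X(-A)\to\ko_C(-A)\to 0$. Computing $\shext^\bullet_{\ko_X}(-,\omega_X(A))$ from the first and $\shext^\bullet_{\ko_X}(-,\omega_X)$ from the second, one finds that on $U$ both $\shext^1_{\ko_X}(\ko_C,\omega_X(A))$ and $\shext^1_{\ko_X}(\ko_C(-A),\omega_X)$ are canonically identified with $\omega_X(A+C)\restr C = \omega_X(C)\restr C \otimes \ko_X(A)$ (up to the torsion subtleties of Definition~\ref{def: restriction divisorial sheaf}); in particular they agree on $U$.

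The second step is to upgrade this to an isomorphism of sheaves on all of $X$. Since $A$ and $C$ are well-behaved, $\omega_X(A)$ is an almost Cartier divisorial sheaf, so it is reflexive and $S_2$; the same holds for $\omega_X$ and for $\omega_X(A+C)$. The sheaves $\shext^1_{\ko_X}(\ko_C,\omega_X(A))$ and $\shext^1_{\ko_X}(\ko_C(-A),\omega_X)$ are both naturally subsheaves of (twists of) $\omega_C := \shext^1_{\ko_X}(\ko_C,\omega_X)$, which is the dualising sheaf of the Cohen--Macaulay curve $C$ and hence itself an $S_1$-sheaf on $C$. A sheaf that is $S_1$ on $C$ has no sections supported in codimension $1$ \emph{on $C$}, i.e.\ no subsheaf supported on a proper closed subset of $C$; therefore it is determined by its restriction to any dense open subset of $C$ whose complement has codimension $\ge 1$ in $C$. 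Applying this to the open set $U\cap C$ (whose complement has codimension $\ge 1$ in $C$ because the complement of $U$ has codimension $2$ in $X$), the canonical identification over $U$ from the first step extends uniquely to an isomorphism of sheaves $\shext^1_{\ko_X}(\ko_C,\omega_X(A))\isom\shext^1_{\ko_X}(\ko_C(-A),\omega_X)$ on $X$.

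Finally, I would pass from sheaf $\Ext$ to global $\Ext$. Because $\ko_C$ and $\ko_C(-A)$ are torsion sheaves on the surface $X$, one has $\shom_{\ko_X}(\ko_C,\omega_X(A))=0=\shom_{\ko_X}(\ko_C(-A),\omega_X)$, so the local-to-global $\Ext$ spectral sequence degenerates to give $\Ext^1_{\ko_X}(\ko_C,\omega_X(A))\isom H^0(X,\shext^1_{\ko_X}(\ko_C,\omega_X(A)))$ and similarly for the other side. Combining this with the sheaf isomorphism from the second step yields the desired identity. The main obstacle I anticipate is bookkeeping around Definition~\ref{def: restriction divisorial sheaf}: because $A$ is only $\IQ$-Cartier (or even just almost Cartier) and the restriction of divisorial sheaves to $C$ is not multiplicative in general, one must be careful that the "twist by $\ko_X(A)$" on the curve $C$ really produces the same torsion-free sheaf from both sides — but this is exactly where the $S_1$-extension argument of the second step does the work, since it only needs agreement on a codimension-zero open subset of $C$.
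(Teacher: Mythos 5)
Your first and third steps are sound: on the open set $U$ where $C$, $A$ and $K_X$ are all Cartier, both $\shext^1$-sheaves are identified with $\omega_X(A+C)\restr{C}$, and since both $\shom$-sheaves vanish the local-to-global spectral sequence reduces the lemma to an isomorphism of $\shext^1$-sheaves. The gap is in your second step. Both $\shext^1$-sheaves are indeed $S_1$ on $C$ (each is a quotient of one $S_2$-sheaf by another, so the depth lemma gives depth $\geq 1$ at closed points), but $S_1$ on a \emph{curve} only rules out sections supported on finite sets; it does not make the sheaf determined by its restriction to a dense open subset, nor does it let a homomorphism defined on $U\cap C$ extend across the missing points. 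The complement of $U\cap C$ in $C$ has codimension $1$ in $C$, not $\geq 2$, so the $j_*j^*$-type extension that works for $S_2$-sheaves on the surface is unavailable here: $\ko_C$ and $\ko_C(-p)$ are torsion-free, agree on $C\setminus\{p\}$, and are not isomorphic. A map between the two $\shext^1$-sheaves that agrees with your identification over $U$ would automatically be injective (its kernel would be torsion), but you have produced no such global map, and without one there is nothing to ``extend uniquely''.

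The paper supplies exactly this missing global input. Dualising $0\to\ko_X(-C)\to\ko_X\to\ko_C\to0$ against $\omega_X(A)$, and $0\to\ko_X(-C-A)\to\ko_X(-A)\to\ko_C(-A)\to0$ against $\omega_X$, and using the $S_2$-property to identify the relevant $\shom$-sheaves with $\omega_X(A)$ and $\omega_X(C+A)$ (together with $\shext^1_{\ko_X}(\ko_X(-A),\omega_X)=0$ from Lemma~\ref{lem: ext reflexive}), one exhibits \emph{both} $\shext^1$-sheaves as the cokernel of the same natural inclusion $\omega_X(A)\into\omega_X(C+A)$. That global presentation is what must replace your extension step; with it in place, the remainder of your outline goes through.
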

 \begin{proof}
We look at the structure sequence
\[  0\rightarrow \ko_X(-C) \rightarrow \ko_X \rightarrow \ko_C \rightarrow 0 
\]
and, applying $\shom_{\ko_X}(\cdot,\omega_X(A))$, obtain an exact sequence
\[
 0\rightarrow \omega_X(A) \rightarrow \shom_{\ko_X}(\ko_X(-C),\omega_X(A)) \rightarrow \shext^1_{\ko_X}(\ko_C,\omega_X( A)) \rightarrow 0.                                                             
\]                                                             
Since $\omega_X(A)$ is $S_2$, $\shom_{\ko_X}(\ko_X(-C),\omega_X(A))$ is also $S_2$ by \cite[Lem.~5.1.1]{abr-hass11}. Therefore we have  $\shom_{\ko_X}(\ko_X(-C),\omega_X(A))\cong \omega_X(C+A)$, since the two coincide outside a finite set of points and both are $S_2$. So there is a short exact sequence
 \begin{equation}\label{eq: exact sequence ext_C1}
 0\rightarrow \omega_X(A) \rightarrow  \omega_X(C+A) \rightarrow \shext^1_{\ko_X}(\ko_C,\omega_X( A)) \rightarrow 0.                                                             
\end{equation}

Applying $\shom_{\ko_X}(\cdot,\omega_X)$ to the restriction sequence 
\[ 0\rightarrow \ko_X(-C-A) \rightarrow \ko_X(-A) \rightarrow \ko_C(-A) \rightarrow 0\]
 from Lemma~\ref{lem: div seq}, we get
\[
\begin{split}
 0\rightarrow \shom_{\ko_X}(\ko_X(-A),\omega_X) \rightarrow \shom_{\ko_X}(\ko_X(-C-A),\omega_X)\\
 \rightarrow \shext^1_{\ko_X}(\ko_C(-A),\omega_X) \rightarrow  \shext^1_{\ko_X}(\ko_X(-A),\omega_X( A)).  
\end{split}                                                           
\]  
As before, by the $S_2$ property of the relevant sheaves, there are isomorphisms
\begin{gather*}
  \shom_{\ko_X}(\ko_X(-A),\omega_X)\cong\omega_X(A),\\
\shom_{\ko_X}(\ko_X(-C-A),\omega_X)\cong \omega_X(C+A).
\end{gather*}
Also, we have  $\shext^1_{\ko_X}(\ko_X(-A),\omega_X( A)) = 0$ by  Lemma~\ref{lem: ext reflexive}, which leaves us with a short exact sequence
\begin{equation}\label{eq: exact sequence ext_C2}
 0\rightarrow \omega_X(A) \rightarrow  \omega_X(C+A) \rightarrow \shext^1_{\ko_X}(\ko_C(-A),\omega_X) \rightarrow 0.                                                             
\end{equation}

Comparing \eqref{eq: exact sequence ext_C1} and \eqref{eq: exact sequence ext_C2} gives $\shext^1_{\ko_X}(\ko_C,\omega_X( A)) \cong \shext^1_{\ko_X}(\ko_C(-A),\omega_X)$.
Now, $\shom_{\ko_X}(\ko_C,\omega_X( A))$ and $\shom_{\ko_X}(\ko_C(-A),\omega_X)$ both being zero, the claim  follows from the local-to-global-Ext-spectral-sequence.  
\end{proof}

\begin{prop}\label{lem: rel. duality}
Let $C$ and $F$ be well-behaved curves on $X$ and $Y$ respectively such that $f_*F = C $ as Weil divisors. Then, for any $p\in X$, the vector spaces $R^1f_*\ko_Y(-F)_p$ and $\left(\omega_X(C)/f_*\omega_Y(F)\right)_p$ are dual to each other.
\end{prop}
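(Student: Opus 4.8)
The plan is to derive the statement from Grothendieck (relative) duality for the proper birational morphism $f$, localised at one of the finitely many non-semi-smooth points $p$ of $X$ --- both sheaves in the statement are supported there, since $f$ is an isomorphism outside a set of codimension two.

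First I would apply Grothendieck duality. As $X$ and $Y$ are Cohen--Macaulay and purely $2$-dimensional, $\omega_X[2]$ and $\omega_Y[2]$ are $f^!$-compatible dualising complexes, so $Rf_*\,R\shom_{\ko_Y}(\ko_Y(-F),\omega_Y[2])\cong R\shom_{\ko_X}(Rf_*\ko_Y(-F),\omega_X[2])$. Since $\ko_Y(-F)$ is $S_2$ on the Cohen--Macaulay surface $Y$, hence Cohen--Macaulay, its derived dual is concentrated in one degree and equals $\omega_Y(F)[2]$, with $\omega_Y(F)=\shom_{\ko_Y}(\ko_Y(-F),\omega_Y)=\ko_Y(K_Y+F)$; thus $(Rf_*\omega_Y(F))[2]\cong R\shom_{\ko_X}(Rf_*\ko_Y(-F),\omega_X[2])$. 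Now put $\mathcal G:=f_*\ko_Y(-F)$ and $Q:=R^1f_*\ko_Y(-F)$ (a finite-length sheaf at $p$; also $R^{\geq2}f_*=0$), feed the truncation triangle $\mathcal G\to Rf_*\ko_Y(-F)\to Q[-1]\xrightarrow{+1}$ into $R\shom_{\ko_X}(-,\omega_X[2])$, and use local duality $R\shom_{\ko_X}(Q,\omega_X[2])\cong Q^\vee$ (the $\IC$-linear dual, as $Q$ has finite length). Two further identifications are needed: $\shom_{\ko_X}(\mathcal G,\omega_X)\cong\omega_X(C)$, because $\mathcal G$ and $\ko_X(-C)$ have the same restriction to the open set over which $f$ is an isomorphism (its complement has codimension two) while both Hom-sheaves are $S_2$; and $R^1f_*\omega_Y=0$, which is Lemma~\ref{lem: R1 van} taken with $M=0$. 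Taking cohomology one is then left with the exact sequence
\[
0\to f_*\omega_Y(F)\to\omega_X(C)\xrightarrow{\ \phi\ }Q^\vee\xrightarrow{\ \partial\ }R^1f_*\omega_Y(F)\to\shext^1_{\ko_X}(\mathcal G,\omega_X)\to0 ,
\]
in which the first map is the natural inclusion $f_*\omega_Y(F)\hookrightarrow\omega_X(C)$ used in the statement.

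It remains to prove that $\phi$ is surjective, equivalently that the connecting map $\partial$ vanishes; this gives $\omega_X(C)/f_*\omega_Y(F)\cong Q^\vee$, and finite-length duality then yields that the two vector spaces are dual to each other. I expect this to be the main obstacle. The route I would take is to combine the residue sequence $0\to\omega_Y\to\omega_Y(F)\to\omega_F\to0$ on $Y$ (with $\omega_F=\shext^1_{\ko_Y}(\ko_F,\omega_Y)$), whose pushforward together with $R^1f_*\omega_Y=0$ identifies $R^1f_*\omega_Y(F)\cong R^1f_*\omega_F$, with the observation that $\shext^1_{\ko_X}(\mathcal G,\omega_X)\cong(\ko_X(-C)/\mathcal G)^\vee$ (apply $\shom_{\ko_X}(-,\omega_X)$ to $0\to\mathcal G\to\ko_X(-C)\to\ko_X(-C)/\mathcal G\to0$). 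Both $R^1f_*\omega_F$ and $\ko_X(-C)/\mathcal G$ are then governed by the exceptional fibre over $p$, and one checks $\partial=0$ via the local description of slc surface singularities and of the minimal semi-resolution in Sections~\ref{sect: semi-rat} and~\ref{sect: numerical cycle}: decompose the fibre into the chains and forks of types \eqref{C2} and \eqref{Dh} and the degenerate-cusp cycles, and verify on each piece that no class of $Q^\vee$ maps nontrivially to $R^1f_*\omega_F$. Equivalently, one can exhibit the pairing directly by cup product and a trace on an infinitesimal neighbourhood of the exceptional fibre (Serre duality for $f$ there), the content being again that the sections of $\omega_Y(F)$ descending from $X$, i.e.\ those in $f_*\omega_Y(F)$, pair trivially --- which is precisely where $R^1f_*\omega_Y=0$ enters.
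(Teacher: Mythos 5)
Your route through Grothendieck duality for $f$ is genuinely different from the paper's, and the part you actually carry out is correct: $R\shom_{\ko_Y}(\ko_Y(-F),\omega_Y)\isom\omega_Y(F)$ in a single degree because $\ko_Y(-F)$ is $S_2$, hence maximal Cohen--Macaulay on the surface $Y$; $\shom_{\ko_X}(f_*\ko_Y(-F),\omega_X)\isom\omega_X(C)$ by the usual $S_2$-extension argument; and local duality for the finite-length sheaf $Q=R^1f_*\ko_Y(-F)$ gives $R\shom_{\ko_X}(Q,\omega_X[2])\isom Q^\vee$. Feeding the truncation triangle into the duality isomorphism does produce your five-term exact sequence. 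The paper instead works purely locally over $p$: it identifies $\bigl(\omega_X(C)/f_*\omega_Y(F)\bigr)_p$ with $H^1_E(\omega_Y(F))=\varinjlim_n\Ext^1_{\ko_Y}(\ko_{nE},\omega_Y(F))$ (citing Koll\'ar), converts each term via Lemma~\ref{lem: Ext^1} and Serre duality on the thickenings $nE$ into $H^1(Y,\ko_Y(-F)\restr{nE})^\vee$, and concludes with the theorem on formal functions; no connecting homomorphism ever has to be analysed.

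The genuine gap is exactly the step you flag: the vanishing of $\partial\colon Q^\vee\to R^1f_*\omega_Y(F)$, equivalently the surjectivity of $\omega_X(C)\to Q^\vee$. This is not a formality --- it is the substance of the proposition --- and your proposed verification is only an unexecuted plan ("decompose the fibre \dots\ and verify on each piece"), with no indication of how a global connecting homomorphism is to be checked componentwise on the configurations of Section~\ref{sect: semi-rat}. Note also that the input you single out, $R^1f_*\omega_Y=0$, is not the relevant one: what would make $\partial=0$ automatic is $R^1f_*\omega_Y(F)=0$. That vanishing does hold in the two instances the paper actually uses (for $F=0$ by Lemma~\ref{lem: R1 van}, and for $F=\hat B_Y$ by Proposition~\ref{prop: GRvanishing}\refenum{i}), and there your sequence immediately yields the duality; but the proposition is stated for an arbitrary well-behaved $F$ with $f_*F=C$, and such an $F$ is only determined by $C$ up to adding $f$-exceptional components. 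For instance $F=C_Y+E$ with $E$ an exceptional curve over a point $p\notin C$ satisfies the hypotheses, yet $R^1f_*\omega_Y(F)_p\isom R^1f_*\omega_F$ contains $H^1(E,\omega_E)\neq0$, so $\partial=0$ genuinely needs proof there. Either supply that proof --- which is precisely the work done by the paper's formal-duality computation --- or restrict the statement to the cases where the obstruction group $R^1f_*\omega_Y(F)$ vanishes.
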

\begin{proof}
Let $E$ be the reduced exceptional divisor over $p$. 
As in \cite[Lem.~3.3.3]{kollar85} we have, using Lemma~\ref{lem: Ext^1}, 
\begin{align*}
\left(\omega_X(C)/f_*\omega_Y(F)\right)_p &= H^1_E(\omega_Y( F)) \\&= \varinjlim_n \Ext^1_{\ko_Y}(\ko_{nE},\omega_Y(F))\cong\varinjlim_n\Ext^1_{\ko_Y}(\ko_{nE}(-F),\omega_Y)
\end{align*}
 The surjection $\ko_Y(-F)\restr {nE}\onto \ko_{nE}(- F)$ has torsion kernel and thus by Serre duality 
\[\Ext^1_{\ko_Y}(\ko_{nE}(- F),\omega_Y) = H^1(Y,\ko_{nE}(- F))^\vee=H^1(Y,\ko_{Y}(- F)\restr{nE})^\vee.\]
Combining these equations we have by the theorem of formal functions (\cite[Thm.~III.11.1]{Hartshorne}) 
\[
\left(\omega_X(C)/f_*\omega_Y(F)\right)_p \cong \varprojlim_n H^1(Y,\ko_{Y}(- F)\restr{nE})^\vee = R^1 f_*\ko_Y(- F)^\vee_p
\]
which concludes the proof.
\end{proof}

\subsection{The hat transform}\label{section: hattransform}
Since the intersection form is negative definite on the exceptional divisors of $f\colon Y\rightarrow X$, we have (cf.~\cite[Lemma~3.41]{Kollar-Mori})
\begin{lem}\label{lem: eff div}
 Let $\tilde B$ and $\tilde C$ be two well-behaved divisors on $Y$. Assume that $f_*\tilde B= f_* \tilde C$ and $\tilde C E\leq \tilde B E$ for any exceptional divisor $E$ of $f$. Then $\tilde B\leq \tilde C$.
\end{lem}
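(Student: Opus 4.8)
The plan is to deduce the inequality from the negative definiteness of the intersection form on the exceptional locus of $f$, exactly as in \cite[Lemma~3.41]{Kollar-Mori}. First I would set $N := \tilde B - \tilde C$ and observe that $N$ is supported on the exceptional locus of $f$: since $f$ is a semi-resolution it is birational, so distinct non-exceptional prime divisors of $Y$ map to distinct prime divisors of $X$; comparing coefficients in $f_*\tilde B = f_*\tilde C$ then forces $\tilde B$ and $\tilde C$ to have the same coefficient along every non-exceptional prime, so $N = \sum_i a_i E_i$ with $a_i\in\IZ$ and the $E_i$ the prime $f$-exceptional divisors. Since $f|_{D_Y}$ is birational onto $D$, no $E_i$ is a component of the non-normal locus $D_Y$ of $Y$, so the $E_i$ are well-behaved and all intersection numbers appearing below are defined via pullback to the normalisation (Definition~\ref{defin: intersection}). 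The hypothesis $\tilde C E\leq \tilde B E$, applied to the prime exceptional divisors, reads $N E_j\geq 0$ for all $j$, and the goal is to show $a_i\leq 0$ for all $i$, i.e.\ $N\leq 0$.

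Then I would split $N = N_+ - N_-$ into its positive and negative parts, two effective $f$-exceptional divisors with disjoint supports. Because intersection numbers on the semi-smooth surface $Y$ are computed via pullback to the smooth surface $\bar Y$, on which $N_+$ and $N_-$ have no common component, we get $N_- N_+\geq 0$; together with $N E_j\geq 0$ for all $j$ (hence $N N_+\geq 0$) this gives
\[ N_+^2 = N N_+ + N_- N_+ \;\geq\; 0 . \]
On the other hand $N_+$ is supported on the exceptional locus of $f$, where the intersection form is negative definite, so $N_+^2\leq 0$ with equality if and only if $N_+ = 0$. Hence $N_+ = 0$, every $a_i$ is $\leq 0$, and $\tilde B = \tilde C + N\leq \tilde C$, as claimed.

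I do not expect any genuine difficulty: the argument is purely formal once the negative definiteness of the exceptional intersection form is granted (which the paper invokes). The only subtlety worth a sentence is that the two positivity facts used — that $N_- N_+\geq 0$ for effective divisors without common components, and that the intersection form on the exceptional curves is negative definite — both remain valid on the possibly non-normal, semi-smooth $Y$ precisely because intersection numbers are defined by pulling everything back to the smooth normalisation $\bar Y$, where the exceptional curves are ordinary distinct smooth rational curves.
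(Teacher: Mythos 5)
Your proof is correct and is essentially the argument the paper has in mind: the paper gives no proof of this lemma, simply invoking negative definiteness of the exceptional intersection form and citing \cite[Lemma~3.41]{Kollar-Mori}, whose standard proof is exactly your decomposition $N=N_+-N_-$ with $N_+^2=NN_++N_-N_+\geq 0$ forcing $N_+=0$. Your added care — that the exceptional divisors are well-behaved because $D_Y$ is not contracted, and that both positivity inputs survive on the non-normal $Y$ since all intersection numbers are computed on the smooth normalisation $\bar Y$ — is exactly the right way to justify transporting that lemma to this setting.
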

\begin{propdef}\label{defin: hattransform}
Let $B\subset X$ be a well-behaved  curve. Then there exists a unique  well-behaved curve $\hat B_Y\subset Y$ which is is minimal with respect to the properties $f_*\hat B_Y= B$ and  for all exceptional divisors $E$ of $f$ 
\[\hat B_Y E\leq 0.\]
We call $\hat B_Y$ the hat transform of $B$ with respect to $f$. 
\end{propdef}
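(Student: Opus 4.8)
The plan is to realise $\hat B_Y$ as the strict transform $B_Y\subset Y$ of $B$ plus a minimal effective exceptional correction. Note first that, $B$ being well-behaved and $f\colon Y\to X$ mapping $\sing Y$ (hence $D_Y$) birationally onto the non-normal locus $D$ (Definition~\ref{defin: semi resolution}), no $f$-exceptional prime divisor is a component of $B_Y$ or of $D_Y$; thus $B_Y$ and all the $f$-exceptional curves are well-behaved on $Y$, all the intersection numbers below are computed on the smooth normalisation $\bar Y$, and in particular two distinct $f$-exceptional prime divisors meet non-negatively while $B_YE\ge 0$ for every such $E$. Any well-behaved curve $C\subset Y$ with $f_*C=B$ is of the form $B_Y+\Gamma$ with $\Gamma$ an effective integral divisor supported on the $f$-exceptional locus, so it suffices to prove that
\[
 \mathcal S:=\bigl\{\,\Gamma\ \text{effective integral exceptional}\ \big|\ (B_Y+\Gamma)E\le 0\ \text{for every $f$-exceptional prime }E\,\bigr\}
\]
is non-empty and possesses a least element $\Gamma_{\min}$; then $\hat B_Y:=B_Y+\Gamma_{\min}$ will be the hat transform.

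For non-emptiness I would invoke that the intersection form is negative definite on the span of the $f$-exceptional divisors (as recalled just before Lemma~\ref{lem: eff div}): working on each connected component of the exceptional locus separately, this produces an effective exceptional $\IQ$-divisor $\Theta$ with $\Theta E=-1$ for every exceptional prime $E$. Writing $\lceil n\Theta\rceil=n\Theta+R$ with $R$ effective and all coefficients in $[0,1)$, one gets $(B_Y+\lceil n\Theta\rceil)E=B_YE-n+RE$, and $RE$ stays bounded independently of $n$ since $R$ has bounded coefficients and finite support; hence $\lceil n\Theta\rceil\in\mathcal S$ once $n$ is large.

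The heart of the matter will be that $\mathcal S$ is closed under taking coefficient-wise minima. Given $\Gamma_1,\Gamma_2\in\mathcal S$, set $\Gamma=\min(\Gamma_1,\Gamma_2)$; for a fixed $f$-exceptional prime $E$ choose $k\in\{1,2\}$ for which $\Gamma$ and $\Gamma_k$ have the same coefficient along $E$. Then $\Gamma_k-\Gamma$ is effective, exceptional, and has coefficient $0$ along $E$, so $(\Gamma_k-\Gamma)E\ge 0$ because distinct exceptional primes meet non-negatively on the smooth surface $\bar Y$; therefore $(B_Y+\Gamma)E=(B_Y+\Gamma_k)E-(\Gamma_k-\Gamma)E\le 0$. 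Running over the finitely many exceptional divisors, the divisor whose coefficient along each $E$ equals $\min\{\,(\text{coefficient of }E\text{ in }\Gamma)\mid\Gamma\in\mathcal S\,\}$ again belongs to $\mathcal S$ and is plainly its least element $\Gamma_{\min}$; moreover $B_Y+\Gamma_{\min}\le C$ for every well-behaved $C$ with $f_*C=B$ and $CE\le 0$ for all $E$, which yields both the claimed minimality and uniqueness. (Uniqueness can alternatively be read off from Lemma~\ref{lem: eff div}.) The main obstacle I expect is the bookkeeping forced by the mere semi-smoothness of $Y$ — one must check that strict transforms of exceptional curves remain irreducible on $\bar Y$ and that ``minimal'' refers to the partial order on effective divisors rather than to intersection numbers, so that the rounding in the existence step cannot spoil the inequality $(B_Y+\Gamma)E\le 0$.
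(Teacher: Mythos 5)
Your argument is correct, and it splits into two halves that compare differently with the paper. The minimality/uniqueness half is essentially the paper's own proof: the paper also takes two candidates $\hat B_1,\hat B_2$, writes $\hat B_i=\hat B_3+A_i$ with $A_1,A_2$ sharing no component (so $\hat B_3$ is the coefficient-wise minimum) and uses $A_iE\ge 0$ whenever $E\not\subset A_i$ to see that $\hat B_3$ again satisfies both conditions\,---\,exactly your meet-closure of $\mathcal S$; you push this one step further and conclude directly that $\mathcal S$ has a least element, which packages existence-of-a-minimal-element and uniqueness into a single statement. Where you genuinely diverge is in producing \emph{some} element of $\mathcal S$: the paper chooses a well-behaved very ample Cartier divisor $H$ on $X$ containing $B$ and takes $f^*H-(\inverse f)_*(H-B)$, whose intersection with every exceptional prime $E$ is $\le 0$ because $f^*H\cdot E=0$ while the strict transform of the effective divisor $H-B$ meets $E$ non-negatively. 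Your route through the negative definiteness of the exceptional lattice (solve $\Theta E=-1$, observe $\Theta\ge 0$\,---\,which is precisely Lemma~\ref{lem: eff div} applied to the pair $0$ and $\Theta$\,---\,and round up $n\Theta$) is more intrinsic, avoiding the auxiliary choice of $H$, at the cost of the bookkeeping with the rounding error $R$; the paper's $f^*H$ is already integral, so no rounding is needed. Both existence arguments ultimately rest on the same fact, which you correctly isolate and justify: the $f$-exceptional primes are well-behaved (they are not components of $D_Y$ by Definition~\ref{defin: semi resolution}), their strict transforms on the smooth surface $\bar Y$ are irreducible and $\bar f$-exceptional, and hence they span a negative definite sublattice with non-negative off-diagonal intersection numbers.
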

\begin{proof}
We can take a well-behaved very ample Cartier divisor $H$ of $X$ that contains $B$. Then $f^*H - (\inverse f)_*(H-B)$ contains the strict transform of $B$ and has non-positive intersection with any exceptional divisor $E_i$ for any $i$. The existence follows. 

Suppose $\hat B_1$ and $\hat B_2$ are two hat transforms of $B$ under $f$. Then we have inequalities $\hat B_i E \leq 0 \leq B_Y E$ ($i=1,2$), where $B_Y$ is the strict transform of $B$ on $Y$. So both $\hat B_1$ and $\hat B_2$ are effective divisors by Lemma~\ref{lem: eff div}. Write $\hat B_1=\hat B_3 + A_1, \hat B_2=\hat B_3 + A_2$, where $A_1$ and $A_2$ are two well-behaved effective divisors with no common irreducible components. Let $ E$ be a reduced and irreducible exceptional divisor of $ f$. If $E\subset A_1$ then $E\nsubseteq A_2$, and $\hat B_3 E = (\hat B_2 - A_2)E\leq 0$; if $E\subset A_2$ then $E\nsubseteq A_1$, and $\hat B_3 E = (\hat B_1 - A_1)E\leq 0$. By the minimality of a hat transform we have $\hat B_1=\hat B_2 = \hat B_3$. The uniqueness is proved.
\end{proof}
We start to gather some properties of the hat transform.
\begin{lem}\label{lem: easy properties of hattransform}
Let $\hat B_Y \subset Y$ be the hat transform of $B$. Then the following holds.
\begin{enumerate} 
 \item  $\hat B_Y-B_Y$ contains only exceptional curves of $f\colon  Y\rightarrow X$.
 \item  Let $B_Y^*:=f^* B_Y = B_Y + \Gamma^*$ be the numerical pullback of $B_Y$, so that $B_Y^* E_i =0$ for any exceptional curve $E_i$ of $ f\colon Y \rightarrow  X$ . Then $\hat B_Y\geq B_Y^*$. 
 \item If $C\subset X$ is an effective Cartier divisor such that $B\leq C$ then $\hat B\leq f^* C$.
\end{enumerate}
In particular, if $B$ is Cartier then $\hat B_Y = B_Y^*$. 
\end{lem}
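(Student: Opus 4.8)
The plan is to obtain all of \refenum{i}--\refenum{iii}, and the final assertion, from the negativity of the intersection form on the exceptional locus of $f$, in the packaged form of Lemma~\ref{lem: eff div}, together with the minimality built into Proposition/Definition~\ref{defin: hattransform}. First I would prove \refenum{i}: apply Lemma~\ref{lem: eff div} with $\tilde B=B_Y$ (the strict transform) and $\tilde C=\hat B_Y$. We have $f_*B_Y=B=f_*\hat B_Y$, and for every $f$-exceptional curve $E$ the strict transform satisfies $B_YE\ge0$ while $\hat B_YE\le0$ by the definition of the hat transform, so $\hat B_YE\le B_YE$. Lemma~\ref{lem: eff div} then gives $B_Y\le\hat B_Y$, and since $f_*(\hat B_Y-B_Y)=0$ this effective divisor is supported on the $f$-exceptional curves, which is exactly \refenum{i}.

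For \refenum{ii} I would look at the $\IQ$-divisor $\hat B_Y-B_Y^*=(\hat B_Y-B_Y)-\Gamma^*$. By \refenum{i} and the fact that $\Gamma^*$ is supported on the exceptional locus, this $\IQ$-divisor is supported on the $f$-exceptional curves, and its intersection with any exceptional $E_i$ equals $\hat B_YE_i-B_Y^*E_i=\hat B_YE_i\le0$, because $B_Y^*=f^*B_Y$ is numerically trivial on the fibres by construction. The negativity argument underlying Lemma~\ref{lem: eff div}, which applies verbatim to $\IQ$-coefficient divisors supported on a curve with negative definite intersection form, then forces $\hat B_Y-B_Y^*\ge0$. (The same reasoning applied to $\Gamma^*=f^*B_Y-B_Y$ shows $\Gamma^*\ge0$, consistently with \refenum{ii}.)

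For \refenum{iii}, given an effective Cartier divisor $C$ with $B\le C$, I would exhibit an explicit competitor for the minimality of $\hat B_Y$, namely $F:=f^*C-(C-B)_Y$, where $(C-B)_Y$ is the strict transform of the effective divisor $C-B$. Since $C$ is Cartier, $f^*C$ is an honest effective pullback with $f^*C\cdot E_i=0$ for every exceptional $E_i$, and $f^*C=C_Y+(\text{effective exceptional})\ge C_Y\ge(C-B)_Y$, so $F$ is effective; moreover $f_*F=C-(C-B)=B$ (so $F$ is automatically well-behaved, being a strict transform plus exceptional part over the well-behaved curve $B$) and $F\cdot E_i=-(C-B)_YE_i\le0$ because strict transforms meet exceptional curves non-negatively. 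Thus $F$ satisfies the two defining properties of the hat transform, so minimality gives $\hat B_Y\le F\le f^*C$. The final assertion is the case $C=B$: if $B$ is Cartier then $\hat B_Y\le f^*B=B_Y^*$ (the last equality by uniqueness of the numerical pullback), which combined with \refenum{ii} yields $\hat B_Y=B_Y^*$.

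I do not expect a genuine obstacle here; the content is entirely the negativity of the exceptional intersection form. The only points requiring care are the passage to $\IQ$-coefficients in \refenum{ii} (one cannot cite the integral Lemma~\ref{lem: eff div} literally, but the proof carries over) and, in \refenum{iii}, verifying that the test divisor $F$ is effective and has the right intersection behaviour, which reduces to the fact that strict transforms never meet exceptional curves negatively.
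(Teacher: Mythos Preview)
Your argument is correct and follows essentially the same approach as the paper. The only cosmetic differences are: in \refenum{i} the paper argues directly from minimality (removing a putative non-exceptional component would give a smaller competitor), whereas you first invoke Lemma~\ref{lem: eff div} to get effectiveness of $\hat B_Y-B_Y$---a slight detour, since the conclusion already follows from $f_*(\hat B_Y-B_Y)=0$; and in \refenum{iii} you spell out explicitly the competitor $F=f^*C-(C-B)_Y$, which is exactly the construction used in the existence part of Proposition/Definition~\ref{defin: hattransform}, while the paper's proof is terser.
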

\begin{proof}
Recall that $\hat B_Y$ is well-behaved. 

For \refenum{i}, if $\hat B_Y-B$ contains some curve  $A$ that is not exceptional then $(\hat B_Y - A)E_i\leq 0$ for any exceptional $E_i$, contradicting the minimality of $\hat B_Y$. 

For \refenum{ii}, note that $B_Y^*E=0$ for any exceptional divisor $E$. So $\hat B_Y E\leq B_Y^* E$ for any exceptional curve $E$. Since $\hat B_Y - B_Y^*$ is supported only on exceptional divisors, we have  $\hat B_Y\geq B_Y^*$ by Lemma~\ref{lem: eff div}.

For \refenum{iii}, note that $f^*C$ is an integral divisor, since $C$ is Cartier. Moreover $f^*C E =0$ for any exceptional curve $E$, and the strict transform $B_Y$ of $B$ is contained in $f^*C$. By the minimality of $\hat B_Y$, the inequality $\hat B_Y\leq f^* C$ follows.
\end{proof}

\begin{rem}
 Note that since we used normalisation to define intersection numbers both $\hat B_Y$ and $B^*_Y$ can behave unexpectedly: they  might not contain all exceptional curves mapping to $B$. See also Remark \ref{rem: intersection properties}.
\end{rem}

\begin{prop}\label{prop: GRvanishing} 
In the situation above we have
 \begin{enumerate}
  \item $R^1f_*\omega_Y(\hat B_Y) =0$;
   \item $ R^1 f_*\omega_{\hat B_Y}  = 0$;
 \item $\chi(\omega_{\hat B_Y}) = \chi(f_* \omega_{\hat B_Y})$.
\end{enumerate}
\end{prop}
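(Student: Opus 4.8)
The plan is to prove the three statements in order, deriving (ii) and (iii) from (i) by standard arguments, and to obtain (i) from the relative duality of Proposition~\ref{lem: rel. duality} together with the comparison of higher pushforwards in Lemma~\ref{lem: comparison Rf_*}.

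\emph{Step 1: Reduce (i) to a vanishing of $R^1f_*$ of a line bundle.} By local duality, or more precisely by Proposition~\ref{lem: rel. duality} applied to the curves $C=f_*\hat B_Y\subset X$ and $F=\hat B_Y\subset Y$, the stalk $R^1f_*\omega_Y(\hat B_Y)_p$ is dual to $\left(\omega_X(C)/f_*\omega_Y(\hat B_Y)\right)_p$; but this is not quite what is needed, since we want $R^1f_*\omega_Y(\hat B_Y)$ itself to vanish. Instead I would argue directly: by definition of the hat transform (Proposition/Definition~\ref{defin: hattransform}) one has $\hat B_Y E\le 0$ for every $f$-exceptional curve $E$, hence $(K_Y + \hat B_Y)E \ge K_Y E$ for the relevant exceptional curves. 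Writing $\omega_Y(\hat B_Y) = \ko_Y(K_Y+\hat B_Y)$ and using that $K_Y - f^*K_X$ is effective and exceptional away from the conductor (Lemma~\ref{lem: R1 van} and the minimal semi-resolution structure), the sheaf $\ko_Y(K_Y + \hat B_Y)$ has the form $\ko_Y(K_Y + \lceil f^* M\rceil + (\text{non-positive exceptional correction}))$ for a suitable well-behaved $M$ on $X$. The key point is that $-\hat B_Y$ has non-negative intersection with every exceptional curve, so applying relative Kawamata--Viehweg/Grauert--Riemenschneider-type vanishing in the semi-smooth setting — exactly as in the proof of Lemma~\ref{lem: R1 van} via the normalisation $\mu\colon\bar Y\to Y$ and the exact sequence relating $\mu_*\ko_{\bar Y}(\cdots)$, $\ko_Y(\cdots)$ and a sheaf supported on $D_Y$ — yields $R^1f_*\omega_Y(\hat B_Y)=0$. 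On $\bar Y$ one uses that $\bar f^*\bar B_{\bar X}$ is $\bar f$-nef together with Sakai's vanishing \cite[2.2]{sak84}.

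\emph{Step 2: Deduce (ii).} Consider the structure sequence
\[
0 \to \omega_Y(\hat B_Y - \hat B_Y)\ \text{—rather—}\ 0\to \ko_Y(-\hat B_Y)\to\ko_Y\to\ko_{\hat B_Y}\to 0,
\]
twist by $\omega_Y(\hat B_Y)$ to get
\[
0\to \omega_Y\to \omega_Y(\hat B_Y)\to \omega_Y(\hat B_Y)\restr{\hat B_Y}\to 0,
\]
and identify $\omega_Y(\hat B_Y)\restr{\hat B_Y}$ with $\omega_{\hat B_Y}$ by adjunction on the semi-smooth surface $Y$ (valid because $\hat B_Y$ is a well-behaved Cohen--Macaulay curve and $\omega_Y(\hat B_Y)$ is a line bundle near $\hat B_Y$). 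Applying $f_*$ and looking at the long exact sequence, $R^1f_*\omega_{\hat B_Y}$ is squeezed between $R^1f_*\omega_Y(\hat B_Y)$, which vanishes by (i), and $R^2f_*\omega_Y$, which vanishes because the fibres of $f$ are at most one-dimensional. Hence $R^1f_*\omega_{\hat B_Y}=0$, which is (ii). (Here one also uses $R^1f_*\omega_Y=0$, Grauert--Riemenschneider for the semi-resolution, but strictly only the two displayed vanishings are needed.)

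\emph{Step 3: Deduce (iii).} Since $\hat B_Y$ is a projective curve, the Leray spectral sequence for $f\restr{\hat B_Y}$ together with the vanishing $R^1f_*\omega_{\hat B_Y}=0$ from (ii) gives
\[
H^i(\hat B_Y,\omega_{\hat B_Y}) \isom H^i(X, f_*\omega_{\hat B_Y})\qquad\text{for all }i\ge 0,
\]
and taking alternating sums of dimensions yields $\chi(\omega_{\hat B_Y}) = \chi(f_*\omega_{\hat B_Y})$, which is (iii). The main obstacle is Step~1: one must be careful that the ``non-positive exceptional correction'' produced by the hat transform does not spoil the numerical hypotheses of the relative vanishing theorem — in particular that over non-normal points the semi-numerical cycle machinery of Section~\ref{sect: numerical cycle} (Lemma~\ref{lem: R^1 = H^1} and Lemma~\ref{lem: subcurves of Z}) still forces $H^1(Z,\ko_Z(-\hat B_Y))^\vee$-type contributions to vanish. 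Once the vanishing $R^1f_*\omega_Y(\hat B_Y)=0$ is secured, steps 2 and 3 are formal.
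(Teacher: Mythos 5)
Your Steps 2 and 3 are correct and coincide with the paper's argument: the paper obtains the sequence $0\to\omega_Y\to\omega_Y(\hat B_Y)\to\omega_{\hat B_Y}\to 0$ by dualising the structure sequence, deduces (ii) from (i) using that the fibres of $f$ are at most one-dimensional, and gets (iii) from Leray. Likewise your reduction of (i) to the normalisation via the conductor sequence $0\to\eta_*\omega_{\bar Y}(\hat B_{\bar Y})\to\omega_Y(\hat B_Y)\to\kq_Y\to 0$ (with $\kq_Y$ supported on $D_Y$ and killed by $f$ because $f\restr{D_Y}$ is finite) is exactly what the paper does.

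The gap is in the final vanishing $R^1\bar f_*\omega_{\bar Y}(\hat B_{\bar Y})=0$ on the normalisation. You want to deduce it from Sakai's relative vanishing $R^1\bar f_*\ko_{\bar Y}(K_{\bar Y}+\lceil\bar f^*\bar M\rceil)=0$, but $\hat B_{\bar Y}$ is in general strictly larger than $\lceil\bar f^*\bar B\rceil$: by Lemma~\ref{lem: easy properties of hattransform}\refenum{ii} one has $\hat B_{\bar Y}\geq\bar f^*\bar B$, and whenever $\lceil\bar f^*\bar B\rceil$ has positive intersection with some exceptional curve (which happens already for chains of rational curves with suitable fractional pullback coefficients) the hat transform must add further \emph{effective} exceptional components. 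So the ``correction'' $\hat B_{\bar Y}-\lceil\bar f^*\bar B\rceil$ is an effective exceptional divisor being \emph{added}, and relative Kawamata--Viehweg/Grauert--Riemenschneider vanishing is not stable under such a twist: for instance $R^1\bar f_*\omega_{\bar Y}=0$ but $R^1\bar f_*\omega_{\bar Y}(E)\neq 0$ when $E$ is the elliptic exceptional curve over a simple elliptic singularity. Nor can one repair this by choosing a different $\bar f$-nef $\IQ$-divisor $N$ with $\lceil N\rceil=\hat B_{\bar Y}$: the defining inequality $\hat B_{\bar Y}E\leq 0$ has the \emph{opposite} sign to nefness, and negative-definiteness (Lemma~\ref{lem: eff div}) forces any such $N$ to differ from $\hat B_{\bar Y}$ by at least $\hat B_{\bar Y}-\bar f^*\bar B$, whose coefficients can exceed $1$. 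The paper instead invokes the argument of \cite[Claim 4.3(iv)]{CFHR}, which proves $R^1\bar f_*\ko_{\bar Y}(K_{\bar Y}+\hat B_{\bar Y})=0$ directly from $\hat B_{\bar Y}E\leq 0$ (together with the presence of the non-exceptional strict transform in $\hat B_{\bar Y}$) by a formal-functions/duality computation on exceptional cycles; this is a genuinely different mechanism from nef round-up vanishing. Finally, your closing remark conflates $R^1f_*\omega_Y(\hat B_Y)$ with the dual of $R^1f_*\ko_Y(-\hat B_Y)$: Proposition~\ref{lem: rel. duality} pairs the latter with $\omega_X(B)/f_*\omega_Y(\hat B_Y)$, which is generally \emph{nonzero} (its size is exactly what Lemma~\ref{lem: comparison Rf_*} bounds), so ``forcing $H^1(Z,\ko_Z(-\hat B_Y))$-type contributions to vanish'' is not available and is not what Proposition~\ref{prop: GRvanishing}\refenum{i} asserts.
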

\begin{proof}
For \refenum{i}  we look at the diagram (\ref{equation: normalisation+resolution}) and consider the exact sequence
\[
 0\rightarrow \eta_*\omega_{\bar Y}(\hat B_{\bar Y}) \rightarrow \omega_Y(\hat B_Y) \rightarrow \kq_Y \rightarrow 0
\]
where $\hat B_{\bar Y}$ is the strict transform of $\hat B_Y$ on $\bar Y$ and $\kq_Y$ is supported on $D_Y$. Applying $f_*$ we have
\begin{equation}\label{equation: higherimage}
 R^1 f_*\eta_* \omega_{\bar Y}(\hat B_{\bar Y}) \rightarrow R^1f_*\omega_Y(\hat B_Y) \rightarrow R^1f_*\kq_Y.
\end{equation}
Since $f\restr{D_Y}$ is finite, $R^1f_*\kq_Y=0$. On the other hand, using the Leray spectral sequence and finiteness of $\eta$ and $\pi$, we have
\begin{align*}
 R^1f_*\eta_*\omega_{\bar Y}(\hat B_{\bar Y}) & = R^1(f\eta)_* \omega_{\bar Y}(\hat B_{\bar Y}) \\
                                             & = R^1(\pi\bar f)_*\omega_{\bar Y}(\hat B_{\bar Y})\\
                                             & = \pi_* R^1\bar f_*\omega_{\bar Y}(\hat B_{\bar Y}).
\end{align*}
The argument for \cite[Claim 4.3 (iv)]{CFHR} gives $R^1\bar f_*\omega_{\bar Y}(\hat B_{\bar Y}) =0$ and hence $R^1f_*\eta_*\omega_{\bar Y}(\hat B_{\bar Y})=0$. Now the vanishing of $ R^1f_*\omega_Y(\hat B_Y)$ follows from the exact sequence (\ref{equation: higherimage}).

For \refenum{ii}, we apply $\shom_{\ko_Y}(\cdot,\omega_Y)$ to the short exact sequence 
\[
 0\rightarrow \ko_Y(-\hat B_Y) \rightarrow \ko_Y \rightarrow \ko_{\hat B_Y} \rightarrow 0,
\]
and get
\[0\rightarrow \omega_Y \rightarrow \omega_Y(\hat B_Y) \rightarrow \omega_{\hat B_Y} \rightarrow 0.\]
Applying $f_*$ to the above sequence and using \refenum{i} gives $R^1 f_* \omega_{\hat B_Y} \cong R^1 f_* \omega_Y(\hat B_Y)=0$.

The last item  is a direct consequence of \refenum{ii} and the Leray spectral sequence.
\end{proof}

We now prove the main result of the section, an estimate for the change in arithmetic genus of the strict transform of the hat transform.
\begin{prop}\label{prop: genus hat transform}
Let $B$ be a well-behaved curve on $X$ and $\hat B_{\bar Y}\subset\bar Y$ the strict transform of the hat transform of $B$. Then 
\[p_a(B)\leq p_a(\hat B_{\bar Y}) + \frac{\hat B_{\bar Y} D_{\bar Y}}{2}.\] 
\end{prop}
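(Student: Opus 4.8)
The plan is to compare the arithmetic genera of $B$, of the hat transform $\hat B_Y$, and of its strict transform $\hat B_{\bar Y}$ in two independent steps and combine them.

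\emph{Passage from $\bar Y$ to $Y$.} Since $\hat B_Y$ is a well-behaved curve on the semi-smooth surface $Y$, Proposition~\ref{prop: semi-smooth RR}\refenum{ii} gives $\chi(\ko_{\hat B_Y})=\chi(\ko_{\hat B_{\bar Y}})-\sum_{q\in\sing Y}n_q(\hat B_Y)$, that is, $p_a(\hat B_Y)=p_a(\hat B_{\bar Y})+\sum_q n_q(\hat B_Y)$. Summing the relation $2n_q(\hat B_Y)+d_q(\hat B_Y)=\sum_{t\in\eta^{-1}(q)}I_t(\hat B_{\bar Y},D_{\bar Y})$ of Definition~\ref{def: local correction} over all $q$ (with $n_q=d_q=0$ at smooth points) and using that $\hat B_{\bar Y}$ shares no component with $D_{\bar Y}$, one gets $2\sum_q n_q(\hat B_Y)+\sum_q d_q(\hat B_Y)=\hat B_{\bar Y}D_{\bar Y}$, hence
\[
 p_a(\hat B_{\bar Y})+\tfrac{1}{2}\hat B_{\bar Y}D_{\bar Y}=p_a(\hat B_Y)+\tfrac{1}{2}\sum_{q\in\sing Y}d_q(\hat B_Y).
\]
As each $d_q(\hat B_Y)$ is a non-negative integer, $\sum_q d_q(\hat B_Y)\ge\#\{q\in\sing Y:d_q(\hat B_Y)>0\}$, so it suffices to prove the inequality $p_a(B)\le p_a(\hat B_Y)+\tfrac{1}{2}\#\{q\in\sing Y:d_q(\hat B_Y)>0\}$.

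\emph{Passage from $Y$ to $X$.} One has $f_*\ko_Y=\ko_X$ (both are $S_2$ and agree over the semi-smooth locus) and, crucially, $f_*\ko_Y(-\hat B_Y)=\ko_X(-B)$. Indeed $f_*\ko_Y(-\hat B_Y)$ is a torsion-free subsheaf of $\ko_X$ coinciding with $\ko_X(-B)$ outside a subset of codimension $\ge 2$, which gives $f_*\ko_Y(-\hat B_Y)\subseteq\ko_X(-B)$; conversely, a local section $s$ of $\ko_X(-B)$ defines an effective Cartier divisor $\mathrm{div}(s)\ge B$, so $\hat B_Y\le f^*\mathrm{div}(s)$ by Lemma~\ref{lem: easy properties of hattransform}\refenum{iii} applied locally over $X$ (the hat transform being a local construction over the base), whence $f^*s$ lies in $\ko_Y(-\hat B_Y)$. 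Since $R^2f_*=0$ and $R^1f_*$ of a coherent sheaf is supported on the finitely many points over which $f$ is not an isomorphism, the Leray spectral sequence yields $\chi(\ko_Y)=\chi(\ko_X)-\rho$ and $\chi(\ko_Y(-\hat B_Y))=\chi(\ko_X(-B))-\rho'$, where $\rho:=h^0(R^1f_*\ko_Y)$ and $\rho':=h^0(R^1f_*\ko_Y(-\hat B_Y))$. Subtracting the structure sequences $0\to\ko_Y(-\hat B_Y)\to\ko_Y\to\ko_{\hat B_Y}\to 0$ on $Y$ and $0\to\ko_X(-B)\to\ko_X\to\ko_B\to 0$ on $X$ then gives $\chi(\ko_{\hat B_Y})=\chi(\ko_B)-\rho+\rho'$.

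\emph{Conclusion.} By the defining property of the hat transform, $\hat B_Y E_i\le 0$ for every $f$-exceptional curve $E_i$, so Lemma~\ref{lem: comparison Rf_*}, summed over $X$, gives $\rho'\le\rho+\tfrac{1}{2}\#\{q\in\sing Y:d_q(\hat B_Y)>0\}$. Plugging this into $\chi(\ko_{\hat B_Y})=\chi(\ko_B)-\rho+\rho'$ and rewriting $\chi=1-p_a$ yields precisely the inequality $p_a(B)\le p_a(\hat B_Y)+\tfrac{1}{2}\#\{q\in\sing Y:d_q(\hat B_Y)>0\}$ required in the first step, which completes the argument. The genuinely delicate points are the identity $f_*\ko_Y(-\hat B_Y)=\ko_X(-B)$ and, above all, the observation that the error term $\tfrac{1}{2}\#\{q:d_q(\hat B_Y)>0\}$ produced by Lemma~\ref{lem: comparison Rf_*} is exactly absorbed by the term $\tfrac{1}{2}\sum_q d_q(\hat B_Y)$ coming from the difference $\hat B_{\bar Y}D_{\bar Y}-2\sum_q n_q(\hat B_Y)$; everything else is cohomological bookkeeping with ingredients already established.
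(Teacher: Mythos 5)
Your argument is correct in outline and reaches the same two reductions as the paper's proof, namely the identity
\[
p_a(\hat B_{\bar Y})+\tfrac12\,\hat B_{\bar Y}D_{\bar Y}\;=\;p_a(\hat B_Y)+\tfrac12\sum_{q\in\sing Y} d_q(\hat B_Y),
\]
obtained from Proposition~\ref{prop: semi-smooth RR}\refenum{ii} and Definition~\ref{def: local correction}, and the estimate of Lemma~\ref{lem: comparison Rf_*}. But you link them by a genuinely different middle step. The paper computes $\chi(\ko_{\hat B_Y})-\chi(\ko_B)$ by pushing forward $0\to\omega_Y\to\omega_Y(\hat B_Y)\to\omega_{\hat B_Y}\to0$, which requires the vanishing $R^1f_*\omega_Y(\hat B_Y)=0$ of Proposition~\ref{prop: GRvanishing} and the relative duality of Proposition~\ref{lem: rel. duality} to identify the cokernels $\omega_X/f_*\omega_Y$ and $\omega_X(B)/f_*\omega_Y(\hat B_Y)$ with the duals of $R^1f_*\ko_Y$ and $R^1f_*\ko_Y(-\hat B_Y)$. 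You instead apply Leray directly to $\ko_Y$ and $\ko_Y(-\hat B_Y)$, trading those two inputs for the single identification $f_*\ko_Y(-\hat B_Y)=\ko_X(-B)$ (together with $f_*\ko_Y=\ko_X$). This is more elementary bookkeeping, it bypasses the appendix's duality machinery entirely, and the identification is true essentially for the reason you give.

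Two points need patching. First, $f_*\ko_Y=\ko_X$ does not follow from ``both are $S_2$'' (a priori $f_*\ko_Y$ need not be $S_2$); argue instead that $f_*\ko_Y$ is a torsion-free sheaf containing $\ko_X$, agreeing with it over the semi-smooth locus $U$ whose complement has codimension two, hence embedding into $\iota_*(\ko_X\restr U)=\ko_X$. Second, and more seriously for the inclusion $\ko_X(-B)\subseteq f_*\ko_Y(-\hat B_Y)$: a local section $s$ of $\ko_X(-B)$ need not be a non-zero-divisor (at a point of $D$ the local ring is reducible and $s$ may vanish identically on a branch), and even when it is, $\mathrm{div}(s)$ may contain components of $D$, so Lemma~\ref{lem: easy properties of hattransform}\refenum{iii} would be invoked for a divisor that is not well-behaved, outside the setting in which the intersection pairing of Definition~\ref{defin: intersection} is defined. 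Both issues are repaired by prime avoidance: since $B$ is well-behaved, the ideal $\ki_{B,x}$ is contained neither in any minimal prime of $\ko_{X,x}$ nor in the prime of any component of $D$, so every germ of $\ko_X(-B)$ is a difference of two germs whose divisors are effective, Cartier and well-behaved; each of these pulls back into $\ko_Y(-\hat B_Y)$ by Lemma~\ref{lem: easy properties of hattransform}\refenum{iii}, which suffices. With these repairs your proof is complete.
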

\begin{proof}
The short exact sequence $0\to \omega_X\to \omega_X(B) \to \omega_B \to 0$ fits into the diagram 
\[
 \begin{tikzcd}
{}   & 0\dar & 0\dar&\kk_B\arrow[hookrightarrow]{d}\\
0\rar & f_*\omega_Y\rar\dar&f_*\omega_Y(\hat B_Y)\rar\dar& f_* \omega_{\hat B_Y}\rar\dar{\exists}& 0 \\
0\rar & \omega_X\rar\dar& \omega_X(B)\rar\dar& \omega_B\rar\dar& 0 \\
 & \omega_X/f_*\omega_Y\rar\dar& \omega(X)(B)/f_*\omega_Y(\hat B_Y)\rar\dar& \kq_B \rar\dar& 0 \\
        & 0  & 0 & 0.
 \end{tikzcd}
\]
The first row is exact by Proposition~\ref{prop: GRvanishing} and the rest of the diagram is exact by defining $\kk_B$ (resp.\ $\kq_B$) to be the kernel (resp.\ cokernel) of $f_*\omega_{\hat B_Y} \rightarrow \omega_B$. The Snake Lemma
together with the duality from Lemma \ref{lem: rel. duality} gives \[
  \dim_{\IC}\kk_B -  \dim_{\IC}\kq_B = h^0(R^1f_*\ko_Y) -  h^0(R^1f_*\ko_Y(-\hat B_Y)).
\]
Now we have 
\begin{align*}
  p_a(\hat B_Y)& = 1 - \chi(\ko_{\hat B_Y}) \\
              &= 1 + \chi(\omega_{\hat B_Y})  \\
&= 1 + \chi(f_*\omega_{\hat B_Y})  \qquad\text{(by Prop.\ \ref{prop: GRvanishing}\refenum{iii})} \\
              &= 1 + \chi(\omega_B) +  \dim_{\IC}\kk_B -  \dim_{\IC}\kq_B \\
              &= 1 +  \chi(\omega_B) +  h^0(R^1f_*\ko_Y) -  h^0(R^1f_*\ko_Y(-\hat B_Y))\\
              &\geq p_a(B) - \frac{1}{2}\#\left\{q\in\hat B_Y\cap D_Y\,|\,d_q(\hat B_Y)>0\right\}\qquad\text{(by Lem.\ \ref{lem: comparison Rf_*})}\\
              &\geq p_a(B) - \frac{1}{2}\sum_{q\in \sing Y} d_q(\hat B_Y)
\end{align*}
Thus, using  Proposition~\ref{prop: semi-smooth RR}\refenum{ii} for $\hat B_Y$, we get 
 \begin{align*}
 p_a(B) &\leq  p_a(\hat B_{Y})+ \frac{1}{2}\sum_{q\in \sing Y} d_q(\hat B_Y)\\
&=  p_a(\hat B_{\bar Y})+ \sum_{q\in \sing Y} n_q(\hat B_Y)+ \frac{1}{2}\sum_{q\in \sing Y} d_q(\hat B_Y)\\
& = p_a(\hat B_{\bar Y}) + \frac{\hat B_{\bar Y} D_{\bar Y}}{2}.
\end{align*}
where the last equality is by Definition \ref{def: local correction}. This concludes the proof.
\end{proof}

\end{document}